\definecolor{grey1}{rgb}{0.5,0.5,0.5}
\renewcommand{\baselinestretch}{1.3}
\definecolor{grau}{rgb}{0.8,0.8,0.8}
\newcommand{\chen}[1]{\color{orange}}
\definecolor{grau}{rgb}{0.8,0.8,0.8}
\newtheoremstyle{mystyle}
{1ex} % space above
{1ex} % space below
{\itshape} % Body font
{} % h Indent amount
{\bfseries} % Theorem head font
{} % Punctuation after theorem head
{1ex} % h Space after theorem head
{} % h Theorem head spec
\theoremstyle{mystyle}
\newtheorem{theorem}{Theorem}
\newtheorem{lemma}{Lemma}
\newtheorem{definition}{Definition}
\newtheorem*{example}{Example}
\newtheorem{remark}{Remark}
\numberwithin{equation}{section}
\DeclareMathOperator*{\argmin}{arg\,min}
\DeclareMathOperator*{\argmax}{arg\,max}
\newcommand{\prob}{{\mathbb{P}}}
\newcommand{\var}{{\mathrm{var}}}
\newcommand{\expect}{\mathbb{E}}
\newcommand{\vect}{\mathrm{vec}}
\newcommand{\iidsim}{{\overset{\mathrm{i.i.d.}}{\sim}}}
\newcommand{\transpose}{^{\mathrm{T}}}
\newcommand{\bLambda}{{\bm{\Lambda}}}
\newcommand{\calC}{{\mathcal{C}}}
\newcommand{\calD}{{\mathcal{D}}}
\newcommand{\calL}{{\mathcal{L}}}
\newcommand{\calX}{{\mathcal{X}}}
\newcommand{\balpha}{{\boldsymbol{\alpha}}}
\newcommand{\bT}{{\mathbf{T}}}
\newcommand{\bU}{{\mathbf{U}}}
\newcommand{\bP}{{\mathbf{P}}}
\newcommand{\bY}{{\mathbf{Y}}}
\newcommand{\bv}{{\mathbf{v}}}
\newcommand{\bx}{{\mathbf{x}}}
\newcommand{\bX}{{\mathbf{X}}}
\newcommand{\bA}{{\mathbf{A}}}
\newcommand{\bB}{{\mathbf{B}}}
\newcommand{\bD}{{\mathbf{D}}}
\newcommand{\bE}{{\mathbf{E}}}
\newcommand{\bG}{{\mathbf{G}}}
\newcommand{\bH}{{\mathbf{H}}}
\newcommand{\bM}{{\mathbf{M}}}
\newcommand{\bN}{{\mathbf{N}}}
\newcommand{\bR}{{\mathbf{R}}}
\newcommand{\bQ}{{\mathbf{Q}}}
\newcommand{\bS}{{\mathbf{S}}}
\newcommand{\bW}{{\mathbf{W}}}
\newcommand{\bZ}{{\mathbf{Z}}}
\newcommand{\bh}{{\mathbf{h}}}
\newcommand{\br}{{\mathbf{r}}}
\newcommand{\bu}{{\mathbf{u}}}
\newcommand{\by}{{\mathbf{y}}}
\newcommand{\bz}{{\mathbf{z}}}
\newcommand{\be}{{\mathbf{e}}}
\newcommand{\bV}{{\mathbf{V}}}
\newcommand{\bgamma}{{\bm{\gamma}}}
\newcommand{\bGamma}{{\bm{\Gamma}}}
\newcommand{\bbeta}{{\bm{\beta}}}
\newcommand{\bDelta}{{\bm{\Delta}}}
\newcommand{\bSigma}{{\bm{\Sigma}}}
\newcommand{\bxi}{{\boldsymbol{\xi}}}
\newcommand{\eye}{{\mathbf{I}}}
\newcommand{\one}{{\mathbf{1}}}
\newcommand{\btheta}{{\bm{\theta}}}
\newcommand{\bzeta}{{\bm{\zeta}}}
\newcommand{\bmu}{{\bm{\mu}}}
\newcommand{\bnu}{{\bm{\nu}}}
\newcommand{\bPi}{{\bm{\Pi}}}
\newcommand{\bPsi}{{\bm{\Psi}}}
\newcommand{\bphi}{{\bm{\phi}}}
\newcommand{\bpi}{{\bm{\pi}}}
\newcommand{\zero}{{\bm{0}}}
\newcommand{\eps}{\epsilon}
\setlist[enumerate]{itemsep=0mm}
\setlist[itemize]{itemsep=0mm}
\begin{document}

\author{Fangzheng Xie
% \footnotemark[1]
\thanks{Department of Applied Mathematics and Statistics, Johns Hopkins University}
% \footnotemark[2]
% \thanks{Department of Statistics, Indiana University} 
\and Yanxun Xu
\footnotemark[1] \thanks{Correspondence should be addressed to Yanxun Xu (yanxun.xu@jhu.edu)}}

 \date{}

\title{\bf Efficient Estimation for Random Dot Product Graphs via a One-step Procedure}
\maketitle

\begin{abstract}
We propose a one-step procedure to estimate the latent positions in random dot product graphs efficiently. Unlike the classical spectral-based methods such as the adjacency and Laplacian spectral embedding, the proposed one-step procedure takes advantage of both the low-rank structure of the expected adjacency matrix and the Bernoulli likelihood information of the sampling model simultaneously. We show that for each vertex, the corresponding row of the one-step estimator converges to a multivariate normal distribution after proper scaling and centering up to an orthogonal transformation, with an efficient covariance matrix.
The initial estimator for the one-step procedure needs to satisfy the so-called approximate linearization property. The one-step estimator improves the commonly-adopted spectral embedding methods in the following sense: Globally for all vertices, it yields an asymptotic sum of squares error no greater than those of the spectral methods, and locally for each  vertex, the asymptotic covariance matrix of the corresponding row of the one-step estimator dominates those of the spectral embeddings in spectra. 
The usefulness of the proposed one-step procedure is demonstrated via numerical examples and the analysis of a real-world Wikipedia graph dataset.  
\end{abstract}

% \noindent%
{\it Keywords:} Approximate linearization property; Asymptotic normality; Bernoulli likelihood information; Latent position estimation; Normalized Laplacian.
% \vfill

\setcounter{tocdepth}{1}
\renewcommand{\baselinestretch}{1}
\tableofcontents
\renewcommand{\baselinestretch}{1.3}

\allowdisplaybreaks
\section{Introduction} % (fold)
\label{sec:introduction}
Statistical inference on graph data, an important topic in statistics and machine learning, has been pervasive in a variety of application domains, such as social networks \citep{young2007random,Girvan7821,wasserman1994social}, brain connectomics \citep{priebe2017semiparametric,8570772}, political science \citep{doi:10.1146/annurev.polisci.12.040907.115949}, computer networks \citep{6623779,7745482}, etc. Due to the high dimensional nature and the complex structure of graph data, classical statistical  methods typically begin with
finding a low-dimensional representation for the vertices in a graph using a collection of points in some Euclidean space, referred to as  \emph{latent positions} of the vertices. 
These latent positions are further used as features for subsequent inference tasks, such as vertex clustering \citep{sussman2012consistent} and classification \citep{6565321,tang2013}, regression \citep{mele2019spectral}, and nonparametric graph testing \citep{tang2017}. 

%dimensionality reduction by finding a low-dimensional representation of the vertices using a collection of points in some Euclidean space. These points are typically referred to as the \emph{latent positions} of the vertices, and are further used for certain machine learning tasks, such as classification, clustering, or regression. For example, \cite{sussman2012consistent} proposed to first embed the adjacency matrix into a $d$-dimensional Euclidean space to obtain a collection of $d$-dimensional Euclidean vectors for all vertices, and then cluster these vertices by applying the $K$-means algorithm to these vectors. In the same spirit, \cite{rohe2011} proposed to embed the normalized Laplacian matrix of the adjacency matrix into a low-dimensional Euclidean space, followed by applying the $K$-means procedure to the resulting Euclidean vectors. Other examples includes vertex classification \citep{6565321,tang2013}, nonparametric graph testing \citep{tang2017}, etc. 
\cite{doi:10.1198/016214502388618906} proposed the latent position graphs to formalize the idea of  latent positions: %formally defined the concept of the latent positions by proposing the latent positions graphs: 
Each vertex $i$ in the graph is assigned a Euclidean vector $\bx_i\in\mathbb{R}^d$, and the occurrence of an edge linking vertices $i$ and $j$ is a Bernoulli random variable with the success probability $\kappa(\bx_i,\bx_j)$, where $\kappa:\mathbb{R}^d\times\mathbb{R}^d\to [0, 1]$ is a symmetric link function. In this work, we study the random dot product graphs \citep{young2007random}, a particular class of latent position graphs   taking the link function to be the dot product of  latent positions: 
 $\kappa(\bx_i,\bx_j) = \bx_i\transpose{}\bx_j$. Random dot product graphs are of special interest due to the following two reasons: Firstly, the adjacency matrix of a random dot product graph can be viewed as the sum of a low-rank matrix and a mean-zero noise matrix, which facilitates the use of low-rank matrix factorization techniques for statistical inference; Secondly, random dot product graphs are sufficiently flexible as they can approximate general latent position graphs with symmetric positive definite link functions when the dimension $d$  of the latent positions grows with the number of vertices at a certain rate \citep{tang2013}. The readers are referred to the survey paper \cite{JMLR:v18:17-448} for a thorough review on the recent development of random dot product graphs. 

Low-rank matrix factorization methods, or more precisely, spectral-based methods, have been broadly used for estimating latent positions for
 random dot product graphs due to the low expected rank of the observed adjacency matrix. 
\cite{6565321} proposed to estimate  latent positions using the eigenvectors associated with the top $d$
 eigenvalues of the adjacency matrix. %, scaled by the square-root of these eigenvalues. 
The resulting estimator is referred to as the \emph{adjacency spectral embedding} (ASE). Asymptotic characterization of the global behavior of the ASE for all vertices have been established, including the consistency \citep{6565321} and the limit of the sum of squares error \citep{doi:10.1080/10618600.2016.1193505} as the number of vertices goes to infinity. Locally, for each vertex, \cite{athreya2016limit} proved that the distribution of the corresponding row of the adjacency spectral embedding converges to a mean-zero multivariate normal mixture distribution after proper scaling and centering, up to an orthogonal transformation, as the number of vertices goes to infinity. Another popular spectral-based method is the \emph{Laplacian spectral embedding} (LSE), which does not  estimate  latent positions directly, but computes the eigenvectors of the normalized Laplacian matrix of the adjacency matrix associated with the top $d$ eigenvalues \citep{rohe2011}. %scaled by the square-root of these eigenvalues. 
The asymptotic theory of the LSE has also been established \citep{sarkar2015,tang2018}. Notably, 
%The LSE does not directly estimate the latent positions, but rather a transformation of them.  
\cite{tang2018} showed that each row of the LSE converges to a mean-zero multivariate normal mixture distribution after proper scaling and centering, up to an orthogonal transformation. 
%Similar to the case of the adjacency spectral embedding, 
%The asymptotic theory of the LSE has also been established \citep{sarkar2015,tang2018}. 
These theoretical studies of the spectral-based methods lay a solid foundation 
%The elegant theory of these spectral-based methods provides a fruitful environment 
for the development of  subsequent inference tasks, such as vertex clustering \citep{sussman2012consistent,rohe2011,sarkar2015}, vertex classification \citep{6565321,tang2013}, testing between graphs \citep{doi:10.1080/10618600.2016.1193505,tang2017}, and parameter estimation in latent structure random graphs \citep{athreya2018estimation}.

Despite the great success of the spectral-based methods for random dot product graphs, it has been pointed out in \cite{xie2019optimal} that they are formulated in a low-rank matrix factorization fashion, whereas the Bernoulli likelihood information contained in the sampling model has been neglected. A fundamental question remains open: 
whether or not the adjacency/Laplacian spectral embedding is optimal for estimating  latent positions (or the transformation of them) due to the negligence of the  the likelihood information? In this paper, we prove the sub-optimality of the ASE by showing that the asymptotic covariance matrix of each row of the ASE is sub-optimal. 
% The notion of efficiency will be stated formally in Section \ref{sec:preliminaries} and Section \ref{sec:an_efficient_one_step_estimator}.
% Furthermore, 
We propose
 %  The major contribution of this work, is that we provide a complete answer to this question. 
 a novel one-step procedure for estimating  latent positions, and show that for each  vertex, the corresponding row of the proposed one-step estimator converges to a multivariate normal distribution after $\sqrt{n}$-scaling and centering at the underlying true latent position, up to an orthogonal transformation. More importantly, the corresponding asymptotic covariance matrix is the same as the maximum likelihood estimator as if the rest of the latent positions are known, provided that the procedure is initialized at an estimator satisfying the approximate linearization property, which will be defined later. This phenomenon of the one-step estimator is referred to as the local efficiency, the formal definition of which is provided in Section \ref{sec:an_efficient_one_step_estimator}. 
 In particular, we show that the efficient covariance matrix is no greater than the asymptotic covariance matrix of the corresponding row of the ASE in spectra. We also provide an example where the difference between the efficient covariance matrix and the asymptotic covariance matrix of the ASE has at least one negative eigenvalue. 
 Besides the local efficiency for each vertex, the proposed one-step estimator for   latent positions has a smaller sum of squares error than that of the ASE globally for all vertices as well. 

The general one-step procedure, which finds a new estimator via a single iteration of the Newton-Raphson update given a $\sqrt{n}$-consistent initial estimator, has been applied to M-estimation theory in classical parametric models to produce an efficient estimator \citep{van2000asymptotic}. Even when the maximum likelihood estimator does not exist (\emph{e.g.}, Gaussian mixture models), the one-step estimator could still be efficient.
This motivates us to extend the one-step procedure from classical parametric models to efficient estimation in high-dimensional random graphs, because neither the existence nor the uniqueness of the maximum likelihood estimator for random dot product graphs has been established. Unlike the ASE, the proposed one-step procedure takes  both the low-rank structure of the mean matrix and the likelihood information of the sampling model into account simultaneously. 
This work represents, to the best of our knowledge, the first effort in the literature addressing the efficient estimation problem for random dot product graphs. % using a one-step procedure. 

Moreover, we prove the asymptotic sub-optimality of the widely adopted LSE  by applying 
 the one-step procedure to construct an estimator for the population version of the LSE, and show that it dominates the LSE in the following sense: Locally for each vertex, the corresponding row of the new estimator converges to a mean-zero multivariate normal distribution after proper scaling and centering, up to an orthogonal transformation, and the asymptotic covariance matrix is no greater than that of the corresponding row of the LSE in spectra; Globally for all vertices, it yields a sum of squares error no greater than that of the LSE. 

% [Discussion of GRDPG]

Recently, there has been substantial progress on generalized random dot product graphs \citep{rubin2017statistical}, which fall into the category of general latent position graphs as well but allow for a more general link function than random dot product graphs. The link function of a generalized random dot product graph is of the form $\kappa(\bx_i, \bx_j) = \bx_i\transpose\eye_{p,q}\bx_j$, where $\eye_{p,q}$ is a diagonal matrix with $p$ ones and $q$ minus ones on its diagonals and $p,q$ are non-negative integers such that $p + q = d$. %The pair $(p, q)$ is referred to as the signature of  generalized random dot product graphs. 
This class of random graphs %are more flexible than random dot product graphs and
 include a broad class of popular network models (\emph{e.g.}, mixed-membership stochastic block models). We remark that the theory and method established in this work can be extended to   generalized random dot product graphs as long as $p,q$ are either provided or can be estimated consistently. 

The remaining part of the paper is structured as follows. We review the background on random dot product graphs and present the limit theorem for the ASE (modified theorem from \citealp{athreya2016limit}) in Section \ref{sub:background_on_random_dot_product_graphs}. The theory for the maximum likelihood estimation of a single latent position with the rest of the latent positions being known, which motivates us to pursue the efficient estimation task, is established in Section \ref{sub:a_motivating_one_dimensional_example}. 
Section \ref{sec:an_efficient_one_step_estimator} elaborates on the proposed one-step procedure for estimating the entire latent position matrix, establishes its 
 asymptotic theory, and  shows that it dominates the ASE as the number of vertices goes to infinity. In Section \ref{sub:a_plug_in_estimator_for_the_normalized_laplacian_matrix}, we apply the proposed one-step procedure to construct an estimator for the population version of the LSE, and show that it dominates the LSE asymptotically. Section \ref{sec:numerical_examples} demonstrates the usefulness of the proposed one-step procedure via numerical examples and the analysis of a real-world Wikipedia graph data. We conclude the paper with discussion in Section \ref{sec:discussion}.

\vspace*{1ex}
\noindent\textbf{Notations:} The $d\times d$ identity matrix is denoted by $\eye_d$ and the vector with  all entries being $1$ is denoted by the boldface $\one$. 
We define the notation $[n]$   to be the set of all consecutive positive integers from $1$ to $n$: $[n]:=\{1,2,\ldots,n\}$. The symbols $\lesssim$ and $\gtrsim$ mean the corresponding inequality up to a constant, \emph{i.e.}, $a \lesssim b$ ($a \gtrsim b$) if $a\leq Cb$ ($a \geq Cb$) for some constant $C > 0$, and we denote $a\asymp b$ if $a\lesssim b$ and $a\gtrsim b$. The shorthand notation $a\vee b$ denotes the maximum value between $a$ and $b$, namely, $a\vee b = \max(a,b)$ for any $a,b\in\mathbb{R}$.  
We use the notation $\mathbb{O}(n, d)$ to denote the set of all orthonormal $d$-frames in $\mathbb{R}^n$, \emph{i.e.}, $\mathbb{O}(n, d) = \{\bU\in\mathbb{R}^{n\times d}:\bU\transpose{}\bU = \eye_d\}$, where $n\geq d$, and write $\mathbb{O}(d) = \mathbb{O}(d, d)$. The notation $\|\bx\|$ is used to denote the Euclidean norm of a vector $\bx = [x_1,\ldots,x_d]\transpose\in\mathbb{R}^d$, \emph{i.e.}, $\|\bx\| = (\sum_{k = 1}^dx_k^2)^{1/2}$. For any two vectors $\bx = [x_1,\ldots,x_d]\transpose$ and $\by = [y_1,\ldots,y_d]\transpose$ in $\mathbb{R}^d$, the inequality $\bx\leq \by$ means that $x_k \leq y_k$ for all $k = 1,2,\ldots,d$. For any two positive semidefinite matrices $\bSigma_1$ and $\bSigma_2$ of the same dimension, the notation $\bSigma_1\preceq \bSigma_2$ ($\bSigma_1\succeq \bSigma_2$) means that $\bSigma_2 - \bSigma_1$ ($\bSigma_1 - \bSigma_2$) is positive semidefinite, and we say that $\bSigma_1$ is no greater (no less) than $\bSigma_2$ in spectra. 
% For a $d\times d$ positive definite matrix $\bDelta$, we use $\lambda_k(\bDelta)$ to denote its $k$th largest eigenvalue, and 
For any rectangular matrix $\bX$, we use $\sigma_k(\bX)$ to denote its $k$th largest singular value. 
For a matrix $\bX = [x_{ik}]_{n\times d}$, we use $\|\bX\|_2$ to denote the spectral norm $\|\bX\|_2 = \sigma_1(\bX)$,  $\|\bX\|_{\mathrm{F}}$ to denote the Frobenius norm $\|\bX\|_{\mathrm{F}} = (\sum_{i = 1}^n\sum_{k = 1}^dx_{ik}^2)^{1/2}$, and $\|\bX\|_{2\to\infty}$ to denote the two-to-infinity norm  $\|\bX\|_{2\to\infty} = \max_{i\in[n]}(\sum_{k = 1}^dx_{ik}^2)^{1/2}$. 

% section introduction (end)

\section{Preliminaries} % (fold)
\label{sec:preliminaries}
\subsection{Background on random dot product graphs} % (fold)
\label{sub:background_on_random_dot_product_graphs}

% subsection background_on_random_dot_product_graphs (end)

%We begin with the basic concepts of random dot product graphs. 
Denote $\calX = \{\bx = [x_1,\ldots,x_d]\transpose\in\mathbb{R}^d:x_1,\ldots,x_d > 0, \|\bx\| < 1\}$ the space of latent positions, and $\calX^n$ the $n$-fold Cartesian product of $\calX$, \emph{i.e.}, $\calX^n = \{\bX = [\bx_1,\ldots,\bx_n]\transpose\in\mathbb{R}^{n\times d}:\bx_1,\ldots,\bx_n\in\calX\}$. For any $\delta \in (0, 1/2)$, denote $\calX(\delta)$ the set of all $\bx\in\calX$ such that $\bx\transpose\bu\in[\delta, 1- \delta]$ for all $\bu\in\calX(\delta)$. Given an $n\times d$ matrix $\bX = [\bx_1,\ldots,\bx_n]\transpose\in\calX^n$ and a sparsity factor $\rho_n\in(0, 1]$, a symmetric and hollow (\emph{i.e.}, the diagonal entries are zeros) random   matrix $\bA = [A_{ij}]_{n\times n}\in\{0,1\}^{n\times n}$ is said to be the adjacency matrix of a random dot product graph on $n$ vertices $[n] = \{1,2,\ldots,n\}$ with latent positions $\bx_1,\ldots,\bx_n$, denoted by $\bA\sim\mathrm{RDPG}(\bX)$, if  $A_{ij}\sim\mathrm{Bernoulli}(\rho_n\bx_i\transpose\bx_j)$ independently, $1\leq i<j\leq n$. We refer to the matrix $\bX$ as the latent position matrix. Namely, the distribution of $\bA$ can be written as
\[
p_\bX(\bA) = \prod_{i < j}(\rho_n\bx_i\transpose\bx_j)^{A_{ij}}(1 - \rho_n\bx_i\transpose\bx_j)^{1 - A_{ij}}.
\]
When $\rho_n\equiv 1$ for all $n$, the resulting graph is dense, in the sense that the expected number of edges $\expect(\sum_{i < j}A_{ij})$ grows quadratically in $n$, and when $\rho_n\to 0$ as $n\to\infty$, the corresponding graph is sparse, namely, the expected number of edges is sub-quadratic in $n$ ($\expect(\sum_{i < j}A_{ij}) = o(n^2)$). 
% {\color{blue} please add several sentences here to say how $\rho_n$ plays roles in the RDPG, like when =1, it's dense, when .. blabla. }

The goal of this work is to estimate the latent positions $\bx_1,\ldots,\bx_n$, which are treated as deterministic parameters. In some cases, the latent positions $\bx_1,\ldots,\bx_n$ are considered as  latent random variables that are independently sampled from some underlying distribution $F$ on $\calX$ (see, for example, \citealp{athreya2016limit,6565321,tang2017,tang2018}). For deterministic latent positions, 
we require that there exists some cumulative distribution function $F$ on $\calX$, such that 
\begin{align}\label{eqn:strong_convergence_measure}
\sup_{\bx\in\calX}\left|F_n(\bx) - F(\bx)\right|\to 0\quad\text{as }n\to\infty,
\end{align}
where $F_n(\bx) = (1/n)\sum_{i = 1}^n\mathbbm{1}\{\bx_i\leq \bx\}$ is the empirical distribution function obtained by treating the latent positions $\bx_1,\ldots,\bx_n$ as independent and identically distributed (i.i.d.) data. Condition \eqref{eqn:strong_convergence_measure} is similar to the case where $\bx_i$'s are random in the following sense: When $\bx_1,\ldots,\bx_n$ are independent random variables sampled from $F$, the Glivenko-Cantelli theorem asserts that \eqref{eqn:strong_convergence_measure} holds with probability one with respect to the randomness of the infinite i.i.d. sequence $(\bx_i)_{i = 1}^\infty$. 
% \end{remark}

\begin{remark}
The latent position matrix $\bX$ can only be identified up to an orthogonal transformation since for any orthogonal matrix $\bW\in\mathbb{O}(d)$ and $i, j\in[n]$, $\bx_i\transpose\bx_j = (\bW\bx_i)\transpose(\bW\bx_j)$. Furthermore, for any $d' > d$ and any latent position matrix $\bX\in\mathbb{R}^{n\times d}$, there exists another matrix $\bX'\in\mathbb{R}^{n\times d'}$, such that $\mathrm{RDPG}(\bX)$ and $\mathrm{RDPG}(\bX')$ yield the same distribution of $\bA$. The latter source of non-identifiability can be avoided for large $n$ by requiring the second moment matrix $\bDelta = \int_\calX \bx\bx\transpose F(\mathrm{d}\bx)$ to be non-singular \citep{tang2018}. \end{remark}

Random dot product graphs have connections with the simplest Erd\H{o}s-R\'enyi models and the popular stochastic block models. 
When $F$ is a point mass at some $p\in(0, 1)$, namely, $F(\mathrm{d}x) = \delta_p(\mathrm{d}x)$, the resulting random dot product graph coincides with an Erd\H{o}s-R\'enyi graph, with $(A_{ij})_{i < j}$ being independent $\mathrm{Bernoulli}(p^2)$ random variables.
When $F$ is a finitely discrete distribution on $\calX$: $F(\mathrm{d}\bx) = \sum_{k = 1}^K\pi_k\delta_{\bnu_k}(\mathrm{d}\bx)$ for $\bnu_1,\ldots,\bnu_k\in\calX$ and $\sum_{k = 1}^K\pi_k = 1$, 
there exists a cluster assignment function $\tau:[n]\to[K]$ such that $(1/n)\sum_{i = 1}^n\mathbbm{1}\{\tau(i) = k\} \to \pi_k$ for all $k = 1,2,\ldots,K$ as $n\to\infty$. Denoting $\bB = [B_{kl}]_{K\times K} := [\bnu_k\transpose\bnu_l]_{K\times K}$ and $\bx_i = \bnu_{\tau(i)}$, $i\in[n]$,  we see that $A_{ij}$ follows $\mathrm{Bernoulli}(B_{\tau(i)\tau(j)}) = \mathrm{Bernoulli}(\bx_i\transpose\bx_j)$ for $i < j$ independently, where $i,j\in[n]$. In this case, the random dot product graph $\mathrm{RDPG}(\bX)$ with $\bX = [\bx_1,\ldots,\bx_n]\transpose$ becomes a stochastic block model with a positive semidefinite block probability matrix $\bB$ and a cluster assignment function $\tau$.

% {\color{blue} This paragraph is not very clear. i would suggest that you rewrite this paragraph following the way as what did in the optimal Bayesian RDPG paper and show that the SBM with a positive semidefinite block probability matrix is a special case of the RDPG. Also, if you want to mention degree-corrected SBM, it's better that you lay out the details on the connection between degree-corrected SBM and RDPG too.  About abbreviation, if you would like to submit it to JASA first, then let's just use abbreviation for RDPG and SBM, etc. }

To estimate the latent positions, \cite{6565321} proposed to solve the least squares problem
\begin{align}\label{eqn:ASE_least_squared_problem}
\widehat\bX^{(\mathrm{ASE})} = \argmin_{\bX\in\mathbb{R}^{n\times d}}\|\bA - \bX\bX\transpose\|_{\mathrm{F}}^2.
\end{align}
The resulting solution $\widehat\bX^{(\mathrm{ASE})}$ to \eqref{eqn:ASE_least_squared_problem} is referred to as the \emph{adjacency spectral embedding} (ASE) of $\bA$ into $\mathbb{R}^d$. Note that $\expect(\bA)$ is a positive semidefinite low-rank matrix modulus the diagonal entries and $\|\bA - \bX\bX\transpose\|_{\mathrm{F}}^2 = \sum_{i = 1}^n\sum_{j = 1}^n(A_{ij} - \bx_i\transpose\bx_j)^2$ is exactly the empirical squared-error loss. Hence the problem \eqref{eqn:ASE_least_squared_problem} becomes a naive empirical risk minimization problem if we regard $\widehat\bX^{(\mathrm{ASE})}$ as an estimator for $\rho_n^{1/2}\bX$, and the solution to \eqref{eqn:ASE_least_squared_problem} can be conveniently computed \citep{Eckart1936}: $\widehat\bX^{(\mathrm{ASE})}$ is the matrix of eigenvectors associated with the top $d$ eigenvalues of $\bA$, scaled by the square roots of these eigenvalues.

\cite{6565321}  proved that $\widehat\bX^{(\mathrm{ASE})} = [\widehat\bx_1^{(\mathrm{ASE})},\ldots,\widehat\bx_n^{(\mathrm{ASE})}]\transpose$ is a consistent estimator for $\rho_n^{1/2}\bX$ globally for all vertices:  $(1/n)\|\widehat\bX^{(\mathrm{ASE})}\bW_n - \bX\|_{\mathrm{F}}^2$ converges to $0$ in probability as $n\to\infty$ for a sequence of orthogonal $(\bW_n)_{n = 1}^\infty\subset\mathbb{O}(d)$. Furthermore, 
for each fixed vertex $i\in[n]$, the asymptotic distribution of $\widehat\bx_i^{(\mathrm{ASE})}$ after proper scaling and centering has been established \citep{athreya2016limit,tang2018} in the case where $\bx_{1},\ldots,\bx_{n}$ are independent and identically distributed (i.i.d.) according to $F$. The setup in this work is slightly different since we posit that the latent positions are deterministic. To distinguish between an arbitrary element $\bX\in\calX^n$ and the ground truth, we denote $\bX_0$ the true latent position matrix that generates the observed adjacency matrix $\bA$ according to the sampling model $\bA\sim\mathrm{RDPG}(\bX_0)$.
% , and $\bX$ any latent position matrix in $\calX^n$. 
We modify the limit theorem of the ASE originally presented in \cite{athreya2016limit} to accommodate the deterministic setup for $\bx_{01},\ldots,\bx_{0n}$ in the current framework and summarize the results in the following theorem. 
% The key difference is that in \cite{tang2018} the authors assume that the true latent positions $\bx_{01},\ldots,\bx_{0n}$ are independent random variables drawn from the distribution $F$, whereas we assume that $\bx_{01},\ldots,\bx_{0n}$ are deterministic but satisfy \eqref{eqn:strong_convergence_measure} in Theorem \ref{thm:ASE_limit_theorem} below.
% Theorem \ref{thm:LSE_limit_theorem} below differs from Theorems 3.1 and 3.2 in \cite{tang2018} in that we assume the latent position matrix $\bX_0$, and hence, the population LSE $\bY_0$, to be deterministic, and 
In the case of deterministic latent position matrix $\bX_0$, the proof technique for the asymptotic normality of the rows of the ASE is very different from that presented in \cite{athreya2016limit} and \cite{tang2018}. The proof of Theorem \ref{thm:ASE_limit_theorem} is deferred to Appendix. 

% \yx do we need to keep both Theorems 1 and 2 in the paper? \xx 
%From now we shall use $\bX_0$ to denote the true latent position matrix that generates the observed adjacency matrix $\bA$ according to the sampling model $\bA\sim\mathrm{RDPG}(\bX_0)$, and use $\bX$ to denote any latent position matrix in $\calX^n$.

\begin{theorem}
\label{thm:ASE_limit_theorem}
Let $\bA\sim\mathrm{RDPG}(\bX_0)$ with a sparsity factor $\rho_n$ and condition \eqref{eqn:strong_convergence_measure} hold for some $\bX_0 = [\bx_{01},\ldots,\bx_{0n}]\transpose\in\calX^n$. Suppose either $\rho_n\equiv 1$ for all $n$ or $\rho_n\to 0$ but $(\log n)^4/(n\rho_n)\to 0$ as $n\to\infty$, and denote $\rho = \lim_{n\to\infty}\rho_n$. Let $\widehat\bX^{(\mathrm{ASE})} = [\widehat\bx_1^{(\mathrm{ASE})},\ldots,\widehat\bx_n^{(\mathrm{ASE})}]\transpose$ be the ASE defined by \eqref{eqn:ASE_least_squared_problem}. 
Denote
\[
\bDelta = \int_\calX \bx\bx\transpose F(\mathrm{d}\bx),
\quad
% \quad\text{and}\quad 
\bSigma(\bx) = \bDelta^{-1}\left[\int_\calX\left\{\bx_1\transpose\bx\left(1 - \rho\bx_1\transpose\bx\right)\right\}\bx_1\bx_1\transpose F(\mathrm{d}\bx_1)\right]\bDelta^{-1},
\]
and assume that $\bDelta$ and $\bSigma(\bx)$ are strictly positive definite for all $\bx\in\calX$. 
Then there exists a sequence of orthogonal matrices $(\bW)_{n = 1}^\infty = (\bW_n)_{n = 1}^\infty\subset\mathbb{O}(d)$, such that
\begin{align}
\label{eqn:ASE_convergence}
&\|\widehat\bX^{(\mathrm{ASE})}\bW - \rho_n^{1/2}\bX_0\|_{\mathrm{F}}^2\overset{a.s.}{\to} 
\int_\calX \mathrm{tr}\{\bSigma(\bx)\}F(\mathrm{d}\bx),
\end{align} 
and for any fixed index $i\in[n]$, 
\begin{align}
\label{eqn:ASE_normality}
&\sqrt{n}(\bW\transpose\widehat\bx_i^{(\mathrm{ASE})} - \rho_n^{1/2}\bx_{0i})\overset{\calL}{\to}\mathrm{N}(\zero, \bSigma(\bx_{0i})).
% \quad\text{and}\quad
\end{align}
\end{theorem}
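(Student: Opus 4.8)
The plan is to reduce both assertions to a single first-order linearization of the embedding and then read off the global statement as a law of large numbers and the row-wise statement as a central limit theorem for the leading linear term. Write $\bP = \expect(\bA)$, which equals $\rho_n\bX_0\bX_0\transpose$ modulo its (hollow) diagonal, and set $\bE = \bA - \bP$. Let $\bP = \bU\bS\bU\transpose$ be the leading rank-$d$ eigendecomposition, so that $\rho_n^{1/2}\bX_0 = \bU\bS^{1/2}\bW_0$ for some $\bW_0\in\mathbb{O}(d)$, and write $\widehat\bX^{(\mathrm{ASE})} = \widehat\bU\widehat\bS^{1/2}$. I would take $\bW = \bW_n$ to be the solution of the orthogonal Procrustes problem $\argmin_{\bW\in\mathbb{O}(d)}\|\widehat\bX^{(\mathrm{ASE})}\bW - \rho_n^{1/2}\bX_0\|_{\mathrm{F}}$, which can be read off from the singular value decomposition of $(\widehat\bX^{(\mathrm{ASE})})\transpose\rho_n^{1/2}\bX_0$ and which guarantees that $\widehat\bX^{(\mathrm{ASE})}\bW$ is centered at $\rho_n^{1/2}\bX_0$ rather than at a rotated copy of it.

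First I would establish the governing expansion
\begin{align*}
\widehat\bX^{(\mathrm{ASE})}\bW - \rho_n^{1/2}\bX_0 = \bE\,\rho_n^{1/2}\bX_0(\rho_n\bX_0\transpose\bX_0)^{-1} + \bR,
\end{align*}
in which the leading term is an explicit linear functional of the noise $\bE$ and $\bR$ is a remainder. The main work is to control $\bR$: globally $\|\bR\|_{\mathrm{F}} = o_{\prob}(1)$ and, for the local statement, row-wise $\sqrt{n}\|\bR\|_{2\to\infty} = o_{\prob}(1)$. To obtain these I would invoke a matrix concentration bound giving $\|\bE\|_2 \lesssim (n\rho_n)^{1/2}$ with high probability under the stated sparsity condition $(\log n)^4/(n\rho_n)\to 0$, a Davis--Kahan subspace bound, and the eigenvalue control $\sigma_d(\bP) \asymp n\rho_n$ coming from the positive-definiteness of $\bDelta$ together with condition \eqref{eqn:strong_convergence_measure}. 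The delicate part, and the principal obstacle, is the two-to-infinity bound on $\bR$, for which I would use a leave-one-out / entrywise perturbation argument to decouple the dependence between $\widehat\bU$ and each row of $\bE$, so that the higher-order eigenvector terms are uniformly negligible at the $\sqrt{n}$ scale; the hollow-diagonal discrepancy between $\bP$ and $\rho_n\bX_0\bX_0\transpose$ contributes only an $O(\rho_n)$ bias in operator norm and is absorbed into $\bR$. This is exactly where the deterministic-latent-position regime differs from \cite{athreya2016limit}, since one can no longer exploit independence of the rows of $\rho_n^{1/2}\bX_0$.

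With the expansion in hand, the $i$th row of the leading term is $(\rho_n\bX_0\transpose\bX_0)^{-1}\sum_{j}\rho_n^{1/2}\bx_{0j}(A_{ij} - P_{ij})$, a sum of independent mean-zero bounded summands. Using $(\rho_n\bX_0\transpose\bX_0)^{-1} = (n\rho_n)^{-1}\bDelta^{-1}(1 + o(1))$ from \eqref{eqn:strong_convergence_measure}, its $\sqrt{n}$-rescaled covariance equals $(1/n)\bDelta^{-1}\{\sum_j \bx_{0j}\transpose\bx_{0i}(1 - \rho_n\bx_{0j}\transpose\bx_{0i})\bx_{0j}\bx_{0j}\transpose\}\bDelta^{-1}$, which converges to $\bSigma(\bx_{0i})$ by the convergence of the empirical measure to $F$; I would then apply the Lindeberg--Feller central limit theorem (the Lindeberg condition holds because the summands are bounded and the limit is nondegenerate by the positive-definiteness assumption on $\bSigma$) to obtain \eqref{eqn:ASE_normality}. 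For the global claim \eqref{eqn:ASE_convergence} I would compute $\expect\|\bE\,\rho_n^{1/2}\bX_0(\rho_n\bX_0\transpose\bX_0)^{-1}\|_{\mathrm{F}}^2 = (1/n)\sum_i \mathrm{tr}\{\bSigma(\bx_{0i})\}(1+o(1))$, whose limit is $\int_\calX\mathrm{tr}\{\bSigma(\bx)\}F(\mathrm{d}\bx)$ by \eqref{eqn:strong_convergence_measure}, and upgrade this convergence in mean to almost-sure convergence by bounding the higher moments of $\|\bE\,\rho_n^{1/2}\bX_0(\rho_n\bX_0\transpose\bX_0)^{-1}\|_{\mathrm{F}}^2$ about its mean, the fluctuations being small because the $n$ row-contributions are only weakly dependent through the shared noise entries, and then invoking the Borel--Cantelli lemma on the underlying infinite array of independent Bernoulli variables.
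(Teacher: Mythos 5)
Your architecture matches the paper's at the level of the governing expansion and the limit computations, but you diverge from the paper's proof on the one step that the paper itself identifies as the novelty. For the global claim \eqref{eqn:ASE_convergence} the paper does essentially what you propose: it quotes the linearization $\|\widehat\bX^{(\mathrm{ASE})}\bW - \rho_n^{1/2}\bX_0\|_{\mathrm{F}} = \rho_n^{-1/2}\|(\bA - \rho_n\bX_0\bX_0\transpose)\bX_0(\bX_0\transpose\bX_0)^{-1}\|_{\mathrm{F}} + O_{\prob_0}((n\rho_n)^{-1/2})$ from Theorem A.5 of Tang and Priebe (2016), shows the squared leading term concentrates at its expectation, and computes that expectation under condition \eqref{eqn:strong_convergence_measure} exactly as you do. Your covariance algebra and the Lindeberg--Feller step for \eqref{eqn:ASE_normality} are likewise the same as the paper's Lyapunov CLT for the leading term $\sqrt{n}\,\be_i\transpose(\bA-\bP)\bU_\bP\bS_\bP^{-1/2}\bW_\bX$. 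Where you genuinely differ is the row-wise remainder control: you propose a leave-one-out decoupling of $\widehat\bU$ from the $i$th row of $\bE$, whereas the paper follows Cape, Tang, and Priebe (2019), writing $\bU_\bA = \sum_{t\geq 0}\bE^t\bU_\bP\bS_\bP\bU_\bP\transpose\bU_\bA\bS_\bA^{-(t+1)}$ and killing the higher-order terms with a combinatorial moment bound (its Lemma B.1, in the style of Erd\H{o}s et al.), namely $\expect_0(|\be_i\transpose\bE^k\bv|^p)\leq (n\rho_n)^{kp/2}(2kp)^{kp}\|\bv\|_\infty^p$, yielding $\|\be_i\transpose\bE(\bU_\bA-\bU_\bP\bW^*)\| = O_{\prob_0}((\log n)^2/\sqrt{n})$ directly, with no auxiliary leave-one-out constructions. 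Both routes are viable under the stated sparsity: the moment method buys a clean, uniform-in-$i$ bound at the cost of the path-counting lemma; your leave-one-out route is more modular and is the standard device in the entrywise eigenvector literature, but it requires building and comparing $n$ auxiliary eigendecompositions and you would still need to verify that the decoupled remainder reaches the $o_{\prob}(n^{-1/2})$ scale (after the $\bS_\bA^{-1/2}$ rescaling) under $(\log n)^4/(n\rho_n)\to 0$, which is the delicate bookkeeping the paper's Lemma B.1 short-circuits. Your Procrustes choice of $\bW$ versus the paper's spectral alignment $\bW = (\bW^*)\transpose\bW_\bX$ is immaterial, since the theorem only asserts existence of some orthogonal sequence and the two alignments are asymptotically interchangeable.

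One point needs repair: \eqref{eqn:ASE_convergence} asserts \emph{almost sure} convergence, but you only claim $\|\bR\|_{\mathrm{F}} = o_{\prob}(1)$ for the remainder, which delivers convergence in probability. You do invoke Borel--Cantelli, but only for the fluctuations of the leading term about its mean; the remainder must also be controlled on events with summable failure probability. This is fixable rather than fatal---the spectral-norm concentration, Davis--Kahan, and Hoeffding-type bounds underlying the linearization all hold with probability $1 - O(n^{-c})$ for arbitrary $c > 0$, which is how the paper (via the Tang--Priebe machinery) obtains the almost sure statement---but as written your plan proves a weaker mode of convergence than the theorem claims.
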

\noindent
In the rest of the paper, we drop the subscript $n$ in $\bW_n$ for notational simplicity and make the convention that the orthogonal alignment matrix $\bW$  implicitly depends on $n$.  
%Although the orthogonal alignment matrix $\bW = \bW_n$ depends on $n$, 

% \yx need to determine where to put the above sentence. the first paragraph in the next subsection.\xx 

%\subsection{Motivation: Estimating a single latent position} % (fold)
% \yx 
\subsection{Motivation: Efficiency in estimating a single latent position}  % (fold) 
\label{sub:a_motivating_one_dimensional_example}

% subsection a_motivating_one_dimensional_example (end)

Theorem \ref{thm:ASE_limit_theorem} suggests the following two properties of the ASE:  
Globally for all vertices,   $\widehat\bX^{(\mathrm{ASE})}$ is a consistent estimator for $\rho_n^{1/2}\bX_0$ as there exists a sequence of orthogonal matrices $(\bW)_{n = 1}^\infty\subset\mathbb{R}^{d\times d}$ such that the sum of squares error $\|\widehat\bX^{(\mathrm{ASE})}\bW - \bX_0\|_{\mathrm{F}}^2$ can be fully characterized by \eqref{eqn:ASE_convergence} as $n\to\infty$; 
Locally, for each fixed vertex $i\in[n]$, the distribution of the $i$th row $\widehat\bx_i^{(\mathrm{ASE})}$ of $\widehat\bX^{(\mathrm{ASE})}$ after $\sqrt{n}$-scaling and centering at $\rho_n^{1/2}\bx_{0i}$, converges to a mean-zero multivariate normal distribution with covariance matrix $\bSigma(\bx_{0i})$, up to a sequence of orthogonal transformations. 
%also fully characterized: $\sqrt{n}(\bW\transpose\widehat\bx_i^{(\mathrm{ASE})} - \bx_{0i}) \overset{\calL}{\to}\mathrm{N}(\zero, \bSigma(\bx_{0i}))$. 
Nevertheless, it remains open whether the results of Theorem \ref{thm:ASE_limit_theorem} are optimal. In this work, we will propose an estimator $\widehat\bX$ for the latent positions that dominates the ASE asymptotically in the following sense: Globally for all vertices, it yields a smaller asymptotic sum of squares error $\|\widehat\bX\bW - \rho_n^{1/2}\bX_0\|_{\mathrm{F}}^2$ than \eqref{eqn:ASE_convergence} for a sequence of orthogonal alignment matrices $(\bW)_{n = 1}^\infty$; Locally for each fixed vertex $i\in[n]$, the corresponding row of $\widehat\bX$, after $\sqrt{n}$-scaling and centering at $\rho_n^{1/2}\bx_{0i}$, also converges to a mean-zero multivariate normal distribution, up to a sequence of orthogonal transformations, but the asymptotic covariance matrix is no greater than $\bSigma(\bx_{0i})$ in spectra. 
% Furthermore, there exists examples where the difference between the 
% The heuristics behind the ASE is that $\widetilde\bX$ minimizes the empirical squared-error loss, but ignores the likelihood information. 
% To answer this question, 
%The answer to this question is positive, but let us 
% \yx 

Before elaborating on the estimator for the entire latent position matrix $\bX_0$, we begin with the problem of estimating a single latent position $\bx_{0i}$ when the rest of the latent positions are known. The theory established herein motivates the development of the proposed efficient estimation procedure. Specifically, for a fixed $i\in[n]$, we estimate $\bx_{0i}$ via the maximum likelihood estimator, assuming that the rest of the latent positions $\{\bx_{0j}:j\in[n],j\neq i\}$ are known. For simplicity, we assume that the sparsity factor $\rho_n\equiv 1$ for all $n$ in this subsection. The result is summarized in the following theorem. 
\begin{theorem}\label{thm:one_dimensional_MLE}
Let $\bA\sim\mathrm{RDPG}(\bX_0)$ for some $\bX_0 = [\bx_{01},\ldots,\bx_{0n}]\transpose\in\calX^n$ with $\rho_n\equiv 1$ for all $n$, 
%with sparsity factor $\rho_n\equiv 1$, 
and condition \eqref{eqn:strong_convergence_measure} hold. Suppose that there exists some constant $\delta > 0$ such that $(\bx_{0j})_{j = 1}^n\subset\calX(\delta)$. Let $i\in[n]$ be fixed and consider the problem of estimating $\bx_{0i}$ where $\{\bx_{0j}:j\in[n],j\neq i\}$ are known. Further assume that $\bx_{0i}$ is in the interior of $\calX(\delta)$, and for any $\bx\in\calX(\delta)$, define $\bG$ the following matrix-valued function $\bG:\calX(\delta)\to\mathbb{R}^{d\times d}$:
\[
\bG(\bx) = \int_\calX\left\{\frac{\bx_1\bx_1\transpose}{\bx\transpose\bx_1(1 - \bx\transpose\bx_1)}\right\} F(\mathrm{d}\bx_1).
\]
Then the maximum likelihood estimator $\widehat\bx_i^{(\mathrm{MLE})} = \argmax_{\bx\in\calX(\delta)}\ell_\bA(\bx)$ is consistent for $\bx_{0i}$, where
\[
\ell_\bA(\bx) = \sum_{j \neq i}\{A_{ij}\log(\bx\transpose\bx_{0j}) + (1 - A_{ij})\log(1 - \bx\transpose\bx_{0j})\}.
\]
Furthermore,
\begin{align}
\label{eqn:MLE_asymptotic_normality}
\sqrt{n}(\widehat\bx_i^{(\mathrm{MLE})} - \bx_{0i})\overset{\calL}{\to}\mathrm{N}(\zero, \bG(\bx_{0i})^{-1}).
\end{align} 
\end{theorem}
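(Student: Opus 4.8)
The plan is to treat this as a maximum-likelihood problem with $n-1$ independent but non-identically-distributed observations $\{A_{ij}:j\neq i\}$, where $A_{ij}\sim\mathrm{Bernoulli}(\bx_{0i}\transpose\bx_{0j})$ and the known $\bx_{0j}$ play the role of covariates. First I would record the score and Hessian of $\ell_\bA$:
\[
\nabla\ell_\bA(\bx)=\sum_{j\neq i}\frac{A_{ij}-\bx\transpose\bx_{0j}}{\bx\transpose\bx_{0j}(1-\bx\transpose\bx_{0j})}\bx_{0j},
\qquad
\nabla^2\ell_\bA(\bx)=-\sum_{j\neq i}\left\{\frac{A_{ij}}{(\bx\transpose\bx_{0j})^2}+\frac{1-A_{ij}}{(1-\bx\transpose\bx_{0j})^2}\right\}\bx_{0j}\bx_{0j}\transpose.
\]
Since $\expect(A_{ij})=\bx_{0i}\transpose\bx_{0j}$, the score is mean-zero at $\bx_{0i}$; a direct computation shows that both $(1/n)\expect\{-\nabla^2\ell_\bA(\bx_{0i})\}$ and $(1/n)\var\{\nabla\ell_\bA(\bx_{0i})\}$ equal $(1/n)\sum_{j\neq i}\bx_{0j}\bx_{0j}\transpose/\{\bx_{0i}\transpose\bx_{0j}(1-\bx_{0i}\transpose\bx_{0j})\}$, which under the $\calX(\delta)$-separation and condition \eqref{eqn:strong_convergence_measure} converges to $\bG(\bx_{0i})$. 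This identifies $\bG(\bx_{0i})$ as the limiting normalized Fisher information, verifies the information equality, and confirms that $\bG(\bx_{0i})^{-1}$ is the covariance the usual MLE heuristic predicts.

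For consistency I would use the argmax-continuity approach. The normalized log-likelihood $n^{-1}\ell_\bA(\bx)$ has mean converging to the population criterion $M(\bx)=\int_\calX\{\bx_{0i}\transpose\bx_1\log(\bx\transpose\bx_1)+(1-\bx_{0i}\transpose\bx_1)\log(1-\bx\transpose\bx_1)\}F(\mathrm{d}\bx_1)$. Writing $M(\bx_{0i})-M(\bx)$ as the $F$-average of the Bernoulli Kullback--Leibler divergence between the success probabilities $\bx_{0i}\transpose\bx_1$ and $\bx\transpose\bx_1$ shows that $M$ is uniquely maximized at $\bx_{0i}$: equality forces $\bx\transpose\bx_1=\bx_{0i}\transpose\bx_1$ for $F$-a.e.\ $\bx_1$, whence $(\bx-\bx_{0i})\transpose\bDelta(\bx-\bx_{0i})=0$ and therefore $\bx=\bx_{0i}$ because $\bDelta\succ\zero$. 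Combined with a uniform law of large numbers over the compact set $\calX(\delta)$, the standard argmax theorem yields $\widehat\bx_i^{(\mathrm{MLE})}\overset{\prob}{\to}\bx_{0i}$.

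For the asymptotic normality I would Taylor-expand the score at $\bx_{0i}$. Since the estimator lies in the interior and solves $\nabla\ell_\bA(\widehat\bx_i^{(\mathrm{MLE})})=\zero$, there is an intermediate point $\tilde\bx$ with $\sqrt{n}(\widehat\bx_i^{(\mathrm{MLE})}-\bx_{0i})=\{-n^{-1}\nabla^2\ell_\bA(\tilde\bx)\}^{-1}\,n^{-1/2}\nabla\ell_\bA(\bx_{0i})$. The factor $n^{-1/2}\nabla\ell_\bA(\bx_{0i})$ converges in distribution to $\mathrm{N}(\zero,\bG(\bx_{0i}))$ by the Lindeberg--Feller central limit theorem for independent summands: the $\calX(\delta)$-separation keeps every summand bounded, so a Lyapunov condition holds, while the normalized variances converge to $\bG(\bx_{0i})$. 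For the remaining factor, consistency gives $\tilde\bx\overset{\prob}{\to}\bx_{0i}$, and a uniform law of large numbers for $n^{-1}\nabla^2\ell_\bA(\cdot)$ together with continuity of $\bG$ yields $-n^{-1}\nabla^2\ell_\bA(\tilde\bx)\overset{\prob}{\to}\bG(\bx_{0i})$. Slutsky's theorem then delivers \eqref{eqn:MLE_asymptotic_normality}.

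The main obstacle, and the place where this departs from textbook MLE asymptotics, is that the observations form a deterministic triangular array rather than an i.i.d.\ sample, so the classical Glivenko--Cantelli machinery does not apply verbatim. The remedy I would use is to rewrite every average $n^{-1}\sum_{j\neq i}h_\bx(\bx_{0j})$ as an integral $\int h_\bx\,\mathrm{d}F_n$ against the empirical measure and invoke the weak convergence $F_n\Rightarrow F$ implied by \eqref{eqn:strong_convergence_measure}. Upgrading this pointwise convergence to uniform-in-$\bx$ convergence then requires checking that the families $\{h_\bx\}$ arising from the log-likelihood and the Hessian are uniformly bounded and equicontinuous in $\bx_1$ uniformly over $\bx\in\calX(\delta)$; this is exactly where the property that $\bx\transpose\bx_{0j}$ is bounded away from $0$ and $1$ by $\delta$ becomes essential.
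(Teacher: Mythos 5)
Your proposal is correct and rests on the same classical M-estimation skeleton as the paper (uniform convergence of the normalized log-likelihood plus a well-separated maximum for consistency; score expansion, CLT for independent non-identically-distributed summands, and Slutsky for normality), but your execution of the two delicate steps is genuinely different. For identifiability you compare $n^{-1}\ell_\bA$ directly with the fixed, $n$-free limit criterion $\bar{M}(\bx)=\int_\calX\{\bx_{0i}\transpose\bx_1\log(\bx\transpose\bx_1)+(1-\bx_{0i}\transpose\bx_1)\log(1-\bx\transpose\bx_1)\}F(\mathrm{d}\bx_1)$ and prove unique maximization via the Bernoulli Kullback--Leibler representation together with $(\bx-\bx_{0i})\transpose\bDelta(\bx-\bx_{0i})=0\Rightarrow\bx=\bx_{0i}$; since $\bar{M}$ is continuous on the compact set $\calX(\delta)$ and does not depend on $n$, well-separation of its maximum is automatic and the standard argmax theorem applies verbatim. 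The paper instead compares $M_n$ with the $n$-dependent expectation $M=\expect_0 M_n$, which forces it to establish a uniform-in-$n$ well-separation statement (its Lemma \ref{lemma:real_analysis_lemma}, proved via Arzel\`a--Ascoli together with an explicit strict-concavity computation of the Hessian of $M$); your route dispenses with that lemma entirely, at the cost of controlling the deterministic drift $\sup_{\bx\in\calX(\delta)}|\int h_\bx\,\mathrm{d}F_n-\int h_\bx\,\mathrm{d}F|$, which you correctly reduce to uniform boundedness and uniform-in-$\bx$ Lipschitzness of the relevant function families on $\calX(\delta)$ --- the same place the paper uses the $\delta$-separation. For the stochastic part of the uniform law of large numbers, the paper invokes a sub-Gaussian process maximal inequality with a packing-entropy bound, whereas your finite-net-plus-Lipschitz argument (each summand bounded by $\log\{(1-\delta)/\delta\}$ with a deterministic Lipschitz constant over $\calX(\delta)$) is more elementary and suffices here; your information-equality check and Lindeberg--Feller step match the paper's Lyapunov computation, and your uniform convergence of $-n^{-1}\nabla^2\ell_\bA(\cdot)$ plus $\widetilde\bx\overset{\prob_0}{\to}\bx_{0i}$ plays the role of the paper's variance bound on $\partial\bPsi_n/\partial\bx\transpose(\bx_{0i})$ and third-derivative control.

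One small imprecision to repair: the mean value theorem does not hold for the vector-valued score with a single intermediate point $\widetilde\bx$. You must apply it coordinatewise, with a possibly different intermediate point $\widetilde\bx_k$ for each component $\Psi_{nk}$ of the score (this is exactly how the paper's proof proceeds), or use the integral form of the Taylor remainder; since your bounds on the Hessian of the score components are uniform over a neighborhood of $\bx_{0i}$ contained in $\calX(\delta)$, the coordinatewise version yields the same sandwich identity and the rest of your argument is unchanged. Note also that you use invertibility of $\bDelta$ (equivalently of $\bG(\bx_{0i})$, since $\bG(\bx)\succeq 4\bDelta$) in the KL step and in Slutsky's theorem; this is not restated in the theorem but is a standing assumption of the paper and is in any case needed for the conclusion \eqref{eqn:MLE_asymptotic_normality} to make sense.
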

\begin{remark}
Recall that the cumulative distribution function $F$ is defined on $\calX$. Note that under the conditions of Theorem \ref{thm:one_dimensional_MLE}, $(\bx_{0j})_{j = 1}^n\subset\calX(\delta)$ for a constant $\delta$ that does not depend on $n$. Therefore the cumulative distribution function $F$ can be further restricted to the subset $\calX(\delta)$ of $\calX$, which is compact, and the precision matrix $\bG(\bx)$ can be alternatively written as
\[
\bG(\bx) = \int_{\calX(\delta)}\left\{\frac{\bx_1\bx_1\transpose}{\bx\transpose\bx_1(1 - \bx\transpose\bx_1)}\right\} F(\mathrm{d}\bx_1).
\]
\end{remark}
\begin{remark}
Although the definition of $\bG(\bx)$ given in Theorem \ref{thm:one_dimensional_MLE} is with regard to the case where $\rho_n\equiv 1$ for all $n$, we remark that it can also be generalized to the case where the sparsity factor $\rho_n\to 0$ as $n\to\infty$ (see equation \eqref{eqn:efficient_covariance} in Section \ref{sec:an_efficient_one_step_estimator}).
\end{remark}

Recall that for the $i$th row $\widehat\bx_i^{(\mathrm{ASE})}$ of the ASE, $\sqrt{n}(\bW\transpose\widehat\bx_i^{(\mathrm{ASE})} - \bx_{0i})\overset{\calL}{\to}\mathrm{N}(\zero, \bSigma(\bx_{0i}))$ for a sequence of orthogonal matrices $(\bW)_{n = 1}^\infty = (\bW_n)_{n = 1}^\infty\subset\mathbb{O}(d)$ by Theorem \ref{thm:ASE_limit_theorem}. 
 % \note{please be consistent with (2.4) and double check carefully all your notations. }. 
% with $\bSigma(\bx_{0i})$ given there. 
 We now claim that $\bSigma(\bx_{0i}) - \bG(\bx_{0i})^{-1}$ is positive semidefinite. In fact, since $F_n(\cdot) = (1/n)\sum_{i = 1}^n\mathbbm{1}\{\bx_i\leq \cdot\}$ converges to $F$ strongly according to condition \eqref{eqn:strong_convergence_measure}, it follows that for any $\bx\in\calX(\delta)$,
\begin{align*}
\bDelta_n &:= \int_\calX \bx\bx\transpose F_n(\mathrm{d}\bx) = \frac{1}{n}\bX_0\transpose\bX_0\to \bDelta,\\
\bSigma_n(\bx) &:= \bDelta_n^{-1}\left[\int_\calX\{\bx\transpose\bx_1(1 - \bx\transpose\bx_1)\}\bx_1\bx_1\transpose F_n(\mathrm{d}\bx_1)\right]\bDelta_n^{-1}\\
& = \left(\frac{1}{n}\bX_0\transpose\bX_0\right)^{-1}\left(\frac{1}{n}\bX_0\transpose\bD_n(\bx)\bX_0\right)\left(\frac{1}{n}\bX_0\transpose\bX_0\right)^{-1}\to\bSigma(\bx_{0i}),\\
\bG_n(\bx) &:= \int_\calX\left\{\frac{\bx_1\bx_1\transpose}{\bx\transpose\bx_1(1 - \bx\transpose\bx_1)}\right\} F_n(\mathrm{d}\bx_1) = \frac{1}{n}\bX_0\transpose\bD_n(\bx)^{-1}\bX_0\to \bG(\bx),
\end{align*}
where $\bD_n(\bx) = \mathrm{diag}\{\bx\transpose\bx_{01}(1 - \bx\transpose\bx_{01}),\ldots,\bx\transpose\bx_{0n}(1 - \bx\transpose\bx_{0n})\}$. Now let $\bX_0$ yield singular value decomposition $\bX_0 = \bU_0\bS_0^{1/2}\bV_0\transpose$ with $\bU_0\in\mathbb{O}(n,d)$, $\bS_0^{1/2}$ being diagonal, and $\bV_0\in\mathbb{O}(d)$. We see immediately that
\begin{align*}
\bSigma_n(\bx) &= \left(\frac{1}{n}\bX_0\transpose\bX_0\right)^{-1}\left(\frac{1}{n}\bX_0\transpose\bD_n(\bx)\bX_0\right)\left(\frac{1}{n}\bX_0\transpose\bX_0\right)^{-1}
\\&
= n (\bV_0\bS_0^{-1}\bV\transpose_0)(\bV_0\bS_0^{1/2}\bU_0\transpose\bD_n(\bx)\bU_0\bS_0^{1/2}\bV_0\transpose)(\bV_0\bS_0^{-1}\bV_0\transpose)\\
& = n\bV_0\bS_0^{-1/2}(\bU_0\transpose\bD_n(\bx)\bU_0)\bS_0^{-1/2}\bV_0\transpose,\\
\bG_n(\bx)^{-1}& = n(\bX_0\transpose\bD_n(\bx)^{-1}\bX_0)^{-1} = n(\bV_0\bS_0^{1/2}\bU_0\transpose\bD_n(\bx)^{-1}\bU_0\bS_0^{1/2}\bV_0\transpose)^{-1}\\
& = n\bV_0\bS_0^{-1/2}(\bU_0\transpose\bD_n(\bx)^{-1}\bU_0)^{-1}\bS_0^{-1/2}\bV_0\transpose.
\end{align*}
Since $\bU_0\transpose\bU_0 = \eye_d$, it follows that $\bU_0\transpose\bD_n(\bx)\bU_0 - (\bU_0\transpose\bD_n(\bx)^{-1}\bU_0)^{-1}$ is positive semidefinite \citep{Marshall1990}, and hence, $\bSigma(\bx) - \bG(\bx)^{-1} = \lim_{n\to\infty}\{\bSigma_n(\bx_{0i}) - \bG_n(\bx_{0i})^{-1}\}$ is positive semidefinite for any $\bx\in\calX(\delta)$. 
Although the resulting inequality is not strict, we will present an example where there exists at least one negative eigenvalue of $\bG(\bx_{0i})^{-1} - \bSigma_n(\bx_{0i})$ in Section \ref{sec:an_efficient_one_step_estimator}.
The conclusion of this example is that the ASE is \emph{inefficient} for estimating the latent position $\bx_{0i}$ for vertex $i$ when the rest of the latent positions are known,
 % in terms of the asymptotic covariance matrix
  in contrast to the efficiency of the maximum likelihood estimator. 
The notion of efficiency in estimating a single latent position of a random dot product graph model is slightly subtle, as this special case does not belong to the classical (i.i.d.) parametric models. Here we make the convention that the notion of efficiency is taken in analogy to the case of parametric models. Namely, we say an estimator $\widehat{\bx}_i^{(\mathrm{Eff})}$ is asymptotically efficient for estimating a single latent position vector $\bx_{0i}$, if it is asymptotically equivalent to the maximum likelihood estimator in the following sense:
\[
\sqrt{n}(\widehat{\bx}_i^{(\mathrm{Eff})} - \bx_{0i})\overset{\calL}{\to}\mathrm{N}(0, \bG(\bx_{0i})^{-1}).
\]
We will see in Section \ref{sec:an_efficient_one_step_estimator} that when all the latent positions are unknown, we can still construct an estimator $\widehat\bX = [\widehat\bx_1,\ldots,\widehat\bx_n]\transpose$, such that for each vertex $i$, $\sqrt{n}(\bW\transpose\widehat\bx_i - \rho_n^{1/2}\bx_{0i})$ converges to a multivariate normal distribution up to a sequence of orthogonal alignment matrices $(\bW)_{n = 1}^\infty = (\bW_n)_{n = 1}^\infty\subset\mathbb{O}(d)$, but the covariance matrix is the same as that of the maximum likelihood estimator as if the rest of the latent positions are known.

% section preliminaries (end)

\section{Efficient Estimation via a One-step Procedure} % (fold)
\label{sec:an_efficient_one_step_estimator}

% \subsection{An efficient one-step estimator for the latent positions} % (fold)
% \label{sub:a_one_step_estimator_for_the_latent_positions}

% subsection a_one_step_estimator_for_the_latent_positions (end)

The inefficiency of the ASE, indicated by $\bSigma(\bx_{0i})\succeq \bG(\bx_{0i})^{-1}$, is due to the fact that the ASE 
% $\widehat\bx_i^{(\mathrm{ASE})}$
is a least squares estimator not depending on the likelihood function of the sampling model. In contrast, the maximum likelihood estimator $\widehat\bx_i^{(\mathrm{MLE})}$ utilizes the Bernoulli likelihood function $\ell_\bA(\bx) = \sum_{j\neq i}\{A_{ij}\log(\bx\transpose\bx_{0j}) + (1 - A_{ij})\log(1 - \bx\transpose\bx_{0j})\}$, and this is a main factor for the asymptotic efficiency. For estimating the entire latent position matrix $\bX$, one strategy that takes advantage of the likelihood information is the maximum likelihood method as a competitor against the ASE. 
 % \note{again}. 
Unfortunately, 
when all latent positions are unknown, random dot product graphs belong to a curved exponential family rather than a canonical exponential family, and 
neither the existence nor the uniqueness of the maximum likelihood estimator of random dot product graphs has been established. As pointed out  
%The underlying reason is that the random dot product graph model is actually a curved exponential family rather than a canonical exponential family, and it has been pointed out 
in \cite{bickel2015mathematical}, properties of the maximum likelihood estimator in curved exponential families are harder to develop than canonical ones.
Therefore, we seek another approach to find an estimator that is asymptotically equivalent to the maximum likelihood estimator. Recall that when $\{\bx_{0j}:j\in[n],j\neq i\}$ are known, the maximum likelihood estimator for $\bx_{0i}$ is a solution to the estimating equation
\[
\bPsi_n(\bx): = \frac{1}{n}\sum_{j\neq i}^n\frac{(A_{ij} - \bx\transpose\bx_{0j})\bx_{0j}}{\bx\transpose\bx_{0j}(1 - \bx\transpose\bx_{0j})} = \zero. 
\]
Then, given an ``appropriate'' initial guess of the solution $\widetilde\bx_i$, we can perform a one-step Newton-Raphson update to obtain another estimator $\widehat\bx_i^{(\mathrm{OS})}$  that is closer to the zero of the estimating equation $\bPsi_n$ (see, for example, Section 5.7 of \citealp{van2000asymptotic}):
\begin{equation}
\label{eqn:One_step_onedimensional}
\begin{aligned}
\widehat\bx_i^{(\mathrm{OS})}
% & = \widetilde\bx_i + \eye(\widetilde\bx_i)^{-1}\bPsi_n(\widetilde\bx_i)\\
& = \widetilde\bx_i + \left\{\frac{1}{n}\sum_{j\neq i}^n\frac{\bx_{0j}\bx_{0j}\transpose}{\widetilde\bx_i\transpose\bx_{0j}(1 - \widetilde\bx_i\transpose\bx_{0j})}\right\}^{-1}\left\{\frac{1}{n}\sum_{j\neq i}^n\frac{(A_{ij} - \widetilde\bx_i\transpose\bx_{0j})\bx_{0j}}{\widetilde\bx_i\transpose\bx_{0j}(1 - \widetilde\bx_i\transpose\bx_{0j})}\right\}.
\end{aligned}
\end{equation}
% where $\widetilde\bx_i$ is an ``appropriate'' initial estimator for $\bx_{0i}$. 
In the case of estimating $\bx_{0i}$ with the rest of the latent positions being known, the requirement for $\widetilde\bx_i$ is that it is $\sqrt{n}$-consistent for $\bx_{0i}$, and the resulting one-step estimator $\widehat\bx_i^{(\mathrm{OS})}$ is as efficient as the maximum likelihood estimator $\widehat\bx_i^{(\mathrm{MLE})}$. 
This result is summarized in the following theorem, which is a variation of Theorem 5.45 of \cite{van2000asymptotic}. 
\begin{theorem}\label{thm:OSE_single_vertex}
Let $\bA\sim\mathrm{RDPG}(\bX_0)$ for some $\bX_0 = [\bx_{01},\ldots,\bx_{0n}]\transpose\in\calX^n$ with $\rho_n\equiv 1$ for all $n$, and assume that the conditions of Theorem \ref{thm:one_dimensional_MLE} hold. Consider the problem of estimating $\bx_{0i}$ with $\{\bx_{0j}:j\in[n],j\neq i\}$ being known. Let $\widetilde\bx_i$ be a $\sqrt{n}$-consistent estimator of $\bx_{0i}$, \emph{i.e.}, $\sqrt{n}(\widetilde\bx_i - \bx_{0i}) = O_{\prob_0}(1)$. Then the one-step estimator $\widehat{\bx}_i^{\mathrm{(OS)}}$ satisfies
\[
\sqrt{n}(\widehat\bx_i^{\mathrm{(OS)}} - \bx_{0i})\overset{\calL}{\to}\mathrm{N}\left(0, \bG(\bx_{0i})^{-1}\right).
\]
\end{theorem}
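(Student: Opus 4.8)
The plan is to follow the classical one-step (Newton--Raphson) argument, adapted to the fact that here the summands are independent but not identically distributed. Write the normalized score and normalized information as
\[
\bPsi_n(\bx) = \frac{1}{n}\sum_{j\neq i}\frac{(A_{ij} - \bx\transpose\bx_{0j})\bx_{0j}}{\bx\transpose\bx_{0j}(1 - \bx\transpose\bx_{0j})}, \qquad \bG_n(\bx) = \frac{1}{n}\sum_{j\neq i}\frac{\bx_{0j}\bx_{0j}\transpose}{\bx\transpose\bx_{0j}(1 - \bx\transpose\bx_{0j})},
\]
so that the update \eqref{eqn:One_step_onedimensional} reads $\widehat\bx_i^{\mathrm{(OS)}} = \widetilde\bx_i + \bG_n(\widetilde\bx_i)^{-1}\bPsi_n(\widetilde\bx_i)$. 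After scaling by $\sqrt{n}$ and subtracting $\bx_{0i}$, I would first expand $\bPsi_n(\widetilde\bx_i)$ about $\bx_{0i}$ in integral (fundamental theorem of calculus) form, writing $\sqrt{n}\,\bPsi_n(\widetilde\bx_i) = \sqrt{n}\,\bPsi_n(\bx_{0i}) + \bar\bJ_n\sqrt{n}(\widetilde\bx_i - \bx_{0i})$, where $\bar\bJ_n = \int_0^1\nabla\bPsi_n(\bx_{0i} + t(\widetilde\bx_i - \bx_{0i}))\,\mathrm{d}t$ is the averaged Jacobian of $\bPsi_n$ along the segment joining $\widetilde\bx_i$ and $\bx_{0i}$.

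The crux is to show $\bar\bJ_n = -\bG(\bx_{0i}) + o_{\prob_0}(1)$. Direct differentiation gives $\nabla\bPsi_n(\bx) = -(1/n)\sum_{j\neq i}c_j(\bx, A_{ij})\bx_{0j}\bx_{0j}\transpose$ for an explicit scalar $c_j$, whose expectation at $\bx = \bx_{0i}$ equals $1/[\bx_{0i}\transpose\bx_{0j}(1-\bx_{0i}\transpose\bx_{0j})]$, so that $\expect\nabla\bPsi_n(\bx_{0i}) = -\bG_n(\bx_{0i})$. I would then establish a uniform law of large numbers, $\sup_{\bx\in B}\|\nabla\bPsi_n(\bx) + \bG_n(\bx)\| = o_{\prob_0}(1)$ over a small ball $B$ centered at $\bx_{0i}$, and combine it with the continuity of $\bx\mapsto\bG(\bx)$ and the consistency of $\widetilde\bx_i$, so that the whole segment, and hence $\bar\bJ_n$, concentrates near $\bx_{0i}$. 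Here the hypothesis $(\bx_{0j})_{j=1}^n\subset\calX(\delta)$ is essential: it keeps the denominators $\bx\transpose\bx_{0j}(1-\bx\transpose\bx_{0j})$ bounded away from zero uniformly in $j$ and for $\bx$ in a neighborhood of $\bx_{0i}$, making the summands and their derivatives uniformly bounded and Lipschitz, which is exactly what the uniform LLN and the convergences $\bG_n(\widetilde\bx_i)^{-1}\to\bG(\bx_{0i})^{-1}$ and $\bar\bJ_n\to-\bG(\bx_{0i})$ require.

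Substituting the expansion into the scaled update and collecting terms yields
\[
\sqrt{n}(\widehat\bx_i^{\mathrm{(OS)}} - \bx_{0i}) = \bG_n(\widetilde\bx_i)^{-1}\sqrt{n}\,\bPsi_n(\bx_{0i}) + \left[\eye_d + \bG_n(\widetilde\bx_i)^{-1}\bar\bJ_n\right]\sqrt{n}(\widetilde\bx_i - \bx_{0i}).
\]
Since the bracketed matrix converges in probability to $\eye_d - \bG(\bx_{0i})^{-1}\bG(\bx_{0i})$, i.e.\ the $d\times d$ zero matrix, while $\sqrt{n}(\widetilde\bx_i - \bx_{0i}) = O_{\prob_0}(1)$ by assumption, the second term is $o_{\prob_0}(1)$; this is precisely where the $\sqrt{n}$-consistency of the initial estimator is spent to absorb the remainder, leaving $\sqrt{n}(\widehat\bx_i^{\mathrm{(OS)}} - \bx_{0i}) = \bG_n(\widetilde\bx_i)^{-1}\sqrt{n}\,\bPsi_n(\bx_{0i}) + o_{\prob_0}(1)$. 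It then remains to show $\sqrt{n}\,\bPsi_n(\bx_{0i})\overset{\calL}{\to}\mathrm{N}(\zero, \bG(\bx_{0i}))$: the summands are independent across $j$, mean-zero because $\expect A_{ij} = \bx_{0i}\transpose\bx_{0j}$, each with covariance $\bx_{0j}\bx_{0j}\transpose/[\bx_{0i}\transpose\bx_{0j}(1-\bx_{0i}\transpose\bx_{0j})]$, so their averaged covariance is exactly $\bG_n(\bx_{0i})\to\bG(\bx_{0i})$; uniform boundedness (again via $\calX(\delta)$) makes the Lindeberg condition trivial, and the multivariate Lindeberg--Feller CLT applies. A final application of Slutsky's theorem with $\bG_n(\widetilde\bx_i)^{-1}\to\bG(\bx_{0i})^{-1}$ gives the limit $\mathrm{N}(\zero, \bG(\bx_{0i})^{-1}\bG(\bx_{0i})\bG(\bx_{0i})^{-1}) = \mathrm{N}(\zero, \bG(\bx_{0i})^{-1})$.

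The step I expect to be the main obstacle is the uniform control of the Jacobian $\nabla\bPsi_n$ over the \emph{random} argument $\widetilde\bx_i$, namely upgrading the pointwise identity $\expect\nabla\bPsi_n(\bx_{0i}) = -\bG_n(\bx_{0i})$ to $\bar\bJ_n = -\bG(\bx_{0i}) + o_{\prob_0}(1)$. This demands a uniform LLN together with stochastic equicontinuity of $\nabla\bPsi_n$ on a neighborhood of $\bx_{0i}$, and it is exactly the place where the separation condition $\calX(\delta)$ (to bound denominators away from zero) and the consistency of $\widetilde\bx_i$ (to confine the integration segment) must be combined carefully; once this is in hand, the CLT and Slutsky steps are routine.
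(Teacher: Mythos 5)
Your overall architecture is sound and is essentially the paper's argument: both are the classical one-step proof in the style of Theorem 5.45 of van der Vaart, adapted to independent non-identically-distributed Bernoulli summands — linearize the score $\bPsi_n$ at $\bx_{0i}$, show the Jacobian appearing in the linearization converges in probability to $-\bG(\bx_{0i})$ so that the matrix multiplying $\sqrt{n}(\widetilde\bx_i - \bx_{0i})$ vanishes (this is where the $\sqrt{n}$-consistency is spent), then conclude by a CLT for $\sqrt{n}\,\bPsi_n(\bx_{0i})$ and Slutsky. Your use of Lindeberg--Feller in place of the paper's Lyapunov third-moment condition, and of the integral-form averaged Jacobian $\bar\bJ_n$ in place of the paper's exact second-order Taylor expansion of each coordinate $\Psi_{nk}$, are routine variants of the same route.

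There is, however, one intermediate claim that is false as stated and must be repaired. You assert the uniform LLN $\sup_{\bx\in B}\|\nabla\bPsi_n(\bx) + \bG_n(\bx)\| = o_{\prob_0}(1)$ over a \emph{fixed} small ball $B = B(\bx_{0i},\eps)$. But $\expect_0\{\nabla\bPsi_n(\bx)\} = -\bG_n(\bx)$ only at $\bx = \bx_{0i}$: in general
\[
\nabla\bPsi_n(\bx) + \bG_n(\bx) = -\frac{1}{n}\sum_{j\neq i}\frac{(A_{ij} - \bx\transpose\bx_{0j})(1 - 2\bx\transpose\bx_{0j})}{\{\bx\transpose\bx_{0j}(1 - \bx\transpose\bx_{0j})\}^2}\,\bx_{0j}\bx_{0j}\transpose,
\]
and writing $A_{ij} - \bx\transpose\bx_{0j} = (A_{ij} - \bx_{0i}\transpose\bx_{0j}) + (\bx_{0i} - \bx)\transpose\bx_{0j}$ splits this into a mean-zero fluctuation (which \emph{is} uniformly $o_{\prob_0}(1)$ over $B$, e.g.\ by the sub-Gaussian entropy argument the paper uses elsewhere) plus a \emph{deterministic drift} of exact order $\|\bx - \bx_{0i}\|$, hence of order $\eps$ uniformly over $B$, which does not vanish for fixed $\eps$. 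Your target conclusion $\bar\bJ_n = -\bG(\bx_{0i}) + o_{\prob_0}(1)$ remains true, because the integration segment has length $\|\widetilde\bx_i - \bx_{0i}\| = o_{\prob_0}(1)$ so the drift along the segment vanishes in probability — you gesture at this ("confine the integration segment") — but the lemma must be restated as fluctuation plus bias rather than as a fixed-ball ULLN. The paper avoids the uniform statement altogether: it shows $\nabla\bPsi_n(\bx_{0i}) + \bG_n(\bx_{0i}) = O_{\prob_0}(n^{-1/2})$ at the single point $\bx_{0i}$ by a variance computation, and handles the displacement through a second-order Taylor expansion whose Hessians are \emph{deterministically} bounded on $B(\bx_{0i},\eps)$ (via $\calX(\delta)$), yielding a remainder $O_{\prob_0}(\|\widetilde\bx_i - \bx_{0i}\|^2) = O_{\prob_0}(n^{-1})$ that dies after $\sqrt{n}$-scaling. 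With your Jacobian step corrected in either of these two ways, the rest of your proposal goes through.
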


The above result motivates us to generalize the one-step estimator \eqref{eqn:One_step_onedimensional} to the case where the latent positions $\bx_{01},\ldots,\bx_{0n}$ are all unknown. Let $\widetilde\bX = [\widetilde\bx_1,\ldots,\widetilde\bx_n]\transpose\in\mathbb{R}^{n\times d}$ be an initial estimator $\widetilde\bX$ for $\bX_0$. An intuitive choice for generalizing the one-step updating scheme \eqref{eqn:One_step_onedimensional} to the case of unknown $(\bx_{0j})_{j\neq i}$ is to substitute the unknown $\bx_{0j}$ by the initial estimator $\widetilde\bx_j$ for all $j\neq i$ in \eqref{eqn:One_step_onedimensional}.
% , based on the ASE $\widetilde\bX = [\widetilde\bx_1,\ldots,\widetilde\bx_n]\transpose$. Thus taking the sparsity factor $\rho_n$ into account, 
We thus define the following one-step estimator $\widehat\bX = [\widehat\bx_1,\ldots,\widehat\bx_n]\transpose$ for $\bX_0$:
\begin{align}\label{eqn:one_step_estimator}
\widehat\bx_i = \widetilde\bx_i + \left\{\frac{1}{n}\sum_{j = 1}^n\frac{\widetilde\bx_{j}\widetilde\bx_{j}\transpose}{\widetilde\bx_{i}\transpose\widetilde\bx_{j}(1 - \widetilde\bx_i\transpose\widetilde\bx_j)}\right\}^{-1}\left\{\frac{1}{n}\sum_{j = 1}^n\frac{(A_{ij} - \widetilde\bx_i\transpose\widetilde\bx_j)\widetilde\bx_j}{\widetilde\bx_{i}\transpose\widetilde\bx_{j}(1 - \widetilde\bx_i\transpose\widetilde\bx_j)}\right\},\quad i = 1,2,\ldots,n.
\end{align}
Unlike the coarse $\sqrt{n}$-consistency requirement for the initial estimator in the case of estimating a single latent position with the rest of the latent positions being known, we need to require that the initial estimator $\widetilde\bX = [\widetilde\bx_1,\ldots,\widetilde\bx_n]$ for the entire latent position matrix $\bX_0$ satisfies a finer condition, referred to as the \emph{approximate linearization property}.
% , which is defined below. 
\begin{definition}[Approximate linearization property]
Given $\bA\sim\mathrm{RDPG}(\bX_0)$ with a sparsity factor $\rho_n$, where $\bX_0 = [\bx_{01},\ldots,\bx_{0n}]\transpose\in\calX^n$, an estimator $\widetilde\bX = [\widetilde\bx_1,\ldots,\widetilde\bx_n]\transpose$ is said to satisfy the approximate linearization property, if for all $n$, 
there exists an orthogonal matrix $\bW = \bW_n\in\mathbb{O}(d)$ and an $n\times d$ matrix $\widetilde\bR = [\widetilde\bR_1,\ldots,\widetilde\bR_n]\transpose$ with $\|\widetilde\bR\|_{\mathrm{F}}^2 = O_{\prob_0}((n\rho_n)^{-1}(\log n)^{\omega})$ for some $\omega\geq 0$, such that
\begin{align}\label{eqn:linearization_property}
\bW\transpose\widetilde\bx_{i} - \rho_n^{1/2}\bx_{0i} = \rho_n^{-1/2}\sum_{j = 1}^n(A_{ij} - \rho_n\bx_{0i}\transpose\bx_{0j})\bzeta_{ij} + \widetilde\bR_i,\quad i = 1,2,\ldots,n,
\end{align}
where $\{\bzeta_{ij}:i,j\in[n]\}$ is a collection of vectors in $\mathbb{R}^d$ with $\sup_{i,j\in[n]}\|\bzeta_{ij}\|\lesssim 1/n$. 
\end{definition} 
The approximate linearization property describes that the deviation of the estimator $\widetilde\bX$ after an appropriate orthogonal alignment $\bW$
from the true value $\bX_0$ can be approximately controlled by a linear combination of the centered Bernoulli random variables $(A_{ij} - \rho_n\bx_{0i}\transpose\bx_{0j})_{i < j}$. It has been shown in \cite{athreya2016limit} and \cite{tang2018} that for the ASE $\widehat\bX^{(\mathrm{ASE})} = [\widehat{\bx}_1^{(\mathrm{ASE})},\ldots,\widehat{\bx}^{(\mathrm{ASE})}_n]\transpose$, for all sufficiently large $n$, there exists an orthogonal $\bW = \bW_n\in\mathbb{O}(d)$ such that
\[
\bW\transpose\widehat\bx^{(\mathrm{ASE})}_i - \rho_n^{1/2}\bx_{0i} = \rho_n^{-1/2}\sum_{j = 1}^n(A_{ij} - \rho_n\bx_{0i}\transpose\bx_{0j})[\bX_0(\bX_0\transpose\bX_0)^{-1}]_{j\cdot} + \widehat\bR^{(\mathrm{ASE})}_i,
\]
where $[\bX_0(\bX_0\transpose\bX_0)^{-1}]_{j\cdot}$ denotes the vector formed by transposing the $j$th row of $\bX_0(\bX_0\transpose\bX_0)^{-1}$, and $(\sum_{i = 1}^n\|\widehat\bR_i^{(\mathrm{ASE})}\|_{\mathrm{F}}^2)^{1/2} = O_{\prob_0}((n\rho_n)^{-1})$. Thus, the ASE satisfies the approximate linearization property \eqref{eqn:linearization_property} with $\omega = 0$ and $\bzeta_{ij} = [\bX_0(\bX_0\transpose\bX_0)^{-1}]_{j\cdot}$, and hence, $\widehat\bX^{(\mathrm{ASE})}$ can be chosen to be an initial estimator for the one-step procedure in practice. Note that the ASE also satisfies the approximate linearization property when $\bx_{01},\ldots,\bx_{0n}$ are deterministic satisfying \eqref{eqn:strong_convergence_measure} (see, for example, Theorem A.5 in \citealp{doi:10.1080/10618600.2016.1193505}). Another initial estimator satisfying the approximate linearization property will be given in Theorem \ref{thm:LSE_transform_initial_condition} using the Laplacian spectral embedding. 

We present the complete procedure for obtaining the one-step estimator \eqref{eqn:one_step_estimator} initialized at the ASE in Algorithm \ref{alg:OSE}.
\begin{algorithm}[h] %算法的开始
  \renewcommand{\algorithmicrequire}{\textbf{Input:}}
  \renewcommand{\algorithmicensure}{\textbf{Output:} }
  \caption{One-step procedure initialized with the ASE} %算法的标题
  \label{alg:OSE} %给算法一个标签，这样方便在文中对算法的引用
  \begin{algorithmic}[1] %这个1 表示每一行都显示数字
    % \Require\\
    \State{\textbf{Input: }
      The adjacency matrix $\bA = [A_{ij}]_{n\times n}$ and the embedding dimension $d$. }
    \State{\textbf{Step 1:} Compute the eigen-decomposition of the adjacency matrix: 
    \[
    \bA = \sum_{i = 1}^n\widehat{\lambda}_i\widehat{\bu}_i\widehat{\bu}_i\transpose,
    \]
    where $|\widehat\lambda_1|\geq|\widehat\lambda_2|\geq\ldots\geq|\widehat\lambda_n|$, and $\widehat{\bu}_i\transpose\widehat{\bu}_j = \mathbbm{1}(i = j)$ for all $i,j\in[n]$.}
        % \State{\indent
    \State{\textbf{Step 2:} Compute the ASE
        \[
        \widetilde\bX = \widehat\bX^{(\mathrm{ASE})} = \sum_{k = 1}^d|\widehat\lambda_k|^{1/2}\widehat{\bu}_k
        \]
        and write $\widetilde\bX = [\widetilde\bx_1,\ldots,\widetilde\bx_n]\transpose\in\mathbb{R}^{n\times d}$.}
    \State{\textbf{Step 3:} For $i = 1,2,\ldots,n$, compute
        \[
        \widehat\bx_i = \widetilde\bx_i + \left\{\frac{1}{n}\sum_{j = 1}^n\frac{\widetilde\bx_{j}\widetilde\bx_{j}\transpose}{\widetilde\bx_{i}\transpose\widetilde\bx_{j}(1 - \widetilde\bx_i\transpose\widetilde\bx_j)}\right\}^{-1}\left\{\frac{1}{n}\sum_{j = 1}^n\frac{(A_{ij} - \widetilde\bx_i\transpose\widetilde\bx_j)\widetilde\bx_j}{\widetilde\bx_{i}\transpose\widetilde\bx_{j}(1 - \widetilde\bx_i\transpose\widetilde\bx_j)}\right\}.
        \]
        }
    \State{\textbf{Output: }The one-step estimator $\widehat\bX = [\widehat\bx_1,\ldots,\widehat\bx_n]\transpose$. }
  \end{algorithmic}
\end{algorithm}

The notion of efficiency for random dot product graphs becomes less clear when the number of unknown latent positions grows with the number of vertices. In a classical i.i.d. parametric model, the dimension of the parameter space does not change with the sample size. However, in random dot product graphs,  when  all latent positions are unknown and estimating the entire latent position matrix $\bX_0 = [\bx_{01},\ldots,\bx_{0n}]\transpose$ is of interest, the dimension of the parameter space $\calX^n$  grows with the number of vertices. 
%In particular, in the standard case where all latent positions are unknown and the estimation of the entire latent position matrix $\bX_0 = [\bx_{01},\ldots,\bx_{0n}]\transpose$ is of interest, the dimension of the parameter space $\calX^n$ also grows with the number of vertices. The dimension of the parameter space for a classical i.i.d. parametric model, however, does not change with the sample size. 
Therefore, the definition of the efficiency for   classical i.i.d. parametric models does not apply. To this end, we introduce the notion of local efficiency for random dot product graphs. The idea is that any row of the estimator $\widehat{\bX}$ has the same asymptotic covariance matrix with that of the maximum likelihood estimator as if the rest of the latent positions are known, up to appropriate centering, rescaling, and an orthogonal alignment. 
\begin{definition}[Local efficiency]
Let $\bA\sim\mathrm{RDPG}(\bX_0)$ with a sparsity factor $\rho_n$ for some $\bX_0 = [\bx_{01},\ldots,\bx_{0n}]\transpose\in\calX^n$, $\bx_{01},\ldots,\bx_{0n}\in\calX(\delta)$ for some $\delta > 0$ that does not depend on $n$, and either $\rho_n\equiv 1$ or $\rho_n\to 0$. Denote $\rho = \lim_{n\to\infty}\rho_n$. 
%but $(\log n)^4/(n\rho_n)^4\to 0$. 
Assume the condition \eqref{eqn:strong_convergence_measure} holds. An estimator $\widehat{\bX}^{(\mathrm{Eff})} = [\widehat{\bx}_1^{(\mathrm{Eff})},\ldots,\widehat{\bx}_n^{(\mathrm{Eff})}]\transpose$ is said to be a locally efficient estimator for $\bX_0$, if there exists a sequence of orthogonal alignment matrices $(\bW)_{n = 1}^\infty = (\bW_n)_{n = 1}^\infty$, such that for all $i\in [n]$,
\[
\sqrt{n}(\bW\transpose\widehat{\bx}_i^{(\mathrm{Eff})} - \rho_n^{1/2}\bx_{0i})\overset{\calL}{\to}\mathrm{N}(\zero, \bG(\bx_{0i})^{-1}),
\]
where $\bG$ is a matrix-valued function $\bG:\calX(\delta)\to\mathbb{R}^{d\times d}$ defined by
\begin{align}
\label{eqn:efficient_covariance}
\bG(\bx) = \int_\calX\frac{\bx_1\bx_1\transpose}{\bx\transpose\bx_1(1 - \rho\bx\transpose\bx_1)}F(\mathrm{d}\bx_1).
\end{align}
\end{definition}

Theorem \ref{thm:asymptotic_normality_OS} and Theorem \ref{thm:convergence_OS} below, which are the main technical results of this paper, establish the asymptotic behavior of the one-step estimator \eqref{eqn:one_step_estimator}. In particular, Theorem \ref{thm:convergence_OS} shows that the one-step estimator $\widehat{\bX}$ is locally efficient. 
\begin{theorem}\label{thm:asymptotic_normality_OS}
Let $\bA\sim\mathrm{RDPG}(\bX_0)$ with a sparsity factor $\rho_n$ for some $\bX_0 = [\bx_{01},\ldots,\bx_{0n}]\transpose\in\calX^n$. Assume that condition \eqref{eqn:strong_convergence_measure} holds, and there exists some constant $\delta > 0$ that is independent of $n$ such that 
$(\bx_{0i})_{i = 1}^n\subset\calX(\delta)$. 
% $\delta \leq \min_{i,j}\bx_{0i}\transpose\bx_{0j} \leq \max_{i,j}\bx_{0i}\transpose\bx_{0j} \leq 1 - \delta$. 
Denote $\widehat\bX = [\widehat\bx_1,\ldots,\widehat\bx_n]\transpose$ the one-step estimator defined by \eqref{eqn:one_step_estimator} initialized at an estimator $\widetilde\bX = [\widetilde\bx_1,\ldots,\widetilde\bx_n]\transpose$ that satisfies the approximate linearization property \eqref{eqn:linearization_property}. Denote
\[
\bG_n(\bx) = \frac{1}{n}\sum_{j = 1}^n\frac{\bx_{0j}\bx_{0j}\transpose}{\bx\transpose\bx_{0j}(1 - \rho_n\bx\transpose\bx_{0j})}
\]  
for any $\bx\in\calX(\delta)$.
 % such that $\bx\transpose\bx_{0j}\in[\delta, 1- \delta]$ for all $j\in[n]$. 
% , and that $(n\rho_n^5)^{-1}(\log n)^2\to 0$ as $n\to\infty$. 
% , and that $\bDelta$ has no repeated eigenvalues. 
If either $\rho_n\equiv 1$ for all $n$ or $\rho_n\to 0$ but $(\log n)^{2(1\vee\omega)}/(n\rho_n^5)\to 0$ as $n\to\infty$,
then there exists a sequence of orthogonal matrices $(\bW)_{n = 1}^\infty = (\bW_n)_{n = 1}^\infty\subset\mathbb{O}(d)$ such that
\begin{align}\label{eqn:OS_expansion}
\bW\transpose\widehat\bx_i - \rho_n^{1/2}\bx_{0i} = \frac{1}{n\sqrt{\rho_n}}\sum_{j = 1}^n\frac{(A_{ij} - \rho_n\bx_{0i}\transpose\bx_{0j})}{\bx_{0i}\transpose\bx_{0j}(1 - \rho_n\bx_{0i}\transpose\bx_{0j})}\bG_n(\bx_{0i})^{-1}\bx_{0j} + \widehat\bR_i,\quad i = 1,\ldots,n,
\end{align}
where $\|\widehat\bR_i\| = O_{\prob_0}(n^{-1}\rho_n^{-5/2}(\log n)^{(1\vee\omega)})$
 and $\sum_{i = 1}^n\|\widehat\bR_i\|^2 = O_{\prob_0}((n\rho_n^5)^{-1}(\log n)^{2(1\vee\omega)})$. 
\end{theorem}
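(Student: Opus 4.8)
The plan is to read \eqref{eqn:one_step_estimator} as a single Newton--Raphson step for the per-vertex estimating equation and to track how the initial error is annihilated by the Newton correction. Write $\bmu_i:=\rho_n^{1/2}\bx_{0i}$, let $\bX_0^\rho:=\rho_n^{1/2}\bX_0$, and for a configuration $\bV=[\bv_1,\ldots,\bv_n]\transpose$ define the plugged-in score
\[
\bPsi_i(\bx;\bV)=\frac1n\sum_{j=1}^n\frac{(A_{ij}-\bx\transpose\bv_j)\bv_j}{\bx\transpose\bv_j(1-\bx\transpose\bv_j)},
\qquad
\widehat\bG_i=\frac1n\sum_{j=1}^n\frac{\widetilde\bx_j\widetilde\bx_j\transpose}{\widetilde\bx_i\transpose\widetilde\bx_j(1-\widetilde\bx_i\transpose\widetilde\bx_j)},
\]
so that \eqref{eqn:one_step_estimator} reads $\widehat\bx_i=\widetilde\bx_i+\widehat\bG_i^{-1}\bPsi_i(\widetilde\bx_i;\widetilde\bX)$. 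The first observation is that the update is equivariant under a common orthogonal change of basis: replacing each $\widetilde\bx_k$ by $\bW\transpose\widetilde\bx_k$ leaves every inner product $\widetilde\bx_i\transpose\widetilde\bx_j$ invariant, sends $\widehat\bG_i\mapsto\bW\transpose\widehat\bG_i\bW$ and $\bPsi_i(\widetilde\bx_i;\widetilde\bX)\mapsto\bW\transpose\bPsi_i(\widetilde\bx_i;\widetilde\bX)$, hence $\widehat\bx_i\mapsto\bW\transpose\widehat\bx_i$. Taking $\bW$ to be the alignment matrix of the approximate linearization property, I may therefore assume $\bW=\eye_d$ and work directly with $\bdelta_k:=\widetilde\bx_k-\bmu_k$, which by \eqref{eqn:linearization_property} equals $\rho_n^{-1/2}\sum_l(A_{kl}-\rho_n\bx_{0k}\transpose\bx_{0l})\bzeta_{kl}+\widetilde\bR_k$.

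Next I would insert the target value $\bmu_i$ and the true configuration $\bX_0^\rho$ through the telescoping decomposition
\[
\bPsi_i(\widetilde\bx_i;\widetilde\bX)
=\bPsi_i(\bmu_i;\bX_0^\rho)
+\bigl\{\bPsi_i(\bmu_i;\widetilde\bX)-\bPsi_i(\bmu_i;\bX_0^\rho)\bigr\}
+\bigl\{\bPsi_i(\widetilde\bx_i;\widetilde\bX)-\bPsi_i(\bmu_i;\widetilde\bX)\bigr\}.
\]
A mean-value expansion of the last bracket in the first argument gives $-\widehat\bG_i\bdelta_i+\bE_i\bdelta_i$, where $\bE_i$ is the gap between the integrated Jacobian along the segment $[\bmu_i,\widetilde\bx_i]$ and $-\widehat\bG_i$; a direct differentiation shows the deterministic part of this Jacobian equals $-\widehat\bG_i$ at the endpoint up to curvature along the segment, while its stochastic part is a centered average of the weights $(A_{ij}-\widetilde\bx_i\transpose\widetilde\bx_j)/\{\widetilde\bx_i\transpose\widetilde\bx_j(1-\widetilde\bx_i\transpose\widetilde\bx_j)\}$. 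Premultiplying by $\widehat\bG_i^{-1}$, the term $-\bdelta_i$ cancels the leading $\bdelta_i$ in $\widehat\bx_i-\bmu_i=\bdelta_i+\widehat\bG_i^{-1}\bPsi_i(\widetilde\bx_i;\widetilde\bX)$ --- this cancellation is the entire point of the one-step construction --- and leaves $\widehat\bx_i-\bmu_i=\bG_n(\bx_{0i})^{-1}\bPsi_i(\bmu_i;\bX_0^\rho)+\widehat\bR_i$, where the main term is exactly \eqref{eqn:OS_expansion} because
\[
\bG_n(\bx_{0i})^{-1}\bPsi_i(\bmu_i;\bX_0^\rho)
=\frac{1}{n\sqrt{\rho_n}}\sum_{j=1}^n
\frac{(A_{ij}-\rho_n\bx_{0i}\transpose\bx_{0j})\,\bG_n(\bx_{0i})^{-1}\bx_{0j}}
{\bx_{0i}\transpose\bx_{0j}(1-\rho_n\bx_{0i}\transpose\bx_{0j})}
\]
after cancelling one factor $\rho_n^{1/2}$, and the remainder collects three pieces: the swap $(\widehat\bG_i^{-1}-\bG_n(\bx_{0i})^{-1})\bPsi_i(\bmu_i;\bX_0^\rho)$, the plug-in difference $\widehat\bG_i^{-1}\{\bPsi_i(\bmu_i;\widetilde\bX)-\bPsi_i(\bmu_i;\bX_0^\rho)\}$, and the curvature term $\widehat\bG_i^{-1}\bE_i\bdelta_i$.

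To control the three pieces I would first establish, uniformly in $i\in[n]$, that $\widehat\bG_i$ is close to the well-conditioned $\bG_n(\bx_{0i})$: since every $\bx_{0i}\transpose\bx_{0j}$ lies in $[\delta,1-\delta]$, $\bG_n(\bx_{0i})$ has eigenvalues comparable to those of $n^{-1}\bX_0\transpose\bX_0$, whence $\|\widehat\bG_i^{-1}\|=O_{\prob_0}(1)$. Because each derivative of the weight $1/\{\bx\transpose\bv_j(1-\bx\transpose\bv_j)\}$ near $\bmu_i$ costs a factor $\asymp\rho_n^{-1}$ (the inner products scale like $\rho_n$), the perturbation $\widehat\bG_i-\bG_n(\bx_{0i})$, the Jacobian gap $\bE_i$, and the plug-in difference all inherit negative powers of $\rho_n$; combined with the high-probability, uniform bound $\|\bdelta_i\|=O_{\prob_0}((n\rho_n)^{-1/2}(\log n)^{\omega/2})$ obtained by applying Bernstein-type concentration to the linear part of \eqref{eqn:linearization_property} and to $\|\widetilde\bR\|_{\mathrm{F}}$, the curvature and swap terms land within $O_{\prob_0}(n^{-1}\rho_n^{-5/2}(\log n)^{1\vee\omega})$. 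The essential use of the approximate linearization property is in the plug-in difference: substituting $\bv_j=\bmu_j+\bdelta_j$ and expanding to first order, the leading contribution is $n^{-1}\sum_j\expect[\partial_{\bv_j}(\cdot)]\bdelta_j$ with expected Jacobian $-\bmu_j\bmu_i\transpose/\{v_{ij}(1-v_{ij})\}$, where $v_{ij}=\rho_n\bx_{0i}\transpose\bx_{0j}$; replacing $\bdelta_j$ by its explicit linear representation turns this into a bilinear form $\sum_{j,l}c_{ijl}(A_{jl}-\rho_n\bx_{0j}\transpose\bx_{0l})$ in the independent centered Bernoulli variables, whose variance I bound using $\|\bzeta_{jl}\|\lesssim 1/n$.

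The hard part will be this plug-in difference. Unlike the single-vertex problem of Theorem \ref{thm:OSE_single_vertex}, where the other latent positions are known exactly, here $\widetilde\bX$ is a random function of the same adjacency matrix $\bA$, so the noise $(A_{ij}-\rho_n\bx_{0i}\transpose\bx_{0j})_j$ driving the main term is statistically coupled to the perturbations $\bdelta_j$ entering the plug-in correction. The approximate linearization property is exactly what decouples this dependence, rewriting everything as an explicit polynomial in the independent edge noises $(A_{kl}-\rho_n\bx_{0k}\transpose\bx_{0l})_{k<l}$; the remaining work is to bound the resulting linear, bilinear, and quadratic forms --- separating the diagonal index $l=i$, which is correlated with the main term but contributes only $O_{\prob_0}(n^{-3/2})$, controlling the off-diagonal bilinear part by a moment estimate, and absorbing the second-order Taylor remainder in $\bdelta_j$ together with the fluctuation of the Jacobian. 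Tracking the accumulated $\rho_n^{-1}$ factors through these steps, together with union bounds over the $n$ vertices that produce the $(\log n)^{1\vee\omega}$, is what yields the claimed entrywise bound $\|\widehat\bR_i\|=O_{\prob_0}(n^{-1}\rho_n^{-5/2}(\log n)^{1\vee\omega})$ and, after summing the squared bilinear forms, the aggregate bound $\sum_i\|\widehat\bR_i\|^2=O_{\prob_0}((n\rho_n^5)^{-1}(\log n)^{2(1\vee\omega)})$.
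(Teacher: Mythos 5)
Your proposal is correct and takes essentially the same route as the paper's own proof: the same Newton-step cancellation of the initial error against the (plugged-in) Hessian, the same three-part remainder decomposition — Hessian-swap term, plug-in difference in the configuration, and curvature/second-order term, matching the paper's $\bR_{i3}$, $\bR_{i12}$ together with $\bR_{i14}$, and $\bR_{i13}$ plus the Taylor remainder — the same use of the approximate linearization property to rewrite the coupled plug-in correction as linear and bilinear forms in the independent edge noises bounded via an index-coincidence moment estimate with $\sup_{j,l}\|\bzeta_{jl}\|\lesssim 1/n$, and the same uniform two-to-infinity control of the initial error followed by union bounds over vertices. One minor slip: the uniform bound on $\|\bdelta_i\|$ should read $O_{\prob_0}((n\rho_n)^{-1/2}(\log n)^{(1\vee\omega)/2})$ rather than $(\log n)^{\omega/2}$, since the Hoeffding bound on the linear part of \eqref{eqn:linearization_property} contributes a $\sqrt{\log n}$ factor that dominates when $\omega<1$ — this does not affect your final claimed rates, which already carry $1\vee\omega$.
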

\begin{theorem}\label{thm:convergence_OS}
Let $\bA\sim\mathrm{RDPG}(\bX_0)$ with a sparsity factor $\rho_n$ for some $\bX_0 = [\bx_{01},\ldots,\bx_{0n}]\transpose\in\calX^n$. Assume that the conditions of Theorem \ref{thm:asymptotic_normality_OS} hold, and denote $\rho = \lim_{n\to\infty}\rho_n$. Let $\widehat\bX = [\widehat{\bx}_1,\ldots,\widehat{\bx}_n]\transpose$ be the one-step estimator \eqref{eqn:one_step_estimator} based on an initial estimator $\widetilde{\bX}$ that satisfies the approximate linearization property. Then there exists a sequence of orthogonal matrices $(\bW)_{n = 1}^\infty = (\bW_n)_{n = 1}^\infty\subset\mathbb{O}(d)$ such that as $n\to\infty$,
\begin{align}
\label{eqn:OS_convergence}
\left\|\widehat\bX\bW - \rho_n^{1/2}\bX_0\right\|_{\mathrm{F}}^2 \overset{\prob_0}{\to} \int_{\calX}\mathrm{tr}\left\{\bG(\bx)^{-1}\right\}F(\mathrm{d}\bx),
\end{align}
and for each fixed $i\in[n]$,
\begin{align}
\label{eqn:OS_normality}
\sqrt{n}(\bW\transpose\widehat\bx_i - \rho_n^{1/2}\bx_{0i})\overset{\calL}{\to}\mathrm{N}(\zero, \bG(\bx_{0i})^{-1}),
\end{align} 
where $\bG(\bx)$ is given by equation \eqref{eqn:efficient_covariance}. 
\end{theorem}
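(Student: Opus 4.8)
Both assertions will be read off from the stochastic expansion \eqref{eqn:OS_expansion} furnished by Theorem \ref{thm:asymptotic_normality_OS}, which I may assume. Write the expansion as $\bW\transpose\widehat\bx_i - \rho_n^{1/2}\bx_{0i} = \bT_i + \widehat\bR_i$, where
\[
\bT_i = \frac{1}{n\sqrt{\rho_n}}\sum_{j = 1}^n\frac{(A_{ij} - \rho_n\bx_{0i}\transpose\bx_{0j})}{\bx_{0i}\transpose\bx_{0j}(1 - \rho_n\bx_{0i}\transpose\bx_{0j})}\bG_n(\bx_{0i})^{-1}\bx_{0j}
\]
is the leading linear term and $\widehat\bR_i$ is the remainder obeying the two bounds in Theorem \ref{thm:asymptotic_normality_OS}. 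Since $(\bx_{0j})_{j=1}^n\subset\calX(\delta)$ with $\delta$ fixed, every inner product $\bx_{0i}\transpose\bx_{0j}$ is bounded away from $0$ and $1$, so the denominators are bounded below; and condition \eqref{eqn:strong_convergence_measure} yields both $\bG_n(\bx)\to\bG(\bx)$ (pointwise, and uniformly on the compact set $\calX(\delta)$, with $\bG(\bx)^{-1}$ bounded) and $F_n\to F$. These standing facts are used throughout.

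For the pointwise normality \eqref{eqn:OS_normality}, I multiply the expansion by $\sqrt n$. The rate hypothesis $(\log n)^{2(1\vee\omega)}/(n\rho_n^5)\to 0$ gives $\sqrt n\,\|\widehat\bR_i\| = O_{\prob_0}(n^{-1/2}\rho_n^{-5/2}(\log n)^{1\vee\omega})\to 0$, hence $\sqrt n\,\widehat\bR_i\overset{\prob_0}{\to}\zero$ and it suffices to treat $\sqrt n\,\bT_i = \bG_n(\bx_{0i})^{-1}(n\rho_n)^{-1/2}\sum_{j}(A_{ij}-\rho_n\bx_{0i}\transpose\bx_{0j})\bx_{0j}/[\bx_{0i}\transpose\bx_{0j}(1-\rho_n\bx_{0i}\transpose\bx_{0j})]$. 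For fixed $i$ the variables $\{A_{ij}:j\neq i\}$ are independent, and the deterministic $j=i$ summand is of order $O(\sqrt{\rho_n}/n)$, hence negligible. Factoring $\bG_n(\bx_{0i})^{-1}\to\bG(\bx_{0i})^{-1}$ out by Slutsky, I apply the Lindeberg--Feller central limit theorem to the triangular array $\{(n\rho_n)^{-1/2}(A_{ij}-\rho_n\bx_{0i}\transpose\bx_{0j})\bx_{0j}/[\bx_{0i}\transpose\bx_{0j}(1-\rho_n\bx_{0i}\transpose\bx_{0j})]\}_{j\neq i}$. Each summand is mean zero with norm $O((n\rho_n)^{-1/2})\to 0$, so the Lindeberg condition holds trivially; using $\mathrm{Var}(A_{ij}) = \rho_n\bx_{0i}\transpose\bx_{0j}(1-\rho_n\bx_{0i}\transpose\bx_{0j})$, the summed covariance telescopes to $\bG_n(\bx_{0i})\to\bG(\bx_{0i})$. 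The limit law is therefore $\bG(\bx_{0i})^{-1}\bG(\bx_{0i})\bG(\bx_{0i})^{-1} = \bG(\bx_{0i})^{-1}$, which is \eqref{eqn:OS_normality}.

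For the global sum of squares \eqref{eqn:OS_convergence}, I write $\|\widehat\bX\bW - \rho_n^{1/2}\bX_0\|_{\mathrm F}^2 = \sum_{i=1}^n\|\bT_i + \widehat\bR_i\|^2$ and expand the square. The term $\sum_i\|\widehat\bR_i\|^2 = O_{\prob_0}((n\rho_n^5)^{-1}(\log n)^{2(1\vee\omega)})\to 0$, and the cross term is at most $2(\sum_i\|\bT_i\|^2)^{1/2}(\sum_i\|\widehat\bR_i\|^2)^{1/2}$ by Cauchy--Schwarz, so both vanish once $\sum_i\|\bT_i\|^2 = O_{\prob_0}(1)$. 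It then remains to show $\sum_i\|\bT_i\|^2\overset{\prob_0}{\to}\int_\calX\mathrm{tr}\{\bG(\bx)^{-1}\}F(\mathrm d\bx)$, which I would do by a second-moment (Chebyshev) argument. A direct computation of $\expect\|\bT_i\|^2$ from the Bernoulli variances gives $\expect\|\bT_i\|^2 = n^{-1}\mathrm{tr}\{\bG_n(\bx_{0i})^{-1}\} + O(n^{-2})$, whence $\sum_i\expect\|\bT_i\|^2 = \int_\calX\mathrm{tr}\{\bG_n(\bx)^{-1}\}F_n(\mathrm d\bx) + O(n^{-1})$; this converges to $\int_\calX\mathrm{tr}\{\bG(\bx)^{-1}\}F(\mathrm d\bx)$ by the uniform convergence $\bG_n\to\bG$ on $\calX(\delta)$ together with the weak convergence $F_n\to F$ applied to the continuous bounded integrand $\mathrm{tr}\{\bG(\cdot)^{-1}\}$. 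Convergence in probability then follows provided $\mathrm{Var}(\sum_i\|\bT_i\|^2)\to 0$.

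The main obstacle is precisely this last variance bound. Although $\|\bT_i\|^2$ depends only on row $i$ of $\bA$, the rows are coupled through the symmetry $A_{ij} = A_{ji}$, so $\bT_i$ and $\bT_k$ are dependent via the single shared entry $A_{ik}$; moreover each $\|\bT_i\|^2$ is itself a quadratic form in independent centered Bernoulli variables, whose fluctuation splits into a sum of independent squared terms and a degenerate, mean-zero bilinear part. I would bound the variance by treating these pieces separately, counting how many of the $O(n^2)$ pairs and $O(n^3)$ triples contribute nonzero cross-moments, and tracking the sparsity factor carefully: the cancellation of $\mathrm{Var}(A_{ij})\asymp\rho_n$ against the $\rho_n^{-1}$ in the normalization leaves only negative powers of $n$ (of order $n^{-2}\rho_n^{-1}$ for the diagonal part and $n^{-1}$ for the bilinear part), which tend to $0$ since $n\rho_n\to\infty$ in the stated regime. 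Verifying that the degenerate bilinear contribution and the cross-vertex covariances induced by the symmetry are all $o(1)$ uniformly as $\rho_n\to 0$ is where the real work lies.
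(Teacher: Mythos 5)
Your proposal is correct, and for the pointwise normality \eqref{eqn:OS_normality} it is essentially the paper's argument: both reduce to the linear term $\bT_i$ via the expansion \eqref{eqn:OS_expansion} of Theorem \ref{thm:asymptotic_normality_OS}, kill the remainder using $\sqrt{n}\,\|\widehat\bR_i\| = O_{\prob_0}(n^{-1/2}\rho_n^{-5/2}(\log n)^{1\vee\omega}) = o_{\prob_0}(1)$, and apply a triangular-array CLT --- the paper checks Lyapunov's third-moment condition with $\bG_n(\bx_{0i})^{-1}$ kept inside the sum, while you factor it out by Slutsky and verify Lindeberg, an immaterial difference since the summands are uniformly bounded by $O((n\rho_n)^{-1/2})$. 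The genuine divergence is the concentration step for \eqref{eqn:OS_convergence}. You propose a second-moment/Chebyshev bound on $\mathrm{Var}(\sum_{i}\|\bT_i\|^2)$ via combinatorial enumeration of edge-matched index configurations; the paper instead invokes a logarithmic Sobolev (entropy-method) concentration inequality (Theorem 6.7 of Boucheron et al., reproduced as Lemma \ref{lemma:log_Sobolev_inequality} inside Lemma \ref{lemma:convergence_score_function}), bounding $\sum_{k<l}(Z - Z_{kl})^2$ under single-edge replacements by $O_{\prob_0}((n\rho_n)^{-1})$ and deducing an exponential tail for $Z - \expect_0(Z)$. Your route does work: writing $Z = \sum_i\sum_{a,b}\epsilon_{ia}\epsilon_{ib}\,\bgamma_{ia}\transpose\bgamma_{ib}$ with $\|\bgamma_{ij}\|\lesssim (n\sqrt{\rho_n})^{-1}$ and $\expect_0(\epsilon_{ij}^2)\lesssim\rho_n$, the diagonal part has variance $O(n^2\cdot\rho_n\cdot n^{-4}\rho_n^{-2}) = O(n^{-2}\rho_n^{-1})$, and the degenerate bilinear part contributes $O(n^3)$ pair-matched quadruples (the couplings between rows $i\neq k$ through the shared entry $A_{ik}$ are absorbed in this same edge-matching count), each of size $O(\rho_n^2\, n^{-4}\rho_n^{-2}) = O(n^{-4})$, giving $O(n^{-1})$ --- exactly the orders you state, both vanishing in the stated regime; the paper performs an entirely analogous enumeration in the proof of Lemma \ref{Lemma:R14k_analysis}, so this bookkeeping is standard within its toolbox. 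What each approach buys: yours is more elementary and self-contained, and at this step needs only $n^2\rho_n\to\infty$; the paper's entropy-method lemma yields stronger (exponential) concentration and a reusable module that is invoked again in the proof of Theorem \ref{thm:convergence_OS_Laplacian}. Both arguments then finish identically: Cauchy--Schwarz to dispose of the cross term against $\sum_i\|\widehat\bR_i\|^2 = o_{\prob_0}(1)$, and Lemma \ref{lemma:uniform_convergence_G} together with condition \eqref{eqn:strong_convergence_measure} to pass from $n^{-1}\sum_{i = 1}^n\mathrm{tr}\{\bG_n(\bx_{0i})^{-1}\}$ to $\int_\calX\mathrm{tr}\{\bG(\bx)^{-1}\}F(\mathrm{d}\bx)$.
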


Since we have already shown that $\bSigma(\bx_{0i})\succeq \bG(\bx_{0i})^{-1}$ for all $i\in[n]$, it follows that 
\begin{align*}
&\|\widehat\bX\bW - \rho_n^{1/2}\bX_0\|_{\mathrm{F}}^2 - \|\widehat\bX^{(\mathrm{ASE})}\bW - \rho_n^{1/2}\bX_0\|_{\mathrm{F}}^2\\
&\quad\overset{\prob_0}{\to} \int_\calX \mathrm{tr}\{\bSigma(\bx) - \bG(\bx)^{-1}\}F(\mathrm{d}\bx)
 = \lim_{n\to\infty}\frac{1}{n}\sum_{i = 1}^n\mathrm{tr}\{\bSigma(\bx_{0i}) - \bG(\bx_{0i})^{-1}\}
 \geq 0,
\end{align*}
and hence we conclude that the one-step estimator $\widehat\bX$ improves the ASE $\widehat\bX^{(\mathrm{ASE})}$ globally for all vertices asymptotically. Furthermore, for every fixed vertex $i\in[n]$, the $i$th row of the one-step estimator $\widehat\bx_i$ is locally efficient by definition, and the corresponding asymptotic covariance matrix is no greater than that of the corresponding row of the ASE in spectra. 
% \note{I would suggest to use a remark to comment this slightly stronger density/sparsity assumption. And maybe add a sentence to discuss more if possible. I did not change your wording here. Please edit. }
% \yx

\begin{remark}
 Theorem \ref{thm:asymptotic_normality_OS}  implies that the one-step estimator $\widehat\bX$  initialized at an estimator that satisfies the approximate linearization property \eqref{eqn:linearization_property} also satisfies \eqref{eqn:linearization_property} by itself when the graph is dense ($\rho_n \equiv 1$ for all $n$).
% In this case the corresponding $\omega = 2$, and the formula of the linear coefficient vector $\bzeta_{ij}$ is given in \eqref{eqn:OS_expansion}. 
In this case, one can apply the one-step procedure multiple times, and the resulting estimator still has the same asymptotic behavior as given by Theorem \ref{thm:asymptotic_normality_OS}. This multi-step updating strategy is of practical interest for more accurate estimation  when the sample size is insufficient for asymptotic approximation. 
\end{remark}
% \note{In this paragraph, you basically list two special examples: ASE and an estimator with ASE as the initial value. When you introduce the second example, it's better to say more indications or meanings about the second case. As people might ask if I take your second estimator as the initial value then take one step procedure, it might also satisfy the condition. Also, you can spell out the form and say it also satisfies the property.  }

\vspace*{1ex}
\noindent\textbf{Proofs sketch for Theorem \ref{thm:asymptotic_normality_OS} and Theorem \ref{thm:convergence_OS}.} 
The key to the proofs of Theorem \ref{thm:asymptotic_normality_OS} and Theorem \ref{thm:convergence_OS} is formula \eqref{eqn:OS_expansion}. From here, we can apply the logarithmic Sobolev concentration inequality to \eqref{eqn:OS_expansion} (see, for example, Section 6.4 in \citealp{boucheron2013concentration}) to show that $\|\widehat\bX\bW - \rho_n^{1/2}\bX_0\|_{\mathrm{F}}^2$ converges in probability to its expectation, which is exactly the quantity on the right-hand side of \eqref{eqn:OS_convergence}. 
The asymptotic normality \eqref{eqn:OS_normality} of $\widehat\bx_i$ can be obtained by directly applying the Lyapunov's central limit theorem to
\[
\frac{1}{\sqrt{n\rho_n}}\sum_{j = 1}^n\frac{(A_{ij} - \rho_n\bx_{0i}\transpose\bx_{0j})}{\bx_{0i}\transpose\bx_{0j}(1 - \rho_n\bx_{0i}\transpose\bx_{0j})}\bG_n(\bx_{0i})^{-1}\bx_{0j},
\]
which is a sum of independent random variables. 
We now sketch the derivation for \eqref{eqn:OS_expansion}. By construction of the one-step estimator \eqref{eqn:one_step_estimator}, we have, 
\begin{align*}
\bW\transpose\widehat\bx_i - \rho_n^{1/2}\bx_{0i}& = \frac{1}{n\sqrt{\rho_n}}\sum_{j = 1}^n\frac{(A_{ij} - \rho_n\bx_{0i}\transpose\bx_{0j})}{\bx_{0i}\transpose\bx_{0j}(1 - \rho_n\bx_{0i}\transpose\bx_{0j})}\bG_n(\bx_{0i})^{-1}\bx_{0j} + 
(\bW\transpose\widetilde \bx_i - \rho_n^{1/2}\bx_{0i})\\
&\quad + \bG_n(\bx_{0i})^{-1}\bR_{i1} + \bR_{i2}\bR_{i1} + \bR_{i3},
\end{align*}
where
\begin{align*}
&\bR_{i1} = \frac{1}{n\sqrt{\rho_n}}\sum_{j = 1}^n\left\{
\bphi_{ij}(\rho_n^{-1/2}\bW\transpose\widetilde\bx_i, \rho_n^{-1/2}\bW\transpose\widetilde\bx_j) - \bphi_{ij}(\bx_{0i}, \bx_{0j})\right\},\\
&\bphi_{ij}(\bu, \bv) = \frac{(A_{ij} - \rho_n\bu\transpose\bv)\bv}{\bu\transpose\bv(1 - \rho_n\bu\transpose\bv)},\\
&\bR_{i2} = \bW\transpose\left\{\frac{1}{n}\sum_{j = 1}^n\frac{\widetilde\bx_{j}\widetilde\bx_{j}\transpose}{\widetilde\bx_{i}\transpose\widetilde\bx_{j}(1 - \widetilde\bx_i\transpose\widetilde\bx_j)}\right\}^{-1}\bW - \bG_n(\bx_{0i})^{-1},\\
&\bR_{i3}  = \frac{1}{n\sqrt{\rho_n}}\sum_{j = 1}^n\frac{(A_{ij} - \rho_n\bx_{0i}\transpose\bx_{0j})}{\bx_{0i}\transpose\bx_{0j}(1 - \rho_n\bx_{0i}\transpose\bx_{0j})}\bR_{i2}\bx_{0j}.
\end{align*}
For $\bR_{i1}$, we apply Taylor's expansion  to $\bphi_{ij}$
% , the expansion formula \eqref{eqn:ASE_expansion} for $\widetilde\bX$, 
together with the result 
\[
\|\widetilde\bX\bW - \rho_n^{1/2}\bX_0\|_{2\to\infty} = O_{\prob_0}\left(\frac{(\log n)^{(1\vee\omega)/2}}{\sqrt{n\rho_n}}\right),
\]
which is a variation of Lemma 2.1 in \cite{lyzinski2014}, to obtain
\begin{align*}
\bR_{i1} 
&= \frac{1}{n\sqrt{\rho_n}}\sum_{j = 1}^n\expect_0\left\{\frac{\partial\bphi_{ij}}{\partial\bu\transpose}(\bx_{0i},\bx_{0j})\right\}(\rho_n^{-1/2}\bW\transpose\widetilde\bx_i - \bx_{0i}) + O_{\prob_0}\left(\frac{(\log n)^{1\vee\omega}}{n\rho_n^{5/2}}\right)\\
&= - \bG_n(\bx_{0i})(\bW\transpose\widetilde\bx_i - \rho_n^{1/2}\bx_{0i}) + O_{\prob_0}\left(\frac{(\log n)^{1\vee\omega}}{n\rho_n^{5/2}}\right).
\end{align*}
For $\bR_{i2}$, we directly obtain from $\|\widetilde\bX\bW - \rho_n^{1/2}\bX_0\|_{2\to\infty} = O_{\prob_0}((n\rho_n)^{-1/2}(\log n)^{(1\vee\omega)/2})$ and the uniform Lipschitz continuity of the function $(\bu,\bv)\mapsto \{\bu\transpose\bv(1 - \rho_n\bu\transpose\bv)\}^{-1}\bv\bv\transpose$ to conclude that $\|\bR_{i2}\| = O_{\prob_0}(\rho_n^{-1}n^{-1/2}(\log n)^{(1\vee\omega)/2})$. Finally, an application of Hoeffding's inequality in conjunction with the union bound yields $\|\bR_{i3}\| = O_{\prob_0}(\rho_n^{-3/2}n^{-1}(\log n)^{1\vee\omega})$. Thus we obtain that
\[
\bW\transpose\widehat\bx_i - \rho_n^{1/2}\bx_{0i} = \frac{1}{n\sqrt{\rho_n}}\sum_{j = 1}^n\frac{(A_{ij} - \rho_n\bx_{0i}\transpose\bx_{0j})}{\bx_{0i}\transpose\bx_{0j}(1 - \rho_n\bx_{0i}\transpose\bx_{0j})}\bG_n(\bx_{0i})^{-1}\bx_{0j} + O_{\prob_0}\left(\frac{(\log n)^{1\vee\omega}}{n\rho_n^{5/2}}\right).
\]
The result $\sum_{i = 1}^n\|\widehat\bR_i\|^2 = O_{\prob_0}((n\rho_n^5)^{-1}(\log n)^{2(1\vee\omega)})$ follows a similar but more technical argument. The detailed proof is provided in Section \ref{sec:proof_of_theorem_OS} of Appendix. 

\begin{remark}
Theorem \ref{thm:convergence_OS} asserts that the one-step estimator $\widehat\bX$ dominates the ASE
 % locally for each vertex and globally for all vertices, 
 under the
density condition $(n\rho_n^5)^{-1}(\log n)^{2(1\vee\omega)}\to 0$ as $n\to\infty$. When the graph is dense, \emph{i.e.}, $\rho_n\equiv 1$ for all $n$, it is easy to show that this condition holds.  When $\rho_n^{-1}$ is a polynomial of $\log n$, indicating that the graph is moderately sparse, 
this condition still holds.  This condition starts to fail when 
 the graph becomes very sparse, \emph{e.g.}, $\rho_n^{-1} \asymp n^{t}$ for some $t \geq 1/5$, in which case a broad range of statistical inference tasks become challenging due to the weak signal. 
 % Further discussion of the density condition is provided in Remark \ref{remark:strong_density_condition} at the end of this section and the Supplementary Material.
\end{remark}
\begin{remark}\label{remark:strong_density_condition}
Theorem \ref{thm:asymptotic_normality_OS} requires that the sparsity factor $\rho_n$ is lower bounded by $n^{-1/5}$ times a polynomial factor of $\log n$. This causes the average expected degree to grow at a polynomial rate of $n$, and the resulting graph is considered as moderately sparse. In contrast, Theorem \ref{thm:ASE_limit_theorem} only requires $\rho_n$ to be lower bounded by $n^{-1}$ times a polynomial factor of $\log n$, and this results in the average expected degree to grow at a polynomial rate of $\log n$, which is a sparser regime than that required by Theorem \ref{thm:asymptotic_normality_OS}. The stronger density assumption that the average expected degree is a polynomial factor of $n$ is essential for the proof strategy employed in this work. Nevertheless, we remark that the proof strategy is standard (see, for example, Section 5.7 of \citealp{van2000asymptotic}). In fact, the stronger density assumption stems from the Lipschitz continuity of the Hessian of the average log-likelihood function, which is guaranteed by the continuity of the third derivatives. This is referred to as the \emph{classical conditions} for M-estimators (see, for example, Section 5.6 of \citealp{van2000asymptotic}). Further discussion of the sparsity condition for the one-step estimator \eqref{eqn:one_step_estimator} is provided in Section \ref{sec:further_discussion_of_sparse_graphs} of Appendix.
\end{remark}
Theorem \ref{thm:convergence_OS} claims that the asymptotic covariance matrix of any fixed row of the one-step estimator \eqref{eqn:one_step_estimator} is no greater than that of the ASE in spectra. The following example shows that the inequality can be strict, namely, there exist situations where $\bG(\bx_{0i})^{-1} - \bSigma(\bx_{0i})$ contains at least one strictly negative eigenvalue. This implies that the one-step estimator dominates the ASE asymptotically. 
\begin{example} (Two-block stochastic block model)
\label{example:strict_dominance_OSEA}
Consider the following two-block stochastic block model, which has also been considered in \cite{tang2018}. Let $F = \pi_1\delta_p + \pi_2\delta_q$ be the distribution on $(0, 1)$ giving rise to the latent positions $x_{01},\ldots,x_{0n}$ via \eqref{eqn:strong_convergence_measure}, where $p,q\in(0, 1)$ and $p\neq q$. This results in an $n\times n$ adjacency matrix $\bA$ drawn from $\mathrm{RDPG}(\bX_0)$ with $\bX_0 = [x_{01},\ldots,x_{0n}]\transpose\in\mathbb{R}^{n\times 1}$. Let $\tau:[n]\to \{1,2\}$ be a cluster assignment function such that $\tau(i) = 1$ if $x_{0i} = p$, $\tau(i) = 2$ if $x_{0i} = q$, and denote
\[
\bB = 
\begin{bmatrix*}
p^2 & pq \\ pq & q^2
\end{bmatrix*}.
\]
Then the distribution of $\bA$ can be also regarded as a stochastic block model with a block probability matrix $\bB$ and  a cluster assignment function $\tau$. Let $\widehat\bX^{(\mathrm{ASE})} = [\widehat\bx_1^{(\mathrm{ASE})},\ldots,\widehat\bx_n^{(\mathrm{ASE})}]\transpose$ be the ASE and $\widehat\bX = [\widehat\bx_1,\ldots,\widehat\bx_n]\transpose$ be the one-step estimator satisfying the conditions of Theorem \ref{thm:asymptotic_normality_OS}. Using formulas \eqref{eqn:ASE_normality} and \eqref{eqn:OS_normality}, we obtain:
\begin{align*}
\sqrt{n}(\widehat\bx_i^{\mathrm{(ASE)}} - p)\overset{\calL}{\to}\mathrm{N}\left(0, \Sigma(p)\right),\quad\text{if }x_{0i} = p,\\
\sqrt{n}(\widehat\bx_i^{\mathrm{(ASE)}} - q)\overset{\calL}{\to}\mathrm{N}\left(0, \Sigma(q)\right),\quad\text{if }x_{0i} = q,
\end{align*}
where
\[
\Sigma(p) = \frac{\pi_1p^4(1 - p^2) + \pi_2pq^3(1 - pq)}{(\pi_1p^2 + \pi_2q^2)^2},\quad
\Sigma(q) = \frac{\pi_1p^3q(1 - pq) + \pi_2q^4(1 - q^2)}{(\pi_1p^2 + \pi_2q^2)^2},
\]
and
\begin{align*}
\sqrt{n}(\widehat\bx_i - p)\overset{\calL}{\to}\mathrm{N}\left(0, G(p)^{-1}\right),\quad\text{if }x_{0i} = p,\\
\sqrt{n}(\widehat\bx_i - q)\overset{\calL}{\to}\mathrm{N}\left(0, G(q)^{-1}\right),\quad\text{if }x_{0i} = q,
\end{align*}
where
\[
G(p) = \frac{\pi_1p^2}{p^2(1 - p^2)} + \frac{\pi_2q^2}{pq(1 - pq)},\quad
G(q) = \frac{\pi_1p^2}{pq(1 - pq)} + \frac{\pi_2q^2}{q^2(1 - q^2)}.
\]
By Cauchy-Schwartz inequality, we see that $G(p)^{-1}\leq \Sigma(p)$ and $G(q)^{-1}\leq \Sigma(q)$ for all $p, q\in(0, 1)$, and in particular, $G(p)^{-1} = \Sigma(p)$ if and only if $q = (1 - p^2)/p$, and $G(q)^{-1} = \Sigma(q)$ if and only if $q = (1/2)(\sqrt{p^2 + 4} - p)$ (recall that $p\neq q$). Namely, the asymptotic variance of the one-step estimator is strictly smaller than that of the ASE for all but finitely many $(p, q)$ pairs in $(0, 1)^2\backslash\{(p, q):p = q\}$. The comparison of variances for different estimators is further visualized in Figure \ref{fig:SBM_variance_comparison}. 
\begin{figure}[htbp]
  \centerline{\includegraphics[width=1\textwidth]{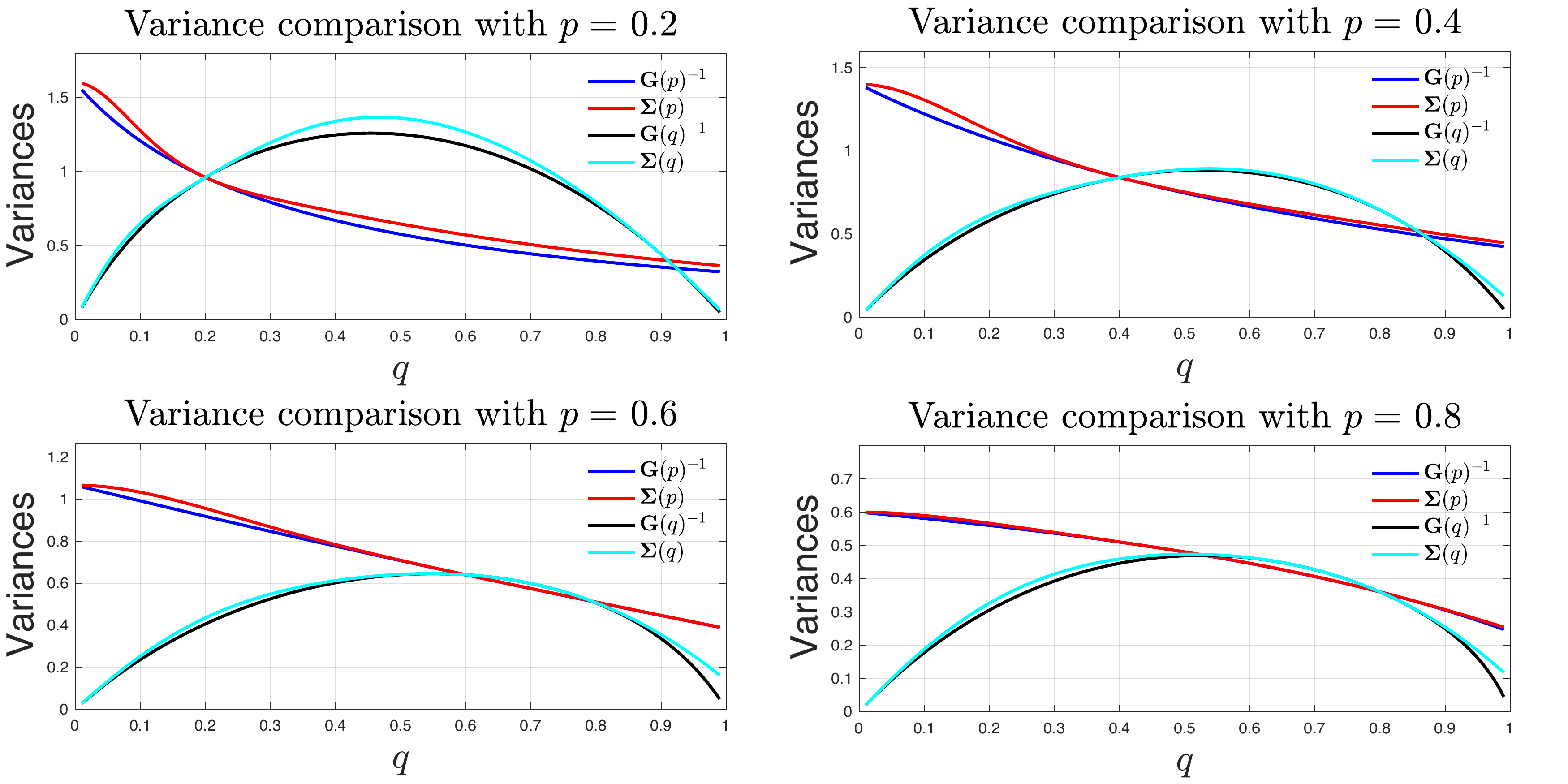}}
  \caption{Comparison of the variances $G(p)^{-1}, G(q)^{-1}, \Sigma(p), \Sigma(q)$ for different values of $p,q\in(0, 1)$ in Example \ref{example:strict_dominance_OSEA}. The cluster assignment probabilities are set to $\pi_1 = 0.6$ and $\pi_2 = 0.4$. Note that all the variances $G(p)^{-1}, G(q)^{-1},\Sigma(p),\Sigma(q)$ depend on both $p$ and $q$. }
  \label{fig:SBM_variance_comparison}
\end{figure}
\end{example}

\section{Application to Estimating the Laplacian Matrix} % (fold)
\label{sub:a_plug_in_estimator_for_the_normalized_laplacian_matrix}
Instead of directly analyzing the adjacency matrix $\bA$, 
 another widely adopted technique for statistical analysis on random graphs is based on the normalized Laplacian of  $\bA$, which is particularly useful for vertex clustering in stochastic block models \citep{rohe2011,sarkar2015}. Formally, the normalized Laplacian of a matrix $\bM$ with non-negative entries and positive row sums, denoted by $\calL(\bM)$, is defined by
\[
\calL(\bM) = (\mathrm{diag}(\bM\one))^{-1/2}\bM(\mathrm{diag}(\bM\one))^{-1/2},
\]
% \note {can you re-check this sentence?} 
where, given $\bz = [z_1,\ldots,z_n]\transpose\in\mathbb{R}^n$, $\mathrm{diag}(\bz)$ is the $n\times n$ diagonal matrix  with $z_1,\ldots,z_n$ being its diagonal entries. Here we follow the definition of the normalized Laplacian adopted in \cite{tang2018} in contrast to the combinatorial Laplacian $\mathrm{diag}(\bM\one) - \bM$ that has been applied to graph theory (see, for example, \citealp{merris1994laplacian}). For the adjacency matrix $\bA$, the $(i,j)$ entry of the normalized Laplacian matrix $\calL(\bA)$ can be interpreted as the connection between vertices $i$ and $j$ normalized by the square roots of the degrees of the two vertices. 

Recall that the edge probability matrix $\rho_n\bX\bX\transpose$ is positive semidefinite low-rank when $\bA\sim\mathrm{RDPG}(\bX)$ with a sparsity factor $\rho_n$. Similarly, the normalized Laplacian of   $\rho_n\bX\bX\transpose$ is also a positive semidefinite low-rank matrix: 
\[
\calL(\rho_n\bX\bX\transpose) = (\mathrm{diag}(\bX\bX\transpose\one))^{-1/2}\bX\bX\transpose(\mathrm{diag}(\bX\bX\transpose\one))^{-1/2} = \bY\bY\transpose, 
\]
where $\bY = [\by_{1},\ldots,\by_{n}]\transpose\in\mathbb{R}^{n\times d}$, and $\by_{i} = \bx_{i}(\sum_{j = 1}^n\bx_{i}\transpose\bx_{j})^{-1/2}$. Following the same spirit of the formulation of the ASE through \eqref{eqn:ASE_least_squared_problem}, one can analogously define the Laplacian spectral embedding (LSE) $\breve\bX$ of $\bA$ into $\mathbb{R}^{d}$ by solving the least squares problem \citep{rohe2011}
\begin{align}\label{eqn:LSE_least_squared_problem}
\breve\bX = \argmin_{\bY\in\mathbb{R}^{n\times d}}\|\calL(\bA) - \bY\bY\transpose\|_{\mathrm{F}}^2.
\end{align}
Since the LSE $\breve{\bX}$ is an estimator for $\bY$, we refer to the $n\times d$ matrix $\bY$ as the population LSE. 
The random random matrix $\breve{\bX}$, which is the LSE of $\bA$ into $\mathbb{R}^d$, is also referred to as the sample LSE in contrast to the population LSE $\bY$.  
Alternatively, the population LSE can be viewed as a transformation $\bY = \bY(\bX)$ of the latent position matrix $\bX$ defined by
\begin{align}
\label{eqn:LSE_transformation}
\bY(\bX) = [\by_1(\bX),\ldots,\by_n(\bX)]\transpose,\quad \by_i = \frac{\bx_i}{\sqrt{\sum_{j = 1}^n\bx_i\transpose\bx_j}},\quad i = 1,\ldots,n.
\end{align}

\cite{tang2018} established the consistency and asymptotic distribution results for the (sample) LSE in random dot product graphs with independent and identically distributed latent positions $\bx_{01},\ldots,\bx_{0n}$. 
%Asymptotic characterization of the LSE as an estimator for the population LSE has also been explored. Notably, in the context random dot product graphs with independent and identically distributed latent positions $\bx_{01},\ldots,\bx_{0n}$ (as opposed to the deterministic assumption on $\bx_{0i}$'s adopted in this work), \cite{tang2018} established the consistency and asymptotic distribution results for the LSE. 
%The setup here differs than that in \cite{tang2018} as we consider deterministic latent positions. 
 In the context of the deterministic latent positions framework adopted in this work, we provide the analogous results for the  LSE in Theorem \ref{thm:LSE_limit_theorem}, which is a modification of Theorems 3.1 and 3.2 in \cite{tang2018}. The proof  is deferred to Appendix. 
% Note that in Theorem \ref{thm:LSE_limit_theorem}, we assume that the latent positions $\bx_{01},\ldots,\bx_{0n}$ are deterministic satisfying \eqref{eqn:strong_convergence_measure}, whereas \cite{tang2018} posit the condition that $\bx_{01},\ldots,\bx_{0n}\iidsim F$ and the results are derived conditioning on these latent positions. 

%In view of the fact that $\widetilde\bX$ can be regarded as a consistent estimator for $\rho_n^{1/2}\bX$, as is illustrated in Theorem \ref{thm:ASE_limit_theorem}, it turns out that similar consistency and asymptotic distribution results hold for $\breve\bX$ as an estimator for $\bY$ as well:

\begin{theorem}
% [Theorems 3.1 and 3.2 of \citealp{tang2018}]
\label{thm:LSE_limit_theorem}
Let $\bA\sim\mathrm{RDPG}(\bX_0)$ with a sparsity factor $\rho_n$ for some $\bX_0 = [\bx_{01},\ldots,\bx_{0n}]\transpose\in\calX^n\subset\mathbb{R}^{n\times d}$, where $\bx_{01},\ldots,\bx_{0n}$ satisfy \eqref{eqn:strong_convergence_measure}. 
Suppose either $\rho_n\equiv 1$ for all $n$ or $\rho_n\to 0$ but $(\log n)^4/(n\rho_n)\to 0$ as $n\to\infty$, and denote $\rho = \lim_{n\to\infty}\rho_n$. Let $\breve\bX = [\breve\bx_1,\ldots,\breve\bx_n]\transpose$ be the LSE of $\bA$ into $\mathbb{R}^d$ defined by \eqref{eqn:LSE_least_squared_problem}. Define the following quantities:
\begin{align*}
&\bY_0 = \bY(\bX_0),\quad \bmu = \int_\calX\bx F(\mathrm{d}\bx), \quad
% \\&
\widetilde\bDelta = \int_\calX\frac{\bx\bx\transpose}{\bx\transpose\bmu}F(\mathrm{d}\bx),\\
& \widetilde\bSigma(\bx) = \left(\widetilde\bDelta^{-1} - \frac{\bx\bmu\transpose}{2\bmu\transpose\bx}\right)\left[\int_\calX\left\{\frac{\bx\transpose\bx_1(1 - \rho\bx\transpose\bx_1)}{\bmu\transpose\bx(\bmu\transpose\bx_1)^2}\bx_1\bx_1\transpose\right\}F(\mathrm{d}\bx_1)\right]\left(\widetilde\bDelta^{-1} - \frac{\bx\bmu\transpose}{2\bmu\transpose\bx}\right)\transpose.
\end{align*}
Then there exists a sequence of orthogonal $(\bW)_{n = 1}^\infty = (\bW_n)_{n = 1}^\infty\subset\mathbb{R}^{d\times d}$ such that as $n\to\infty$,
\begin{align}
% \quad\text{and}\quad 
\label{eqn:LSE_convergence}
n\rho_n\|\breve\bX\bW - \bY_0\|_{\mathrm{F}}^2\overset{a.s.}{\to} \int \mathrm{tr}\{\widetilde\bSigma(\bx)\}F(\mathrm{d}\bx).
\end{align} 
Furthermore, assume the graph model falls into one of the following two regimes:
\begin{itemize}
   \item[(i)] \textbf{Dense regime:} $\rho_n\equiv 1$ for all $n$;
   \item[(ii)] \textbf{Sparse stochastic block model regime:} $\rho_n\to 0$ with $(\log n)^4/(n\rho_n)\to 0$ as $n\to\infty$, and $F$ is a mixture of $K$ linearly independent point masses $\bnu_1,\ldots,\bnu_K\in\calX$ with $K \geq d$:
    \[
    F(\mathrm{d}\bx) = \sum_{k = 1}^K\pi_k\delta_{\bnu_k}(\mathrm{d}\bx),\quad \pi_1,\ldots,\pi_K > 0,\quad\sum_{k = 1}^K\pi_k = 1.
    \]
    Namely, the underlying random dot product graph coincides with a stochastic block model.
 \end{itemize}
Then for any fixed $i\in[n]$, 
\begin{align}
\label{eqn:LSE_normality}
n\rho_n^{1/2}(\bW\transpose\breve\bx_i - \by_{0i})\overset{\calL}{\to}\mathrm{N}(\zero, \widetilde\bSigma(\bx_{0i})).
\end{align}
% and .
\end{theorem}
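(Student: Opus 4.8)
The plan is to mirror the strategy used for the adjacency spectral embedding in Theorem \ref{thm:ASE_limit_theorem}, but with the extra bookkeeping forced by the degree normalization. The starting point is the scale invariance $\calL(\rho_n\bX_0\bX_0\transpose) = \calL(\bX_0\bX_0\transpose) = \bY_0\bY_0\transpose$, so that the sample LSE $\breve\bX$ is exactly the rank-$d$ spectral embedding of a random matrix $\calL(\bA)$ whose population analogue is the low-rank positive semidefinite matrix $\bY_0\bY_0\transpose$. First I would establish, in both the spectral norm and the two-to-infinity norm, a concentration bound for $\calL(\bA) - \bY_0\bY_0\transpose$; writing $\bar\bA = \expect_0\bA$ and the expected degrees $\bar d_i = \rho_n\sum_k \bx_{0i}\transpose\bx_{0k} = n\rho_n\bx_{0i}\transpose\bmu_n$ with $\bmu_n = (1/n)\sum_k\bx_{0k}$, I would split $\calL(\bA) - \bY_0\bY_0\transpose = \{\calL(\bA) - \calL(\bar\bA)\}+\{\calL(\bar\bA)-\bY_0\bY_0\transpose\}$, the latter being a diagonal/hollowness correction of smaller order. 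This reduces the problem to the leading stochastic term $\calL(\bA)-\calL(\bar\bA)$.

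The heart of the argument is a row-wise first-order expansion for $\breve\bx_i$. Using the standard spectral-embedding linearization (as in the proof of Theorem \ref{thm:ASE_limit_theorem}) I would write, up to an orthogonal alignment $\bW$,
\[
\bW\transpose\breve\bx_i - \by_{0i} = \big[\{\calL(\bA)-\calL(\bar\bA)\}\bY_0(\bY_0\transpose\bY_0)^{-1}\big]_{i\cdot} + \breve\bR_i,
\]
with a residual $\breve\bR_i$ controlled by two-to-infinity perturbation bounds (a Laplacian analogue of Lemma 2.1 of \citealp{lyzinski2014}). The delicate step is to expand $\calL(\bA)_{ij}-\calL(\bar\bA)_{ij}=A_{ij}/\sqrt{d_id_j}-\bar\bA_{ij}/\sqrt{\bar d_i\bar d_j}$ simultaneously in the numerator deviation $A_{ij}-\bar\bA_{ij}$ and the degree deviations $d_i-\bar d_i=\sum_k(A_{ik}-\bar\bA_{ik})$. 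A two-term Taylor expansion produces a numerator contribution together with a degree-normalization contribution; after projecting through $\bY_0(\bY_0\transpose\bY_0)^{-1}$ and using $(1/n)\sum_k\bx_{0k}\to\bmu$ and $\bY_0\transpose\bY_0=(1/n)\sum_k\bx_{0k}\bx_{0k}\transpose/(\bx_{0k}\transpose\bmu_n)\to\widetilde\bDelta$, the two contributions combine into the sandwich factor $\widetilde\bDelta^{-1}-\bx\bmu\transpose/(2\bmu\transpose\bx)$ appearing in $\widetilde\bSigma(\bx)$. I expect this combination — correctly matching the $-\bx\bmu\transpose/(2\bmu\transpose\bx)$ correction, and showing that the cross terms between numerator and degree noise, as well as the quadratic remainder, are negligible after $n\rho_n^{1/2}$-scaling — to be the main obstacle.

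With the expansion in hand, the two conclusions follow along familiar lines. For the normality \eqref{eqn:LSE_normality} I would apply the Lyapunov central limit theorem to the leading linear term, which is a sum over $j$ of independent mean-zero vectors built from $(A_{ij}-\bar\bA_{ij})$; a direct variance computation, using \eqref{eqn:strong_convergence_measure} and the limits above, identifies the limiting covariance as $\widetilde\bSigma(\bx_{0i})$, while the residual vanishes after scaling. The dense regime (i) gives the cleanest control of the degree fluctuations; in the sparse regime (ii) I would exploit that $F$ is supported on $K$ point masses, so that only finitely many distinct expected degrees occur and the degree concentration — hence the negligibility of $\breve\bR_i$ — holds uniformly at the weaker rate $(\log n)^4/(n\rho_n)\to0$. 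Finally, for the Frobenius convergence \eqref{eqn:LSE_convergence} I would sum the squared row expansions, show via a logarithmic Sobolev concentration inequality (as in Section 6.4 of \citealp{boucheron2013concentration}) that $n\rho_n\|\breve\bX\bW-\bY_0\|_{\mathrm{F}}^2$ concentrates around its mean, and identify the limit of the mean as $\int\mathrm{tr}\{\widetilde\bSigma(\bx)\}F(\mathrm{d}\bx)$ using \eqref{eqn:strong_convergence_measure}, upgrading convergence in probability to almost sure convergence through a Borel--Cantelli argument.
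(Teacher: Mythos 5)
Your overall skeleton and your treatment of \eqref{eqn:LSE_convergence} match the paper: your two-term Taylor expansion of $\calL(\bA)_{ij}$ in the numerator and degree deviations is exactly the paper's expansion $\breve\bX\bW - \bY_0 = \bT^{-1/2}(\bA-\bP_0)\bT^{-1/2}\bY_0(\bY_0\transpose\bY_0)^{-1} + \tfrac{1}{2}\bT^{-1}(\bT-\bD)\bY_0 + \breve\bR$ with $\bT = \mathrm{diag}(\bP_0\one)$ and $\bD = \mathrm{diag}(\bA\one)$, the Frobenius limit is indeed obtained by concentration of this quantity around its expectation and by identifying the expectation through \eqref{eqn:strong_convergence_measure}, and the sandwich factor $\widetilde\bDelta^{-1} - \bx\bmu\transpose/(2\bmu\transpose\bx)$ arises exactly as you predict.

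The genuine gap is the row-wise control of the residual $\breve\bR_i$ needed for \eqref{eqn:LSE_normality}, in both regimes. The global bound $\|\breve\bR\|_{\mathrm{F}} = O_{\prob_0}((n\rho_n)^{-1})$ gives, for a fixed row, only $\|\breve\bR_i\| \leq \|\breve\bR\|_{\mathrm{F}}$, which after the $n\rho_n^{1/2}$ scaling is $O_{\prob_0}(\rho_n^{-1/2})$ --- divergent when $\rho_n \to 0$, and merely $O_{\prob_0}(1)$ rather than $o_{\prob_0}(1)$ even when $\rho_n \equiv 1$. Your stated mechanism for regime (ii), namely that ``only finitely many distinct expected degrees occur so degree concentration holds uniformly,'' is not how the point-mass assumption enters and would not close this gap: uniform degree concentration holds for arbitrary latent positions by a union bound, and it controls the degree terms, not the nonlinear eigenvector residual. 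What the paper actually exploits is within-block \emph{exchangeability}: in a stochastic block model, permuting the vertices of cluster $k$ leaves the law of $\bA$ invariant, so $\bPi_k\breve\bR \overset{\calL}{=} \breve\bR$, all rows of $\breve\bR$ within a block are identically distributed, and hence $\expect_0\|\breve{\br}_i\|^2 \leq \expect_0(\|\breve\bR\|_{\mathrm{F}}^2)/n_k = O(n^{-1}(n\rho_n)^{-2})$, which gives $n\rho_n^{1/2}\breve{\br}_i = o_{\prob_0}(1)$. This trick is precisely why the sparse regime is restricted to finite mixtures of point masses.

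Conversely, the dense regime is not the ``cleanest'' case but the hardest one: with general deterministic latent positions no exchangeability is available, and as noted above the Frobenius bound alone leaves an $O_{\prob_0}(1)$ term after scaling. The paper therefore builds genuinely row-wise machinery there: a decomposition $\widetilde\bE = \widetilde\bE_1 + \widetilde\bE_2$ of the scaled Laplacian noise, moment-method bounds on $|\be_i\transpose\widetilde\bE^t\widetilde{\bu}_{0k}|$ for $t = 1,2$ in the style of the multigraph expansion (Lemma \ref{lemma:Laplacian_moment_bound}), the row-wise CLT for $\be_i\transpose\widetilde\bE(\sqrt{n}\bY_0)$ (Lemma \ref{lemma:Laplacian_normality}), and verification of conditions A1--A6 so that Theorem 3 of \cite{cape2019signal} applies. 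The ``Laplacian analogue of Lemma 2.1 of \citealp{lyzinski2014}'' you invoke yields only a two-to-infinity bound of insufficient order for this purpose; identifying and executing this entrywise eigenvector analysis is the missing core of the dense-regime proof.
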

% Theorem \ref{thm:LSE_limit_theorem} provides a limit result for the LSE under the conditions that $F$ is a finite mixture of point masse. This corresponds to the case where the random dot product graph becomes a stochastic block model with a positive semidefinite block probability matrix. 
% It requires significant work to generalize the limit result for the LSE from Theorem \ref{thm:LSE_limit_theorem} beyond the case where $F$ is a finite discrete distribution, and such a generalization falls out of the scope of the current paper. 
% Thus, when discussing the asymptotic results of the LSE, we restrict ourselves in the case where $F$ is a finite mixture of point masses and the resulting graph model is a stochastic block model.

The LSE can be applied to construct another initial estimator that satisfies the approximate linearization property. This is given in the following theorem.
\begin{theorem}\label{thm:LSE_transform_initial_condition}
Let $\bA\sim\mathrm{RDPG}(\bX_0)$ with a sparsity factor $\rho_n$ for some $\bX_0 = [\bx_{01},\ldots,\bx_{0n}]\transpose\in\calX^n\subset\mathbb{R}^{n\times d}$, where $\bx_{01},\ldots,\bx_{0n}$ satisfy \eqref{eqn:strong_convergence_measure}. Suppose either $\rho_n\equiv 1$ for all $n$ or $\rho_n\to 0$ but $(\log n)^4/(n\rho_n)\to 0$ as $n\to\infty$, and denote $\rho = \lim_{n\to\infty}\rho_n$. Let $\breve\bX$ be the LSE of $\bA$ into $\mathbb{R}^d$ defined by \eqref{eqn:LSE_least_squared_problem}. Then the estimator $\widetilde\bX = \mathrm{diag}(\sum_{j = 1}^nA_{1j},\ldots,\sum_{j = 1}^nA_{nj})^{1/2}\breve{\bX}$ satisfies the approximate linearization property. 
\end{theorem}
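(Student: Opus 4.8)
The plan is to reduce the claim to the row-wise first-order expansion of the sample LSE $\breve\bX$ around the population LSE $\bY_0 = \bY(\bX_0)$ and then to propagate the diagonal rescaling $\mathrm{diag}(d_1,\ldots,d_n)^{1/2}$, $d_i = \sum_{j=1}^n A_{ij}$, through a delta-method expansion. The rescaling is natural: since $\breve\bX\breve\bX\transpose$ approximates $\calL(\bA) = \mathrm{diag}(\bA\one)^{-1/2}\bA\,\mathrm{diag}(\bA\one)^{-1/2}$, one has $(\mathrm{diag}(\bA\one)^{1/2}\breve\bX)(\mathrm{diag}(\bA\one)^{1/2}\breve\bX)\transpose \approx \bA$, so $\widetilde\bX$ should behave like the ASE and inherit its linearization. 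Quantitatively, writing $t_i = \sum_{j=1}^n\bx_{0i}\transpose\bx_{0j}$ so that $\by_{0i} = \bx_{0i}/\sqrt{t_i}$, and setting $\bar d_i = \rho_n t_i$, two exact identities drive everything: $\bar d_i^{1/2}\by_{0i} = \rho_n^{1/2}\bx_{0i}$, and, because $\bA$ is hollow, $d_i - \bar d_i = \sum_{j=1}^n(A_{ij} - \rho_n\bx_{0i}\transpose\bx_{0j})$. Since $\bx_{0i}\transpose\bmu_n \in [\delta,1-\delta]$ for $\bx_{0i}\in\calX(\delta)$, we also have $\bar d_i \asymp n\rho_n$ and $\|\by_{0i}\|\asymp n^{-1/2}$ uniformly.

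First I would record, as the analytic input, the linearization of the LSE obtained in the proof of Theorem \ref{thm:LSE_limit_theorem}: there is $\bW = \bW_n\in\mathbb{O}(d)$ and deterministic weights $\bg_{ij}\in\mathbb{R}^d$ with $\sup_{i,j}\|\bg_{ij}\|\lesssim n^{-3/2}\rho_n^{-1}$ such that
\[
\bW\transpose\breve\bx_i - \by_{0i} = \sum_{j=1}^n(A_{ij} - \rho_n\bx_{0i}\transpose\bx_{0j})\bg_{ij} + \widetilde\bS_i,\quad \sum_{i=1}^n\|\widetilde\bS_i\|^2 = O_{\prob_0}\big((n\rho_n)^{-2}(\log n)^{c}\big),
\]
for some $c\geq 0$; this is the Laplacian analog of the ASE expansion quoted after \eqref{eqn:linearization_property}, and rests on a two-to-infinity-norm perturbation analysis of $\calL(\bA)$ around $\calL(\rho_n\bX_0\bX_0\transpose) = \bY_0\bY_0\transpose$. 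In parallel I would expand the scalar square root: a second-order Taylor expansion of $s\mapsto s^{1/2}$ at $\bar d_i$ gives $d_i^{1/2} = \bar d_i^{1/2} + a_i + r_i^{(d)}$ with $a_i = (2\bar d_i^{1/2})^{-1}\sum_{j=1}^n(A_{ij} - \rho_n\bx_{0i}\transpose\bx_{0j})$ and $|r_i^{(d)}|\lesssim (d_i - \bar d_i)^2\bar d_i^{-3/2}$.

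Next I would multiply the two expansions, using that the scalar $d_i^{1/2}$ commutes with $\bW\transpose$ so that the same $\bW$ serves as the alignment for $\widetilde\bX$. The product $\bW\transpose\widetilde\bx_i = d_i^{1/2}\,\bW\transpose\breve\bx_i$ has leading term $\bar d_i^{1/2}\by_{0i} = \rho_n^{1/2}\bx_{0i}$, and its two genuine first-order contributions combine into
\[
\bar d_i^{1/2}\sum_{j=1}^n(A_{ij} - \rho_n\bx_{0i}\transpose\bx_{0j})\bg_{ij} + a_i\by_{0i} = \sum_{j=1}^n(A_{ij} - \rho_n\bx_{0i}\transpose\bx_{0j})\Big(\bar d_i^{1/2}\bg_{ij} + \frac{\by_{0i}}{2\bar d_i^{1/2}}\Big).
\]
This identifies $\bzeta_{ij} = \rho_n^{1/2}\big(\bar d_i^{1/2}\bg_{ij} + \by_{0i}/(2\bar d_i^{1/2})\big)$; since $\bar d_i\asymp n\rho_n$, $\|\by_{0i}\|\asymp n^{-1/2}$, and $\|\bg_{ij}\|\lesssim n^{-3/2}\rho_n^{-1}$, both summands are $O(n^{-1}\rho_n^{-1/2})$, whence $\sup_{i,j}\|\bzeta_{ij}\|\lesssim 1/n$ as required. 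All remaining products — $\bar d_i^{1/2}\widetilde\bS_i$, the cross term $a_i\sum_j(\cdots)\bg_{ij}$, the bias term $r_i^{(d)}\by_{0i}$, and the lower-order products — are collected into $\widetilde\bR_i$.

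The crux is then the Frobenius bound $\|\widetilde\bR\|_{\mathrm F}^2 = \sum_i\|\widetilde\bR_i\|^2 = O_{\prob_0}((n\rho_n)^{-1}(\log n)^\omega)$. Term by term: $\sum_i\bar d_i\|\widetilde\bS_i\|^2\lesssim n\rho_n\cdot(n\rho_n)^{-2}(\log n)^c = (n\rho_n)^{-1}(\log n)^c$; the second-order pieces $a_i\sum_j(\cdots)\bg_{ij}$ and $r_i^{(d)}\by_{0i}$ are each $O(n^{-1}\rho_n^{-1/2})$ per row, so their squared-row-sums are $O((n\rho_n)^{-1})$; and the remaining products are of strictly smaller order. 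The two pieces that need real work are precisely these second-order terms, which are quadratic (respectively quartic) forms in the centered Bernoulli noise: showing that $\sum_i\|a_i\sum_j(\cdots)\bg_{ij}\|^2$ and $\sum_i (r_i^{(d)})^2\|\by_{0i}\|^2$ concentrate at order $(n\rho_n)^{-1}$ up to logarithmic factors is where I would invoke the log-Sobolev/Hanson–Wright machinery (as in the proof sketch for Theorem \ref{thm:asymptotic_normality_OS}) together with the density assumption $(\log n)^4/(n\rho_n)\to 0$. I expect the main obstacle to be the analytic input of the second paragraph: establishing a row-uniform two-to-infinity linearization of the LSE with explicit weights $\bg_{ij}$ and a Frobenius-controlled remainder, since the fixed-index central limit theorem of Theorem \ref{thm:LSE_limit_theorem} is insufficient and one must simultaneously control the spectral perturbation of $\bA - \rho_n\bX_0\bX_0\transpose$ and the degree-normalization perturbation $\mathrm{diag}(\bA\one) - \mathrm{diag}(\rho_n\bX_0\bX_0\transpose\one)$, both driven by the same noise.
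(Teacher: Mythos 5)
Your proposal is correct and takes essentially the paper's route: the row-wise linearization you posit for $\breve\bX$ is exactly the expansion \eqref{eqn:LSE_expansion} from the proof of Theorem \ref{thm:LSE_limit_theorem} (so the ``main obstacle'' you flag is already available, with $\bg_{ij} = \widetilde\bDelta_n^{-1}\by_{0j}/\sqrt{t_it_j} - \by_{0i}/(2t_i)$ where $t_i$ is the $i$th diagonal entry of $\bT$), the paper likewise Taylor-expands $\bD^{1/2}-\bT^{1/2}$ to second order, and your composite weights collapse — the $\by_{0i}/(2\bar d_i^{1/2})$ term cancels the degree-normalization part of $\bar d_i^{1/2}\bg_{ij}$ exactly — to the paper's $i$-independent $\bzeta_{ij} = \widetilde\bDelta_n^{-1}\bx_{0j}/(n\,\bx_{0j}\transpose\bmu_n)$. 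The only substantive difference is that the paper does the bookkeeping in matrix form, using the identity $\tfrac{1}{2}\bT^{-1/2}(\bD-\bT)\breve\bX\bW + \tfrac{1}{2}\bT^{-1/2}(\bT-\bD)\bY_0 = \tfrac{1}{2}\bT^{-1/2}(\bD-\bT)(\breve\bX\bW-\bY_0)$ so that every second-order term is disposed of by elementary operator-norm bounds such as $\|\bD-\bT\|_2 = O_{\prob_0}\{(n\rho_n)^{1/2}\log n\}$ together with the Frobenius control of $\breve\bX\bW - \bY_0$, yielding $\|\widetilde\bR\|_{\mathrm{F}}^2 = O_{\prob_0}\{(n\rho_n)^{-1}(\log n)^4\}$ (i.e.\ $\omega = 4$) without any log-Sobolev or Hanson--Wright machinery — your proposed concentration step is valid but unnecessary.
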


Similar to the ASE, the LSE is also a least squares type estimator and does not involve the likelihood function. %, whereas the one-step
Therefore, to 
estimate the population LSE $\bY_0 = \bY(\bX_0)$ using the likelihood information of the sampling model, we propose the following one-step estimator $\widehat\bY$ for $\bY_0$ based on the one-step estimator $\widehat\bX = [\widehat\bx_1,\ldots,\widehat\bx_n]\transpose$ defined in \eqref{eqn:one_step_estimator} and an initial estimator $\widetilde\bX = [\widetilde\bx_1,\ldots,\widetilde\bx_n]\transpose$ that satisfies the approximate linearization property \eqref{eqn:linearization_property}:
\begin{align}\label{eqn:one_step_estimator_Laplacian}
\widehat\bY = [\widehat\by_1,\ldots,\widehat\by_n]\transpose,\quad
\widehat\by_i = \frac{\widehat\bx_i}{\sqrt{\sum_{j = 1}^n\widehat\bx_i\transpose\widetilde\bx_j}},\quad i = 1,2,\ldots,n.
\end{align}
In matrix form, we can write $\widehat\bY = \{\mathrm{diag}(\widehat\bX\widetilde\bX\one)\}^{-1/2}\widehat\bX$. 
% The following two theorems for  are similar to those for the one-step estimator $\widehat\bX$. 
The likelihood information is thus absorbed into $\widehat\bY$ through the one-step estimator $\widehat\bX$.
We characterize the global and local behavior of the one-step estimator $\widehat\bY$ for the population LSE via the following two theorems. 
\begin{theorem}\label{thm:asymptotic_normality_OS_Laplacian}
Let $\bA\sim\mathrm{RDPG}(\bX_0)$ with a sparsity factor $\rho_n$ for some $\bX_0 = [\bx_{01},\ldots,\bx_{0n}]\transpose\in\calX^n$. Assume that the conditions of Theorem \ref{thm:asymptotic_normality_OS} hold. 
Denote 
% $\widetilde\bX = [\widetilde\bx_1,\ldots,\widetilde\bx_n]\transpose$ the ASE of $\bA$ into $\mathbb{R}^d$, $\widehat\bX = [\widehat\bx_1,\ldots,\widehat\bx_n]\transpose$ the one-step estimator defined by \eqref{eqn:one_step_estimator}, and 
$\widehat\bY = [\widehat\by_1,\ldots,\widehat\by_n]\transpose$ the one-step estimator for the population LSE defined by \eqref{eqn:one_step_estimator_Laplacian}, and $\bmu_n = (1/n)\sum_{i = 1}^n\bx_{0i}$. 
% , and that $\bDelta$ has no repeated eigenvalues. 
% If $(\log n)^{2(1\vee\omega)}(n\rho_n^6)^{-1}\to0$, 
Then there exists a sequence of orthogonal matrices $(\bW)_{n = 1}^\infty = (\bW_n)_{n = 1}^\infty\subset\mathbb{O}(d)$ such that
\begin{align*}
\sqrt{n}(\bW\transpose\widehat\by_i - \by_{0i}) = \rho_n^{-1/2}\frac{1}{\sqrt{\bmu_n\transpose\bx_{0i}}}\left(\eye_d - \frac{\bx_{0i}\bmu_n\transpose}{2\bmu_n\transpose\bx_{0i}}\right)(\bW\transpose\widehat\bx_i - \rho_n^{1/2}\bx_{0i}) + \bR_i^{(\mathrm{L})},\quad i = 1,2,\ldots,n,
\end{align*}
where $\|\bR_i^{(\mathrm{L})}\| = O_{\prob_0}((n\rho_n^2)^{-1}(\log n)^{1\vee\omega})$ and $\sum_{i = 1}^n\|\bR_i^{(\mathrm{L})}\|^2 = O_{\prob_0}\left((n\rho_n^4)^{-1}{(\log n)^{2(1\vee\omega)}}\right)$. 
\end{theorem}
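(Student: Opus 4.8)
The plan is to recognize the one-step estimator $\widehat\by_i$ for the population LSE as an explicit, smooth function of the one-step estimator $\widehat\bx_i$ and the column sum $\sum_{j}\widetilde\bx_j$ of the initial estimator, and then to carry out a first-order Taylor (delta-method) expansion around the population quantities. Writing $\bu_i = \bW\transpose\widehat\bx_i$ and $\bs = \sum_{j=1}^n\bW\transpose\widetilde\bx_j$, the identity $\widehat\by_i = \widehat\bx_i(\sum_j\widehat\bx_i\transpose\widetilde\bx_j)^{-1/2}$ together with $\bW\transpose\bW = \eye_d$ gives the exact relation $\bW\transpose\widehat\by_i = g(\bu_i,\bs)$, where $g(\bu,\bs) = \bu(\bu\transpose\bs)^{-1/2}$. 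The natural expansion point is $(\bu_i^{0},\bs^{0}) = (\rho_n^{1/2}\bx_{0i},\, \rho_n^{1/2}n\bmu_n)$; a direct computation confirms $g(\bu_i^{0},\bs^{0}) = \by_{0i}$, so that $\bW\transpose\widehat\by_i - \by_{0i} = g(\bu_i,\bs) - g(\bu_i^{0},\bs^{0})$, and the sparsity factor $\rho_n$ cancels in the leading term, consistent with the fact that the Laplacian normalization removes the scaling of $\bX_0$.

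The key observation is that the $\partial g/\partial\bu$ derivative reproduces precisely the advertised leading coefficient. Differentiation yields
\[
\frac{\partial g}{\partial\bu} = (\bu\transpose\bs)^{-1/2}\Big[\eye_d - \frac{\bu\bs\transpose}{2\bu\transpose\bs}\Big],\qquad
\frac{\partial g}{\partial\bs} = -\frac{1}{2}(\bu\transpose\bs)^{-3/2}\bu\bu\transpose.
\]
Evaluating at $(\bu_i^{0},\bs^{0})$, where $\bu_i^{0\transpose}\bs^{0} = \rho_n n\,\bmu_n\transpose\bx_{0i}$, the first derivative becomes $\rho_n^{-1/2}n^{-1/2}(\bmu_n\transpose\bx_{0i})^{-1/2}[\eye_d - \bx_{0i}\bmu_n\transpose/(2\bmu_n\transpose\bx_{0i})]$; multiplying the first-order term $(\partial g/\partial\bu)|_0(\bu_i - \bu_i^{0})$ by $\sqrt n$ then reproduces exactly the stated leading term in $(\bW\transpose\widehat\bx_i - \rho_n^{1/2}\bx_{0i})$. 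The remaining first-order contribution $(\partial g/\partial\bs)|_0(\bs - \bs^{0})$ and the second-order Taylor remainder are what must be absorbed into $\bR_i^{(\mathrm{L})}$.

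To control the remainder I would bound the two deviations. First, by Theorem \ref{thm:asymptotic_normality_OS} the leading sum in \eqref{eqn:OS_expansion} is a normalized sum of $n$ independent mean-zero terms of size $O(1/n)$, so $\|\bu_i - \bu_i^{0}\| = O_{\prob_0}(n^{-1/2})$, the remainder $\widehat\bR_i$ being of smaller order under the density condition. Second, by the approximate linearization property, $\bs - \bs^{0} = \sum_j(\bW\transpose\widetilde\bx_j - \rho_n^{1/2}\bx_{0j})$ splits into a centered double sum of Bernoulli fluctuations, whose variance is $O_{\prob_0}(1)$ using $\sup_{i,j}\|\bzeta_{ij}\|\lesssim 1/n$, plus the deterministic part $\sum_j\widetilde\bR_j$, which Cauchy--Schwarz bounds by $\sqrt n\,\|\widetilde\bR\|_{\mathrm{F}} = O_{\prob_0}(\rho_n^{-1/2}(\log n)^{\omega/2})$; hence $\|\bs - \bs^{0}\| = O_{\prob_0}(\rho_n^{-1/2}(\log n)^{\omega/2})$. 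Since the expansion point scales with $\rho_n$ and $n$, the relevant derivatives of $g$ scale as $(\partial g/\partial\bs)|_0 = O(\rho_n^{-1/2}n^{-3/2})$ and $\|\mathrm{Hess}\,g\| = O(\rho_n^{-1}n^{-1/2})$ along the joining segment; combining with the deviation bounds and the $\sqrt n$ scaling shows that every discarded term is $O_{\prob_0}(\rho_n^{-1}n^{-1}(\log n)^{\omega/2})$ or smaller, all dominated by the claimed $\|\bR_i^{(\mathrm{L})}\| = O_{\prob_0}((n\rho_n^2)^{-1}(\log n)^{1\vee\omega})$ because $\rho_n\le 1$.

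Finally, the aggregate bound $\sum_i\|\bR_i^{(\mathrm{L})}\|^2 = O_{\prob_0}((n\rho_n^4)^{-1}(\log n)^{2(1\vee\omega)})$ is obtained by summing the squared remainders over $i$. The $\bs$-derivative contribution is immediate, as $\bs - \bs^{0}$ is common to all $i$ and the coefficient $(\partial g/\partial\bs)|_0$ is $O(\rho_n^{-1/2}n^{-3/2})$. For the quadratic-in-$\bu$ contribution I would use $\sum_i\|\bu_i - \bu_i^{0}\|^4 \le (\max_i\|\bu_i-\bu_i^{0}\|^2)\sum_i\|\bu_i-\bu_i^{0}\|^2$, bounding $\sum_i\|\bu_i-\bu_i^{0}\|^2 = O_{\prob_0}(1)$ via Theorem \ref{thm:convergence_OS} and $\max_i\|\bu_i-\bu_i^{0}\|^2 = \|\widehat\bX\bW - \rho_n^{1/2}\bX_0\|_{2\to\infty}^2 = O_{\prob_0}(n^{-1}(\log n)^{1\vee\omega})$. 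The main obstacle is the bookkeeping of the $\rho_n$- and $n$-dependent scalings of the derivatives of $g$ at a base point that itself drifts with $\rho_n$: each factor of $\rho_n$ must be tracked through the Hessian and through the $\bs$-deviation bound to confirm the final powers $\rho_n^{-2}$ and $\rho_n^{-4}$, and establishing the uniform two-to-infinity control of the one-step estimator (not isolated as a standalone statement in the excerpt) is the one ingredient that genuinely requires the concentration machinery underlying Theorem \ref{thm:asymptotic_normality_OS} rather than elementary calculus.
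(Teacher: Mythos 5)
Your proposal is correct and is essentially the paper's own argument: the paper likewise Taylor-expands the map $\bh(\bx,\bZ) = \bx\{(1/n)\sum_{j=1}^n\bx\transpose\bz_j\}^{-1/2}$ at $(\rho_n^{1/2}\bx_{0i},\rho_n^{1/2}\bX_0)$, reads off the $\partial/\partial\bx$ derivative as the stated leading coefficient, absorbs the linear-in-$\widetilde\bx_j$ term into $\bR_i^{(\mathrm{L})}$ via the approximate linearization property (your variance-plus-Cauchy--Schwarz bound on $\bs-\bs^0$ is the Lemma~\ref{lemma:R12k_analysis}-type argument the paper invokes, with its per-row derivatives $\partial\bh/\partial\bz_j$ merely being the disaggregated form of your single sufficient statistic $\bs$), and controls the second-order remainder through the two-to-infinity bounds on $\widetilde\bX$ and $\widehat\bX$ — the latter being exactly the ingredient you correctly flag as requiring the machinery of Theorem~\ref{thm:asymptotic_normality_OS}. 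One bookkeeping slip to fix: $\|\widehat\bX\bW - \rho_n^{1/2}\bX_0\|_{2\to\infty}^2 = O_{\prob_0}((n\rho_n)^{-1}(\log n)^{1\vee\omega})$, not $O_{\prob_0}(n^{-1}(\log n)^{1\vee\omega})$, so the dominant quadratic-in-$\bu$ remainder is, uniformly in $i$, of order $\rho_n^{-1}\cdot(n\rho_n)^{-1}(\log n)^{1\vee\omega} = (n\rho_n^2)^{-1}(\log n)^{1\vee\omega}$ rather than the $\rho_n^{-1}n^{-1}(\log n)^{\omega/2}$ you quote — this is precisely where the $\rho_n^{-2}$ (and, after squaring and combining with $\sum_i\|\bW\transpose\widehat\bx_i - \rho_n^{1/2}\bx_{0i}\|^2 = O_{\prob_0}(1)$, the $\rho_n^{-4}$) in the theorem's rates originates, though both corrected quantities remain within the claimed bounds so your conclusions stand.
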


\begin{theorem}\label{thm:convergence_OS_Laplacian}
Let $\bA\sim\mathrm{RDPG}(\bX_0)$ with a sparsity factor $\rho_n$ for some $\bX_0 = [\bx_{01},\ldots,\bx_{0n}]\transpose\in\calX^n$.  %with condition \eqref{eqn:strong_convergence_measure} hold. 
Assume the conditions of Theorem \ref{thm:asymptotic_normality_OS_Laplacian} hold. Denote 
% $\widehat\bX = [\widehat\bx_1,\ldots,\widehat\bx_n]\transpose$ the one-step estimator defined by \eqref{eqn:one_step_estimator}, 
$\widehat\bY = [\widehat\by_1,\ldots,\widehat\by_n]\transpose$ the one-step estimator for the population LSE defined by \eqref{eqn:one_step_estimator_Laplacian}, and
\[
\widetilde\bG(\bx) = \frac{1}{(\bmu\transpose\bx)}\left(\eye_d - \frac{\bx\bmu\transpose}{2\bmu\transpose\bx}\right)\bG(\bx)^{-1}\left(\eye_d - \frac{\bx\bmu\transpose}{2\bmu\transpose\bx}\right)\transpose
\]
for any $\bx\in\calX(\delta)$, 
 % such that $\bx\transpose\bx_{0j}\in[\delta, 1 - \delta]$ for any $j\in[n]$, 
 where $\bmu = \int_\calX\bx F(\mathrm{d}\bx)$ and $\bG(\cdot)$ is defined in equation \eqref{eqn:efficient_covariance}. 
Then there exists a sequence of orthogonal matrices $(\bW)_{n = 1}^\infty = (\bW_n)_{n = 1}^\infty\subset\mathbb{O}(d)$ such that
\begin{align}
\label{eqn:OSL_convergence}
n\rho_n\left\|\widehat\bY\bW - \bY_0\right\|_{\mathrm{F}}^2 \overset{\prob_0}{\to}\int_\calX\mathrm{tr}\left\{\widetilde\bG(\bx)\right\}F(\mathrm{d}\bx),
\end{align}
% where $\widehat\bY = [\widehat\by_1,\ldots,\widehat\by_n]\transpose$, 
and for each fixed $i\in[n]$,
\begin{align}
\label{eqn:OLS_asymptotic_normality}
n\rho_n^{1/2}(\bW\transpose\widehat\by_i - \by_{0i})\overset{\calL}{\to}\mathrm{N}(\zero, \widetilde\bG(\bx_{0i})).
\end{align}
\end{theorem}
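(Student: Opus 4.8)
The plan is to use Theorem~\ref{thm:asymptotic_normality_OS_Laplacian} as the workhorse, transferring the already-established asymptotics of the one-step estimator $\widehat\bX$ (Theorems~\ref{thm:asymptotic_normality_OS} and~\ref{thm:convergence_OS}) onto $\widehat\bY$. Abbreviate $\bH_i := (\bmu_n\transpose\bx_{0i})^{-1/2}\{\eye_d - \bx_{0i}\bmu_n\transpose/(2\bmu_n\transpose\bx_{0i})\}$, so that multiplying the expansion of Theorem~\ref{thm:asymptotic_normality_OS_Laplacian} by $\rho_n^{1/2}$ yields
\[
\sqrt{n\rho_n}\,(\bW\transpose\widehat\by_i - \by_{0i}) = \bH_i(\bW\transpose\widehat\bx_i - \rho_n^{1/2}\bx_{0i}) + \rho_n^{1/2}\bR_i^{(\mathrm{L})}.
\]
Because $\bx_{0i}\in\calX(\delta)$, the scalar $\bmu_n\transpose\bx_{0i}$ is bounded away from zero uniformly in $i$ and $n$, giving $\sup_{i\in[n]}\|\bH_i\|_2\lesssim 1$; moreover $\bmu_n\to\bmu$ by~\eqref{eqn:strong_convergence_measure}, so $\bH_i\to\bH(\bx_{0i}) := (\bmu\transpose\bx_{0i})^{-1/2}\{\eye_d - \bx_{0i}\bmu\transpose/(2\bmu\transpose\bx_{0i})\}$, which satisfies the key identity $\bH(\bx_{0i})\bG(\bx_{0i})^{-1}\bH(\bx_{0i})\transpose = \widetilde\bG(\bx_{0i})$ by the definition of $\widetilde\bG$.

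For the local normality~\eqref{eqn:OLS_asymptotic_normality}, I would multiply the display above by $\sqrt{n}$, obtaining $n\rho_n^{1/2}(\bW\transpose\widehat\by_i - \by_{0i}) = \bH_i\{\sqrt{n}(\bW\transpose\widehat\bx_i - \rho_n^{1/2}\bx_{0i})\} + \sqrt{n}\rho_n^{1/2}\bR_i^{(\mathrm{L})}$. By Theorem~\ref{thm:convergence_OS}, $\sqrt{n}(\bW\transpose\widehat\bx_i - \rho_n^{1/2}\bx_{0i})\overset{\calL}{\to}\mathrm{N}(\zero,\bG(\bx_{0i})^{-1})$, while the remainder obeys $\sqrt{n}\rho_n^{1/2}\|\bR_i^{(\mathrm{L})}\| = O_{\prob_0}(n^{-1/2}\rho_n^{-3/2}(\log n)^{1\vee\omega})$, which equals $\{(\log n)^{2(1\vee\omega)}/(n\rho_n^3)\}^{1/2}$ up to a constant and is therefore $o_{\prob_0}(1)$ under the density condition $(\log n)^{2(1\vee\omega)}/(n\rho_n^5)\to 0$ together with $\rho_n\le 1$. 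Slutsky's theorem combined with $\bH_i\to\bH(\bx_{0i})$ then delivers the limit $\mathrm{N}(\zero,\widetilde\bG(\bx_{0i}))$.

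For the global statement~\eqref{eqn:OSL_convergence}, write $\bv_i := \bW\transpose\widehat\bx_i - \rho_n^{1/2}\bx_{0i}$ and square-and-sum the first display to get $n\rho_n\|\widehat\bY\bW - \bY_0\|_{\mathrm{F}}^2 = \sum_{i=1}^n\|\bH_i\bv_i + \rho_n^{1/2}\bR_i^{(\mathrm{L})}\|^2$. The pure-remainder contribution is $\rho_n\sum_i\|\bR_i^{(\mathrm{L})}\|^2 = O_{\prob_0}((n\rho_n^3)^{-1}(\log n)^{2(1\vee\omega)})\to 0$, while the leading sum satisfies $\sum_i\|\bH_i\bv_i\|^2\le\sup_i\|\bH_i\|_2^2\sum_i\|\bv_i\|^2 = O_{\prob_0}(1)$ by Theorem~\ref{thm:convergence_OS}, so the cross term is $o_{\prob_0}(1)$ by Cauchy--Schwarz. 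It remains to identify the limit of $\sum_i\|\bH_i\bv_i\|^2$. Substituting the linearization~\eqref{eqn:OS_expansion} $\bv_i = \bT_i + \widehat\bR_i$, where $\bT_i$ is the leading centered-Bernoulli sum, the $\widehat\bR_i$ contributions are negligible since $\sum_i\|\bH_i\widehat\bR_i\|^2\lesssim\sum_i\|\widehat\bR_i\|^2 = O_{\prob_0}((n\rho_n^5)^{-1}(\log n)^{2(1\vee\omega)})\to 0$, so $\sum_i\|\bH_i\bv_i\|^2 = \sum_i\|\bH_i\bT_i\|^2 + o_{\prob_0}(1)$. Applying the logarithmic Sobolev concentration inequality exactly as in the proof of Theorem~\ref{thm:convergence_OS}, this sum converges in probability to $\expect_0\sum_i\|\bH_i\bT_i\|^2$; a second-moment computation using $\mathrm{Var}_0(A_{ij}) = \rho_n\bx_{0i}\transpose\bx_{0j}(1-\rho_n\bx_{0i}\transpose\bx_{0j})$ gives $\expect_0[\bT_i\bT_i\transpose] = (1/n)\bG_n(\bx_{0i})^{-1}$, whence $\expect_0\sum_i\|\bH_i\bT_i\|^2 = (1/n)\sum_i\mathrm{tr}\{\bH_i\bG_n(\bx_{0i})^{-1}\bH_i\transpose\}$, the empirical average of $\mathrm{tr}\{\widetilde\bG_n(\bx_{0i})\}$, which converges to $\int_\calX\mathrm{tr}\{\widetilde\bG(\bx)\}F(\mathrm{d}\bx)$ by~\eqref{eqn:strong_convergence_measure}.

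The main obstacle is the concentration step for $\sum_i\|\bH_i\bT_i\|^2$: for fixed $i$ the summand involves only the $i$th row of $\bA$, but the symmetry $A_{ij}=A_{ji}$ couples the summands across $i$, so a naive law of large numbers does not apply and a bounded-differences / log-Sobolev estimate is required, leaning on $\sup_i\|\bH_i\|_2\lesssim 1$ and the uniform invertibility of $\bG_n(\bx_{0i})$ afforded by $\bx_{0i}\in\calX(\delta)$. This, however, reuses essentially verbatim the concentration machinery already developed for Theorem~\ref{thm:convergence_OS}; the genuinely new content reduces to the algebraic identity $\expect_0[\bT_i\bT_i\transpose] = (1/n)\bG_n(\bx_{0i})^{-1}$ and the bookkeeping verifying that every remainder order is $o_{\prob_0}(1)$ in the stated sparsity regime.
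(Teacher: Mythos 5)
Your proposal is correct and follows essentially the same route as the paper: both transfer the expansion of Theorem \ref{thm:asymptotic_normality_OS_Laplacian} onto $\widehat\by_i$, prove \eqref{eqn:OLS_asymptotic_normality} via Slutsky's theorem using $\sqrt{n}(\bW\transpose\widehat\bx_i - \rho_n^{1/2}\bx_{0i})\overset{\calL}{\to}\mathrm{N}(\zero,\bG(\bx_{0i})^{-1})$, and prove \eqref{eqn:OSL_convergence} by isolating the centered-Bernoulli leading term (your $\bH_i\bT_i$, the paper's $\sum_j(A_{ij}-\rho_n\bx_{0i}\transpose\bx_{0j})\bgamma_{ij}$), applying the logarithmic Sobolev concentration lemma (Lemma \ref{lemma:convergence_score_function}) with the same bound $\sup_{i,j}\|\bgamma_{ij}\|\lesssim(n\sqrt{\rho_n})^{-1}$, computing $\expect_0[\bT_i\bT_i\transpose]=(1/n)\bG_n(\bx_{0i})^{-1}$, killing the cross term by Cauchy--Schwarz, and passing to the limit via \eqref{eqn:strong_convergence_measure} and the uniform convergence $\widetilde\bG_n\to\widetilde\bG$ (Lemma \ref{lemma:uniform_convergence_G}). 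The only cosmetic difference is that the paper merges $\bH_i\widehat\bR_i$ and $\rho_n^{1/2}\bR_i^{(\mathrm{L})}$ into a single remainder $\widehat\bR_i^{(\mathrm{L})}$ before squaring, whereas you peel them off in two stages with identical order bounds.
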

\begin{remark}
The key difference between the assumption of Theorem \ref{thm:asymptotic_normality_OS_Laplacian} for the one-step estimator for the population LSE and that of Theorem \ref{thm:LSE_limit_theorem} for the (sample) LSE is that, under the sparse regime (ii), we drop the requirement that $F$ is a finite mixture of point masses and $F$ is allowed to be a general distribution function on $\calX^n$, at the cost of a stronger density assumption $(\log n)^{2(1\vee\omega)}/(n\rho_n^4)\to 0$. The same density condition for $\rho_n$ is also required in Theorem \ref{thm:asymptotic_normality_OS}.
 % that characterizes the asymptotic behavior of the one-step estmator \eqref{eqn:one_step_estimator}. 
\end{remark}
In Section \ref{sec:an_efficient_one_step_estimator}, it is shown that the one-step estimator $\widehat\bX$ dominates the ASE $\widehat\bX^{(\mathrm{ASE})}$ for estimating $\bX_0$ asymptotically. Here we argue that $\widehat\bY$ dominates the LSE $\breve{\bX}$ as well. 
Denote 
\[
\bLambda = \mathrm{diag}\{(\bmu_n\transpose\bx_{01})^{-1},\ldots,(\bmu_n\transpose\bx_{0n})^{-1}\}\]
and
\[
\widetilde\bDelta_n = \int_\calX\left(\frac{\bx_{1}\bx_{1}\transpose}{\bmu_n\transpose\bx_{1}}\right)F_n(\mathrm{d}\bx_1) = \frac{1}{n}\sum_{j = 1}^n\frac{\bx_{0j}\bx_{0j}\transpose}{\bmu_n\transpose\bx_{0j}} = \frac{1}{n}\bX_0\transpose\bLambda\bX_0. 
\] 
Clearly, $\widetilde\bDelta_n\to \widetilde\bDelta$ by \eqref{eqn:strong_convergence_measure}.
% \note{In Theorem 5, I had $\bx\transpose\bmu$ for $\widetilde\bDelta$, but I define $\bmu_n\transpose\bx_{0j}$ for $\widetilde\bDelta_n$. These two are different notations. } 
Suppose $\bX_0$ yields the singular value decomposition $\bX_0 = \bU_0\bS_0^{1/2}\bV_0\transpose$ with $\bU_0\in\mathbb{O}(n, d)$, $\bS_0^{1/2}$ being diagonal, and $\bV_0\in\mathbb{O}(d)$. By Corollary 2.1 in \cite{PECARIC1996455}, we have
\begin{align*}
(\bU_0\transpose\bLambda\bU_0)(\bU_0\transpose\bD_n(\bx)\bU_0)(\bU_0\transpose\bLambda\bU_0)\succeq \bU_0\transpose\bLambda\bD_n(\bx)\bLambda\bU_0,
\end{align*}
implying that
\begin{align*}
&
\widetilde\bDelta^{-1}_n\left(\frac{1}{n}\bX_0\transpose\bLambda\bD_n(\bx)\bLambda\bX_0\right)\widetilde\bDelta_n^{-1}
\\
&\quad = n(\bX_0\transpose\bLambda\bX_0)^{-1}\left(\bX_0\transpose\bLambda\bD_n(\bx)\bLambda\bX_0\right)(\bX_0\transpose\bLambda\bX_0)^{-1}\\
&\quad = n\bV_0\bS_0^{-1/2}(\bU_0\transpose\bLambda\bU_0)^{-1}(\bU_0\transpose\bLambda\bD_n(\bx)\bLambda\bU_0)(\bU_0\transpose\bLambda\bU_0)^{-1}\bS_0^{-1/2}\bV_0\transpose\\
&\quad\succeq n\bV_0\bS_0^{-1/2}(\bU_0\transpose\bD_n(\bx)^{-1}\bU_0)^{-1}\bS_0^{-1/2}\bV_0\transpose\\
&\quad = n(\bV_0\bS_0^{1/2}\bU_0\transpose\bD_n(\bx)^{-1}\bU_0\bS_0^{1/2}\bV_0\transpose)^{-1}\\
&\quad = \left(\frac{1}{n}\bX_0\transpose\bD_n(\bx)^{-1}\bX_0\right)^{-1}.
\end{align*}
Since $\widetilde\bDelta_n\bmu_n = \bmu_n$, it follows that 
\begin{align*}
\widetilde\bSigma_n(\bx)&:= \frac{1}{\bmu\transpose\bx}\left(\widetilde\bDelta^{-1}_n - \frac{\bx\bmu_n\transpose}{2\bmu_n\transpose\bx}\right)\left\{\frac{1}{n}\sum_{j = 1}^n\frac{\bx\transpose\bx_{0j}(1 - \rho_n\bx\transpose\bx_{0j})}{(\bmu\transpose\bx_{0j})^2}\bx_{0j}\bx_{0j}\transpose\right\}\left(\widetilde\bDelta^{-1}_n - \frac{\bx\bmu_n\transpose}{2\bmu_n\transpose\bx}\right)\transpose\\
& = \frac{1}{\bmu\transpose\bx}\left(\widetilde\bDelta^{-1}_n - \frac{\bx\bmu_n\transpose\widetilde\bDelta_n^{-1}}{2\bmu_n\transpose\bx}\right)
\left(\frac{1}{n}\bX_0\transpose\bLambda\bD_n(\bx)\bLambda\bX_0\right)
\left(\widetilde\bDelta^{-1}_n - \frac{\bx\bmu_n\transpose\widetilde\bDelta_n^{-1}}{2\bmu_n\transpose\bx}\right)\transpose\\
& = \frac{1}{\bmu\transpose\bx}\left(\eye_d - \frac{\bx\bmu_n\transpose}{2\bmu_n\transpose\bx}\right)
\widetilde\bDelta_n^{-1}
\left(\frac{1}{n}\bX_0\transpose\bLambda\bD_n(\bx)\bLambda\bX_0\right)\widetilde\bDelta_n^{-1}
\left(\eye_d - \frac{\bx\bmu_n\transpose}{2\bmu_n\transpose\bx}\right)\transpose\\
&\succeq \frac{1}{\bmu\transpose\bx}\left(\eye_d - \frac{\bx\bmu_n\transpose}{2\bmu_n\transpose\bx}\right)
\left(\frac{1}{n}\bX_0\transpose\bD_n(\bx)^{-1}\bX_0\right)^{-1}
\left(\eye_d - \frac{\bx\bmu_n\transpose}{2\bmu_n\transpose\bx}\right)\transpose\to \widetilde\bG(\bx)
\end{align*}
as $n\to\infty$. Therefore, $\widetilde\bSigma(\bx) = \lim_{n\to\infty}\widetilde\bSigma_n(\bx)\succeq\widetilde\bG(\bx)$. This shows that locally for a fixed vertex $i$, the one-step estimator $\widehat\bY$ improves the LSE $\breve{\bX}$ asymptotically in terms of a smaller asymptotic covariance matrix in spectra. In addition,  
\begin{align*}
&n\rho_n\|\breve{\bX}\bW - \bY_0\|_{\mathrm{F}}^2 - n\rho_n\|\widehat\bY\bW - \bY_0\|_{\mathrm{F}}^2
% \\&\quad
\overset{\prob_0}{\to} \int_\calX \mathrm{tr}\{\widetilde\bSigma(\bx) - \widetilde\bG(\bx)\}F(\mathrm{d}\bx)\geq 0.
\end{align*}
Namely, the one-step estimator $\widehat\bY$ also improves the LSE $\breve{\bX}$ globally for all vertices in terms of the sum of squares error $\|\widehat\bY\bW - \bY_0\|_{\mathrm{F}}^2$.

% section an_efficient_one_step_estimator (end)

\section{Numerical Examples} % (fold)
\label{sec:numerical_examples}

\subsection{Clustering performance in stochastic block models via Chernoff information} % (fold)
\label{sub:comparison_of_different_estimators_via_chernoff_information}
% subsection comparison_of_different_estimators_via_chernoff_information (end)

In this section, we consider the stochastic block models with positive semidefinite block probability matrices in the context of random dot product graphs and focus on vertex clustering as a subsequent inference task of interest after obtaining estimates of the latent positions or the population LSE. In particular, the following four estimates are considered: the ASE \eqref{eqn:ASE_least_squared_problem}, the one-step estimator \eqref{eqn:one_step_estimator} initialized at the ASE, abbreviated as OSE-A, the LSE \eqref{eqn:LSE_least_squared_problem}, and the one-step estimator \eqref{eqn:one_step_estimator_Laplacian} for the population LSE, abbreviated as OSE-L. These estimates are then used as input features for vertex clustering. 

% We are interested in comparing the performance of \yx the subsequent clustering on vertices based on the estimated latent positions \xx using the four different estimators: . 
Our goal is to compare the  vertex clustering using these four estimates rather than the performance of specific clustering algorithms. Hence, we  need a criterion that is independent of the choice of the clustering algorithm, but focuses on     distributions of the input features.
% \yx For measuring the clustering performance of different estimators, \xx 
To this end, we introduce the concept of \emph{minimum pairwise Chernoff distance}. Generically, let $\bx_1,\ldots,\bx_n$ be i.i.d. random variables following a distribution $F\in\{F_1,\ldots,F_K\}$, where $F_k(\mathrm{d}\bx) = f_k(\bx)\mathrm{d}\bx$, $k\in [K]$, and suppose the task is to determine whether $F = F_k$ for $k\in[K]$. 
Assume that $F = F_k$ with prior probability $\pi_k$, $k\in[K]$. Then for any decision rule $u$, the risk of $u$ is $r(u) = \sum_{k = 1}^K\pi_k\sum_{l\neq k}p_{kl}(u)$, where $p_{kl}(u)$ is the probability that the decision rule $u$ assigns $F = F_l$ when the underlying true distribution is $F = F_k$. 
In the context of vertex clustering, the decision rule $u$ plays the role of a clustering algorithm, and $\bx_i$'s are treated as the rows of one of the aforementioned four estimates. Since we are interested in a criterion that does not depend on $u$, it is natural to investigate the behavior of the risk when the optimal decision rule (clustering algorithm) is applied. 
The following result characterized the optimal error rate \citep{567705}:
\[
\inf_u\lim_{n\to\infty}\frac{1}{n}r(u) = -\min_{k\neq l}C(F_k, F_l),
\]
where $C(F_k, F_l)$ is the \emph{Chernoff information} between $F_k$ and $F_l$ defined by \citep{chernoff1952,10.2307/2236974}
\begin{align}\label{eqn:Chernoff_information}
C(F_k, F_l) = \sup_{t\in(0, 1)}\left\{-\log\int f_k(\bx)^tf_l(\bx)^{1 - t}\mathrm{d}\bx\right\},
\end{align}
and $\min_{k\neq l}C(F_k, F_l)$ is the minimum pairwise Chernoff distance. This quantity describes the asymptotic decaying rate of the error for the optimal decision rule, with larger values indicating smaller optimal error rate. 
% \yx 
In our  context, since the asymptotic distributions of the rows of the four estimators are multivariate normal,  it is useful to derive the Chernoff information for two multivariate normal distributions: 
\[
C(F_k, F_l) = \sup_{t\in(0, 1)}\left\{\frac{t(1 - t)}{2}(\bmu_k - \bmu_l)\transpose\bV_t^{-1}(\bmu_k - \bmu_l) + \frac{1}{2}\log\frac{|\bV_t|}{|\bV_k|^t|\bV_l|^{1 - t}}\right\},
\]
where $F_k = \mathrm{N}(\bmu_k, \bV_k)$ and $F_l = \mathrm{N}(\bmu_l, \bV_l)$, and $\bV_t = t\bV_k + (1 - t)\bV_l$. 
 % Note that the term $(1/2)\log\{|\bV_t|/(|\bV_k|^t|\bV_l|^{1 - t})\}$ is negligible for sufficiently large $n$. 

%we are specifically in using the Chernoff information when $F_k$ and $F_l$ are multivariate normal distributions: When $F_k = \mathrm{N}(\bmu_k, \bV_k)$ and $F_l = \mathrm{N}(\bmu_l, \bV_l)$, we have:
%\[
%C(F_k, F_l) = \sup_{t\in(0, 1)}\left\{\frac{t(1 - t)}{2}(\bmu_k - \bmu_l)\transpose\bV_t^{-1}(\bmu_k - \bmu_l) + \frac{1}{2}\log\frac{|\bV_t|}{|\bV_k|^t|\bV_l|^{1 - t}}\right\},
%\]
%where $\bV_t = t\bV_k + (1 - t)\bV_l$. Note that the term $(1/2)\log\{|\bV_t|/(|\bV_k|^t|\bV_l|^{1 - t})\}$ is negligible for sufficiently large $n$. 

%and following the approach illustrated in \cite{tang2018} and \cite{cape2018spectral}, 
For a $K$-block stochastic block model with a positive semidefinite block probability matrix $\bB = (\bX^*_0)(\bX^*_0)\transpose$, where $\bX^*_0 = [\bnu_1,\ldots,\bnu_K]\transpose\in\mathbb{R}^{K\times d}$, $d\leq K$, and a cluster assignment function $\tau:[n]\to[K]$ satisfying $(1/n)\sum_{i = 1}^n\mathbbm{1}\{\tau(i) = k\}\to \pi_k$ for $k\in[K]$ and $\sum_{k = 1}^K\pi_k = 1$, we define the following quantities for the ASE, the LSE, the OSE-A, and the OSE-L, respectively:
% , to measure the performance of clustering based on these estimators for large $n$:
% Chernoff information: 
\begin{align*}
\rho_{\mathrm{ASE}}^* &= \min_{k\neq l}\sup_{t\in(0, 1)}
% \frac{1}{2}\log\frac{|\bSigma_{kl}(t)|}{|\bSigma_k|^t|\bSigma_l|^{1-t}} + 
\frac{nt(1-t)}{2}(\bnu_{k} - \bnu_{0l})\transpose\bSigma_{kl}^{-1}(t)(\bnu_k - \bnu_l),\\
\rho_{\mathrm{LSE}}^* &= \min_{k\neq l}\sup_{t\in(0, 1)}
% \frac{1}{2}\log\frac{|\widetilde\bSigma_{kl}(t)|}{|\widetilde\bSigma_k|^t|\widetilde\bSigma_l|^{1-t}} + 
\frac{n^2t(1-t)}{2}(\by_{0k}^* - \by_{0l}^*)\transpose\widetilde\bSigma_{kl}^{-1}(t)(\by_{0k}^* - \by_{0l}^*),\\
\rho_{\mathrm{OSE-A}}^* &= \min_{k\neq l}\sup_{t\in(0, 1)}
% \frac{1}{2}\log\frac{|\bG_{kl}(t)|}{|\bG_k|^t|\bG_l|^{1-t}} + 
\frac{nt(1-t)}{2}(\bnu_{k} - \bnu_{0l})\transpose\bG_{kl}^{-1}(t)(\bnu_{k} - \bnu_{0l}),\\
\rho_{\mathrm{OSE-L}}^* &= \min_{k\neq l}\sup_{t\in(0, 1)}
% \frac{1}{2}\log\frac{|\widetilde\bG_{kl}(t)|}{|\widetilde\bG_k|^t|\widetilde\bG_l|^{1-t}} + 
\frac{n^2t(1-t)}{2}(\by_{0k}^* - \by_{0l}^*)\transpose\widetilde\bG_{kl}^{-1}(t)(\by_{0k}^* - \by_{0l}^*),
\end{align*}
where
 % \note{please check if you should use $\bx$ or $\by$ in the following}
\[
\begin{array}{ll}
\bSigma_{kl}(t) = t\bSigma(\bnu_k) + (1 - t)\bSigma(\bnu_{l}),&\widetilde\bSigma_{kl}(t) = t\widetilde\bSigma(\bnu_k) + (1 - t)\widetilde\bSigma(\bnu_{l}),\\
\bG_{kl}(t) = t\bG(\bnu_k)^{-1} + (1 - t)\bG(\bnu_{l})^{-1},& \widetilde\bG_{kl}(t) = t\widetilde\bG_k(\bnu_k) + (1 - t)\widetilde\bG(\bnu_{l}),
\end{array}
\]
and $\by_{0k}^* = \bnu_k(\sum_{l = 1}^Kn\pi_l\bnu_k\transpose\bnu_{l})^{-1/2}$. These quantities are motivated by the use of the minimum pairwise Chernoff distance for measuring clustering performance.
Note that for all $t\in(0, 1)$, we have seen in Section \ref{sec:an_efficient_one_step_estimator} and Section \ref{sub:a_plug_in_estimator_for_the_normalized_laplacian_matrix} that
\begin{align*}
\bSigma_{kl}(t) &= t\bSigma(\bnu_{k}) + (1 - t)\bSigma(\bnu_{l})\succeq t\bG(\bnu_{k})^{-1} + (1 - t)\bG(\bnu_{l})^{-1} = \bG_{kl}(t),\\
\widetilde\bSigma_{kl}(t) &= t\widetilde\bSigma(\bnu_{k}) + (1 - t)\widetilde\bSigma(\bnu_{l})\succeq t\widetilde\bG(\bnu_{k}) + (1 - t)\widetilde\bG(\bnu_{l}) = \widetilde\bG_{kl}(t).
\end{align*}
It follows that $\rho_{\mathrm{ASE}}^*\leq \rho_{\mathrm{OSE-A}}^*$ and $\rho_{\mathrm{LSE}}^*\leq \rho_{\mathrm{OSE-L}}^*$ regardless of the choice of the underlying true latent positions. Namely, the decaying rate of the optimal decision error using the OSE-A is always smaller than that using the ASE, and the same conclusion holds for the comparison between the OSE-L and the LSE. We also note that the above criteria are independent of the choice of the clustering algorithm and only depend on the distribution of the input features. 
% $\bSigma_{kl}(t) = t\bSigma(\bx_{0k}^*) + (1 - t)\bSigma(\bx_{0l}^*)\succeq t\bG(\bx_{0k}^*)^{-1} + (1 - t)\bG(\bx_{0l}^*)^{-1} = \bG_{kl}(t)$, and 
% for sufficiently large $n$, the first term in the minimum pairwise Chernoff distance negligible. 
% Therefore, specialized to the example considered in this subsection, we have:
% Quantities needed

\begin{example}\label{example:1D2BlockSBM}
We revisit Example \ref{example:strict_dominance_OSEA} in Section \ref{sec:an_efficient_one_step_estimator}. Consider the following rank-one stochastic block model example with two communities on $n$ vertices. The block probability matrix is
\[
\bB = \begin{bmatrix*}
p^2 & pq \\ pq & q^2
\end{bmatrix*},
\]
where $p,q\in(0, 1)$, and the cluster assignment function $\tau:[n]\to [2]$ satisfies 
% \note{empirically? }
\[
\lim_{n\to\infty}\frac{1}{n}\sum_{i = 1}^n\mathbbm{1}\{\tau(i) = 1\} = \pi_1,\quad \lim_{n\to\infty}\frac{1}{n}\sum_{i = 1}^n\mathbbm{1}\{\tau(i) = 2\} = \pi_2,\quad\text{where }\pi_1 + \pi_2 = 1.
\]
% Namely, $A_{ij}\sim\mathrm$
The distribution $F$ satisfying condition \eqref{eqn:strong_convergence_measure} can be explicitly computed: $F(\mathrm{d}x) = \pi_1\delta_{p}(\mathrm{d}x) + \pi_2\delta_q(\mathrm{d}x)$ with $\pi_1 + \pi_2 = 1$, $p,q\in(0, 1)$. 
% Specialized to the two-block stochastic block model example considered in this subsection, 
We obtain by simple algebra that 
% n the corrent context
\begin{align*}
\rho_{\mathrm{OSE-A}}^* &= \frac{n(p - q)^2}{2}\{G(p)^{-1/2} + G(q)^{-1/2}\}^{-2},\\
\rho_{\mathrm{OSE-L}}^* &= \frac{n(p - q)^2}{2}\left\{\frac{\sqrt{p} + \sqrt{q}}{2\sqrt{p}}G(p)^{-1/2} + \frac{\sqrt{p} + \sqrt{q}}{2\sqrt{q}}G(q)^{-1/2}\right\}^{-2}.
\end{align*}
where
\begin{align*}
% &x_{01}^* = p,\quad x_{02}^* = q,\quad y_{01}^* = \frac{\sqrt{n}p}{\sqrt{\pi_1p^2 + \pi_2pq}},\quad y_{02}^* = \frac{\sqrt{n}q}{\sqrt{\pi_1pq + \pi_2q^2}},\\
% % &\Delta = \pi_1p^2 + \pi_2q^2,\quad 
&G(p) = \frac{\pi_1p^2}{p^2(1 - p^2)} + \frac{\pi_2q^2}{pq(1 - pq)},\quad
G(q) = \frac{\pi_1p^2}{pq(1 - pq)} + \frac{\pi_2q^2}{q^2(1 - q^2)}.
% &\widetilde G(p) = \frac{1}{4(\pi_1p^2 + \pi_2pq)}G(p)^{-1},\quad
% \widetilde G(q) = \frac{1}{4(\pi_1pq + \pi_2q^2)}G(q)^{-1}.
\end{align*}
% Simple algebra shows that
% In particular, when $p\neq q$,   $\rho_{\mathrm{OSE-A}}^* < \rho_{\mathrm{OSE-L}}^*$ if and only if $q > p$. Namely, when $q < p$, the OSE-A dominates the OSE-L in terms of the optimal error rate, and when $q > p$ the OSE-L outperforms the OSE-A. 

We have already shown that $\rho^*_{\mathrm{ASE}}\leq \rho^*_{\mathrm{OSE-A}}$ and $\rho^*_{\mathrm{LSE}}\leq \rho^*_{\mathrm{OSE-L}}$ always hold for stochastic block models. 
Furthermore, in this specific example, one can show that $\rho^*_{\mathrm{OSE-A}}\geq \rho_{\mathrm{OSE-L}}$ always holds regardless of the choice of $p$, $q$, $\pi_1$, and $\pi_2$. This means that the OSE-A dominates the OSE-L in terms of the optimal error rate in this specific rank-one stochastic block model example. Note that OSE-A does not necessarily dominate the OSE-L in general, as will be seen in Example \ref{example:2D3BlockSBM} below.
To visualize these findings, we 
%follow the setup in Section 4.3 of \cite{tang2018},
 fix $\pi_1 = 0.6, \pi_2  = 0.4$, let $p$ range over $[0.2, 0.8]$, $r = q - p$ range over in $[-0.15, 0.15]\backslash\{0\}$, compute the ratios  $\rho_{\mathrm{ASE}}^*/\rho_{\mathrm{OSE-A}}^*$,  $\rho_{\mathrm{LSE}}^*/\rho_{\mathrm{OSE-L}}^*$,  $\rho_{\mathrm{ASE}}^*/\rho_{\mathrm{LSE}}^*$,  $\rho_{\mathrm{OSE-A}}^*/\rho_{\mathrm{OSE-L}}^*$, and plot the numerical results in Figure \ref{fig:SBM_chernoff_ratio_K2}. 
\begin{figure}[t]
  \centerline{\includegraphics[width=1\textwidth]{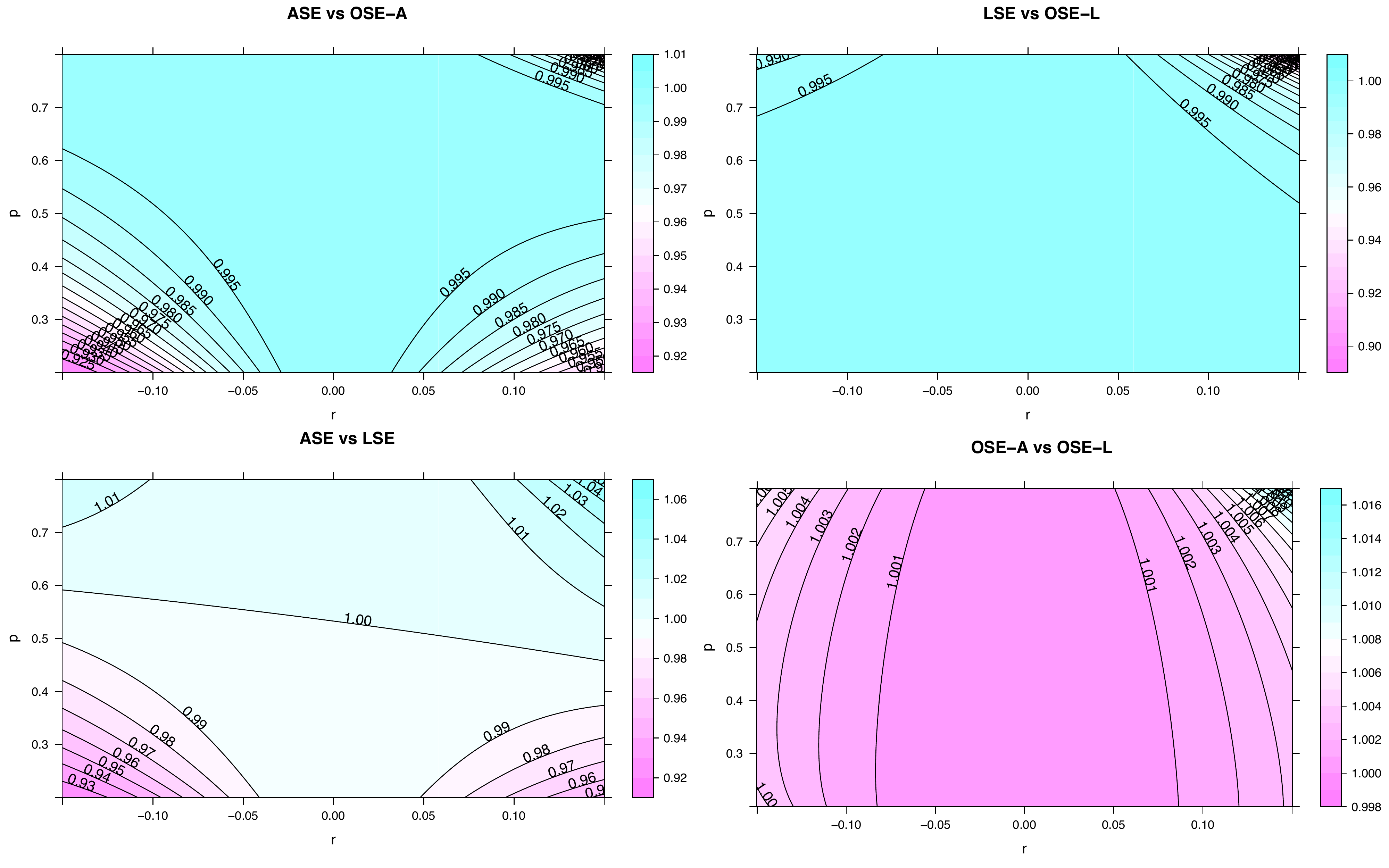}}
  \caption{Heatmap and level curves of the ratio $\rho_{\mathrm{OSE-A}}^*/\rho_{\mathrm{OSE-L}}^*$ for $p \in [0.2, 0.8]$ and $r\in[-0.15,0.15]\backslash\{0\}$ for Example \ref{example:1D2BlockSBM}. }
  \label{fig:SBM_chernoff_ratio_K2}
\end{figure}

Besides the aforementioned large sample conclusion, we perform two finite-sample experiments as well. We first compute the four estimates with $n = 200$, $p = 0.6, q = 0.4$ and $n = 200$, $p = 0.45$, $q = 0.6$, respectively. 
In each of the scenarios, we choose the Gaussian-mixture-model-based (GMM-based) clustering algorithm \citep{fraley2012mclust,fraley2002model}, which is recommended in \cite{tang2018}, for the subsequent vertex clustering task.
 % using these estimates as features. 
To evaluate the finite-sample clustering results, we adopt the Rand index \citep{doi:10.1080/01621459.1971.10482356} as a measurement of the clustering accuracy. 
% agreement between any two partitions of the vertices $[n]$.
% , which is a quantity that measures the agreement between any two partitions of data points and is invariant under relabeling. 
Formally, given two partitions $\calC_1 = \{c_{11},\ldots,c_{1r}\}$ and $\calC_2 = \{c_{21},\ldots,c_{2s}\}$ of $[n]$, let $a$ be the number of pairs in $[n]$ that are both in the same block in partition $\calC_1$ and in the same block in partition $\calC_2$, and $b$ the number of pairs in $[n]$ that are neither in the same block in $\calC_1$ nor in the same block in $\calC_2$. Then the Rand index (between $\calC_1$ and $\calC_2$) is defined by $\mathrm{RI}(\calC_1,\calC_2) = 2(a + b)/\{n(n - 1)\}$. The Rand index ranges between $0$ and $1$, with higher value suggesting a better agreement between $\calC_1$ and $\calC_2$. 
Table \ref{table:SBM_simulation_RI_GMM_1} reports the average Rand indices of the four cluster assignment estimates in comparison with the underlying true cluster assignment based on $1000$ Monte Carlo replicates, together with the corresponding standard errors. The differences in the Rand indices are statistically significant at level $\alpha = 0.01$, and the results are in accordance with the aforementioned large sample conclusion. 
\begin{table}[htbp]
\caption{Rand indices of the GMM-based clustering algorithm using different estimates for Example \ref{example:1D2BlockSBM}. For each setup $p = 0.6,q = 0.4$ and $p =0.45$, $q = 0.6$, the Rand indices are averaged over $1000$ Monte Carlo replicates of adjacency matrices. The standard errors are included in the parentheses. }
\centering{%
	\begin{tabular}{c | c c }
	    \hline\hline
	    % \begin{table}[]
	% \begin{tabular}{lllll}
		Estimates & ASE & OSE-A \\
		\hline
		$p = 0.6$, $q = 0.4$   & 0.8985 ($1.13\times 10^{-3}$) & {\bf 0.9022} ($1.12\times 10^{-3}$)  \\
		$p  = 0.45$, $q = 0.6$ & 0.7635 ($2.06\times 10^{-3}$)& {\bf 0.7899} ($1.70\times 10^{-3}$)	 \\
    \hline\hline
    Estimates & LSE & OSE-L \\
    \hline
    $p = 0.6$, $q = 0.4$ & 0.8966 ($1.26\times 10^{-3}$) & 0.8972 ($1.25\times 10^{-3}$)     \\
    $p  = 0.45$, $q = 0.6$ & 0.7742 ($1.64\times 10^{-3}$) & 0.7863 ($1.37\times 10^{-3}$) \\
	% \end{tabular}
	% \end{table}
	    \hline\hline
	  \end{tabular}%
	}
\label{table:SBM_simulation_RI_GMM_1}
% \begin{tabnote}
% \end{tabnote}
\end{table}

\end{example}

\begin{example}\label{example:2D3BlockSBM}
We next consider the following rank-two stochastic block model with three communities. The block probability matrix $\bB$ is given by $\bB = (\bX_0^\star)(\bX_0^\star)\transpose$, where 
\[
\bX_0^\star = \begin{bmatrix*}
q & q & p\\
q & p & q
\end{bmatrix*}.
\] 
Let $\tau:[n]\to\{1,2,3\}$ be the corresponding cluster assignment function satisfying
\[
\lim_{n\to\infty}\frac{1}{n}\sum_{i = 1}^n\mathbbm{1}\{\tau(i) = 1\} = 0.8,\quad 
\lim_{n\to\infty}\frac{1}{n}\sum_{i = 1}^n\mathbbm{1}\{\tau(i) = 2\} = 0.1,\quad
\lim_{n\to\infty}\frac{1}{n}\sum_{i = 1}^n\mathbbm{1}\{\tau(i) = 2\} = 0.1.
\]
We let $p$ range over $[0.3, 0.6]$ and $q = p - r$ with $r\in[-0.2, -0.01]$. We then explore the minimum pairwise Chernoff distance via the computation of the ratios $\rho^*_{\mathrm{ASE}}/\rho^*_{\mathrm{OSE-A}}$, $\rho^*_{\mathrm{LSE}}/\rho^*_{\mathrm{OSE-L}}$, $\rho^*_{\mathrm{ASE}}/\rho^*_{\mathrm{LSE}}$, $\rho^*_{\mathrm{OSE-A}}/\rho^*_{\mathrm{OSE-L}}$, and plot the ratios in Figure \ref{fig:SBM_chernoff_ratio_K3}. Panels (a) and (b) in Figure \ref{fig:SBM_chernoff_ratio_K3} show that the OSE-A outperforms the ASE and the OSE-L outperforms the LSE, respectively, in terms of the optimal clustering error rates. Panel (c) of Figure \ref{fig:SBM_chernoff_ratio_K3} indicate that the LSE outperforms the ASE for sparser stochastic block models corresponding to the lower-left region of the heatmap, and a similar comparison conclusion between the OSE-A and OSE-L can be drawn from panel (d) of Figure \ref{fig:SBM_chernoff_ratio_K3} as well. 
\begin{figure}[ht!]
  \centerline{\includegraphics[width=1\textwidth]{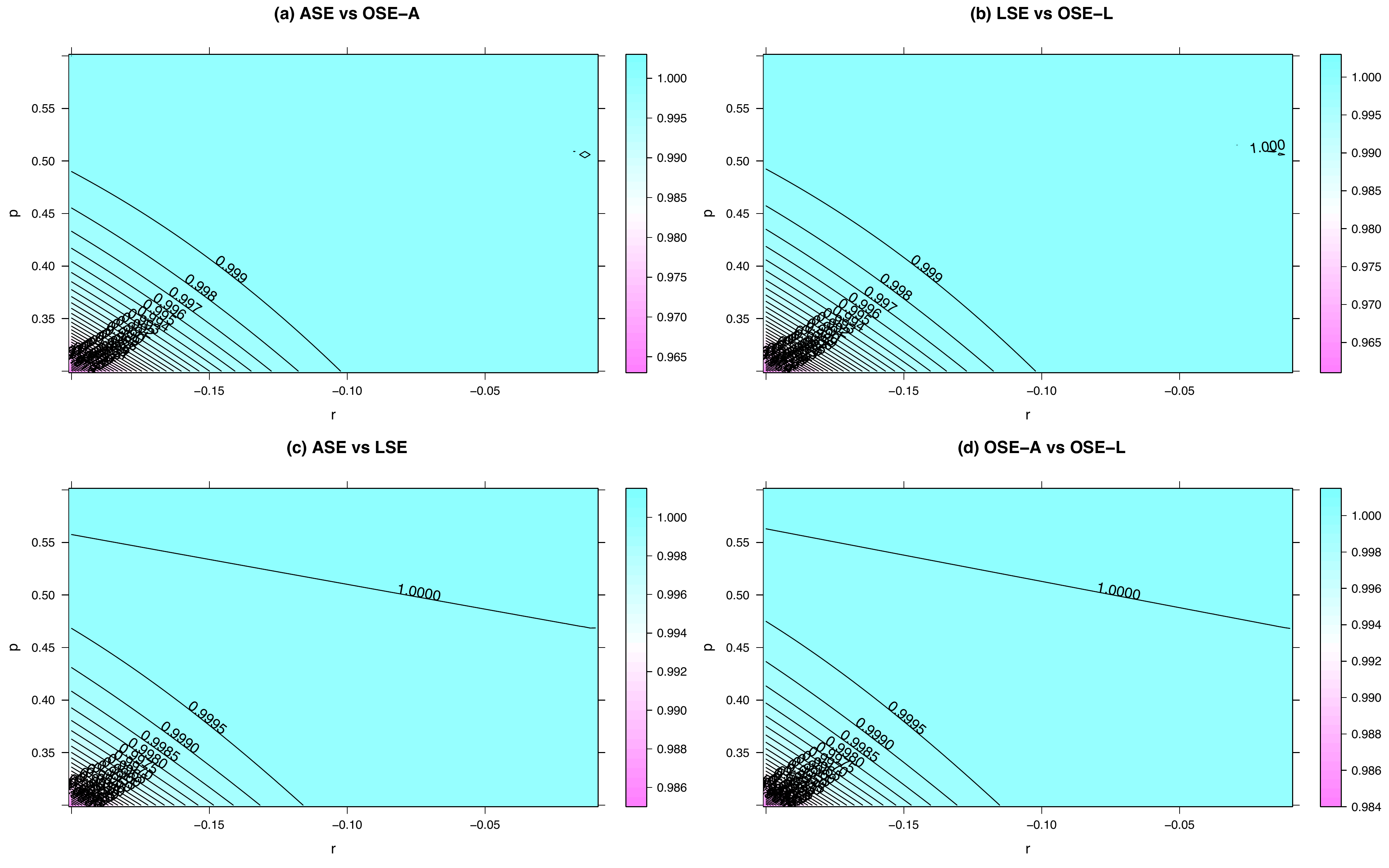}}
  \caption{Heatmap and level curves of the ratio $\rho_{\mathrm{OSE-A}}^*/\rho_{\mathrm{OSE-L}}^*$ for $p \in [0.2, 0.8]$ and $r\in[-0.15,0.15]\backslash\{0\}$ for Example \ref{example:2D3BlockSBM}. }
  \label{fig:SBM_chernoff_ratio_K3}
\end{figure}
\end{example}

\begin{remark}
Unlike the  minimum pairwise Chernoff distance, which is an asymptotic criterion for comparing the performance of different estimators in terms of the subsequent optimal clustering rates and does not depend on the clustering algorithm, the Rand index can only reflect the behavior of the clustering result in a finite-sample experiment and may depend on the clustering method we choose. 
\end{remark}

% To quantify the performance of a clustering procedure, we adopt the Rand index \citep{doi:10.1080/01621459.1971.10482356}, which is a quantity that measures the agreement between any two partitions of data points and is invariant under relabeling. Formally, given two partitions $\calC_1 = \{c_{11},\ldots,c_{1r}\}$ and $\calC_2 = \{c_{21},\ldots,c_{2s}\}$ of $[n]$, let $a$ be the number of pairs in $[n]$ that are both in the same block in partition $\calC_1$ and in the same block in partition $\calC_2$, and $b$ the number of pairs in $[n]$ that are neither in the same block in $\calC_1$ nor in the same block in $\calC_2$. Then the Rand index (between $\calC_1$ and $\calC_2$) is defined by $\mathrm{RI}(\calC_1,\calC_2) = 2(a + b)/\{n(n - 1)\}$ that lies between $0$ and $1$, with higher value suggesting better agreement between $\calC_1$ and $\calC_2$. 

\subsection{A three-block stochastic block model example} % (fold)
\label{sub:a_simulated_example}
We next consider the following three-block stochastic block model on $n$ vertices with the block probability matrix $\bB = (\bX_0^*)(\bX_0^*)\transpose{}$, where
\[
(\bX_0^*)\transpose{} = \begin{bmatrix*}
0.3 & 0.3 & 0.6\\
0.3 & 0.6 & 0.3\\
\end{bmatrix*},
\]
and a cluster assignment function $\tau:[n]\to [3]$, such that as $n\to\infty$,
\[
\frac{1}{n}\sum_{i = 1}^n\mathbbm{1}\{\tau(i) = 1\}\to 0.3,\quad
\frac{1}{n}\sum_{i = 1}^n\mathbbm{1}\{\tau(i) = 2\}\to 0.3,\quad
\frac{1}{n}\sum_{i = 1}^n\mathbbm{1}\{\tau(i) = 3\}\to 0.4.
\]
 % and . 
The corresponding distribution $F$ satisfying condition \eqref{eqn:strong_convergence_measure} is 
$F(\mathrm{d}\bx) = \sum_{k = 1}^3\pi_k\delta_{\bnu_{k}}(\mathrm{d}\bx)$, 
where $\pi_1 = \pi_2 = 0.3$, $\pi_3 = 0.4$, $\bnu_{1} = [0.3, 0.3]\transpose$, $\bnu_{2} = [0.3, 0.6]\transpose$, and $\bnu_{3} = [0.6, 0.3]\transpose$. For each $n\in\{500, 600, \ldots, 1200\}$, we generate $10000$ replicates of the simulated adjacency matrices from the above sampling model, and then compute the following four estimates: the ASE \eqref{eqn:ASE_least_squared_problem}, the one-step estimate \eqref{eqn:one_step_estimator} initialized at the ASE (OSE-A), the LSE \eqref{eqn:LSE_least_squared_problem}, and the one-step estimate \eqref{eqn:one_step_estimator_Laplacian} for the population LSE (OSE-L). The goal is to compare the performance of vertex clustering by applying the GMM-based clustering algorithm to these estimates.

% To quantify the performance of a clustering procedure, we adopt the Rand index \citep{doi:10.1080/01621459.1971.10482356}, which is a quantity that measures the agreement between any two partitions of data points and is invariant under relabeling. Formally, given two partitions $\calC_1 = \{c_{11},\ldots,c_{1r}\}$ and $\calC_2 = \{c_{21},\ldots,c_{2s}\}$ of $[n]$, let $a$ be the number of pairs in $[n]$ that are both in the same block in partition $\calC_1$ and in the same block in partition $\calC_2$, and $b$ the number of pairs in $[n]$ that are neither in the same block in $\calC_1$ nor in the same block in $\calC_2$. Then the Rand index (between $\calC_1$ and $\calC_2$) is defined by $\mathrm{RI}(\calC_1,\calC_2) = 2(a + b)/\{n(n - 1)\}$ that lies between $0$ and $1$, with higher value suggesting better agreement between $\calC_1$ and $\calC_2$. 

 Figure \ref{fig:RI_SBM_K3} and Table \ref{table:SBM_simulation_RI_GMM} present the Rand indices of clustering results obtained by applying the GMM-based clustering algorithm to the four estimates against the underlying true cluster assignment, and these Rand indices are averaged based on $10000$ Monte Carlo replicates. The standard errors corresponding to the Monte Carlo replicates are tabulated in the parentheses of Table \ref{table:SBM_simulation_RI_GMM}. 
 When the number of vertices $n\in\{500,600,700,800\}$, the clustering results based on the ASE outperform the rest of the competitors. 
 % \note{maybe mention when $n<900$, the ASE performs best?}
 However, as $n$ increases with $n\geq 900$, the best result is given by either the OSE-A or the OSE-L, and the differences in the Rand indices are statistically significant at level $\alpha = 0.01$. In particular, when $n \in \{1100, 1200\}$, the OSE-A and the OSE-L yield better results than the 
ASE and the LSE, respectively.  These numerical results are in accordance with the fact that asymptotically, the ASE and the LSE are dominated by the OSE-A and OSE-L, respectively. 
\begin{figure}[ht!]
  \centerline{\includegraphics[width=.9\textwidth]{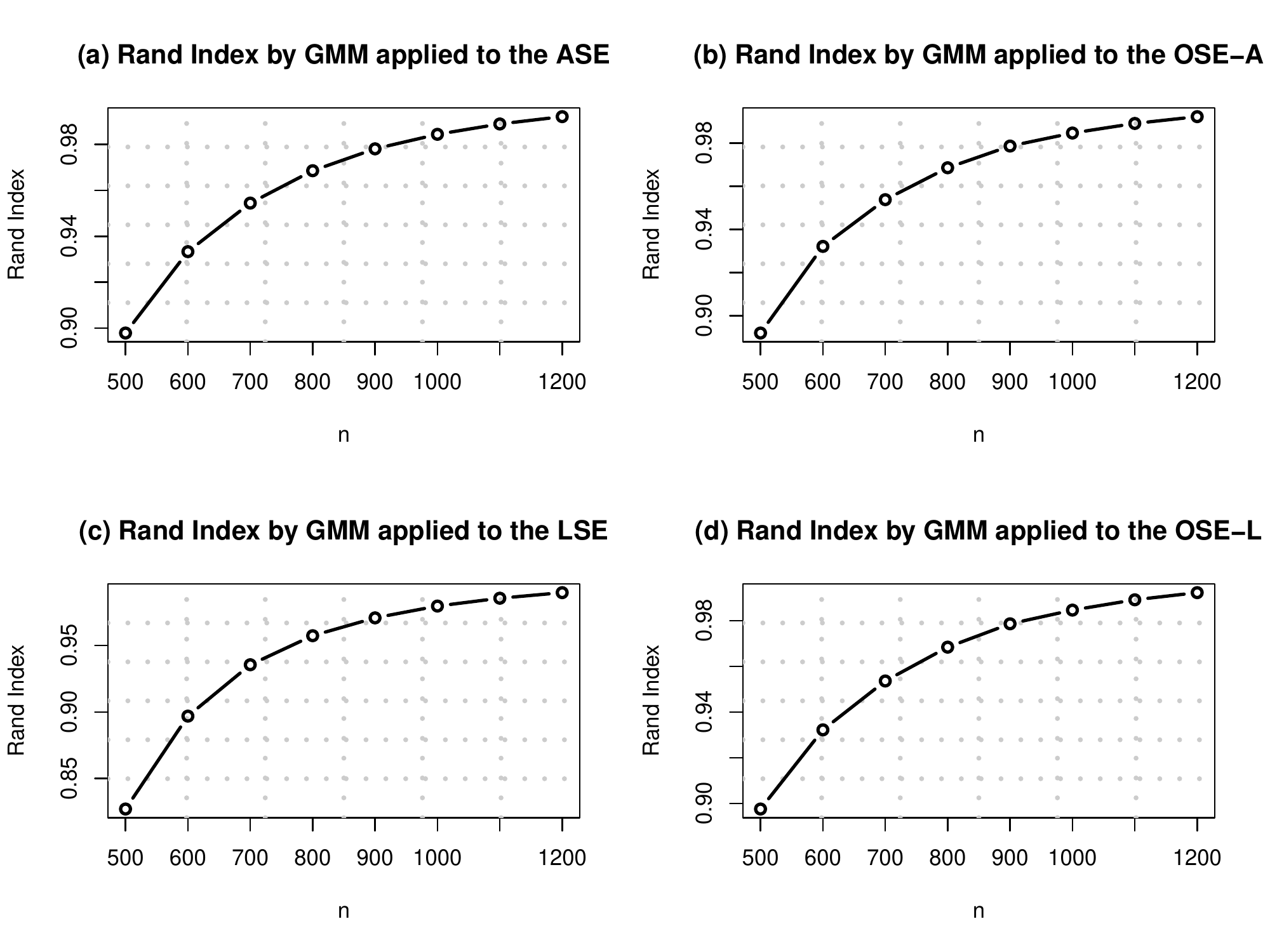}}
  \caption{The three-block stochastic block model example in Section \ref{sub:a_simulated_example}: The Rand indices of the GMM-model-based clustering method applied to different estimates (the ASE, the OSE-A, the LSE, and the OSE-L) when the number of vertices $n$ ranges in $\{500,600,\ldots,1200\}$. The results are averaged based on $10000$ Monte Carlo replicates. }
  \label{fig:RI_SBM_K3}
\end{figure}
\begin{table}[ht!]
\centering%
\caption{The three-block stochastic block model example in Section \ref{sub:a_simulated_example}: Rand indices of the GMM-based clustering algorithm using different estimates. The number of vertices $n$ ranges over $\{500, 600, \ldots, 1200\}$, and for each $n$, the Rand indices are averaged over $10000$ Monte Carlo replicates of adjacency matrices, with the standard errors included in parentheses. }
\vspace*{1ex}
\footnotesize
\begin{tabular}{c c c c c}
    \hline\hline
    % \begin{table}[]
% \begin{tabular}{lllll}
	Estimates & ASE & OSE-A & LSE & OSE-L \\
	\hline
	$n = 500$  & {\bf 0.89783} ($3.3\times10^{-4}$) & 0.89199 ($4.0\times10^{-4}$)	   & 0.82708 ($5.9\times 10^{-4}$) & 0.89761 ($3.4\times 10^{-4}$)	 	 \\
	$n = 600$  & {\bf 0.93329} ($1.9\times10^{-4}$) & 0.93212 ($2.2\times10^{-4}$)	   & 0.89691 ($4.2\times 10^{-4}$) & 0.93224 ($2.0\times 10^{-4}$) 	 \\
	$n = 700$  & {\bf 0.95445} ($1.3\times10^{-4}$) & 0.95378 ($1.4\times10^{-4}$)	   & 0.93555 ($2.1\times 10^{-4}$) & 0.95361 ($1.4\times 10^{-4}$)      \\
	$n = 800$  & {\bf 0.96857} ($8.6\times10^{-5}$) & 0.96856 ($8.6\times10^{-5}$)	   & 0.95740 ($1.2\times 10^{-4}$) & 0.96841 ($8.8\times 10^{-5}$)      \\
	$n = 900$  & 0.97811 ($6.4\times10^{-5}$)	   & { 0.97861} ($6.1\times10^{-5}$)& 
  0.97081 ($8.2\times 10^{-5}$) & {\bf 0.97862} ($6.0\times 10^{-5}$)		 \\
	$n = 1000$ & 0.98443 ($5.0\times10^{-5}$)	   & {\bf 0.98465} ($4.9\times10^{-5}$)& 
  0.97968 ($6.1\times 10^{-5}$) & { 0.98460} ($4.8\times 10^{-5}$)\\
	$n = 1100$ & 0.98894 ($4.0\times10^{-5}$)	   & { 0.98909} ($3.9\times10^{-5}$)& 
  0.98559 ($4.6\times 10^{-5}$) & {\bf 0.98911} ($3.9\times 10^{-5}$)      \\
	$n = 1200$ & 0.99213 ($3.1\times10^{-5}$)	   & { 0.99224} ($3.0\times10^{-5}$)& 
  0.98978 ($3.6\times 10^{-5}$) & {\bf 0.99225}	($3.0\times 10^{-5}$)	 \\
% \end{tabular}
% \end{table}
    \hline\hline
  \end{tabular}%
\label{table:SBM_simulation_RI_GMM}
% \begin{tabnote}
% \end{tabnote}
\end{table}

For each $n\in\{600, 900, 1200\}$, we also compute the OSE-A $\widehat\bX$ and the OSE-L $\widehat\bY$ for each block, as well as the corresponding cluster-specific sample covariance matrices after applying the appropriate orthogonal transformation towards the underlying true $\bX_0$ and $\bY_0$, for one randomly selected instance among the $10000$ replicated adjacency matrices. The results are tabulated in Table \ref{table:SBM_covariance_matrix_K3_OSEA} and Table \ref{table:SBM_covariance_matrix_K3_OSEL}, respectively, in comparison with the limit covariance matrices given by Theorem \ref{thm:ASE_limit_theorem} (for the ASE), Theorem \ref{thm:convergence_OS} (for the OSE-A), Theorem \ref{thm:LSE_limit_theorem} (for the LSE), and Theorem \ref{thm:convergence_OS_Laplacian} (for the OSE-L). 
It can be seen that as $n$ increases, the sample covariance matrices converge to their corresponding cluster-specific limit covariance matrices. 
The scatter points of $\widehat\bX$ and $\widehat\bY$ after applying the orthogonal alignment matrix $\bW$ towards $\bX_0$ and $\bY_0$ are visualized in Figure \ref{fig:SBM_K3_OSEA_scatter}, along with the cluster-specific $95\%$ empirical and asymptotic confidence ellipses in dashed lines and solid lines, respectively. These figures also validate the aforementioned limit results empirically. 
\begin{table}[ht!]
\footnotesize
  \centering
  \caption{Three-block stochastic block model example in Section \ref{sub:a_simulated_example}: the cluster-specific sample covariance matrices for the OSE-A with the number of vertices $n\in\{600, 900, 1200\}$, in comparison with the limit covariance matrix of the OSE-A and the ASE. }
  \vspace*{1ex}
  \begin{tabular}{c | c c c }
    \hline\hline
     & $n = 600$& $n = 900$& $n = 1200$ \\
    \hline\hline\\[-0.3cm]
    $\bG_n(\bnu_1)^{-1}$ &
    $\begin{bmatrix*}
    3.762811 & -3.651978\\ -3.651978 & 4.621907
    \end{bmatrix*}$
    & $\begin{bmatrix*}
    3.647588 & -3.669011\\ -3.669011 & 4.756269
    \end{bmatrix*}$ 
    & $\begin{bmatrix*}
    3.550210 & -3.169485\\ -3.169485 & 4.032291
    \end{bmatrix*}$\\[0.3cm]
    \hline\\[-0.3cm]
    & $\bG(\bnu_1)^{-1}$ & $\bSigma(\bnu_1)$ \\[0.1cm]
    \hline\\[-0.3cm]
    Limit Covariances & $\begin{bmatrix*}
    3.220559 & -2.898602\\ -2.898602 & 3.703496
    \end{bmatrix*}$
    & $\begin{bmatrix*}
    3.221615 & -2.895962\\
    -2.895962 & 3.710096
    \end{bmatrix*}$\\[0.3cm]
    \hline\hline
    & $n = 600$& $n = 900$& $n = 1200$ \\
        \hline\hline\\[-0.3cm]
    $\bG_n(\bnu_2)^{-1}$&
    $\begin{bmatrix*}
    3.765898 & -3.803725\\ -3.803725 & 5.213684
    \end{bmatrix*}$
    & $\begin{bmatrix*}
    4.846434 & -4.701554\\ -4.701554 & 6.290449
    \end{bmatrix*}$ 
    & $\begin{bmatrix*}
    4.403550 & -4.438183\\ -4.438183 & 5.918178
    \end{bmatrix*}$
    \\[0.3cm]
    \hline\\[-0.3cm]
    & $\bG(\bnu_2)^{-1}$ & $\bSigma(\bnu_2)$ \\[0.1cm]
    \hline\\[-0.3cm]
    % [0.3cm]
    Limit Covariances & $\begin{bmatrix*}
    3.844914 & -3.518540\\ -3.518540 & 4.590484
    \end{bmatrix*}$
    & $\begin{bmatrix*}
        3.844943 & -3.519037\\
        -3.519037 & 4.598917
        \end{bmatrix*}$
    \\[0.3cm]
        \hline\hline
    & $n = 600$& $n = 900$& $n = 1200$ \\
    \hline\hline\\[-0.3cm]
    $\bG_n(\bnu_3)^{-1}$ &
    $\begin{bmatrix*}
    5.812994 & -4.747638\\ -4.747638 & 5.186311
    \end{bmatrix*}$
    & $\begin{bmatrix*}
    5.322016 & -4.583552\\ -4.583552 & 5.301519
    \end{bmatrix*}$ 
    & $\begin{bmatrix*}
    4.664418 & -4.140929\\ -4.140929 & 5.045003
    \end{bmatrix*}$
    \\[0.3cm]
    \hline\\[-0.3cm]
    & $\bG(\bnu_3)^{-1}$ & $\bSigma(\bnu_3)$ \\[0.1cm]
    \hline\\[-0.3cm]
    Limit Covariances & $\begin{bmatrix*}
    3.969281 & -3.495403\\ -3.495403 & 4.414981
    \end{bmatrix*}$
    & $\begin{bmatrix*}
    3.966424 & -3.496907\\
   -3.496907 & 4.414189
    \end{bmatrix*}$
    \\[0.3cm]
    \hline\hline
  \end{tabular}%
  \label{table:SBM_covariance_matrix_K3_OSEA}
\end{table}% 

\begin{table}[ht!]
\footnotesize
  \centering
  \caption{Three-block stochastic block model example in Section \ref{sub:a_simulated_example}: the cluster-specific sample covariance matrices for the OSE-L with the number of vertices $n\in\{600, 900, 1200\}$, in comparison with the limit covariance matrix of the OSE-L and the LSE.}
  \vspace*{1ex}
  \begin{tabular}{c | c c c}
    \hline\hline\\[-0.3cm]
    $k$ & $n = 600$& $n = 900$& $n = 1200$ \\[0.1cm]
    \hline\hline\\[-0.3cm]
    $\widetilde{\bG}_n(\bnu_1)$ &
    $\begin{bmatrix*}
    14.90763 & -15.75143\\ -15.75143 & 17.78524
    \end{bmatrix*}$
    & $\begin{bmatrix*}
    14.41713 & -15.60931\\ -15.60931 & 18.02098
    \end{bmatrix*}$ 
    & $\begin{bmatrix*}
    13.55155 & -14.03450\\ -14.03450 & 15.8845
    \end{bmatrix*}$\\[0.3cm]
    \hline\\[-0.3cm]
    & $\widetilde{\bG}(\bnu_1)$ & $\widetilde{\bSigma}(\bnu_1)$  \\[0.1cm]
    \hline\\[-0.3cm]
    Limit Covariances & $\begin{bmatrix*}
    12.40965 & -12.78420\\ -12.78420 & 14.37281
    \end{bmatrix*}$
    & $\begin{bmatrix*}
    12.41030 & -12.78353\\ -12.78353 & 14.37349
    \end{bmatrix*}$
    \\[0.3cm]
    \hline\hline
    & $n = 600$& $n = 900$& $n = 1200$ \\    
    \hline\hline\\[-0.3cm]
    $\widetilde{\bG}_n(\bnu_2)$ &
    $\begin{bmatrix*}
    10.29810 & -11.05184\\ -11.05184 & 12.82275
    \end{bmatrix*}$
    & $\begin{bmatrix*}
    12.95708 & -13.73205\\ -13.73205 & 15.79435
    \end{bmatrix*}$ 
    & $\begin{bmatrix*}
    12.12355 & -13.08517\\ -13.08517 & 15.15464
    \end{bmatrix*}$\\[0.3cm]
    \hline\\[-0.3cm]
    & $\widetilde{\bG}(\bnu_2)$ & $\widetilde{\bSigma}(\bnu_2)$  \\[0.1cm]
    \hline\\[-0.3cm]
    Limit Covariances & $\begin{bmatrix*}
    10.22658 & -10.48123\\ -10.48123 & 11.73625
    \end{bmatrix*}$
    & $\begin{bmatrix*}
    10.23471 & -10.48190\\ -10.48190 & 11.73631
    \end{bmatrix*}$
    \\[0.3cm]
    \hline\hline
    & $n = 600$& $n = 900$& $n = 1200$ \\
    \hline\hline\\[-0.3cm]
    $\widetilde{\bG}_n(\bnu_3)$ &
    $\begin{bmatrix*}
    13.64657 & -13.33241\\ -13.33241 & 14.05080
    \end{bmatrix*}$
    & $\begin{bmatrix*}
    12.79163 & -12.98354\\ -12.98354 & 14.20550
    \end{bmatrix*}$ 
    & $\begin{bmatrix*}
    11.36609 & -11.81191\\ -11.81191 & 13.30482
    \end{bmatrix*}$
    \\
    \hline\\[-0.3cm]
    & $\widetilde\bG(\bnu_3)$ & $\widetilde{\bSigma}(\bnu_3)$ \\[0.1cm]
    \hline\\[-0.3cm]
    Limit Covariances & $\begin{bmatrix*}
    9.821792 & -10.16463\\ -10.16463 & 11.50649
    \end{bmatrix*}$
    & $\begin{bmatrix*}
    9.823044 & -10.16911\\ -10.16911 & 11.52254
    \end{bmatrix*}$
    \\[0.3cm]
    \hline\hline
  \end{tabular}%
  \label{table:SBM_covariance_matrix_K3_OSEL}
\end{table}% 

\begin{figure}[htbp]
	\begin{center}
		\begin{tabular}{cc}
			% \hspace{-2cm} 
			\includegraphics[width=.43\textwidth]{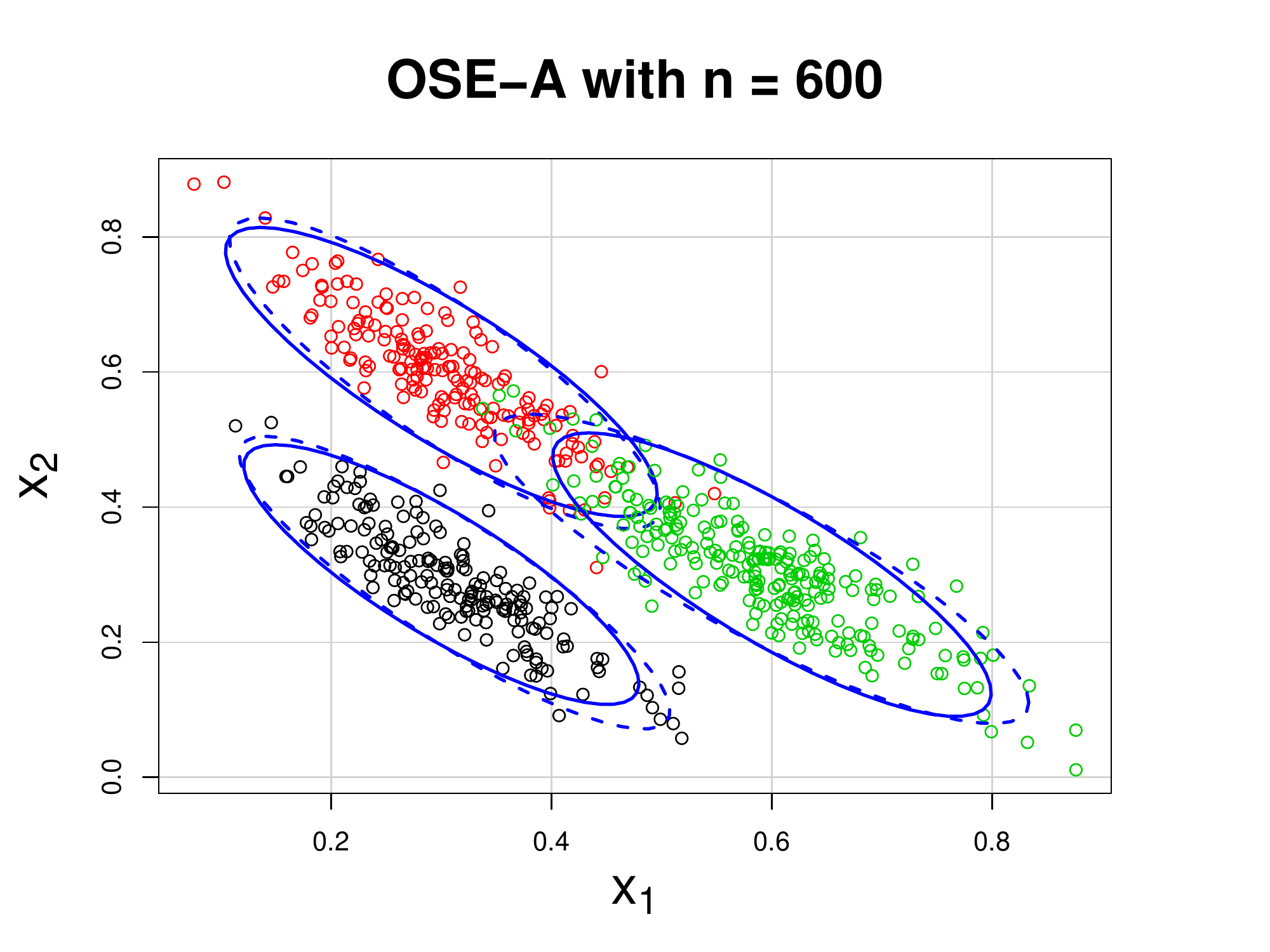}
			&
			\includegraphics[width=.43\textwidth]{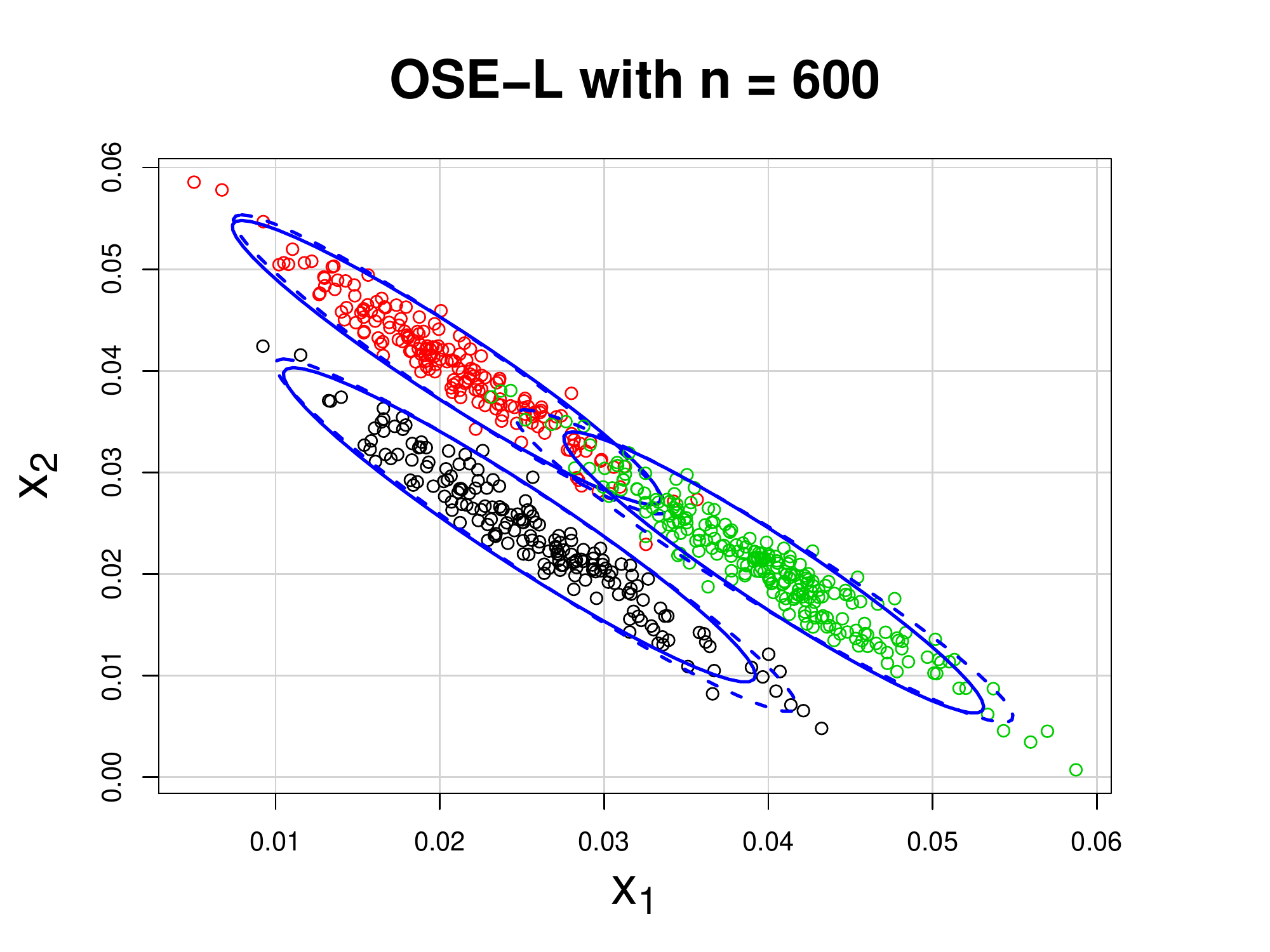}\\
			OSE-A with $n = 600$ & OSE-L with $n = 600$ \\
			\includegraphics[width=.43\textwidth]{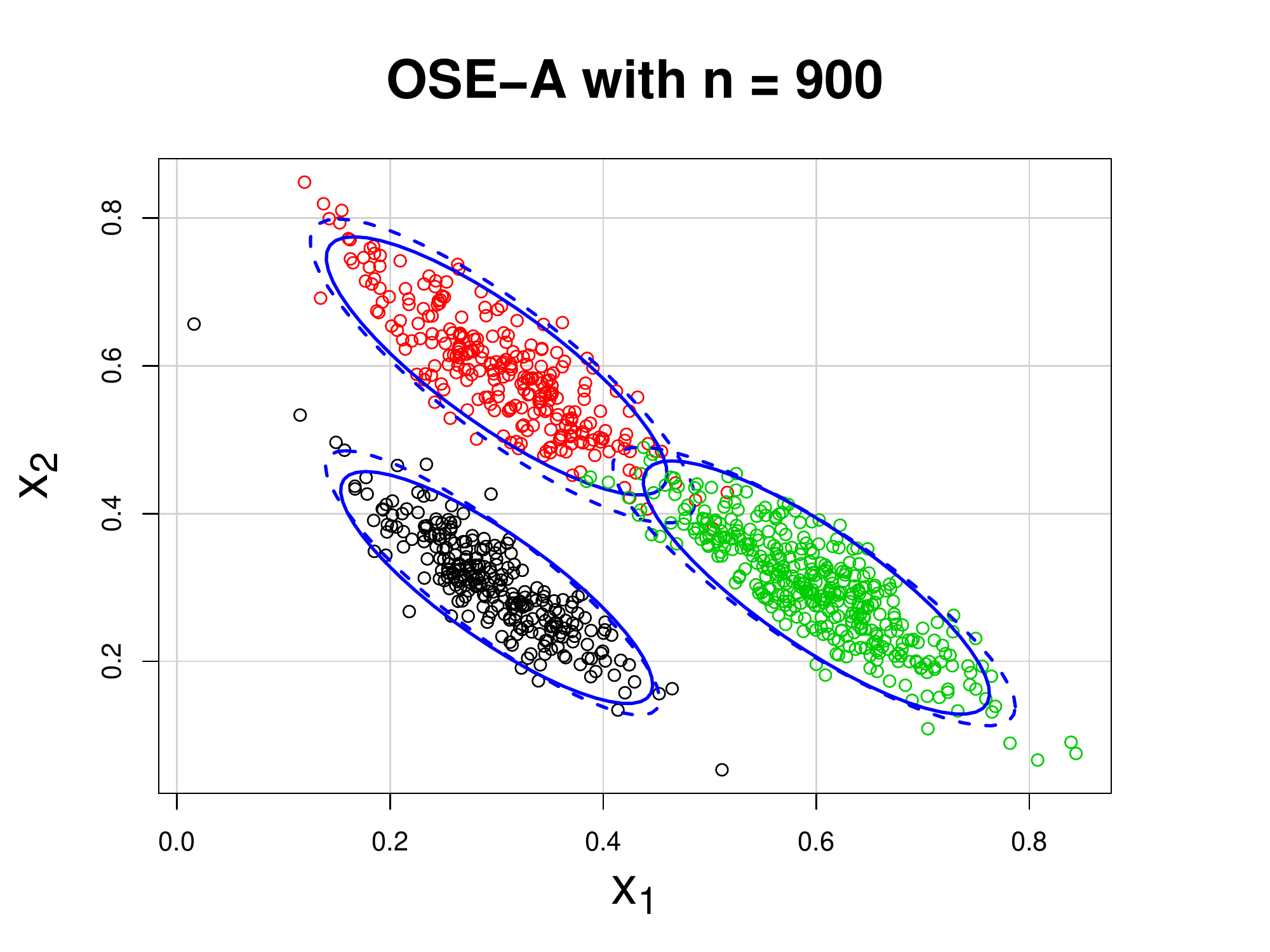}
			&
			\includegraphics[width=.43\textwidth]{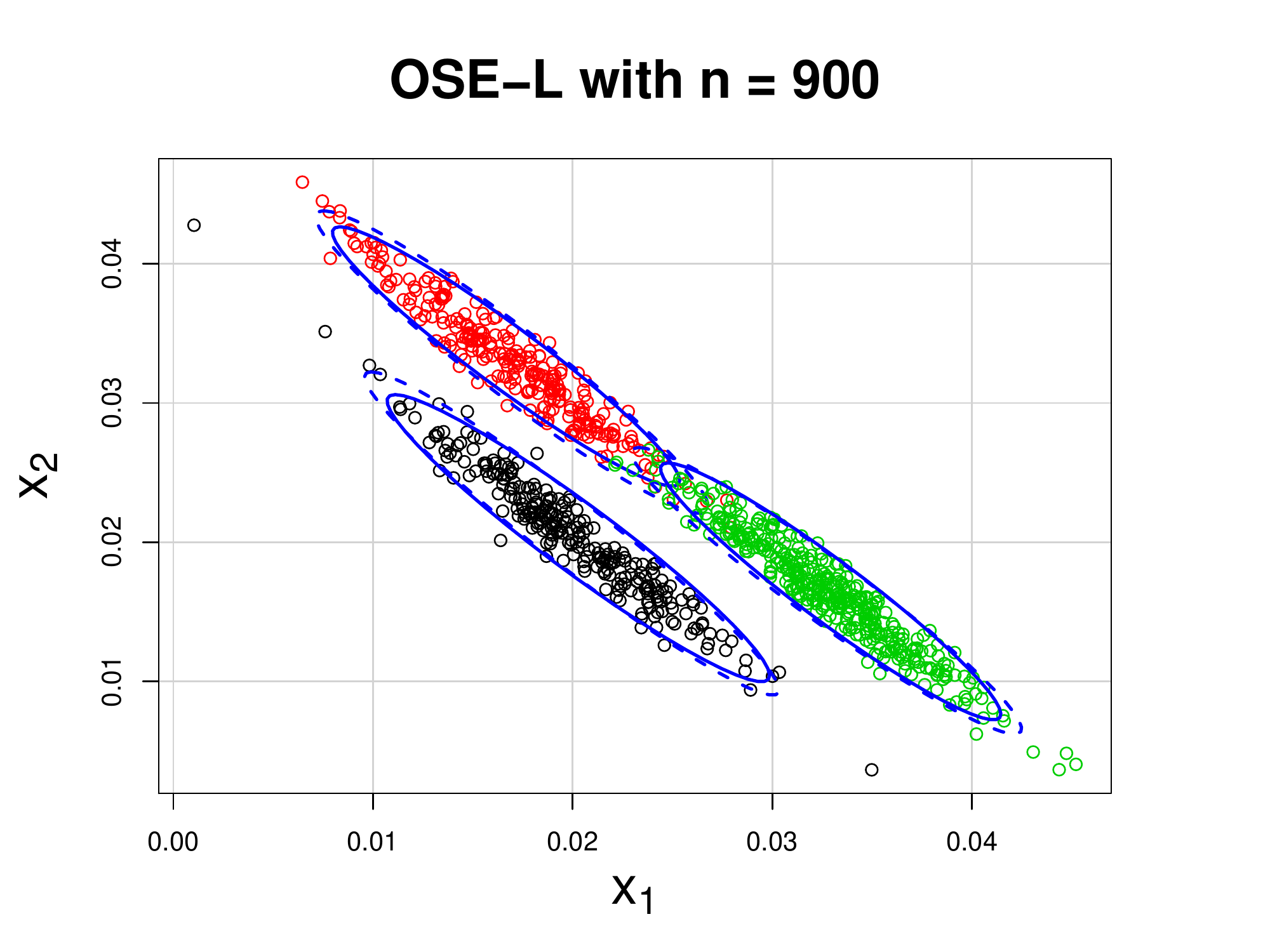}
			\\
			OSE-A with $n = 900$ & OSE-L with $n = 900$ \\
			\includegraphics[width=.43\textwidth]{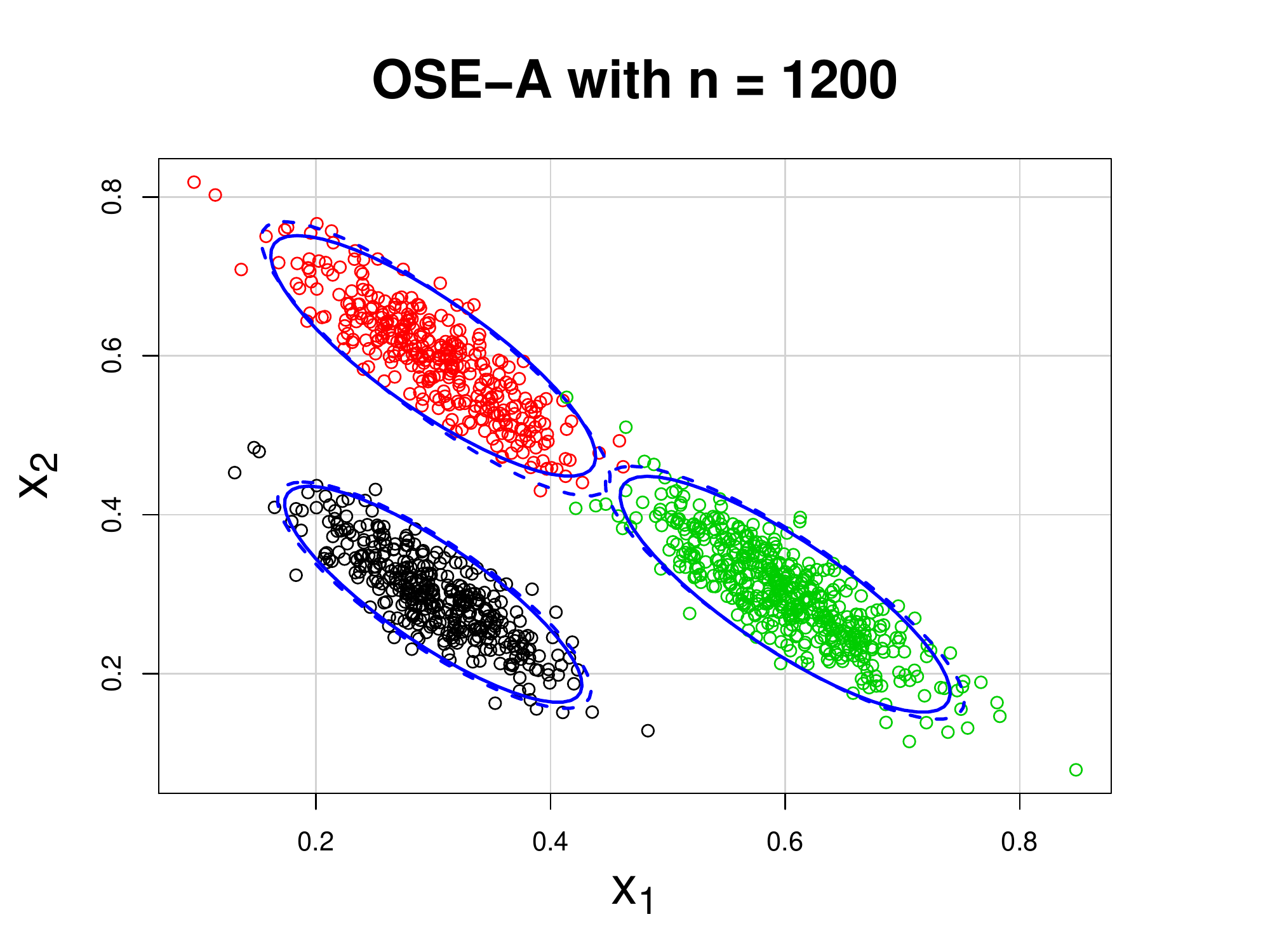}
			&
			\includegraphics[width=.43\textwidth]{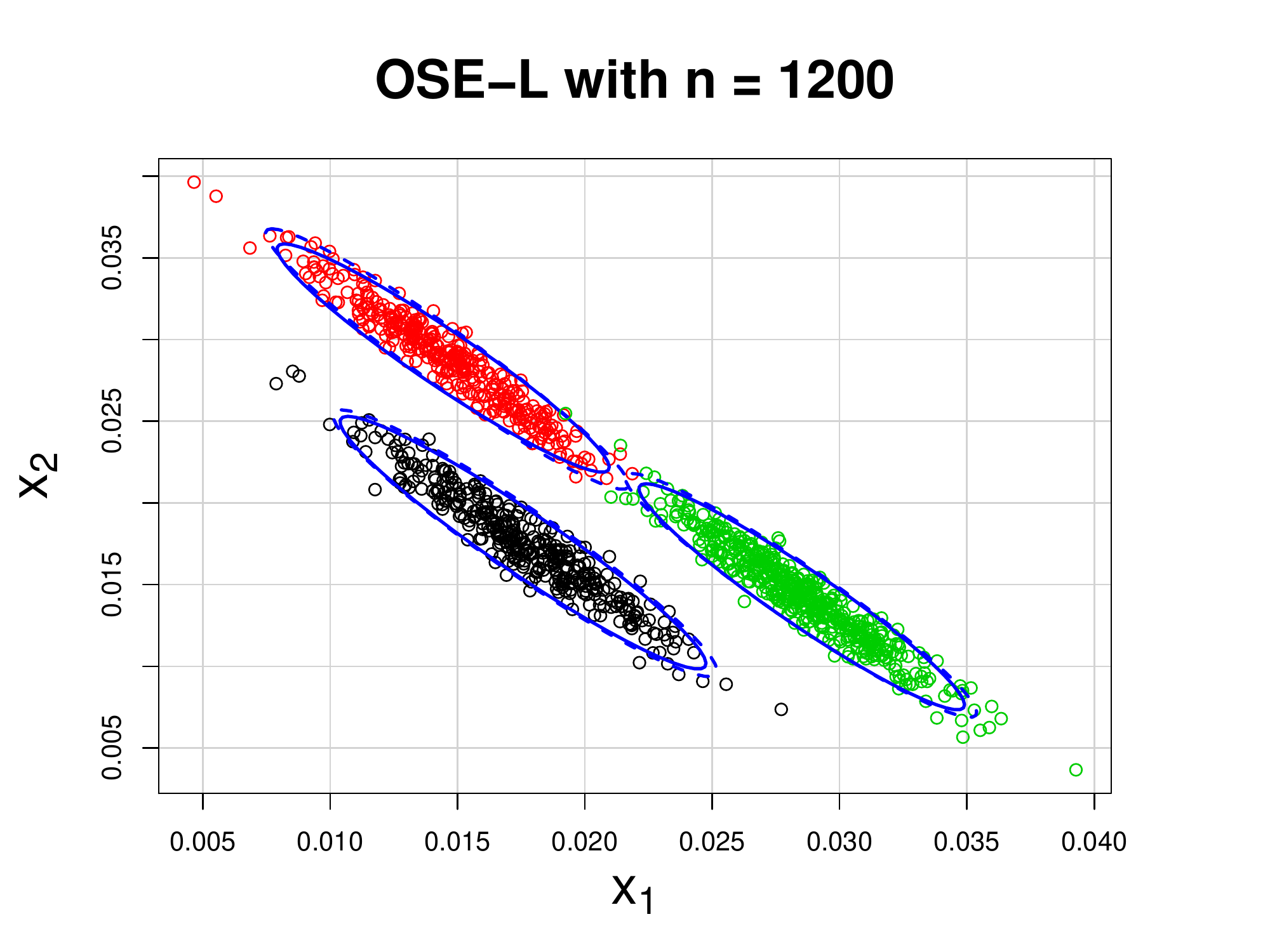}
			\\
			OSE-A with $n = 1200$ & OSE-L with $n = 1200$ \\
		\end{tabular}
	\end{center}
	\caption{Scatter plots of the OSE-A and OSE-L in the three-block stochastic block model example with $n$ vertices in Section \ref{sub:a_simulated_example}, with $n \in\{600, 900, 1200\}$. The scatter points are colored according to the cluster assignment of the corresponding vertices. For each specific cluster, the $95\%$ empirical confidence ellipses are displayed by the dashed lines, along with the $95\%$ asymptotic confidence ellipses drawn using the solid lines, as  provided by Theorem \ref{thm:convergence_OS} and Theorem \ref{thm:convergence_OS_Laplacian}. }
	\label{fig:SBM_K3_OSEA_scatter}
\end{figure}

% subsection a_simulated_example (end)

\subsection{Wikipedia Graph data} % (fold)
\label{sub:wikipedia_graph_data}

We finally investigate the performance of the proposed one-step procedure to a real-world Wikipedia graph dataset, which is available at \url{http://www.cis.jhu.edu/~parky/Data/data.html}. The Wikipedia graph dataset consists of an adjacency matrix among $n = 1382$ Wikipedia articles that are within two hyperlinks of the article ``Algebraic Geometry'', and these articles are further manually labeled according to one of the following $6$ descriptions: People, Places, Dates, Things, Math, and Category. To determine a suitable embedding dimension $d$ for the random dot product graph  model, we follow the ad-hoc approach of \cite{ZHU2006918} and computes
\[
\widehat d = \argmax_{d = 1,2,\ldots,q}\left\{\sum_{k = 1}^d\log f(\sigma_k(\bA);\widehat\mu_1,\widehat\sigma^2) + \sum_{k = d + 1}^q\log f(\sigma_k(\bA);\widehat\mu_2,\widehat\sigma^2)\right\},
\]
where $f(x;\mu,\sigma^2) = (2\pi\sigma^2)^{-1/2}\exp\left\{-(x-\mu)^2/(2\sigma^2)\right\}$ is the normal density with mean $\mu$ and variance $\sigma^2$, 
\[
\mu_1 = \frac{1}{d}\sum_{k = 1}^d\sigma_k(\bA), \quad\mu_2 = \frac{1}{p - d}\sum_{k = d + 1}^p\sigma_k(\bA),\quad
\widehat\sigma^2 = \frac{(d - 1)s_1^2 + (p - d - 1)s_2^2}{p - 2},
\]
$s_1^2$, $s_2^2$ are the sample variances of $\{\sigma_k(\bA)\}_{k =1}^d$ and $\{\sigma_k(\bA)\}_{k = d + 1}^q$, respectively, and $q$ is an upper bound for the embedding dimension. Here we select $q = 50$ as a conservative upper bound, resulting in $\widehat d = 11$. 

We next compute the ASE, the LSE, the OSE-A, and the OSE-L, with the embedding dimension $d = 11$, and then  apply the GMM-based clustering algorithm to these estimates, with the number of clusters being $6$. 
We next compare the similarity between the manually assigned $6$ class labels and these clustering results by computing the respective Rand indices, which are tabulated in Table \ref{table:Wikidata_RI}. The results show that 
 the one-step procedure for the population LSE outperforms the rest of the competitors, as it provides the clustering result that is most similar to the original class label assignment among the four methods. 
\begin{table}[htbp]
  \caption{Wikipedia Graph Data: Rand indices of the GMM-based clustering algorithm applied to the ASE, the LSE, the OSE-A, and the OSE-L, respectively, with the number of clusters being $6$, in comparison with the corresponding manual labels.}
  \centering{%
  \begin{tabular}{c c c c c}
    \hline\hline
    Method & ASE & LSE & OSE-A & OSE-L\\
    \hline
    Rand Index & 0.7429 & 0.7350 & 0.7413 & {\bf 0.7538}\\
    \hline\hline
  \end{tabular}%
  }
  \label{table:Wikidata_RI}
  % \begin{tabnote}
% \end{tabnote}
\end{table}%
\begin{table}[htbp]
  \caption{Wikipedia Graph Data: Rand indices of the GMM-based clustering algorithm applied to the ASE, the LSE, the OSE-A, and the OSE-L, respectively, with the number of clusters being $2$, in comparison with the corresponding one-versus-all manual labels for the class ``Dates''.}
  \centering{%
  \begin{tabular}{c c c c c}
    \hline\hline
    Method & ASE & LSE & OSE-A & OSE-L\\
    \hline
    Rand Index & 0.5289 & 0.5097 & {\bf 0.5432} & 0.5313 \\
    \hline\hline
  \end{tabular}%
  }
  \label{table:Wikidata_RI_Date}
  % \begin{tabnote}
% \end{tabnote}
\end{table}%
\begin{figure}[htbp]
  \centerline{\includegraphics[width=1\textwidth]{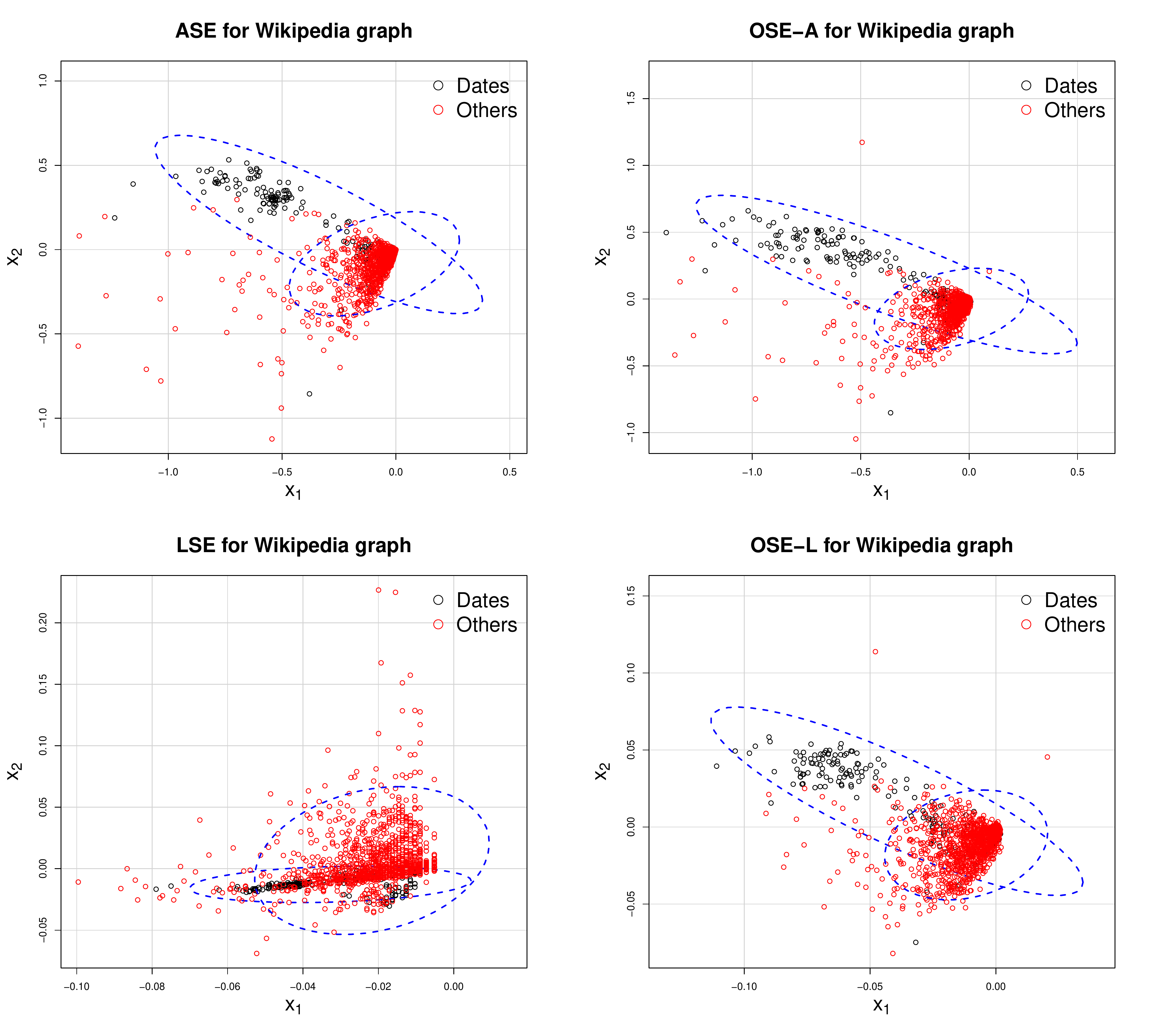}}
  \caption{Wikipedia graph data: The scatter plots of the first-versus-second dimension of the four estimates. The scatter points are colored according to whether the articles are in the class ``Dates'' or the others. The $95\%$ empirical cluster-specific confidence ellipses are displayed by the dashed lines. }
  \label{fig:Wikidata_embedding_plot}
\end{figure}

Besides evaluating the performance of the overall clustering for the $6$ manually-assigned labels, we also focus on the comparison of the article class ``Dates'' against the rest of the articles specifically. We apply the GMM-based clustering algorithm to the aforementioned four estimates again, but with the number of clusters being $2$, and tabulate the Rand indices in Table \ref{table:Wikidata_RI_Date}. We can see that the proposed  one-step procedure improves the clustering accuracy 
as well when we focus on the comparison between the article class ``Dates'' against the rest of the labels. The scatter plots of the first-versus-second dimension of the four estimates are visualized in Figure \ref{fig:Wikidata_embedding_plot}, along with the cluster-specific $95\%$ empirical confidence ellipses in dashed lines. 
% It can be clearly seen that the OSE-A and the OSE-L outperform the ASE and the LSE, respectively, as the one-step procedure results in better separation of the articles in the ``Dates'' class from the rest of the articles.

% subsection wikipedia_graph_data (end)

% section numerical_examples (end)

\section{Discussion} % (fold)
\label{sec:discussion}

% [Discussion of the paper by Gao, Ma, Zhang, and Zhou]

In the context of stochastic block models, \cite{10.5555/3122009.3153016} proposed a vertex clustering approach that improves the solution provided by the ASE and/or the LSE. The algorithm in \cite{10.5555/3122009.3153016} starts from the clustering solution of the ASE/LSE and then refines the cluster assignment of each vertex through the maximization of a penalized Bernoulli likelihood function, where the cluster memberships of the rest of the vertices are fixed at their most recent values. This approach is similar to our one-step procedure for estimating the latent positions in spirit, as both methods are implemented in a vertex-by-vertex optimization fashion with a warm start solution (\emph{i.e.}, the ASE/LSE or the cluster assignment given by them). Our method differs from the method of \cite{10.5555/3122009.3153016} in that the proposed one-step procedure aims at maximizing the Bernoulli likelihood function with regard to the continuous-valued latent positions and takes the gradient information of the likelihood function into account, whereas \cite{10.5555/3122009.3153016} focus on estimating  cluster memberships of vertices, and no gradient information is available due to the discrete nature of the variables of interest. 

We assume that the embedding dimension $d$ for the random dot product graph is known throughout the paper. 
%The theory developed in this work is based on the assumption that the dimension $d$ for the random dot product graph is known. 
The proposed one-step procedure is also valid when the true dimension $d$ for the underlying sampling model is unknown. In this case, the method proceeds by first finding the ASE into $\mathbb{R}^{d'}$ for some $d'\geq1$ and $d' < d$ (\emph{i.e.}, when the dimension is under-estimated)
%is not necessarily identical to $d$ 
and then computing the one-step estimator based on $d'$.  Our Theorem \ref{thm:convergence_OS} and Theorem \ref{thm:convergence_OS_Laplacian} still hold and can be easily proved as suggested by \cite{tang2018}. 
%As pointed out in \cite{tang2018}, results that are similar to our Theorem \ref{thm:convergence_OS} and Theorem \ref{thm:convergence_OS_Laplacian} also hold when $d' < d$ (\emph{i.e.}, when the dimension is under-estimated). 
%The limit theory for the case when the dimension is over-estimated, namely, $d' > d$, remains open. A closely related scenario is the case where the dimension $d$ is unknown, but can be consistently estimated (see, for example, \citealp{chatterjee2015,sussman2012consistent}), and it is unclear whether the limit theory developed here can be extended to such a scenario. 
On the other hand, leveraging Bayesian methods when the dimension $d$ is unknown is a promising future direction in light of the recent progress in Bayesian theory and methods for low-rank matrix models with undetermined rank \citep{doi:10.1093/biomet/asr013,doi:10.1080/01621459.2015.1100620} and network models \citep{doi:10.1111/rssb.12233,xie2019optimal,doi:10.1080/01621459.2018.1458618}.

We have shown that the one-step procedure produces an estimator enjoying fascinating asymptotic properties both globally for all vertices and locally for each vertex. Nevertheless, for problems with relatively small sample sizes, we found in simulation examples that the one-step estimators do not necessarily provide us with better numerical results compared to the classical adjacency/Laplacian spectral embedding. 
%provide us with results comparable to the classical adjacency/Laplacian spectral embedding. 
Since the one-step procedure only implements a single iteration of the Newton-Raphson algorithm with the observed Hessian matrix replaced by the negative Fisher information matrix, we  hope to develop an iterative algorithm for finding a local maximum likelihood estimator by repeating the one-step procedure multiple times until convergence. Such an iterative algorithm can be implemented in conjunction with the regularization of the Fisher information matrix and backtracking procedure for finding suitable step sizes to achieve faster convergence \citep{nocedal2006numerical}. Furthermore, developing a scalable version of such an algorithm will be highly desirable in the presence of big data and extremely large networks. It will also be useful to explore the statistical properties of the estimator obtained by the iterative algorithm, and to establish its theoretical guarantee. We defer these research topics to future work.

% \clearpage
% \section*{Supplementary Material}
% The supplementary material contains a comprehensive list of notations, the proofs of the technical results in Section \ref{sec:preliminaries}, Section \ref{sec:an_efficient_one_step_estimator}, and Section \ref{sub:a_plug_in_estimator_for_the_normalized_laplacian_matrix}, the behavior of the one-step estimator for positive definite stochastic block models, further discussion regarding sparse graph models, and additional simulated examples. 

\section*{Acknowledgements}
The work of Xu was supported by NSF 1918854 and NSF 1940107.

% \clearpage

\bigskip
% \begin{center}
% {\large\bf APPENDIX}
% \end{center}

\counterwithin{lemma}{section}
\counterwithin{theorem}{section}

% This document contains the following supplementary materials:
% \begin{itemize}[topsep = 0cm, noitemsep]
%    \item A comprehensive list of notations used throughout document (Section \ref{sec:list_of_notations}).  
%    \item Proofs of the main theorems, namely, Theorems \ref{thm:ASE_limit_theorem} 
%    % , \ref{thm:one_dimensional_MLE}, \ref{thm:OSE_single_vertex}, \ref{thm:asymptotic_normality_OS}, \ref{thm:convergence_OS}, \ref{thm:LSE_limit_theorem}, \ref{thm:LSE_transform_initial_condition}, \ref{thm:asymptotic_normality_OS_Laplacian}, and 
%    through \ref{thm:convergence_OS_Laplacian} (Sections \ref{sec:proof_of_theorem_thm:ase_limit_theorem} through \ref{sec:proof_of_theorems_OSLaplacian}).
%    \item One-step estimator in the context of positive definite stochastic block models (Section \ref{sec:positive_definite_stochastic_block_models}).
%    \item Further discussion of the applicability of the proposed theory to sparse graphs (Section \ref{sec:further_discussion_of_sparse_graphs}).
%    % \item Proofs of technical lemmas that are used for the proofs of the main theorems.
%    \item Additional simulated examples (Section \ref{sec:additional_simulated_example}).
%  \end{itemize}

\begin{appendices}

% \clearpage
\section{List of Notations} % (fold)
\label{sec:list_of_notations}
In Table \ref{table:notations_Supplement} we prepare a comprehensive lists of notations that are repeatedly used throughout the paper. 
\begin{longtable}[t]{ c  l }
\caption{Table of Notations} \\% needs to go inside longtable environment
\hline\hline\\[-0.2cm]
\endfirsthead
\caption[]{Table of Notations (continued)} \\% needs to go inside longtable environment
\hline\hline\\[-0.2cm]
\endhead
\hline\hline
\endfoot
  \label{table:notations_Supplement} 

    $\eye_d$ & $d\times d$ identity matrix \\[0.2cm]

      $\zero$ & The Eulidean vector of all zeros in its coordinates\\[0.2cm]

      $\one$ & The Eulidean vector of all ones in its coordinates\\[0.2cm]

      $[n]$ & $[n] = \{1,2,\ldots,n\}$; In this work $[n]$ may denote the set of all vertices. \\[0.2cm]

      $a\lesssim b$ & $a\lesssim b$ if $a\leq Cb$ for some constant $C > 0$ \\[0.2cm]

      $a\gtrsim b$ & $a\gtrsim b$ if $a\geq Cb$ for some constant $C > 0$\\[0.2cm]

      $a\vee b$ & $a\vee b = \max(a, b)$ \\[0.2cm]

      $a\wedge b$ & $a\wedge b = \min(a, b)$ \\[0.2cm]

      $a\asymp b$ & $a\asymp b$ if $a\lesssim b$ and $a\gtrsim b$ \\[0.2cm]

      $\mathbb{O}(n, d)$ & $\mathbb{O}(n, d) = \{\bU\in\mathbb{R}^{n\times d}:\bU\transpose{}\bU = \eye_d\}$, where $n\geq d$\\[0.2cm]

      $\mathbb{O}(d)$ & $\mathbb{O}(d) = \mathbb{O}(d, d)$\\[0.2cm]

      \multirow{2}{*}{$\bx\leq \by$} & For vectors $\bx = [x_1,\ldots,x_d]\transpose$ and $\by = [y_1,\ldots,y_d]\transpose$ in $\mathbb{R}^d$,\\
      &the inequality $\bx\leq \by$ means that $x_k \leq y_k$ for all $k = 1,2,\ldots,d$\\[0.2cm]

      $\|\bx\|$ & For a vector $\bx = [x_1,\ldots,x_d]\transpose\in\mathbb{R}^d$, $\|\bx\|_2 = (\sum_kx_k^2)^{1/2}$\\[0.2cm]

      $\bSigma_1\preceq \bSigma_2$ & $\bSigma_2 - \bSigma_1$ is positive semidefinite\\[0.2cm]

      $\bSigma_1\succeq \bSigma_2$ & $\bSigma_1 - \bSigma_2$ is positive semidefinite\\[0.2cm]

      $\bSigma_1\prec   \bSigma_2$ & $\bSigma_2 - \bSigma_1$ is positive definite\\[0.2cm]

      $\bSigma_1\succ   \bSigma_2$ & $\bSigma_1 - \bSigma_2$ is positive definite\\[0.2cm]

      $\sigma_k(\bZ)$ & $k$th largest singular value of a matrix $\bZ$\\[0.2cm]

      $\|\bZ\|_2$ & Spectral norm of a matrix $\bZ$: $\|\bZ\|_2 = \sigma_1(\bZ)$\\[0.2cm]

      $\|\bZ\|_{\mathrm{F}}$ & Frobenius norm of a matrix $\bZ = [Z_{ik}]_{n\times d}$: $\|\bZ\|_{\mathrm{F}} = (\sum_{i,k}Z_{ik}^2)^{1/2}$\\[0.2cm]

      % $\|\bZ\|_{\infty}$ & Infinity norm of a matrix $\bZ = [Z_{ik}]_{n\times d}$: $\|\bZ\|_{\infty} = \max_i\sum_k|z_{ik}|$\\
      $\|\bZ\|_{2\to\infty}$ & Two-to-infinity norm of a matrix $\bZ = [Z_{ik}]_{n\times d}$: $\|\bZ\|_{2\to\infty} = \max_i(\sum_kz_{ik}^2)^{1/2}$\\[0.2cm]

      \multirow{2}{*}{$\mathrm{diag}(\bz)$} & For a vector $\bz = [x_1,\ldots,x_n]\transpose\in\mathbb{R}^n$, the matrix $\mathrm{diag}(\bx)$ is a diagonal matrix\\
      & with the $i$th diagonal entries being $z_i$, $i = 1,\ldots,n$\\[0.2cm]

      $\calX$ & Latent space $\calX = \{\bx = [x_1,\ldots,x_d]\in\mathbb{R}^d: x_1,\ldots,x_d > 0, \|\bx\| < 1\}$\\[0.2cm]

      $\calX(\delta)$ & A subset of $\calX$ such that $\bx,\bu\in\calX(\delta)$ implies $\bx\transpose\bu\in[\delta, 1 - \delta]$ for some $\delta > 0$\\[0.3cm]

      \multirow{2}{*}{$\calX^n$}
                & $n$-fold cartesion product of $\calX$: \\
                & $\calX^n = \{\bX = [\bx_1,\ldots,\bx_n]\transpose\in\mathbb{R}^{n\times d}:\bx_1,\ldots,\bx_n\in\calX\}$\\[0.2cm]

      $n$ & Number of vertices \\[0.2cm]

      $d$ & Dimension of underlying latent positions \\[0.2cm]

      $\bX$ & Latent position matrix $\bX = [\bx_1,\ldots,\bx_n]\transpose\in\calX^n$\\[0.2cm]

      $\bx_i$ & The $i$th row of $\bX$ as a column vector in $\mathbb{R}^d$\\[0.2cm]

      $\bA$ & Adjacency matrix drawn from a random dot product graph\\[0.2cm]

      $\rho_n$ & Sparsity factor $\rho_n\in(0, 1]$ such that $A_{ij}\sim \mathrm{Bernoulli}(\bx_i\transpose\bx_j)$\\[0.2cm]

      $\bX_0$ & True latent position matrix $\bX_0 = [\bx_{01},\ldots,\bx_{0n}]\transpose\in \calX^n$\\[0.2cm]

      $\bx_{0i}$ & The $i$th row of $\bX_0$ as a column vector in $\mathbb{R}^d$\\[0.2cm]
      
      $F_n$ & Empirical distribution function of $(\bx_{0i})_{i = 1}^n$: $F_n(\bx) = (1/n)\sum_{i = 1}^n\mathbbm{\bx_i\leq \bx}$\\[0.2cm]

      $F$ & The cumulative distribution function such that \eqref{eqn:strong_convergence_measure} holds
      \\[0.2cm]

      $\bW$ & A generic orthogonal matrix in $\mathbb{O}(d)$\\[0.2cm]

      $\bDelta$ & Second moment matrix of $F$: $\bDelta = \int_\calX \bx\bx\transpose F(\mathrm{d}\bx)$\\[0.2cm]

      \multirow{2}{*}{$\tau$}
             & Cluster assignment function $\tau:[n]\to [K]$ in a $K$-block \\
             & stochastic block model\\[0.2cm]

      \multirow{2}{*}{$\bB$}
             & A symmetric $K\times K$ block probability matrix\\
             & for a $K$-block stochastic block model\\[0.2cm]
      $\widehat{\bX}^{(\mathrm{ASE})}$ & The adjacency spectral embedding of $\bA$ into $\mathbb{R}^d$ \\[0.2cm]

      $\bnu_k$ & A unique latent position in a positive semidefinite stochastic block model \\[0.2cm]

      $\widehat{\bx}_i^{\mathrm{(ASE)}}$ & The $i$th row of $\widehat{\bX}^{(\mathrm{ASE})}$ as a column vector in $\mathbb{R}^d$\\[0.2cm]

      $\rho$ & $\rho = \lim_{n\to\infty}\rho_n$; In this work we assume $\rho\in\{0, 1\}$. \\[0.2cm]

      % $\delta$ & $\delta > 0$ such that $\bx_{0i}\transpose\bx_{0j}\in[\delta, 1 - \delta]$ uniformly for all $i,j$ and $n$\\

      $\bSigma(\bx)$ & 
      $\displaystyle\bDelta^{-1}\int_\calX \{\bx\transpose\bx_1(1 - \rho\bx\transpose\bx_1)\}\bx_1\bx_1\transpose F(\mathrm{d}\bx_1)\bDelta^{-1}$, where $\bx\in\calX(\delta)$
      \\[0.4cm]

            $\bG(\bx)$ & 
      $\displaystyle \int_\calX \left\{\frac{\bx_1\bx_1\transpose}{\bx\transpose\bx_1(1 - \rho\bx\transpose\bx_1)}\right\} F(\mathrm{d}\bx_1)$, where $\bx\in\calX(\delta)$
      \\[0.4cm]

      $\bDelta_n$ & $\displaystyle \int_\calX \bx_1\bx_1\transpose F_n(\mathrm{d}\bx)$, and can be alternatively expressed as $\displaystyle (1/n)\bX_0\transpose\bX_0$.
      \\[0.4cm]

      $\bSigma_n(\bx)$ & $\displaystyle \int_\calX \{\bx\transpose\bx_1(1 - \rho\bx\transpose\bx_1)\}\bx_1\bx_1\transpose F_n(\mathrm{d}\bx)$, where $\bx\in\calX$\\[0.4cm]

      % $\bD(\bx)$ & $\mathrm{diag}\{\bx\transpose\bx_{01}(1 - \rho\bx\transpose\bx_{01}),\ldots,\bx\transpose\bx_{0n}(1 - \rho\bx\transpose\bx_{0n})\}$, where $\bx\in\calX(\delta)$.\\
      $\bG_n(\bx)$ & $\displaystyle \int_\calX \left\{\frac{\bx_1\bx_1\transpose}{\bx\transpose\bx_1(1 - \rho\bx\transpose\bx_1)}\right\} F_n(\mathrm{d}\bx)$, where $\bx\in\calX(\delta)$.\\[0.4cm]

      % $\bU_0\bS_0^{1/2}\bV_0\transpose$ & The singular value decomposition of $\bX_0$: $\bU_0\in\mathbb{O}(n, d)$, $\bV_0\in\mathbb{O}(d)$, \\
      % & and $\bS_0^{1/2} = \mathrm{diag}\{\sigma_1(\bX_0),\ldots,\sigma_d(\bX_0)\}$ where $\sigma_1(\bX_0) \geq\ldots\geq \sigma_d(\bX_0)$\\
      $\bPsi_n(\bx)$ & Score function
      $\displaystyle 
      \bPsi_n(\bx) = \frac{1}{n}\sum_{j\neq i}^n\frac{(A_{ij} - \bx\transpose\bx_{0j})\bx_{0j}}{\bx\transpose\bx_{0j}(1 - \bx\transpose\bx_{0j})}
      $\\[0.2cm]

      $\widetilde{\bx}_i$ & Initial estimator of $\bx_i$ for the one-step procedures \eqref{eqn:One_step_onedimensional} and \eqref{eqn:one_step_estimator}\\[0.2cm]

      % $\widehat{\bx}_i^{\mathrm{(OS)}}$ & One-step estimator for $\bx_i$ when $\{\bx_{0j}:j\neq i\}$ are known\\
      $\widetilde\bX$ & Initial estimator of $\bX$ for the one-step procedure in \eqref{eqn:one_step_estimator}\\[0.2cm]

      % $\bzeta_{ij}$ & The generic coefficients of $(A_{ij} - \rho_n\bx_{0i}\transpose\bx_{0j})$ defined in \eqref{eqn:linearization_property}, where $\bzeta_{ij}\in\mathbb{R}^d$\\
      % $\widetilde\bR$ & The remainder of $\widetilde\bX\bW - \rho_n^{1/2}\bX_0$ given in \eqref{eqn:linearization_property}, where $\bW$ is an orthogona\\
      % & alignment matrix
     $\widehat\bX$ & The one-step estimator for $\bX$ defined by \eqref{eqn:one_step_estimator}\\[0.2cm]

     $\widehat{\bx}_i$ & The $i$th row of $\widehat{\bX}$ as a column vector in $\mathbb{R}^d$ \\[0.2cm]

     \multirow{2}{*}{$\calL(\bM)$} & The normalized Laplacian of a square matrix $\bM$ defined by \\
     &$\calL(\bM) = (\mathrm{diag}(\bM\one))^{-1/2}\bM(\mathrm{diag}(\bM\one))^{-1/2}$\\[0.2cm]

     $\bY$ & The population Laplacian spectral embedding:
     $\bY = \bY(\bX) = (\mathrm{diag}(\bX\one))^{-1/2}\bX$\\[0.2cm]

     $\by_i$ & The $i$th row of $\bY$ as a column vector in $\mathbb{R}^d$\\[0.2cm]

     $\bY_0$ & The true value of $\bY$ given by $\bY_0 = (\mathrm{diag}(\bX_0\bX_0\one))^{-1/2}\bX_0$\\[0.2cm]

     $\by_{0i}$ & The $i$th row of $\bY_0$ as a column vector in $\mathbb{R}^d$\\[0.2cm]

     $\breve{\bX}$ & The (sample) Laplacian spectral embedding of $\bA$ into $\mathbb{R}^d$\\[0.2cm]

     $\bmu$ & $\displaystyle \int_\calX \bx F(\mathrm{d}\bx)$\\[0.4cm]

     $\widetilde\bDelta$ & $\displaystyle\int_\calX \frac{\bx\bx\transpose}{\bx\transpose\bmu}F(\mathrm{d}\bx)$\\[0.4cm]

     $\widetilde\bSigma(\bx)$ & $\displaystyle
     \left(\widetilde{\bDelta}^{-1} - \frac{\bx\bmu\transpose}{2\bx\transpose\bmu}\right)
     \int_\calX\frac{\bx\transpose\bx_1(1  - \rho\bx\transpose\bx_1)\bx_1\bx_1\transpose F(\mathrm{d}\bx_1)}{\bmu\transpose\bx(\bmu\transpose\bx_1)^2}
     \left(\widetilde{\bDelta}^{-1} - \frac{\bx\bmu\transpose}{2\bx\transpose\bmu}\right)\transpose
     $, $\bx\in\calX(\delta)$
      \\[0.4cm]

    \multirow{2}{*}{$\widehat{\bY}$} & The one-step estimator for the population LSE: $\widehat{\bY} = \{\mathrm{diag}(\widehat\bX\transpose\widetilde\bX\one)\}^{-1/2}\widehat\bX$, \\
     &where $\widehat{\bX}$ is the one-step estimator for $\bX$, and $\widetilde\bX$ is an initial estimator for $\bX$\\[0.2cm]

     $\widehat{\by}_i$ & The $i$th row of $\widehat{\bY}$ as a column vector in
      $\mathbb{R}^d$\\[0.2cm]   

            $\bmu_n$ & $\displaystyle \int_\calX \bx F_n(\mathrm{d}\bx)$, can be equivalently expressed as $(1/n)\sum_{i = 1}^n\bx_{0i}$\\[0.4cm]   

      $\widetilde{\bG}(\bx)$ & $\displaystyle
      \frac{1}{\bmu\transpose\bx}
      \left(\eye_d - \frac{\bx\bmu\transpose}{2\bx\transpose\bmu}\right)\bG^{-1}(\bx)
      \left(\eye_d - \frac{\bx\bmu\transpose}{2\bx\transpose\bmu}\right)\transpose
      $, $\bx\in\calX(\delta)$\\[0.4cm]

      $\widetilde\bDelta_n$ & $\displaystyle\int_\calX\left(\frac{\bx_1\bx_1\transpose}{\bmu_n\transpose\bx_1}\right)F_n(\mathrm{d}\bx_1)$, can be equivalently expressed as
      $\displaystyle \frac{1}{n}\sum_{j = 1}^n\frac{\bx_{0j}\bx_{0j}\transpose}{\bmu_n\transpose\bx_{0j}}$\\[0.4cm]

      $\widetilde\bSigma_n(\bx)$
      & $\displaystyle
      \left(\widetilde{\bDelta}_n^{-1} - \frac{\bx\bmu_n\transpose}{2\bmu_n\transpose\bx}\right)
      % \left\{
      \frac{1}{n}\sum_{j = 1}^n\frac{\bx\transpose\bx_{0j}(1 - \rho_n\bx\transpose\bx_{0j})\bx_{0j}\bx_{0j}\transpose}{(\bmu\transpose\bx)(\bmu\transpose\bx_{0j})^2}
      % \right\}
      \left(\widetilde{\bDelta}_n^{-1} - \frac{\bx\bmu_n\transpose}{2\bmu_n\transpose\bx}\right)\transpose,\bx\in\calX(\delta)
      $\\[0.6cm]

      \multirow{2}{*}{$C(F_k, F_l)$}
      & Chernoff information between distributions $F_k$ and $F_l$, defined by\\
      &       $\displaystyle 
      \sup_{t\in(0, 1)}\left\{-\log\int f_k(\bx)^t f_l(\bx)^{1 - t}\mathrm{d}\bx\right\}
      $, where $F_k(\bx) = f_k(\bx)\mathrm{d}\bx$, $F_l(\bx) = f_l(\bx)\mathrm{d}\bx$\\[0.4cm]

      $\lambda_k(\mathbf{C})$ & The $k$th largest eigenvalue of a (square) positive semidefinite matrix $\mathbf{C}$ \\[0.2cm]
      % \\
      $[\bz]_k$ & The $k$th coordinate of a Euclidean vector $\bz$\\[0.2cm]

      \multirow{2}{*}{$C, C_1,C_2,C_c, c, c_1, c_2,...$} & Generic constants that may change from line to line but is independent of the\\
      & quantities of interest\\[0.2cm]

      $\bV_n(\bx)$ & $(1/n)\sum_{j\neq i}\{\bx\transpose\bx_{0j}(1 - \rho_n\bx\transpose\bx_{0j})\}\bx_{0j}\bx_{0j}\transpose$\\[0.2cm]

      $\bV(\bx)$ & $\displaystyle
      \int_\calX\{\bx\transpose{}\bx_1(1 - \rho\bx\transpose\bx_1)\}\bx_1\bx_1\transpose F(\mathrm{d}\bx_1)
      $\\[0.4cm]

      $\bP_0$ & $\rho_n\bX_0\bX_0\transpose$\\[0.2cm]

      $\bE$ & $\bA - \rho_n\bX_0\bX_0\transpose = \bA - \bP_0$\\[0.2cm]

      $\bH$ & $(n\rho_n)^{-1/2}(\bA - \rho_n\bX_0\bX_0)\transpose = (n\rho_n)^{-1/2}\bE$\\[0.2cm]

      $H_{ij}$ & The $(i,j)$th element of $\bH$\\[0.2cm]

      \multirow{2}{*}{$\be_i$} & The standard basis vector in $\mathbb{R}^n$ with all $0$ in coordinates except the $i$th\\
      & coordinate being $1$\\[0.2cm]

      \multirow{2}{*}{$\bU_\bP\bS_\bP\bU_\bP\transpose$} & The spectral decomposition of $\bP_0$, where $\bS_\bP = \mathrm{diag}\{\lambda_1(\bP_0),\ldots,\lambda_d(\bP_0)\}$,\\
      & and $\bU_\bP\in\mathbb{O}(n, d)$\\[0.2cm]

      $\bu_{0k}$ & The $k$th column of $\bU_\bP$, where $k\in[d]$\\[0.2cm]

      \multirow{2}{*}{$\sum_{i = 1}^n\widehat{\lambda}_i\widehat{\bu}_i\widehat{\bu}_i\transpose$}
      & The spectral decomposition of $\bA$, where $|\widehat\lambda_1|\geq\ldots\geq|\widehat\lambda_n|\geq 0$, and\\
      & $\widehat{\bu}_i\transpose\widehat{\bu}_j = \mathbbm{1}(i = j)$\\[0.2cm]

      $\bU_\bA$ & $[\widehat{\bu}_1,\ldots,\widehat{\bu}_d]$\\[0.2cm]

      $\bS_\bA$ & $\mathrm{diag}\{|\widehat\lambda_1|,\ldots,|\widehat\lambda_d|\}$\\[0.2cm]

      \multirow{2}{*}{$\nu,\nu_1,\nu_2$} & A generic positive constant that may change from line to line but does not \\
      & depend on $n$\\[0.2cm]

      \multirow{2}{*}{$\bW^*$} & The product of the left and right singular factor of $\bU_\bP\transpose\bU_\bA$: 
      Let $\bW_1\bS_{\bP\bA}\bW_2\transpose$\\
      & be the singular value decomposition of $\bU_\bP\transpose\bU_\bA$. 
      Then $\bW^* = \bW_1\bW_2\transpose$\\[0.2cm]

      $\bW_\bX$ & The orthogonal matrix $\bW_\bX\in\mathbb{O}(d)$ such that $\bX_0 = \bU_\bP\bS_\bP^{1/2}\bW_\bX$\\[0.2cm]

      $M_n(\bx)$ & $\displaystyle
      \frac{1}{n}\sum_{j\neq i}^n\{A_{ij}\log(\bx\transpose\bx_{0j}) + (1 - A_{ij})\log(1 - \bx\transpose\bx_{0j})\}     
      $, $\bx\in\calX(\delta)$\\[0.4cm]

            $\widetilde{\bV}_n(\bx)$ & $\displaystyle
      \frac{1}{n}\sum_{i = 1}^n\frac{\bx_{0i}\transpose\bx(1 - \rho_n\bx_{0i}\transpose\bx)\bx_{0i}\bx_{0i}\transpose}{(\bx_{0i}\transpose\bmu_n)^2}
      $\\[0.4cm]

    $\widetilde{\bV}(\bx)$ & $\displaystyle
      \int_\calX\frac{\bx_{1}\transpose\bx(1 - \rho_n\bx_{1}\transpose\bx)\bx_{1}\bx_{1}\transpose}{(\bx_{1}\transpose\bmu_n)^2}F(\mathrm{d}\bx_1)
      $\\[0.4cm]

      $\bPi$ & A generic permutation matrix \\[0.2cm]

      $\bnu_k$ & A unique latent position in a positive semidefinite stochastic block model \\[0.2cm]

      $M(\bx)$ & $\expect_0\{M_n(\bx)\}$ \\[0.2cm]

      $\bPsi_n(\bx)$ & $\displaystyle \frac{\partial M_n}{\partial\bx}(\bx)$ \\[0.4cm]

      $\bPsi(\bx)$ & $\expect_0\{\bPsi_n(\bx)\}$ \\[0.2cm]

      $\bar{M}(\bx)$ & $\lim_{n\to\infty} M(\bx)$ (note that $M(\bx)$ depends on $n$ implicitly)\\[0.2cm]

      $\dot{\bPsi}_0$ & $\displaystyle \frac{1}{n}\sum_{j\neq i}^n\frac{\bx_{0j}\bx_{0j}\transpose}{\bx_{0i}\transpose\bx_{0j}(1 - \bx_{0i}\transpose\bx_{0j})}$\\[0.4cm]

      $\dot{\bPsi}_{n,0}$ & $\displaystyle \frac{1}{n}\sum_{j\neq i}^n\frac{\bx_{0j}\bx_{0j}\transpose}{\widetilde{\bx}_{i}\transpose\bx_{0j}(1 - \widetilde{\bx}_{i}\transpose\bx_{0j})}$\\[0.2cm]

      \multirow{2}{*}{$\vect(\bSigma)$} & the vectorization of the matrix $\bSigma$ defined to be the vector formed by stacking\\
      & the columns of $\bSigma$ consecutively\\[0.2cm]

      $H_i(\bX)$ & $\displaystyle \frac{1}{n}\sum_{j = 1}^n\frac{\bx_j\bx_j\transpose}{\bx_i\transpose\bx_j(1 - \bx_i\transpose\bx_j)}$ \\[0.4cm]

      $\bT$ & $\mathrm{diag}(\rho_n\bP_0\one) = \mathrm{diag}(\rho_n\bX_0\bX_0\transpose\one)$\\[0.2cm]

      $\bD$ & $\mathrm{diag}(\bA\one)$ \\[0.2cm]

      $\widetilde{\bE}$ & $n\rho_n\{\calL(\bA) - \calL(\bP_0)\}$, where $\calL(\bM) = (\mathrm{diag}(\bM\one))^{-1/2}\bM(\mathrm{diag}(\bM\one))^{-1/2}$\\[0.2cm]

      \multirow{2}{*}{$\sum_{i = 1}^n\widetilde{\lambda}_i(\widetilde{\bu}_\bA)_i(\widetilde{\bu}_\bA)_i\transpose$} 
      & The spectral decomposition of $\calL(\bA)$, where $|\widetilde{\lambda}_1|\geq\ldots\geq|\widetilde{\lambda}_n|$, and\\
      &$(\widetilde{\bu}_\bA)_i\transpose(\widetilde{\bu}_\bA)_j = \mathbbm{1}(i = j)$\\[0.2cm]

      \multirow{2}{*}{$\widetilde\bU_\bP\widetilde\bS_\bP\widetilde\bU_\bP$}
      & The spectral decomposition of $\calL(\bP_0)$, where $\widetilde\bU_\bP\in\mathbb{O}(n, d)$, and\\
      &$\widetilde\bS_\bP = \mathrm{diag}[\lambda_1\{\calL(\bP_0)\},\ldots,\lambda_d\{\calL(\bP_0)\}]$ \\[0.2cm]

      $\widetilde{\bu}_{0k}$ & The $k$th column of $\widetilde{\bU}_\bP$ as a vector in $\mathbb{R}^n$\\[0.2cm]

      $\widetilde{u}_{0jk}$ & The $j$th element of the $k$th column of $\widetilde\bU_\bP$\\[0.2cm]

      $\widetilde{\bU}_\bA$ & $[(\widetilde\bu_\bA)_1,\ldots,(\widetilde\bu_\bA)_2]$\\[0.2cm]

      $\widetilde{\bS}_\bA$ & $\mathrm{diag}(|\widetilde\lambda_1|,\ldots,|\widetilde\lambda_d|)$ \\[0.2cm]

      $\|\bZ\|_{\infty}$ & Infinity norm of a matrix $\bZ = [Z_{ik}]_{n\times d}$: $\|\bZ\|_{\infty} = \max_i\sum_k|z_{ik}|$\\[0.2cm]

      $\|\bz\|_\infty$ & Infinity norm of a vector $\bz = [z_1,\ldots,z_n]\transpose$: $\|\bz\|_\infty = \max_i|z_i|$\\[0.2cm]

      $[\bZ]_{*k}$ & The $k$th column of a matrix $\bZ$ as a column vector\\[0.2cm]

    $[\bZ]_{i*}$ & The $i$th row of a matrix $\bZ$ as a column vector\\[0.2cm]      

% \hline\hline\\[-0.2cm]
\end{longtable}

\section{Proof of Theorem \ref{thm:ASE_limit_theorem} (Limit Theorem for the ASE)} % (fold)
\label{sec:proof_of_theorem_thm:ase_limit_theorem}
Theorem \ref{thm:ASE_limit_theorem} is slightly different than those presented in \cite{tang2018} and \cite{athreya2016limit}, as the latent positions $\bx_{01},\ldots,\bx_{0n}$ are deterministic in the current setup. For comparison, we first state the limit theorem for the ASE when the latent positions are independent and identically distributed random variables in Theorem \ref{thm:Avanti_CLT} below, which is originally due to \cite{athreya2016limit}:
\begin{theorem}[Theorems 2.1 and 2.2 of \citealp{tang2018}]
\label{thm:Avanti_CLT}
Let $\bx_1,\ldots,\bx_n\iidsim F$ for some distribution $F$ supported on $\calX$, and suppose $\bA\mid \bX\sim\mathrm{RDPG}(\bX)$ with a sparsity factor $\rho_n$. Suppose either $\rho_n\equiv 1$ for all $n$ or $\rho_n\to 0$ but $(\log n)^4/(n\rho_n)\to 0$ as $n\to\infty$, and denote $\rho = \lim_{n\to\infty}\rho_n$. Let $\widehat\bX^{(\mathrm{ASE})} = [\widehat\bx_1^{(\mathrm{ASE})},\ldots,\widehat\bx_n^{(\mathrm{ASE})}]\transpose$ be the ASE defined by \eqref{eqn:ASE_least_squared_problem}. Let $\bDelta$ and $\bSigma(\bx)$ be given as in Theorem \ref{thm:ASE_limit_theorem}
% Denote
% \[
% \bDelta = \int_\calX \bx\bx\transpose F(\mathrm{d}\bx),\quad\text{and}\quad \bSigma(\bx) = \bDelta^{-1}\left[\int_\calX\left\{\bx_1\transpose\bx\left(1 - \rho\bx_1\transpose\bx\right)\right\}\bx_1\bx_1\transpose F(\mathrm{d}\bx_1)\right]\bDelta^{-1},
% \]
and assume that $\bDelta$ and $\bSigma(\bx)$ are strictly positive definite for all $\bx\in\calX$. Then there exists a sequence of orthogonal matrices $(\bW_n)_{n = 1}^\infty\in\mathbb{R}^{d\times d}$ such that
\begin{align*}
% \label{eqn:ASE_convergence}
&\|\widehat\bX^{(\mathrm{ASE})}\bW_n - \rho_n^{1/2}\bX_0\|_{\mathrm{F}}^2\overset{a.s.}{\to} 
\int_\calX \mathrm{tr}\{\bSigma(\bx)\}F(\mathrm{d}\bx),
\end{align*} 
and for any fixed index $i\in[n]$, $\sqrt{n}(\bW\transpose_n\widehat\bx_i^{(\mathrm{ASE})} - \rho_n^{1/2}\bx_{0i})$ converges to a normal mixture distribution with density
\begin{align*}
% \label{eqn:ASE_normality}
&
\int_{\calX} \frac{1}{\sqrt{\det\{2\pi\bSigma(\bx_1)\}}}\exp\left\{-\frac{1}{2}\bx\transpose\bSigma(\bx_1)^{-1}\bx\right\} F(\mathrm{d}\bx_1).
% \mathrm{N}(\zero, \bSigma(\bx_{0i})).
% \quad\text{and}\quad
\end{align*}
\end{theorem}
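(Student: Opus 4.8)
The plan is to realize the ASE as a perturbation of the eigenstructure of the population edge-probability matrix. Write $\bA = \bP_0 + \bE$ with $\bP_0 = \rho_n\bX_0\bX_0\transpose$ and $\bE = \bA - \bP_0$ the mean-zero noise matrix, and let $\bP_0 = \bU_\bP\bS_\bP\bU_\bP\transpose$ be the rank-$d$ spectral decomposition, so that $\rho_n^{1/2}\bX_0 = \bU_\bP\bS_\bP^{1/2}\bW_\bX$ for a suitable $\bW_\bX\in\mathbb{O}(d)$. The ASE is $\widehat\bX^{(\mathrm{ASE})} = \bU_\bA\bS_\bA^{1/2}$, and I would define the alignment $\bW_n = \bW^*\bW_\bX$ through the orthogonal Procrustes factor $\bW^*$ of $\bU_\bP\transpose\bU_\bA$ (as in the notation table), which resolves the sign/rotation ambiguity of the empirical eigenvectors. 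The central object is the \emph{linearization}
\begin{align*}
\bW_n\transpose\widehat\bx_i^{(\mathrm{ASE})} - \rho_n^{1/2}\bx_{0i} = \rho_n^{-1/2}\sum_{j=1}^n(A_{ij} - \rho_n\bx_{0i}\transpose\bx_{0j})[\bX_0(\bX_0\transpose\bX_0)^{-1}]_{j\cdot} + \widehat\bR_i^{(\mathrm{ASE})},
\end{align*}
whose leading term is an explicit linear functional of the noise and whose remainder $\widehat\bR_i^{(\mathrm{ASE})}$ will be shown negligible.

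To justify this expansion I would first bound $\|\bE\|_2 = O_\prob((n\rho_n)^{1/2})$ by a matrix Bernstein/Chernoff inequality valid in the regime $(\log n)^4/(n\rho_n)\to 0$, and combine it with Weyl's inequality and a Davis--Kahan argument to separate and match the top-$d$ invariant subspaces of $\bA$ and $\bP_0$. The key refinement is a \emph{two-to-infinity} bound $\|\widehat\bX^{(\mathrm{ASE})}\bW_n - \rho_n^{1/2}\bX_0\|_{2\to\infty} = O_\prob((\log n)^{c}(n\rho_n)^{-1/2})$ in the spirit of Lemma 2.1 of \cite{lyzinski2014}. Expanding the eigen-relation $\bA\bU_\bA = \bU_\bA\bS_\bA$, substituting $\bA = \bP_0 + \bE$, and isolating the term $\bE\bU_\bP\bS_\bP^{-1/2}$ then yields the leading term above, while the residual pieces (those quadratic in $\bE$, or arising from the eigenvalue and eigenvector perturbations) are absorbed into $\widehat\bR_i^{(\mathrm{ASE})}$ with $\sqrt{n}\,\widehat\bR_i^{(\mathrm{ASE})} = o_\prob(1)$ for fixed $i$ and $\sum_i\|\widehat\bR_i^{(\mathrm{ASE})}\|^2 = o_\prob(1)$.

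Granting the linearization, both conclusions follow by classical arguments. For the row-wise limit I would condition on $\bX$: given $\bX$ the leading term is a sum of independent mean-zero summands indexed by $j$, and $n$ times its conditional covariance equals $\bDelta_n^{-1}\{(1/n)\sum_{j=1}^n\bx_{0i}\transpose\bx_{0j}(1-\rho_n\bx_{0i}\transpose\bx_{0j})\bx_{0j}\bx_{0j}\transpose\}\bDelta_n^{-1}$ with $\bDelta_n = (1/n)\bX_0\transpose\bX_0$, which converges by the strong law (applied to the i.i.d. $\bx_{0j}$) to $\bSigma(\bx_{0i})$; a Lindeberg--Feller CLT then gives conditional convergence to $\mathrm{N}(\zero,\bSigma(\bx_{0i}))$, and marginalizing over $\bx_{0i}\sim F$ produces exactly the stated normal-mixture density. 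For the global claim I would write $\|\widehat\bX^{(\mathrm{ASE})}\bW_n - \rho_n^{1/2}\bX_0\|_{\mathrm{F}}^2 = \sum_i\|\bW_n\transpose\widehat\bx_i^{(\mathrm{ASE})} - \rho_n^{1/2}\bx_{0i}\|^2$, note that each scaled squared row norm concentrates around $\mathrm{tr}\{\bSigma(\bx_{0i})\}/n$, and invoke the strong law so that $(1/n)\sum_i\mathrm{tr}\{\bSigma(\bx_{0i})\}\to\int_\calX\mathrm{tr}\{\bSigma(\bx)\}F(\mathrm{d}\bx)$, with the remainder contributions vanishing almost surely.

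The main obstacle is the uniform control of the remainder $\widehat\bR_i^{(\mathrm{ASE})}$, i.e.\ establishing the two-to-infinity bound and showing it is genuinely $o_\prob(n^{-1/2})$ for a fixed row while also being summable in the Frobenius sense; this is where the sparsity assumption $(\log n)^4/(n\rho_n)\to 0$ is consumed, and where the delicate matching of empirical to population eigenvectors (handling sign flips and near-degeneracies of $\bS_\bP$) must be carried out carefully. By contrast, once the strong law is in force, the passage to i.i.d.\ latent positions and the replacement of the empirical second-moment and variance matrices by their population limits $\bDelta$ and $\bSigma(\bx)$ is comparatively routine.
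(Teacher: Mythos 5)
Your overall architecture --- Procrustes alignment of $\bU_\bA$ to $\bU_\bP$, spectral-norm control of $\bE$, a two-to-infinity bound, the linearization with leading term $\rho_n^{-1/2}\sum_{j}(A_{ij}-\rho_n\bx_{0i}\transpose\bx_{0j})[\bX_0(\bX_0\transpose\bX_0)^{-1}]_{j\cdot}$, conditioning on $\bX$ followed by Lindeberg--Feller, and marginalizing over $\bx_{0i}\sim F$ to obtain the mixture density --- is essentially the route of the cited sources, and your conditional covariance computation (including the normalization by $\bDelta_n = n^{-1}\bX_0\transpose\bX_0$ and the identification of the limit $\bSigma(\bx_{0i})$) is correct. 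Note that the paper does not reprove this statement: it quotes it from \cite{tang2018}, and its own Appendix proof concerns the deterministic-latent-position analogue (Theorem \ref{thm:ASE_limit_theorem}) by a deliberately different entrywise method. The genuine gap in your proposal is the claim that $\sqrt{n}\,\widehat\bR_i^{(\mathrm{ASE})} = o_{\prob}(1)$ for a \emph{fixed} row $i$: none of the tools you invoke delivers this. The known global bound is $(\sum_{i}\|\widehat\bR_i^{(\mathrm{ASE})}\|^2)^{1/2} = O_{\prob}((n\rho_n)^{-1})$, which applied to a single row gives only $\sqrt{n}\|\widehat\bR_i^{(\mathrm{ASE})}\| = O_{\prob}(\sqrt{n}/(n\rho_n))$; this does not vanish throughout the admissible sparsity regime, since $(\log n)^4/(n\rho_n)\to 0$ permits $\rho_n\asymp n^{-1/2}$, where $\sqrt{n}/(n\rho_n)\asymp 1$. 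The two-to-infinity bound cannot rescue the step either: it controls the \emph{full} row deviation $\bW_n\transpose\widehat\bx_i^{(\mathrm{ASE})}-\rho_n^{1/2}\bx_{0i}$ at order $(n\rho_n)^{-1/2}$ up to polylogarithmic factors, i.e., exactly the CLT scale, not the $o(n^{-1/2})$ needed for the residual.

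The missing idea --- available precisely because this statement assumes $\bx_1,\ldots,\bx_n\iidsim F$ --- is exchangeability: the rows of the residual matrix are exchangeable, so $\expect\|\widehat\bR_i^{(\mathrm{ASE})}\|^2 = n^{-1}\expect\|\widehat\bR^{(\mathrm{ASE})}\|_{\mathrm{F}}^2 = O(n^{-1}(n\rho_n)^{-2})$, whence $\sqrt{n}\|\widehat\bR_i^{(\mathrm{ASE})}\| = O_{\prob}((n\rho_n)^{-1}) = o_{\prob}(1)$. This is how \cite{tang2018} close the argument, and the same device appears in this paper's proof of the LSE normality under the sparse stochastic block model regime (the bound on $\breve{\br}_i$ via within-cluster exchangeability). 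Without it --- e.g., for deterministic latent positions, where permuting rows changes the model --- one is forced into a genuinely row-wise analysis, which is exactly why the paper's proof of Theorem \ref{thm:ASE_limit_theorem} abandons this route and instead runs the expansion $\bU_\bA - \bU_\bP\bW^* = \bE\bU_\bP\bS_\bP^{-1}\bW^* + \bR$ with the high-order moment bound of Lemma \ref{lemma:moment_bound_ASE}. A smaller looseness: for the almost-sure Frobenius limit, ``each row concentrates'' plus the strong law is not sufficient for a.s.\ convergence; you need concentration of the quadratic form $\rho_n^{-1}\|(\bA-\rho_n\bX_0\bX_0\transpose)\bX_0(\bX_0\transpose\bX_0)^{-1}\|_{\mathrm{F}}^2$ around its expectation with summable tail probabilities (as in Lemma A.5 of \citealp{doi:10.1080/10618600.2016.1193505}), followed by Borel--Cantelli.
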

We breakdown the proof into the proof of the limit\eqref{eqn:ASE_convergence} and the proof of the asymptotic normality of the rows of the ASE \eqref{eqn:ASE_normality}.

\subsection{Proof of the Limit \eqref{eqn:ASE_convergence}} % (fold)
\label{sub:proof_of_the_limit_eqn:ase_convergence}

% subsection proof_of_the_limit_eqn:ase_convergence (end)

\begin{proof}[\bf Proof of the limit \eqref{eqn:ASE_convergence}]
The proof of \eqref{eqn:ASE_convergence} is very similar to the proof given in Appendix A of \cite{tang2018} and here we only present the sketch. In the case where $\bA\sim\mathrm{RDPG}(\bX_0)$ where $\bX_0 = [\bx_{01},\ldots,\bx_{0n}]\transpose$, Theorem A.5 in \cite{doi:10.1080/10618600.2016.1193505} yields
\[
\|\widehat{\bX}^{\mathrm{(ASE)}}\bW - \rho_n^{1/2}\bX_0\|_{\mathrm{F}} = \rho_n^{-1/2}\|(\bA - \rho_n\bX_0\bX_0\transpose)\bX_0(\bX_0\transpose\bX_0)^{-1}\|_{\mathrm{F}} + O_{\prob_0}((n\rho_n)^{-1/2}).
\]
Denote $\zeta = \rho_n^{-1}\|(\bA - \rho_n\bX_0\bX_0\transpose)\bX_0(\bX_0\transpose\bX_0)^{-1}\|_{\mathrm{F}}^2$. Then Lemma A.5 in \cite{doi:10.1080/10618600.2016.1193505} further shows that $\zeta - \expect_0(\zeta)$ is asymptotically negligible. Apppendix A of \cite{tang2018} also shows that
\[
\expect_0(\zeta) = \mathrm{tr}\left[
n(\bX_0\transpose\bX_0)^{-1}\{n^{-2}\rho_n^{-1}\bX_0\transpose\expect_0\{(\bA - \rho_n\bX_0\bX_0)^2\}\bX_0\}n(\bX_0\transpose\bX_0)^{-1}
\right],
\]
where
\[
\expect_0\{(\bA - \rho_n\bX_0\bX_0)^2\} = \mathrm{diag}\left\{\sum_{j\neq i}\rho_n\bx_{01}\transpose\bx_{0j}(1 - \rho_n\bx_{01}\transpose\bx_{0j}),\ldots,\sum_{j\neq i}\rho_n\bx_{0n}\transpose\bx_{0j}(1 - \rho_n\bx_{0n}\transpose\bx_{0j})\right\}.
\]
In what follows we make use of the condition \eqref{eqn:strong_convergence_measure} to derive $\expect_0(\zeta)$. Condition \eqref{eqn:strong_convergence_measure} immediately implies that $n(\bX_0\transpose\bX_0)^{-1}\to \bDelta^{-1}$. Furthermore, 
\begin{align*}
n^{-2}\rho_n^{-1}\bX_0\transpose\expect_0\{(\bA - \rho_n\bX_0\bX_0)^2\}\bX_0
& = \frac{1}{n}\sum_{i = 1}^n\bx_{0i}\left\{\frac{1}{n}\sum_{j\neq i}^n\bx_{0i}\transpose\bx_{0j}(1 - \rho_n\bx_{0i}\transpose\bx_{0j})\right\}\bx_{0i}\transpose.
\end{align*}
Define a matrix-valued fucntion $\bV_n(\bx) = (1/n)\sum_{j\neq i}^n\{\bx\transpose\bx_{0j}(1 - \rho_n\bx\transpose\bx_{0j})\}\bx_{0j}\bx_{0j}\transpose$. Using the argument for proving Lemma \ref{lemma:uniform_convergence_G}, one can also show that $\bV_n(\bx)\to\bV(\bx)$ uniformly for all $\bx\in \bar{\calX}$ as well, where $\bar{\calX}$ is the closure of $\calX$ (which is compact), and 
\[
\bV(\bx) = \int_\calX\{\bx\transpose\bx_{1}(1 - \rho\bx\transpose\bx_{1})\}\bx_{1}\bx_{1}\transpose F(\mathrm{d}\bx_1).
\]
Since $F_n = (1/n)\sum_{j = 1}^n\delta_{\bx_{0i}}$ converges strongly to $F$, it follows that (see, for example, Exercise 3 in Section 4.4 of \citealp{chung2001course}) that
\begin{align*}
&\frac{1}{n}\sum_{i = 1}^n\bx_{0i}\left\{\frac{1}{n}\sum_{j\neq i}^n\bx_{0i}\transpose\bx_{0j}(1 - \rho_n\bx_{0i}\transpose\bx_{0j})\right\}\bx_{0i}\transpose
% \\&\quad
 = \frac{1}{n}\sum_{i = 1}^n \bV_n(\bx_i) = \int_\calX \bV_n(\bx_1)F_n(\mathrm{d}\bx_1) \to \int_\calX \bV(\bx_1)F(\mathrm{d}\bx).
\end{align*}
Hence we conclude that
\begin{align*}
\expect_0(\zeta)
 & = \mathrm{tr}\left[
n(\bX_0\transpose\bX_0)^{-1}\{n^{-2}\rho_n^{-1}\bX_0\transpose\expect_0\{(\bA - \rho_n\bX_0\bX_0)^2\}\bX_0\}n(\bX_0\transpose\bX_0)^{-1}
\right]\\
& \to \mathrm{tr}\left\{\bDelta^{-1}\int_\calX \bV(\bx_1)F(\mathrm{d}\bx_1)\bDelta^{-1}\right\}\\
& = \int_\calX\mathrm{tr}\left\{\bDelta^{-1}\bV(\bx)\bDelta^{-1}\right\}F(\mathrm{d}\bx)\\
& = \int_\calX\mathrm{tr}\left[\bDelta^{-1}\int_\calX\{\bx\transpose\bx_{1}(1 - \rho\bx\transpose\bx_{1})\}\bx_{1}\bx_{1}\transpose F(\mathrm{d}\bx_1)\bDelta^{-1}\right]F(\mathrm{d}\bx)\\
& = \int_\calX\mathrm{tr}\{\bSigma(\bx)\}F(\mathrm{d}\bx).
\end{align*}
\end{proof}

\subsection{Proof of the Asymptotic Normality \eqref{eqn:ASE_normality}} % (fold)
\label{sub:proof_of_the_asymptotic_normality_eqn:ase_normality}

% subsection proof_of_the_asymptotic_normality_eqn:ase_normality (end)

The proof of the asymptotic normality \eqref{eqn:ASE_normality} is enormously different than that presented in Appendix A in \cite{tang2018} and \cite{athreya2016limit}. Here we follow the proof strategy adopted in \cite{cape2019signal}, which addresses the asymptotic normality of the rows of eigenvectors of random matrices. To this end, we need the following technical lemma.
\begin{lemma}\label{lemma:moment_bound_ASE}
Let $\bP_0 = \rho_n\bX_0\bX_0\transpose$ and $\bE = \bA - \bP_0$. Then for deterministic vector $\bv\in\mathbb{R}$, any $k = 1,\ldots,\lceil\log n\rceil$, and $p\leq \lceil(\log n)^2\rceil$, there exists a constant $C_\bE > 0$ such that
\[
\expect_0\left(|\be_i\transpose\bE^k\bv|^p\right)\leq (n\rho_n)^{kp/2} (2kp)^{kp}\|\bv\|_\infty^p.
\]
\end{lemma}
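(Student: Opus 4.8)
The plan is to establish the bound by the combinatorial moment (walk-counting) method, in the spirit of \cite{cape2019signal}. I will first treat the case of even $p$, where $\expect_0(|\be_i\transpose\bE^k\bv|^p) = \expect_0((\be_i\transpose\bE^k\bv)^p)$, and recover the odd case at the end. Writing $Z = \be_i\transpose\bE^k\bv$ and expanding the matrix powers,
\begin{align*}
Z^p = \sum E_{i_0^{(1)}i_1^{(1)}}\cdots E_{i_{k-1}^{(1)}i_k^{(1)}}\cdots E_{i_0^{(p)}i_1^{(p)}}\cdots E_{i_{k-1}^{(p)}i_k^{(p)}}\, v_{i_k^{(1)}}\cdots v_{i_k^{(p)}},
\end{align*}
where the sum is over all collections of $p$ walks of length $k$, each starting at the fixed vertex $i_0^{(\ell)} = i$, and the free indices $i_s^{(\ell)}$ (for $1\le s\le k$) range over $[n]$. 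I would record the elementary facts about $\bE = \bA - \bP_0$: the off-diagonal entries $E_{ab}$ ($a\neq b$) are independent, mean zero, satisfy $|E_{ab}|\le 1$ and $\var_0(E_{ab}) = \rho_n\bx_{0a}\transpose\bx_{0b}(1-\rho_n\bx_{0a}\transpose\bx_{0b})\le\rho_n$, so that $\expect_0|E_{ab}|^m\le E_{ab}^2\le\rho_n$ for every $m\ge2$ while $\expect_0 E_{ab} = 0$; and the diagonal entries $E_{aa} = -\rho_n\|\bx_{0a}\|^2$ are deterministic with $|E_{aa}|\le\rho_n$. Taking expectations and using independence, a term survives only if every distinct off-diagonal edge it traverses appears at least twice, since an edge traversed exactly once forces a factor $\expect_0 E_{ab}=0$.

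Next I would bound the contribution of a surviving term. Grouping the $pk$ steps by the distinct edges they traverse, the expectation of a surviving term is at most $\rho_n^{|\calE|}$ in absolute value, where $\calE$ is its set of distinct off-diagonal edges (self-loops only contribute harmless factors $|E_{aa}|\le\rho_n\le1$), and $\prod_\ell |v_{i_k^{(\ell)}}|\le\|\bv\|_\infty^p$. Two constraints then control everything. Since each off-diagonal edge is traversed at least twice among the $pk$ steps, $2|\calE|\le pk$. Since all $p$ walks share the start $i$ and a new vertex can only be reached across an off-diagonal edge, the off-diagonal edges form a connected spanning subgraph of the set $\calV$ of distinct visited vertices, whence $|\calE|\ge|\calV|-1$. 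Combining gives $|\calV|-1\le pk/2$. Grouping the index-tuples into isomorphism shapes (coincidence patterns of the endpoints $i_s^{(\ell)}$), the number of labelings realizing a fixed shape with $|\calV|$ distinct vertices is at most $n^{|\calV|-1}$, so each shape contributes at most $(n\rho_n)^{|\calV|-1}\|\bv\|_\infty^p \le (n\rho_n)^{kp/2}\|\bv\|_\infty^p$, using $n\rho_n\ge1$ (valid for large $n$ under either density regime) together with $|\calV|-1\le pk/2$.

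It then remains to count the shapes. Revealing the $pk$ endpoints $i_s^{(\ell)}$ one at a time, the $t$-th endpoint is either one of the at most $t\le pk$ previously seen vertices or a brand-new vertex, giving at most $pk+1\le2kp$ choices per step and hence at most $(2kp)^{kp}$ shapes in total. Multiplying the shape count by the per-shape bound yields $\expect_0(Z^p)\le(2kp)^{kp}(n\rho_n)^{kp/2}\|\bv\|_\infty^p$ for even $p$, which is the claim. For odd $p$ I would invoke Lyapunov's inequality, $\expect_0|Z|^p\le(\expect_0 Z^{p+1})^{p/(p+1)}$, and substitute the even-$(p+1)$ bound; the exponents of $n\rho_n$ and $\|\bv\|_\infty$ come out exactly as stated, and the combinatorial prefactor $(2k(p+1))^{kp}$ differs from $(2kp)^{kp}$ only by $(1+1/p)^{kp}\le e^{k}$, which is absorbed either within the (very loose) shape bound or by a marginally sharper count tracking the at most $pk/2$ new-vertex steps.

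The main obstacle is the bookkeeping in the moment expansion rather than any single hard estimate: one must correctly isolate the mean-zero cancellation that kills every term with a singly-traversed off-diagonal edge, and then establish the vertex bound $|\calV|-1\le pk/2$ via the connectivity-plus-double-traversal argument, since this is precisely what converts the naive $n^{pk}$ summands into the crucial $(n\rho_n)^{kp/2}$ scaling. The deterministic nonzero diagonal of $\bE$ and the absolute-value/odd-$p$ subtlety are minor but must be handled explicitly, and the constraints $k\le\lceil\log n\rceil$ and $p\le\lceil(\log n)^2\rceil$ ensure $pk\ll n$, so that the labeling count $n^{|\calV|-1}$ and the final bound remain meaningful.
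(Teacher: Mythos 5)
Your proposal is correct and is essentially the paper's own argument written out in full: the paper rescales to $\bH = (n\rho_n)^{-1/2}\bE$, notes $\expect_0(|H_{ij}|^m)\le 1/n$ for all $m\ge 2$, and then cites the multigraph construction of Erd\H{o}s et al.\ (partitioning the summation indices into equivalence classes and bounding the count via a spanning tree of the associated multigraph), which is precisely your coincidence-pattern count combined with the mean-zero double-traversal constraint and the connectivity bound $|\calE|\ge|\calV|-1$. The one loose end, the odd-$p$ constant $(2k(p+1))^{kp}$ produced by the Lyapunov step, is as you say absorbed by the marginally sharper shape count (or by direct computation for small $p$), so there is no genuine gap.
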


\begin{proof}[\bf Proof]
The proof is very similar to that of Lemma 5.4 in \cite{doi:10.1080/01621459.2020.1751645}, which originates from the proof of Lemma 7.10 in \cite{erdos2013}. Denote $\bH = (n\rho_n)^{-1/2}\bE = (n\rho_n)^{-1/2}(\bA - \bP_0)$, and let $H_{ij}$ be the $(i, j)$th entry of $\bH$. The proof is based on the following two observations:
\begin{itemize}
  \item $\expect_0(|H_{ij}|^m) \leq 1/n$ for all $m\geq 2$. This is because $n\rho_n\to \infty$, implying that 
  \[
  |H_{ij}| = \frac{|A_{ij} - \rho_n\bx_{0i}\transpose\bx_{0j}|}{\sqrt{n\rho_n}}\leq \frac{1}{\sqrt{n\rho_n}}\leq 1.
  \]
  Therefore, for any $m \geq 2$
  \[
  \expect_0(|H_{ij}|^m)\leq \expect_0(H_{ij}^2) = \frac{1}{n\rho_n}\expect_0\{(A_{ij} - \rho_n\bx_{0i}\transpose\bx_{0j})^2\} \leq \frac{1}{n}.
  \]
  \item To compute an upper bound for the $p$th moment of $|\be_i\transpose\bH^k\bv|$, the authors of \cite{erdos2013} use a multigraph construction technique by partition the summation indices into equivalent classes according to whether the indexed variables in the same equivalent class are the same or not. Then the number of summand is further upper bounded by the use of a spanning tree of the associated multigraph. Since the upper bound there uses the moment of the absolute values of $H_{ij}$'s and the vector $\bv$ is deterministic, we directly apply the proof there to the upper bound yields
  \[ 
  E(|\be_i\transpose\bH^k\bv|^p)\leq (2kp)^{kp}\|\bv\|_\infty^p.
  \]
  The factor $2$ comes from the fact that $\bH$ is diagonal and only the upper triangular part of $\bH$ are independent random variables. 
\end{itemize}
Since $\bH = (n\rho_n)^{-1/2}\bE$, it follows that
\[
\expect_0\left(|\be_i\transpose\bE^k\bv|^p\right)\leq (n\rho_n)^{kp/2} (2kp)^{kp}\|\bv\|_\infty^p.
\]
The proof is thus completed.
\end{proof}

\begin{proof}[\bf Proof of the asymptotic normality \eqref{eqn:ASE_normality}]
Denote $\bE = \bA - \bP_0 = \bA - \rho_n\bX_0\bX_0\transpose$. Let $\bA = \sum_{i = 1}^n\widehat\lambda_i\widehat\bu_i\widehat\bu_i\transpose$ be the spectral decomposition of $\bA$ with $|\widehat\lambda_1|\geq\ldots\geq|\widehat\lambda_n|$. Denote $\bS_\bA = \mathrm{diag}(|\widehat\lambda_1|,\ldots,|\widehat\lambda_d|)$ and $\bU_\bA = [\widehat\bu_1,\ldots,\widehat\bu_d]$. Similarly, let $\bP_0 = \bU_\bP\bS_\bP\bU_\bP\transpose$ be the spectral decomposition of $\bP_0$, where $\bU_\bP\in\mathbb{O}(n, d)$ and $\bS_\bP = \mathrm{diag}\{\lambda_1(\bP_0),\ldots,\lambda_d(\bP_0)\}$. Let $\bU_\bP = [\bu_{01},\ldots,\bu_{0d}]$.
In order to prove \eqref{eqn:ASE_normality}, we first list several useful facts:
\begin{itemize}
  \item[(a)] For any $c > 0$, there exists $C_1 > 0$ such that $\|\bA - \bP\|_2 \leq C_1(n\rho_n)^{1/2}$ with probability at least $1 - n^c$, and there exists constant $C_2 > 0$ such that $\|\bA - \bP\|_2 \leq C_2(n\rho_n)^{1/2}\log n$ with probability at least $1 - \exp\{-c(\log n)^2\}$. The first result is obtained from \cite{lei2015}, and the second result is a consequence of the matrix Bernstein's inequality.
  \item[(b)] For any $c > 0$, there exists $C > 0$ such that $\|\bS_\bA\|_2\leq Cn\rho_n$ and $\|\bS_\bA^{-1}\|_2\leq C(n\rho_n)^{-1}$ with probability at least $1 - \exp\{-c(\log n)^2\}$. This fact can be implied by fact (a) and Weyl's inequality. Furthermore, deterministically, $\|\bS_\bP\|_2\lesssim  n\rho_n$ and $\|\bS_\bP^{-1}\|_2\lesssim (n\rho_n)^{-1}$. This result follows from the fact that the smallest eigenvalue of $\bS_\bP$ is the same as the smallest eigenvalue of $\rho_n(\bX_0\transpose\bX_0)$, and $(1/n)(\bX_0\transpose\bX_0)\to \bDelta$ for some deterministic positive definite matrix $\bDelta$.
  \item[(c)] For any $c > 0$, there exists $C > 0$ such that $\|\bU_\bP\transpose\bE\bU_\bP\|_{\mathrm{F}}\leq C\log n$ with probability at least $1 - n^c$, and $\|\bU_\bP\transpose\bE\bU_\bP\|_{\mathrm{F}} = O_{\prob_0}(1)$. This follows from the union bound and Hoeffding's inequality (see equation (50) in \citealp{JMLR:v18:17-448}).
  \item[(d)] There exists some constant $c > 0$, such that for any fixed $i\in[n]$,
  \begin{align*}
  \prob\left(\bigcup_{k = 1}^d\left\{|\be_i\transpose\bE^2\bu_{0k}| > (n\rho_n)(\log n)^2\|\bu_{0k}\|_\infty\right\}\right)\leq \exp\left(-c\log n\right).
  \end{align*}
  This result can be derived as follows: By Lemma \ref{lemma:moment_bound_ASE} and Markov's inequality, with $p = \lfloor(\log n)/8\rfloor$, for every $k = 1,\ldots,d$, we have
  \begin{align*}
  \prob_0\left(|\be_i\transpose\bE^2\bu_{0k}| > (n\rho_n)(\log n)^{2}\|\bu_{0k}\|_\infty\right)
  &\leq \frac{(4p)^{2p}(n\rho_n)^p\|\bu_{0k}\|_\infty^p}{(n\rho_n)^p(\log n)^{2p}\|\bu_{0k}\|^p_\infty}\\
  & = \left(\frac{4p}{\log n}\right)^{2p} = \exp\left(-c\log n\right)
  \end{align*}
  for some constant $c > 0$. 
  The result then follows from the union bound applied to $k \in \{1,\ldots,d\}$.
  \item[(e)] There exists a constant $\nu > 0$, such that for all $\xi \in (1, 2]$, 
  \begin{align*}
  \prob_0\left(\bigcup_{t = 1}^{\lceil\log n\rceil}\bigcup_{i = 1}^n\bigcup_{k = 1}^d\left\{|\be_i\transpose\bE^t\bu_{0k}| > (n\rho_n)^{t/2}(\log n)^{\xi t}\|\bu_{0k}\|_\infty\right\}\right)
  \leq \exp\{-\nu(\log n)^\xi\}.
  \end{align*}
  The derivation is similar to (d), but for each fixed $i\in[n]$, $k\in[d]$, and $t = 1,\ldots,\lceil\log n\rceil$, we replace the choice of $p = \lfloor(\log n)/8\rfloor$ by $\lfloor(\log n)^{\xi}/(4t)\rfloor$, and invoke Lemma \ref{lemma:moment_bound_ASE} to derive
  \begin{align*}
  \prob_0\left(|\be_i\transpose\bE^t\bu_{0k}| > (n\rho_n)^{t/2}(\log n)^{\xi t}\|\bu_{0k}\|_\infty\right)
  &\leq \frac{(n\rho_n)^{pt/2}(2pt)^{pt}\|\bu_{0k}\|_{\infty}^p}{(n\rho_n)^{pt/2}(\log n)^{\xi tp}\|\bu_{0k}\|^p_\infty}
  \\
  & = \left\{\frac{2pt}{(\log n)^{\xi}}\right\}^{pt} = \exp\left\{-c(\log n)^{\xi}\right\}.
  \end{align*}
  Since $\xi > 1$ and $(\log n)^{\xi} \gg \log n$, taking the union bound over $i\in[n]$, $t = 1,\ldots,\lceil\log n\rceil$, and $k\in[d]$ leads to the desired result. 
  \item[(f)] $\|\bU_\bP\|_{2\to\infty}\lesssim n^{-1/2}$. This is because
  % \[
  $\|\bU_\bP\|_{2\to\infty} = \|\bX_0\bS_\bP^{-1/2}\|_{2\to\infty}\leq \|\bX_0\|_{2\to\infty}\|\bS_\bP^{-1/2}\| \lesssim n^{-1/2}$.
  % \]
\end{itemize}
Let $\bU_\bP\transpose\bU_\bA$ have singular value decomposition $\bU_\bP\transpose\bU_\bA = \bW_1\bS_{\bP\bA}\bW_2\transpose$, and let $\bW^* = \bW_1\bW_2\transpose$. 
We begin the proof by observing the following decomposition of $\bU_\bA - \bU_\bP\bW^*$ originating from \cite{cape2019signal}:
\begin{align*}\label{eqn:ASE_eigenvector_decomposition}
\bU_\bA - \bU_\bP\bW^* = \bE\bU_\bP\bS_\bP^{-1}\bW^* + \bR,
\end{align*}
where $\bR = \bR^{(1)} + \bR^{(2)} + \bR_2^{(1)} + \bR_2^{(2)} + \bR_2^{(\infty)}$, 
\begin{align*}
\bR^{(1)} & = \bU_\bP\bS_\bP\bR^{(3)},\quad \bR^{(3)} = \bU_\bP\transpose\bU_\bA\bS_\bA^{-1} - \bS_\bP^{-1}\bU_\bP\transpose\bU_\bA,\\
\bR^{(2)} & = \bU_\bP(\bU_\bP\transpose\bU_\bA - \bW^*),\quad
\bR_2^{(1)} = \bE\bU_\bP\bS_\bP(\bU_\bP\transpose\bU_\bA\bS_\bA^{-2} - \bS_\bP^{-2}\bU_\bP\transpose\bU_\bA),\\
\bR_2^{(2)} & = \bE\bU_\bP\bS_\bP^{-1}(\bU_\bP\transpose\bU_\bA - \bW^*),
\quad
\bR_2^{(\infty)}  = \sum_{t = 2}^\infty \bE^t\bU_\bP\bS_\bP\bU_\bP\transpose\bU_\bA\bS_\bA^{-(t + 1)}.
\end{align*}
The above derivation is directly obtained from the proof of Theorem 2 in \cite{cape2019signal}, and the key idea is that the spectra of $\bS_\bA$ and $\bE$ are disjoint with high probability, such that one can apply the von Neumann trick to write
\[
\bU_\bA = \sum_{t = 0}^\infty\bE^t\bU_\bP\bS_\bP\bU_\bP\transpose\bU_\bA\bS_\bA^{-(t + 1)}
\]
Also see (S3) in the Supplementary Material of \cite{cape2019signal}. 

Let $\xi\in (1, 2]$ be a constant to be specified later. We now fix the row index $i\in[n]$ and show that
\begin{align}\label{eqn:ASE_leading_remainder}
\|\be_i\transpose\bE(\bU_\bA - \bU_\bP\bW^*)\|_2 = O_{\prob_0}\left\{\frac{(\log n)^2}{\sqrt{n}}\right\}.
\end{align}
This can be done by the establishing the following results:
\begin{itemize}
  \item[(1)] $\|\bE\bR^{(1)}\|_{2\to\infty} = o_{\prob_0}(n^{-1/2})$. In fact, following \cite{cape2019signal}, we see that
  \[
  \|\bR^{(3)}\|_2\leq d\|\bS_\bP^{-1}\|_2\|\bS_\bA^{-1}\|_2\|\bU_\bP\transpose\bU_\bA\bS_\bA - \bS_\bP\bU_\bP\transpose\bU_\bA\|_2 = O_{\prob_0}\{(n\rho_n)^{-2}(\|\bU_\bP\transpose\bE\bU_\bP\|_2 + 1)\}.
  \]
  By result (e) with $t = 1$, we have $\|\bE\bU_\bP\|_{2\to\infty} = O_{\prob_0}((n\rho)^{1/2}(\log n)^{\xi}\|\bU_\bP\|_{2\to\infty})$. Therefore, by results (b), (c), and (f), we obtain
  \begin{align*}
  \|\bE\bR^{(1)}\|_{2\to\infty}
  &\leq \|\bE\bU_\bP\|_{2\to\infty}\|\bS_\bP\|_2\|\bR^{(3)}\|_2 = O_{\prob_0}\left\{\frac{(\log n)^\xi(\|\bU_\bP\transpose\bE\bU_\bP\|_2 + 1)\|\bU_\bP\|_{2\to\infty}}{(n\rho_n)^{1/2}}\right\}\\
  &= \frac{1}{\sqrt{n}}O_{\prob_0}\left\{\frac{(\log n)^\xi}{(n\rho_n)^{1/2}}\right\} = o_{\prob_0}(n^{-1/2})
  \end{align*}
  \item[(2)] $\|\bE\bR^{(2)}\|_{2\to\infty} = o_{\prob_0}(n^{-1/2})$. Since $\|\bU_\bP\transpose\bU_\bA - \bW^*\|_2 = O_{\prob_0}((n\rho_n)^{-1})$, which is a simple consequence of the Davis-Kahan theorem \citep{doi:10.1093/biomet/asv008}, it follows from result (e) with $t = 1$ and result (f) that
  \begin{align*}
  \|\bE\bR^{(2)}\|_{2\to\infty}
  & \leq \|\bE\bU_\bP\|_{2\to\infty}\|\bU_\bP\transpose\bU_\bA - \bW^*\|_2
  = O_{\prob_0}\left( (n\rho_n)^{1/2}(\log n)^{\xi}n^{-1/2}(n\rho_n)^{-1} \right)
  = o_{\prob_0}(n^{-1/2}).
  \end{align*}
  \item[(3)] $\be_i\transpose\bE\bR_2^{(1)} = o_{\prob_0}(n^{-1/2})$. By the results (d) and (f), we have
  \[
  \|\be_i\transpose\bE^2\bU_\bP\|_2 = O_{\prob_0}\{n^{-1/2}(n\rho_n)(\log n)^2\}.
  \]
  From \cite{cape2019signal}, we have that
  \[
  \|\bU_\bP\transpose\bU_\bA\bS_\bA^{-2} - \bS_\bP^{-2}\bU_\bP\transpose\bU_\bA\|_2 = O_{\prob_0}((n\rho_n)^{-5/2}).
  \]
  Therefore,
  \begin{align*}
  \|\be_i\transpose\bE\bR_2^{(1)}\|_2
  & \leq \|\be_i\transpose\bE^2\bU_\bP\|_2\|\bS_\bP\|_2\|\bU_\bP\transpose\bU_\bA\bS_\bA^{-2} - \bS_\bP^{-2}\bU_\bP\transpose\bU_\bA\|_2\\
  & =  O_{\prob_0}\{n^{-1/2}(n\rho_n)(\log n)^2\} O_{\prob_0}((n\rho_n)^{-3/2})
  = O_{\prob_0}\left\{\frac{(\log n)^2}{\sqrt{n}(n\rho_n)}\right\} = o_{\prob_0}(n^{-1/2}).
  \end{align*}
  \item[(4)] $\|\bE\bR_2^{(2)}\|_{2\to\infty} = o_{\prob_0}(1)$. By result (e) with $t = 2$, the fact that $\|\bU_\bP\transpose\bU_\bA - \bW^*\|_2 = O_{\prob_0}((n\rho_n)^{-1})$, the results (b) and (f), we obtain
  \begin{align*}
  \|\bE\bR_2^{(2)}\|_{2\to\infty}
  &\leq 
  \|\bE^2\bU_\bP\|_{2\to\infty}\|\bS_\bP^{-1}\|_2\|\bU_\bP\transpose\bU_\bA - \bW^*\|_2\\
  &= O_{\prob_0}\left\{\|\bU_\bP\|_{2\to\infty}(\log n)^{2\xi}(n\rho_n) (n\rho_n)^{-1}(n\rho_n)^{-1}\right\}\\
  &= O_{\prob_0}\left\{n^{-1/2}\frac{(\log n)^{2\xi}}{n\rho_n}\right\} = o_{\prob_0}(n^{-1/2}).
  \end{align*}
  \item[(5)] $\|\bE\bR_2^{(\infty)}\|_{2\to\infty} = O_{\prob_0}\{n^{-1/2}(\log n)^2\}$. Denote $t(n) = 2\lceil(\log n)/(\log n\rho_n)\rceil$. Clearly, $t(n) \ll \log n$ since $n\rho_n\to\infty$. Write
  \begin{align*}
  \|\bE\bR_2^{(\infty)}\|_{2\to\infty}
  & = \left\|\sum_{t = 2}^\infty\bE^{t + 1}\bU_\bP\bS_\bP\bU_\bP\transpose\bU_\bA\bS_\bA^{-(t + 1)}\right\|_{2\to\infty}\\
  & \leq \sum_{t = 2}^{t(n) + 1}\|\bE^{t + 1}\bU_\bP\|_{2\to\infty}\|\bS_\bP\|_2\|\bS_\bA^{-1}\|_2^{t + 1}
   + \sum_{t = t(n) + 2}^\infty\|\bE\|_2^{t + 1}\|\bS_\bP\|_2\|\bS_\bA^{-1}\|_2^{t + 1}.
  \end{align*}
  By the results (b) and (e), for any $c > 0$, there exist constant $C_\bE > 0$, such that with probability at least $1 - n^c$, we have
  \begin{align*}
  \sum_{t = 2}^{t(n) + 1}\|\bE^{t + 1}\bU_\bP\|_{2\to\infty}\|\bS_\bP\|_2\|\bS_\bA^{-1}\|_2^{t + 1}
  &\leq \sum_{t = 3}^{t(n) + 2}\|\bE^{t}\bU_\bP\|_{2\to\infty}\|\bS_\bP\|_2\|\bS_\bA^{-1}\|_2^{t}\\
  &\leq \sum_{t = 3}^\infty C_\bE^t(n\rho_n)^{t/2}(\log n)^{t\xi}\|\bU_\bP\|_{2\to\infty}\|\bS_\bP\|_2(n\rho_n)^{-t}\\
  &\lesssim \frac{(n\rho_n)}{\sqrt{n}}\sum_{t = 3}^\infty \{C_\bE(n\rho_n)^{-1/2}(\log n)^\xi\}^t\\
  &\lesssim \frac{n\rho_n}{\sqrt{n}}\frac{(\log n)^{3\xi}C_\bE^{3}}{(n\rho_n)^{3/2}} \asymp \frac{(\log n)^{3\xi}}{\sqrt{n}(n\rho_n)^{1/2}}.
  \end{align*}
  For the second infinite sum, by the results (a) and (b), with probability at least $1 - n^c$, we directly compute
  \begin{align*}
  \sum_{t = t(n) + 2}^\infty\|\bE\|_2^{t + 1}\|\bS_\bP\|_2\|\bS_\bA^{-1}\|_2^{t + 1}
  &\lesssim \sum_{t = t(n) + 2}^\infty n\rho_n\{C(n\rho_n)^{1/2}(n\rho_n)^{-1}\}^{t + 1}\\
  & = n\rho_n\sum_{t = t(n) + 2}^\infty \{C(n\rho_n)^{-1/2}\}^{t + 1}\\
  % & = n\rho_n\{C^{t(n) + 3}(n\rho_n)^{-t(n)/2 - 3/2}\}\\
  & = \exp\left[ \{t(n) + 3\} \log C - \frac{t(n) + 3}{2}\log (n\rho_n) + \log(n\rho_n)\right]\\
  &\leq \exp\left[ \left\{2\left\lceil\frac{\log n}{\log (n\rho_n)}\right\rceil+ 3\right\} \log C - \frac{1}{2}\log (n\rho_n) - \log n \right]\\
  &\lesssim \exp\left\{-\frac{1}{2}\log (n\rho_n)-\frac{1}{2}\log n\right\} = o(n^{-1/2}).
  \end{align*}
  Therefore, with $\xi = 4/3$, we conclude that 
  \[
  \|\bE\bR_2^{(\infty)}\|_{2\to\infty} = O_{\prob_0}\left\{\frac{1}{\sqrt{n}}\frac{(\log n)^4}{(n\rho_n)^{1/2}}\right\}
  = O_{\prob_0}\left\{\frac{(\log n)^2}{\sqrt{n}}\frac{(\log n)^2}{(n\rho_n)^{1/2}}\right\} = O_{\prob_0}\left\{\frac{(\log n)^2}{\sqrt{n}}\right\}
  .
  \]
  \item[(6)] $\|\be_i\transpose\bE^2\bU_\bP\bS_\bP\bW^*\|_{2} = O_{\prob_0}\{n^{-1/2}(\log n)^2\}$. Since 
  \[
  \|\be_i\transpose\bE^2\bU_\bP\bS_\bP\bW^*\|_{2}\leq \|\be_i\transpose\bE^2\bU_\bP\|_{2}\|\bS_\bP^{-1}\|_{2}\lesssim (n\rho_n)^{-1}\|\be_i\transpose\bE^2\bU_\bP\|_{2},
  \]
  then by result (d), with probability going to $1$, and result (f),
  \[
  \|\be_i\transpose\bE^2\bU_\bP\bS_\bP\bW^*\|_{2}\lesssim (n\rho_n)^{-1} (n\rho_n) (\log n)^2\|\bU_\bP\|_{2\to\infty} = O_{\prob_0}\left\{\frac{(\log n)^2}{\sqrt{n}}\right\}.
  \]
\end{itemize}
We are finally in a position to prove the asymptotic normality \eqref{eqn:ASE_normality} of the $i$th row of the ASE.
Let $\bW_\bX$ be the orthogonal matrix such that $\bX_0 = \bU_\bP\bS_\bP^{1/2}\bW_\bX$, and let $\bW = (\bW^*)\transpose\bW_\bX$.
 Following the derivation in Appendix A.1 and A.2 in \cite{JMLR:v18:17-448}, we obtain the following decomposition:
\begin{align*}
\sqrt{n}\be_i\transpose\{\widehat\bX^{(\mathrm{ASE})}\bW - \bX_0\}
& = \sqrt{n}\be_i\transpose(\bA - \bP)\bU_\bP\bS_\bP^{-1/2}\bW_\bX
% \\&\quad
 + \sqrt{n}\be_i\transpose\bE\bU_\bP(\bW^*\bS_\bA^{-1/2} - \bS_\bP^{-1/2}\bW^*)\bW\\
&\quad - \sqrt{n}\be_i\transpose\bU_\bP(\bU_\bP\transpose\bE\bU_\bP)\bW^*\bS_\bA^{-1/2}\bW
% \\&\quad
 + \sqrt{n}\be_i\transpose\bE(\bU_\bA - \bU_\bP\bW^*)\bS_\bA^{-1/2}\bW\\
&\quad - \sqrt{n}\be_i\transpose\bU_\bP\bU_\bP\transpose\bE(\bU_\bA - \bU_\bP\bW^*)\bS_\bA^{-1/2}\bW
% \\&\quad
 + \sqrt{n}\be_i\transpose(\bR_1\bS_\bA^{1/2} + \bU_\bP\bR_2),
\end{align*}
where
\[
\bR_1 = \bU_\bP(\bU_\bP\transpose\bU_\bA - \bW^*),\quad \bR_2 = \bW^*\bS_\bA^{1/2} - \bS_\bP^{1/2}\bW^*.
\]
By Appendix A in \cite{tang2018}, 
\[
\{\sqrt{n}\be_i\transpose(\bA - \bP)\bU_\bP\bS_\bP^{-1/2}\bW_\bX\}\transpose\overset{\calL}{\to}\mathrm{N}(\zero, \bSigma(\bx_{0i})). 
\]
It suffices to argue that the remainders are asymptotically negligible. 
\begin{itemize}
  \item[(7)] $\sqrt{n}\|\bE\bU_\bP(\bW^*\bS_\bA^{-1/2} - \bS_\bP^{-1/2}\bW^*)\bW\|_{2\to\infty} = o_{\prob_0}(1)$. By a similar argument for proving Lemma 49 of \cite{JMLR:v18:17-448}, we have
  \[
  \|\bW^*\bS_\bA^{-1/2} - \bS_\bP^{-1/2}\bW^*\| = O_{\prob_0}\left\{(n\rho_n)^{-3/2}\log n\right\}.
  \]
  Furthermore, by result (e) with $t = 1$ and result (f),
  \[
  \|\bE\bU_\bP\|_{2\to\infty} = O_{\prob_0}\{n^{-1/2}(n\rho_n)^{1/2}(\log n)^{4/3}\}.
  \]
  Therefore,
  \begin{align*}
  \sqrt{n}\|\bE\bU_\bP(\bW^*\bS_\bA^{-1/2} - \bS_\bP^{-1/2}\bW^*)\bW\|_{2\to\infty}
  &\leq O_{\prob_0}\{(n\rho_n)^{1/2}(\log n)^{4/3}\}O_{\prob_0}\left\{(n\rho_n)^{-3/2}\log n\right\}
  \\
  & = O_{\prob_0}\{(n\rho_n)^{-1}(\log n)^{7/3}\} = o_{\prob_0}(1).
  \end{align*}

  \item[(8)] $\sqrt{n}\|\bU_\bP(\bU_\bP\transpose\bE\bU_\bP)\bW^*\bS_\bA^{-1/2}\bW\|_{2\to\infty} = o_{\prob_0}(1)$. By the results (b), (c), and (f), we have
  \begin{align*}
  \sqrt{n}\|\bU_\bP(\bU_\bP\transpose\bE\bU_\bP)\bW^*\bS_\bA^{-1/2}\bW\|_{2\to\infty}
  &\leq \sqrt{n}\|\bU_\bP\|_{2\to\infty}\|\bU_\bP\transpose\bE\bU_\bP\|_2\|\bS_\bA^{-1/2}\|_2
  \\
  &\lesssim \|\bU_\bP\transpose\bE\bU_\bP\|_2\|\bS_\bA^{-1/2}\|_2 = O_{\prob_0}(\log n)O_{\prob_0}((n\rho_n)^{-1/2}) = o_{\prob_0}(1).
  \end{align*}

  \item[(9)] $\sqrt{n}\be_i\transpose\bE(\bU_\bA - \bU_\bP\bW^*)\bS_\bA^{-1/2}\bW = o_{\prob_0}(1)$. By the previously claimed result \eqref{eqn:ASE_leading_remainder}, we have
  \[
  \|\be_i\transpose\bE(\bU_\bA - \bU_\bP\bW^*)\|_2 = O_{\prob_0}\left\{\frac{(\log n)^2}{\sqrt{n}}\right\}.
  \]
  Together with result (b), we obtain
  \begin{align*}
  \|\sqrt{n}\be_i\transpose\bE(\bU_\bA - \bU_\bP\bW^*)\bS_\bA^{-1/2}\bW\|_2
  &\leq \sqrt{n}\|\be_i\transpose\bE(\bU_\bA - \bU_\bP\bW^*)\|_2\|\bS_\bA^{-1/2}\|_2\\
  & = O_{\prob_0}\left\{\frac{(\log n)^2}{\sqrt{n\rho_n}}\right\} = o_{\prob_0}(1).
  \end{align*}

  \item[(10)] $\|\sqrt{n}\bU_\bP\bU_\bP\transpose\bE(\bU_\bA - \bU_\bP\bW^*)\bS_\bA^{-1/2}\bW\|_{2\to\infty} = o_{\prob_0}(1)$. By the Davis-Kahan theorem and results (a) and (b), $\|\bU_\bA - \bU_\bP\bW^*\|_2 = O_{\prob_0}((n\rho_n)^{-1/2})$. Hence,
  \begin{align*}
  \|\sqrt{n}\bU_\bP\bU_\bP\transpose\bE(\bU_\bA - \bU_\bP\bW^*)\bS_\bA^{-1/2}\bW\|_{2\to\infty}
  & \leq \sqrt{n}\|\bU_\bP\|_{2\to\infty}\|\bE\|_2\|\bU_\bA - \bU_\bP\bW^*\|_2\|\bS_\bA^{-1/2}\|_2\\
  & = O_{\prob_0}\{(n\rho_n)^{1/2}(n\rho_n)^{-1/2}(n\rho_n)^{-1/2}\} = o_{\prob_0}(1).
  \end{align*}

  \item[(11)] $\|\sqrt{n}(\bR_1\bS_\bA^{1/2} + \bU_\bP\bR_2)\|_{2\to\infty} = o_{\prob_0}(1)$. By Lemma 49 of \cite{JMLR:v18:17-448} (which also holds for deterministic $\bx_{01},\ldots,\bx_{0n}$), $\|\bR_2\|_{\mathrm{F}} = O_{\prob_0}((n\rho)^{-1/2}\log n)$. Hence, 
  \[
  \sqrt{n}\|\bU_\bP\bR_2\|_{2\to\infty}
  \leq \sqrt{n}\|\bU_\bP\|_{2\to\infty}\|\bR_2\|_2 = O_{\prob_0}((n\rho_n)^{-1/2}\log n) = o_{\prob_0}(1).
  \]
  In addition, we obtain
  \[
  \sqrt{n}\|\bR_1\bS_\bA^{1/2}\|_{2\to\infty}\leq\sqrt{n}\|\bU_\bP\|_{2\to\infty}\|\bU_\bP\transpose\bU_\bA - \bW^*\|_2\|\bS_\bA^{1/2}\|_2\lesssim \|\bU_\bP\transpose\bU_\bA - \bW^*\|_2\|\bS_\bA^{1/2}\|_2 = O_{\prob_0}((n\rho_n)^{-1})
  \]
  by result (b).
\end{itemize}
The proof is completed by summarizing the above results. 
\end{proof}

% section proof_of_theorem_thm:ase_limit_theorem (end)

\section{Proof of Theorems \ref{thm:one_dimensional_MLE} and \ref{thm:OSE_single_vertex}
(Estimating One Latent Position)} % (fold)
\label{sec:proof_of_theorems_thm:one_dimensional_mle_and_thm:ose_single_vertex}

% section proof_of_theorems_thm:one_dimensional_mle_and_thm:ose_single_vertex (end)

\subsection{Proofs of Theorem \ref{thm:one_dimensional_MLE}} % (fold)
\label{sec:proof_of_theorem_thm:one_dimensional_mle}
Before presenting the proof of Theorem \ref{thm:one_dimensional_MLE}, we need to establish the following real analysis result. We note that this result may not be completely original and can be proved using a standard real analysis argument. 
\begin{lemma}\label{lemma:real_analysis_lemma}
Let $(f_n)_{n = 1}^\infty\subset C^2(D)$ be a sequence of functions in $C^2(D)$, where $D$ is a convex compact subset of $\mathbb{R}^d$ with non-empty interior, and $C^2(D)$ is the class of twice continuously differentiable functions on $D$. Let $f\in C^2(D)$ as well. Assume the following conditions hold:
\begin{itemize}
  \item[(i)] $f_n$ converges uniformly to $f$ within $D$, \emph{i.e.}, $\sup_{\bx\in D}|f_n(\bx) - f(\bx)| \to 0$ as $n\to\infty$;
  \item[(ii)] $f_n$ and $f$ both have continuous Hessians 
  \[
  \frac{\partial^2 f_n}{\partial\bx\partial\bx\transpose}(\bx),\quad
  \frac{\partial^2 f}{\partial\bx\partial\bx\transpose}  (\bx)
  \]
  and the negative Hessians are positive definite for all $\bx\in D$. 
\end{itemize}
Let $\widehat{\bx}_n$ be the unique maximizer of $f_n$, and $\widehat{\bx}_0$ be the unique maximizer of $f$. 
% Assume that $\widehat\bx_n$ and $\widehat\bx_0$ are inside the interior of $D$. 
Then:
\begin{itemize}
  \item[(a)] The sequence $(\widehat{\bx}_n)_{n = 1}^\infty$ converges to the unique maximizer $\widehat{\bx}_0$ of the limit function $f$;
  \item[(b)] For all $\eps > 0$, there exists a positive $\eta = \eta(\eps)$ that does not depend on $n$, and a positive integer $N = N(\eps)$ that only depends on $\eps$, such that for all $n\geq N$, and all $\bx$ satisfying $\|\bx - \widehat\bx_n\|_2 > \eps$, 
  \[
  f_n(\widehat{\bx}_n) - f_n(\bx) \geq \eta(\eps)
  \]
  holds. 
\end{itemize}
\begin{proof}[\bf Proof]
For conclusion (a) we take $(\widehat{\bx}_{n_k})_{k = 1}^\infty$ to be any subsequence of $(\widehat{\bx}_n)_{n = 1}^\infty$ that converges. Suppose $\widehat{\bx}_{n_k}
\to \widehat{\bx}_0^*$ as $n\to\infty$. It follows that
\begin{align*}
|f_{n_k}(\widehat{\bx}_{n_k}) - f(\widehat{\bx}_0^*)|
&\leq |f_{n_k}(\widehat{\bx}_{n_k}) - f(\widehat{\bx}_{n_k})| + |f(\widehat{\bx}_{n_k}) - f(\widehat{\bx}_0^*)|\\
&\leq \sup_{\bx\in D}|f_{n_k}(\bx) - f(\bx)| + |f(\widehat{\bx}_{n_k}) - f(\widehat{\bx}_0^*)|.
\end{align*}
The sequence $(\sup_{\bx\in D}|f_n(\bx) - f(\bx)|)_{n = 1}^\infty$ converges to $0$ as $n\to\infty$, and the function $f$ is continuous. Therefore, the two terms on the right-hand side of the previous display converges to $0$ as $k\to\infty$ because $(\sup_{\bx\in D}|f_{n_k}(\bx) - f(\bx)|)_{k = 1}^\infty$ is a subsequence of $(\sup_{\bx\in D}|f_n(\bx) - f(\bx)|)_{n = 1}^\infty$, and hence, $f_{n_k}(\widehat{\bx}_{n_k}) \to f(\widehat{\bx}_0^*)$ as $n\to\infty$. By definition, $\widehat{\bx}_{n_k}$ is the maximizer of $f_{n_k}$, implying that $f_{n_k}(\widehat{\bx}_{n_k})\geq f_{n_k}(\bx)$ for all $\bx\in D$. Therefore,
\[
f(\widehat{\bx}_0^*) = \lim_{n\to\infty}f_{n_k}(\widehat{\bx}_{n_k})\geq \lim_{n\to\infty}f_{n_k}(\bx) = f(\bx)
\]
for any $\bx\in D$, where we have used the fact that $f_{n_k}$ converges point-wise to $f$ as $k\to\infty$. This further shows that $\widehat{\bx}_0^*$ is the maximizer of $f$. Since the maximizer of $f$ is unique, we conclude that $\widehat{\bx}_{n_k}\to \widehat{\bx}_0^* = \widehat{\bx}_0$ as $k\to\infty$. Note that any converging subsequence $(\widehat{\bx}_{n_k})_{k = 1}^\infty$ of $(\widehat{\bx}_n)_{n = 1}^\infty$ converges to the same limit point $\widehat{\bx}_0$. Therefore we conclude that the entire sequence $\widehat{\bx}_n$ converges to $\widehat{\bx}_0$.

For part (b), we first observe that $\widehat{\bx}_0$ is a strict maximizer of $f$ because the negative Hessian of $f$ is positive definite, and $-f$ is convex because the Hessian of $-f$ is always positive definite. Therefore, there exists some $\delta > 0$, such that
$f(\widehat{\bx}_0) > f(\bx)$ for all $\bx\in\{\bx\in D:0 < \|\bx - \widehat\bx_0\| < \delta\}$. Now we claim that there exists some constant $\xi > 0$ that only depends on $\delta > 0$, such that 
\begin{align}
\label{eqn:real_analysis_result1}
f(\bx) + \xi\leq f(\widehat{\bx}_0)\quad
\text{for any }\bx\in\{\bx\in D:\|\bx - \widehat\bx_0\|_2 \geq \delta\}.
\end{align}
In fact, if one assumes otherwise, then for all $\xi > 0$, there exists some $\bx_\xi$ with $\|\bx_\xi - \widehat{\bx}_0\|_2\geq\delta$, such that $f(\bx_\xi) + \xi > f(\widehat{\bx}_0)$. Taking a sequence $(\xi_j)_{j = 1}^\infty = (1/j)_{j = 1}^\infty$ yields a sequence $(\bx_{j})_{j = 1}^\infty$ such that
\[
f(\bx_{j}) + \frac{1}{j} > f(\widehat{\bx}_0).
\]
Let $(\bx_{j_k})_{k = 1}^\infty$ be a converging subsequence of $(\bx_j)_{j = 1}^\infty$ that converges to some point $\by\in D$. Then the continuity of $f$ leads to
\[
f(\by) = \lim_{k\to\infty}\left\{f(\bx_{j_k}) + \frac{1}{j_k}\right\}\geq f(\widehat{\bx}_0)
.
\]
Since $(\bx_{j_k})_{k = 1}^\infty\subset \{\bx\in D:\|\bx - \widehat\bx_{0}\|_2 \geq \delta\}$, and the latter superset is compact, it follows that $\|\by - \widehat{\bx}_0\|_2 \geq \delta$, and hence, it must be the case that $f(\by) < f(\widehat{\bx}_0)$ due to the uniqueness of the maximizer of $f$. Therefore we conclude that there exists some constant $\xi > 0$ that only depends on $\delta > 0$, such that 
\[
f(\bx) + \xi\leq f(\widehat{\bx}_0)\quad\text{ for all }\bx\in\{\bx\in D:\|\bx - \widehat\bx_0\|_2 \geq \delta\}.
\]

Now we further claim that for any $\eps > 0$, there exists some $\eta(\eps) > 0$ such that
\begin{align}
\label{eqn:real_analysis_result2}
f(\widehat{\bx}_0) \geq f(\bx) + 4\eta(\eps)\quad\text{ for all }\bx\in\{\bx\in D:\|\bx - \widehat{\bx}_0\|_2 > \eps\}.
\end{align}
% for all $\bx\in\{\bx\in D:\|\bx - \widehat{\bx}_0\|_2 > \eps\}$. 
Now let $\eps > 0$ be arbitrarily given. We consider two cases: If $\eps \geq \delta$, then we can take $\eta(\eps) = \xi /4$ directly. Then the previous claim applies; If $\eps < \delta$, then we see that 
\[
\{\bx\in D:\|\bx - \widehat{\bx}_0\|_2 > \eps\}
 = \{\bx\in D:\eps < \|\bx - \widehat{\bx}_0\|_2 < \delta\}\cup
 \{\bx\in D:\|\bx - \widehat{\bx}_0\|_2 \geq \delta\}.
\]
For any $\bx\in \{\bx\in D:\eps < \|\bx - \widehat{\bx}_0\|_2 < \delta\}$, by the fact that $\widehat{\bx}_0$ is a strict maximizer of $f$, it must be the case that
\[
\sup_{\bx:\eps < \|\bx - \widehat{\bx}_0\|_2 < \delta}f(\bx) < f(\widehat{\bx}_0).
\]
Now take
\[
\eta(\eps) = \min\left[\frac{1}{4}\left\{f(\widehat{\bx}_0) - \sup_{\bx:\eps < \|\bx - \widehat{\bx}_0\|_2 < \delta}f(\bx)\right\}, \frac{\xi}{4}\right].
\]
It follows from \eqref{eqn:real_analysis_result1} that
\begin{align*}
&\sup_{\bx\in D:\|\bx - \widehat{\bx}_0\|_2 > \eps}f(\bx) + 4\eta(\eps)\\
&\quad = \max\left[
\sup_{\bx\in D:\eps < \|\bx - \widehat{\bx}_0\|_2 < \delta}f(\bx) + 4\eta(\eps),
\sup_{\bx\in D: \|\bx - \widehat{\bx}_0\|_2 \geq \delta}f(\bx) + 4\eta(\eps)
\right]
\\
&\quad\leq \max\left[
\sup_{\bx\in D:\eps < \|\bx - \widehat{\bx}_0\|_2 < \delta}f(\bx) + f(\widehat{\bx}_0) - 
\sup_{\bx\in D:\eps < \|\bx - \widehat{\bx}_0\|_2 < \delta}f(\bx),
\sup_{\bx\in D: \|\bx - \widehat{\bx}_0\|_2 \geq \delta}f(\bx) + \xi
\right]\\
&\quad = f(\widehat{\bx}_0),
\end{align*}
and this completes the proof of \eqref{eqn:real_analysis_result2}.

We finally prove the desired result. Since $f_n\to f$ uniformly as $n\to\infty$, then there exists some large positive constant $N_1 = N_1(\eps)$, such that
 % for all $n > N_1$,
\[
\sup_{\bx\in D}|f_n(\bx) - f(\bx)| < \eta(\eps/2)\quad\text{ for all }n > N_1.
\]
Furthermore, by part (a) and the continuity of $f$, there exists some large positive constant $N_2 = N_2(\eps)$, such that 
\[
\|\widehat{\bx}_n - \widehat{\bx}_0\| < \frac{\eps}{2}\quad\text{and}\quad|f(\widehat{\bx}_n) - f(\widehat{\bx}_0)| < \eta(\eps/2)\quad\text{ for all }n > N_2.
\]
Note that 
\[
\{\bx\in D:\|\bx - \widehat{\bx}_n\|_2 > \eps\}
\subset
\{\bx\in D:\|\bx - \widehat{\bx}_0\|_2 > \eps/2\}
\]
for $n\geq N_2(\eps)$. 
Hence, for all $n\geq N_2(\eps)$, we obtain
\begin{align*}
\sup_{\bx\in D:\|\bx - \widehat{\bx}_n\|_2 > \eps}f_n(\bx)
& \leq \sup_{\bx\in D:\|\bx - \widehat{\bx}_n\|_2 > \eps}|f_n(\bx) - f(\bx)| + \sup_{\bx\in D:\|\bx - \widehat{\bx}_n\|_2 > \eps}f(\bx)\\
& \leq \sup_{\bx\in D}|f_n(\bx) - f(\bx)| + \sup_{\bx\in D:\|\bx - \widehat{\bx}_n\|_2 > \eps}f(\bx)\\
& \leq \sup_{\bx\in D}|f_n(\bx) - f(\bx)| + \sup_{\bx\in D:\|\bx - \widehat{\bx}_0\|_2 > \eps/2}f(\bx)\\
& \leq \sup_{\bx\in D}|f_n(\bx) - f(\bx)| + f(\widehat{\bx}_0) - 4\eta(\eps/2),
\end{align*}
where we have used \eqref{eqn:real_analysis_result2} at the end of the previous display. We further write
\begin{align*}
\sup_{\bx\in D:\|\bx - \widehat{\bx}_n\|_2 > \eps}f_n(\bx)
& \leq \sup_{\bx\in D}|f_n(\bx) - f(\bx)| + f(\widehat{\bx}_0)\\
& =    \sup_{\bx\in D}|f_n(\bx) - f(\bx)| + 
f(\widehat{\bx}_0) - f(\widehat{\bx}_n) + f(\widehat{\bx}_n) - f_n(\widehat{\bx}_n) + f_n(\widehat\bx_n)
 - 4\eta(\eps/2)\\
&\leq \sup_{\bx\in D}|f_n(\bx) - f(\bx)| + 
    |f(\widehat{\bx}_0) - f(\widehat{\bx}_n)| + 
    \sup_{\bx\in D}|f(\bx) - f_n(\bx)| + f_n(\widehat\bx_n)
 - 4\eta(\eps/2)
\end{align*}
for all $n\geq N_2(\eps)$. Hence for any $n \geq N = N(\eps) = \max\{N_1(\eps), N_2(\eps)\}$, 
\begin{align*}
\sup_{\bx\in D:\|\bx - \widehat{\bx}_n\|_2 > \eps}f_n(\bx)
&\leq 2\sup_{\bx\in D}|f_n(\bx) - f(\bx)| + 
    |f(\widehat{\bx}_0) - f(\widehat{\bx}_n)| + f_n(\widehat\bx_n) - 4\eta(\eps/2)\\
&\leq f_n(\widehat\bx_n) + 2\eta(\eps/2) + \eta(\eps/2) - 4\eta(\eps/2) = f_n(\widehat\bx_n) -\eta(\eps/2).
\end{align*}
The proof is completed by observing that both $N$ and $\eta(\eps/2)$ only depend on $\eps$ so that we can replace $\eta(\eps/2)$ by $\eta(\eps)$ without changing the proof, and these two quantities does not depend on $n$. 
\end{proof}
\end{lemma}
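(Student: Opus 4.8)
The plan is to treat this as a standard argmax-consistency result for $M$-estimation, supplemented by a uniform well-separation estimate, using strict concavity only insofar as it guarantees the uniqueness of the maximizers that the statement invokes.

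For part (a), I would argue via subsequences. Compactness of $D$ ensures that every subsequence of $(\widehat{\bx}_n)_{n=1}^\infty$ has a convergent sub-subsequence, so it suffices to show that each convergent subsequence has limit $\widehat{\bx}_0$. If $\widehat{\bx}_{n_k}\to\bx^*$, then the triangle inequality
\[
|f_{n_k}(\widehat{\bx}_{n_k}) - f(\bx^*)| \leq \sup_{\bx\in D}|f_{n_k}(\bx) - f(\bx)| + |f(\widehat{\bx}_{n_k}) - f(\bx^*)|
\]
together with condition (i) and the continuity of $f$ gives $f_{n_k}(\widehat{\bx}_{n_k})\to f(\bx^*)$. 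Passing to the limit in $f_{n_k}(\widehat{\bx}_{n_k})\geq f_{n_k}(\bx)$ (valid because $\widehat{\bx}_{n_k}$ maximizes $f_{n_k}$ and $f_{n_k}\to f$ pointwise) yields $f(\bx^*)\geq f(\bx)$ for every $\bx\in D$, so $\bx^*$ maximizes $f$; uniqueness of the maximizer forces $\bx^* = \widehat{\bx}_0$. Since all subsequential limits coincide, the full sequence converges.

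For part (b), the first step is to establish well-separation for the single limit function $f$: for each $\eps>0$ the set $\{\bx\in D:\|\bx-\widehat{\bx}_0\|\geq\eps\}$ is compact, $f$ attains a maximum $M_\eps$ there, and uniqueness of $\widehat{\bx}_0$ produces a strictly positive gap $g(\eps):=f(\widehat{\bx}_0)-M_\eps>0$. The second step transfers this to $f_n$. By part (a), for large $n$ one has $\|\widehat{\bx}_n-\widehat{\bx}_0\|<\eps/2$, whence $\{\bx:\|\bx-\widehat{\bx}_n\|>\eps\}\subset\{\bx:\|\bx-\widehat{\bx}_0\|>\eps/2\}$. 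For any such $\bx$ I would write
\[
f_n(\widehat{\bx}_n)-f_n(\bx) = [f_n(\widehat{\bx}_n)-f(\widehat{\bx}_n)] + [f(\widehat{\bx}_n)-f(\bx)] + [f(\bx)-f_n(\bx)],
\]
bound the middle bracket below by $f(\widehat{\bx}_n)-M_{\eps/2}\geq (3/4)g(\eps/2)$ (choosing $n$ large enough that $f(\widehat{\bx}_n)$ lies within $g(\eps/2)/4$ of $f(\widehat{\bx}_0)$), and bound each of the two outer brackets in absolute value by $\sup_{\bx\in D}|f_n-f|$, which condition (i) makes smaller than $g(\eps/2)/8$ once $n$ exceeds some $N(\eps)$. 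This gives $f_n(\widehat{\bx}_n)-f_n(\bx)\geq g(\eps/2)/2 =: \eta(\eps)$.

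The main obstacle is precisely ensuring that both $\eta$ and $N$ depend only on $\eps$ and not on $n$. The device that accomplishes this is to push all the $n$-dependence into the two uniform-convergence error terms and the maximizer shift $\|\widehat{\bx}_n-\widehat{\bx}_0\|$, each uniformly controlled once $n\geq N(\eps)$, while the genuine separation $g(\eps/2)$ is a property of the fixed function $f$ alone. The positive-definiteness of the negative Hessians in condition (ii) serves only to guarantee the uniqueness of the maximizers used in part (a) and the strict positivity $g(\eps)>0$; beyond that, no second-order expansion is required for this lemma, so I would not attempt to invoke the Hessians directly.
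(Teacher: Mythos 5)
Your proof is correct, and part (a) is verbatim the paper's argument. For part (b) you take a mildly but genuinely different route to the same two ingredients. Where the paper establishes well-separation of $f$ in two stages --- first producing a gap $\xi$ on $\{\bx\in D:\|\bx-\widehat{\bx}_0\|\geq\delta\}$ by a contradiction argument with an extracted convergent sequence, then patching in the annulus $\eps<\|\bx-\widehat{\bx}_0\|<\delta$ via a case split and the combined constant $\eta(\eps)$ --- you get the gap $g(\eps)=f(\widehat{\bx}_0)-M_\eps>0$ in one stroke for every $\eps$, since $\{\bx\in D:\|\bx-\widehat{\bx}_0\|\geq\eps\}$ is compact, $f$ attains its maximum $M_\eps$ there, and uniqueness of the maximizer forces $M_\eps<f(\widehat{\bx}_0)$ (with the vacuous case when this set is empty causing no harm). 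Your contradiction-free argument is exactly what the paper's sequence argument proves, so nothing is lost and the $\delta$/$\xi$ bookkeeping disappears. In the transfer step you likewise replace the paper's chain of supremum manipulations and the factor-of-$4$ budgeting of $\eta$ with the pointwise decomposition
\[
f_n(\widehat{\bx}_n)-f_n(\bx) = \{f_n(\widehat{\bx}_n)-f(\widehat{\bx}_n)\} + \{f(\widehat{\bx}_n)-f(\bx)\} + \{f(\bx)-f_n(\bx)\},
\]
with the middle term bounded below by $(3/4)g(\eps/2)$ once part (a) puts $f(\widehat{\bx}_n)$ within $g(\eps/2)/4$ of $f(\widehat{\bx}_0)$, and the outer terms each bounded by $g(\eps/2)/8$; the arithmetic yields $\eta(\eps)=g(\eps/2)/2$ and your triangle-inequality inclusion $\{\|\bx-\widehat{\bx}_n\|>\eps\}\subset\{\|\bx-\widehat{\bx}_0\|>\eps/2\}$ matches the paper's. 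You are also right that condition (ii) on the Hessians enters only through uniqueness and strictness of the maximizers --- the paper uses it the same way --- so no second-order expansion is needed. In short: same skeleton, but your execution of the separation claim is more direct and shorter, at no cost in generality.
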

\begin{proof}[\bf Proof of Theorem \ref{thm:one_dimensional_MLE}]
We begin the proof with writing down the likelihood function for $\bx_i$:
\[
\ell_\bA(\bx_i) = \sum_{j\neq i}^n\{A_{ij}\log(\bx_i\transpose\bx_{0j}) + (1 - A_{ij})\log(1 - \bx_i\transpose\bx_{0j})\}.
\]
For convenience we denote the following functions:
\begin{align*}
M_n(\bx) &= \frac{1}{n}\ell_\bA(\bx) = \frac{1}{n}\sum_{j\neq i}^n\{A_{ij}\log(\bx\transpose\bx_{0j}) + (1 - A_{ij})\log(1 - \bx\transpose\bx_{0j})\},\\
M(\bx) &= \expect_0\{M_n(\bx)\} = \frac{1}{n}\sum_{j\neq i}^n\{\bx_{0i}\transpose\bx_{0j}\log(\bx\transpose\bx_{0j}) + (1 - \bx_{0i}\transpose\bx_{0j})\log(1 - \bx\transpose\bx_{0j})\},\\
\bPsi_n(\bx)&= \frac{\partial M_n}{\partial \bx}(\bx) = \frac{1}{n}\sum_{j\neq i}^n\left\{\frac{(A_{ij} - \bx\transpose\bx_{0j})}{\bx\transpose\bx_{0j}(1 - \bx\transpose\bx_{0j})}\right\}\bx_{0j},\\
\bPsi(\bx) & = \expect_0\{\bPsi_n(\bx)\} = \expect_0\left\{\frac{\partial M_n}{\partial \bx}(\bx)\right\} = \frac{1}{n}\sum_{j\neq i}^n\left\{\frac{(\bx_{0i} - \bx)\transpose\bx_{0j}}{\bx\transpose\bx_{0j}(1 - \bx\transpose\bx_{0j})}\right\}\bx_{0j}.
\end{align*}
Note that the function $M$ itself also depends on $n$ implicitly. Denote $\Psi_{nk}$ the $k$th component of $\bPsi_n$, \emph{i.e.}, 
\[
\Psi_{nk}(\bx) = \frac{1}{n}\sum_{j\neq i}^n\left\{\frac{(A_{ij} - \bx\transpose\bx_{0j})}{\bx\transpose\bx_{0j}(1 - \bx\transpose\bx_{0j})}\right\}x_{0jk},\quad k = 1,2,\ldots,d,
\]
where $\bx_{0j} = [x_{0j1},\ldots,x_{0jd}]\transpose\in\mathbb{R}^d$. 
Simple algebra shows that
\begin{align*}
\frac{\partial^2M_n}{\partial\bx\partial\bx\transpose}(\bx)
& = -\frac{1}{n}\sum_{j\neq i}^n\frac{\bx_{0j}\bx_{0j}\transpose}{\bx\transpose\bx_{0j}(1 - \bx\transpose\bx_{0j})} - \frac{1}{n}\sum_{j\neq i}^n\frac{(A_{ij} - \bx\transpose\bx_{0j})(1 - 2\bx\transpose\bx_{0j})\bx_{0j}\bx_{0j}\transpose}{\{\bx\transpose\bx_{0j}(1 - \bx\transpose\bx_{0j})\}^2},\\
\frac{\partial^2M}{\partial\bx\partial\bx\transpose}(\bx)
& = -\frac{1}{n}\sum_{j\neq i}^n\left\{\frac{(\bx\transpose\bx_{0j})^2 - 2\bx\transpose\bx_{0i}\bx\transpose\bx_{0j} + \bx\transpose\bx_{0i}}{\bx\transpose\bx_{0j}(1 - \bx\transpose\bx_{0j})}\right\}\bx_{0j}\bx_{0j}\transpose,
% \end{align*}
% \begin{align*}
\\
\frac{\partial^2\Psi_{nk}}{\partial\bx\partial\bx\transpose}(\bx)
& = \frac{1}{n}\sum_{j\neq i}^n\frac{x_{0jk}(1 - 2\bx\transpose\bx_{0j})}{\{\bx\transpose\bx_{0j}(1 - \bx\transpose\bx_{0j})\}^2}\bx_{0j}\bx_{0j}\transpose
% \\&\quad 
+ \frac{1}{n}\sum_{j\neq i}^n\frac{x_{0jk}\{(1 - 2\bx\transpose\bx_{0j}) + 2(A_{ij} - \bx\transpose\bx_{0j})\}}{\{\bx\transpose\bx_{0j}(1 - \bx\transpose\bx_{0j})\}^2}\bx_{0j}\bx_{0j}\transpose
\\&\quad
+ \frac{1}{n}\sum_{j\neq i}^n\frac{x_{0jk}\{2(A_{ij} - \bx\transpose\bx_{0j})(1 - 2\bx\transpose\bx_{0j})^2\}}{\{\bx\transpose\bx_{0j}(1 - \bx\transpose\bx_{0j})\}^3}\bx_{0j}\bx_{0j}\transpose.
\end{align*}
Clearly, $\calX(\delta)$ is compact and $M_n(\bx)$ is continuous. Therefore $\widehat\bx_i^{(\mathrm{MLE})} = \argmax_{\bx\in\calX(\delta)}M_n(\bx)$ exists with probability one. Furthermore, by Shannon's lemma (see, for example, Lemma 2.2.1 in \citealp{bickel2015mathematical}), we know that $M(\bx)$ is maximized at $\bx = \bx_{0i}$. 
Since $\bx\transpose\bx_{0j}\in[\delta, 1- \delta]$ for all $j\in[n]$, implying that
\begin{align*}
-\frac{\partial^2M}{\partial\bx\partial\bx\transpose}(\bx) &= 
\frac{1}{n}\sum_{j\neq i}^n\left\{\frac{(\bx\transpose\bx_{0j})^2 - 2\bx\transpose\bx_{0i}\bx\transpose\bx_{0j} + \bx\transpose\bx_{0i}}{(\bx\transpose\bx_{0j})^2(1 - \bx\transpose\bx_{0j})^2}\right\}\bx_{0j}\bx_{0j}\transpose\\
&\succ\frac{1}{n}\sum_{j\neq i}^n\left\{\frac{(\bx\transpose\bx_{0j})^2 - 2\bx\transpose\bx_{0i}\bx\transpose\bx_{0j} + (\bx\transpose\bx_{0i})^2}{(\bx\transpose\bx_{0j})^2(1 - \bx\transpose\bx_{0j})^2}\right\}\bx_{0j}\bx_{0j}\transpose\\
& = \frac{1}{n}\sum_{j\neq i}^n\left\{\frac{(\bx\transpose\bx_{0j} - \bx\transpose\bx_{0i})^2}{(\bx\transpose\bx_{0j})^2(1 - \bx\transpose\bx_{0j})^2}\right\}\bx_{0j}\bx_{0j}\transpose\succeq \mathbf{O},
\end{align*}
% \emph{i.e.}, the Hessian of $M$ function is strictly positive negative for all $\bx\in\Theta_n$, 
it follows that $\bx_{0i}$ is the unique maximizer of $M$ because $-M$ is strictly convex, and $\calX(\delta)$ is convex and compact. 
Observe that the function $M$ (implicitly) depends on $n$. Since the gradient of $M$ is
\begin{align*}
\sup_{n\in\mathbb{N}_+}\sup_{\bx\in\calX}\left\|\frac{\partial M}{\partial \bx}(\bx)\right\|_2 \leq \sup_{n\in\mathbb{N}_+}\frac{1}{n}\sum_{j\neq i}\sup_{\bx\in\calX}\left\| \frac{(\bx_{0i} - \bx)\transpose\bx_{0j}}{\bx\transpose\bx_{0j}(1 - \bx\transpose\bx_{0j})}\bx_{0j} \right\|_2\leq \frac{2}{\delta^2}<\infty.
\end{align*}
Therefore the function class $\{M(\bx)\}_n$ (each function $M$ depends on $n$ implicitly) is equicontinuous. We also observe that 
\begin{align*}
\lim_{n\to\infty}M(\bx)
& = \lim_{n\to\infty}\frac{1}{n}\sum_{j\neq i}^n\{\bx_{0i}\transpose\bx_{0j}\log(\bx\transpose\bx_{0j}) + (1 - \bx_{0i}\transpose\bx_{0j})\log(1 - \bx\transpose\bx_{0j})\}\\
& = \bar{M}(\bx):=\int_\calX\left\{\bx_{0i}\transpose\bx_1\log(\bx\transpose\bx_1) + (1 - \bx_{0i}\transpose\bx_1)\log(1 - \bx\transpose\bx_1)\right\}F(\mathrm{d}\bx_1)
\end{align*}
converges for all $\bx\in\calX(\delta)$, and $\calX(\delta)$ is a compact subset of $\mathbb{R}^d$. Hence, by the Arzela-Ascoli theorem, the convergence $M\to \bar{M}$ as $n\to\infty$ is also uniform. Note that the Hessian of $-M$ is strictly positive definite. Also note that
\begin{align*}
&\sup_{\bx_1,\bx\in \calX}\left\|\frac{\partial}{\partial\bx}\left\{\bx_{0i}\transpose\bx_1\log(\bx\transpose\bx_1) + (1 - \bx_{0i}\transpose\bx_1)\log(1 - \bx\transpose\bx_1)\right\}\right\|_2\\
&\quad = \sup_{\bx_1,\bx\in \calX}\left\| \frac{(\bx_{0i} - \bx)\transpose\bx_1}{\bx\transpose\bx_1(1 - \bx\transpose\bx_1)}\bx_1 \right\|_2\leq \frac{2}{\delta^2} < \infty, 
% \end{align*}
% \begin{align*}
\\
&\sup_{\bx_1,\bx\in \calX}\left\|\frac{\partial^2}{\partial\bx\partial\bx}\left\{\bx_{0i}\transpose\bx_1\log(\bx\transpose\bx_1) + (1 - \bx_{0i}\transpose\bx_1)\log(1 - \bx\transpose\bx_1)\right\}\right\|_2\\
&\quad = \sup_{\bx_1,\bx\in \calX}\left\| \frac{(\bx\transpose\bx_{1})^2 - 2\bx\transpose\bx_{0i}\bx\transpose\bx_1 + \bx\transpose\bx_{0i}}{\{\bx\transpose\bx_1(1 - \bx\transpose\bx_1)\}^2 }\bx_1\bx_1\transpose \right\|_2\leq \frac{4}{\delta^4} < \infty, 
\end{align*}
and hence, by the Lebesgue dominating convergence theorem, the negative Hessian of $\bar{M}$ equals
\begin{align*}
-\frac{\partial^2\bar{M}(\bx)}{\partial\bx\partial\bx\transpose}(\bx)
& = \int_\calX\frac{(\bx\transpose\bx_{1})^2 - 2\bx\transpose\bx_{0i}\bx\transpose\bx_1 + \bx\transpose\bx_{0i}}{\{\bx\transpose\bx_1(1 - \bx\transpose\bx_1)\}^2 }\bx_1\bx_1\transpose F(\mathrm{d}\bx_1),
\end{align*}
which is also strictly positive definite. Hence $-\bar{M}$ is also strictly convex, and the maximizer for $\bar{M}$ is unique in $\calX(\delta)$ because $\calX(\delta)$ is convex and compact. Therefore, we apply Lemma \ref{lemma:real_analysis_lemma} to obtain the following conclusion: For any $\eps > 0$, there exists an $\eta(\eps) > 0$ that depends on $\eps > 0$ but does not depend on $n$, and a positive integer $N = N(\eps)$ that depends on $\eps > 0$, such that
\begin{align}\label{eqn:M_function_iso_maximum}
\sup_{\|\bx - \bx_{0i}\| > \eps} M(\bx) + \eta(\eps) \leq M(\bx_{0i})
\end{align}
for all $n > N(\eps)$.

 % for all $\eps > 0$, 
% \begin{align}\label{eqn:M_function_iso_maximum}
% \sup_{\|\bx - \bx_{0i}\| > \eps} M(\bx) < M(\bx_{0i}),
% \end{align}
% since $\bx_{0i}$ is in the interior of $\Theta_n$ and the Hessian of $M$ is strictly negative definite for all $\bx\in\Theta_n$. 

\noindent
We first claim that 
\begin{align}\label{eqn:uniform_LLN_likelihood}
\sup_{\bx\in\calX(\delta)}|M_n(\bx) - M(\bx)|\overset{\prob_0}{\to} 0.
\end{align}
Define a stochastic process $\{J(\bx) = M_n(\bx) - M(\bx):\bx\in\calX(\delta)\}$. Since for any $\bx_1,\bx_2\in\calX(\delta)$, there exists a constant $K_\delta$ only depending on $\delta > 0$, such that
\begin{align*}
\left|\log\left(\frac{\bx_1\transpose\bx_{0j}}{1 - \bx_1\transpose\bx_{0j}}\right) - \log\left(\frac{\bx_2\transpose\bx_{0j}}{1 - \bx_2\transpose\bx_{0j}}\right)\right|
&\leq \sup_{\bx\in\calX(\delta),j\in[n]}\left\|\frac{\partial}{\partial\bx}\log\left(\frac{\bx\transpose\bx_{0j}}{1 - \bx\transpose\bx_{0j}}\right)\right\|\|\bx_1 - \bx_2\|
\\& 
\leq K_\delta\|\bx_1 - \bx_2\|,
\end{align*}
it follows from Hoeffding's inequality that
\[
\prob_0\left(|J(\bx_1) - J(\bx_2)| > t\right)\leq 2\exp\left(-\frac{2nt^2}{K_\delta^2\|\bx_1 - \bx_2\|^2}\right),
\]
implying that $J(\cdot)$ is a sub-Gaussian process with respect to $K_\delta n^{-1/2}\|\cdot\|$. Hence the packing entropy can also be bounded: There exists some large constant $C > 0$, such that
\[
\log \calD\left(\eps, \calX(\delta), \frac{K_\delta}{\sqrt{n}}\|\cdot\|\right)\leq d\log\left(\frac{C}{\eps\sqrt{n}}\right).
\]
Hence, by the fact that $\sup_{\bx_1,\bx_2\in\calX(\delta)}K_\delta n^{-1/2}\|\bx_1 - \bx_2\|\leq cn^{-1/2}$ for some constant $c\in (0, C)$, a maximum inequality for sub-Gaussian process (see, for example, Corollary 8.5 in \citealp{kosorok2008introduction}), and the change of variable $u = \log\{C/(\eps\sqrt{n})\}$, we have
\begin{align*}
\expect_0\left(\sup_{\bx\in\calX(\delta)}|J(\bx)|\right)
&\lesssim \expect_0(|J(\bx_{0i})|) + \int_0^{cn^{-1/2}}\sqrt{\log\calD\left(\eps, \calX(\delta), \frac{M}{\sqrt{n}}\|\cdot\|\right)}\mathrm{d}\eps\\
&\lesssim \sqrt{\var_0(J(\bx_{0i}))} + \int_0^{cn^{-1/2}}\sqrt{\log\frac{C}{\eps\sqrt{n}}}\mathrm{d}\eps\\
&= \left\{\frac{1}{n^2}\sum_{j\neq i}^n\bx_{0i}\transpose\bx_{0j}(1 - \bx_{0i}\transpose\bx_{0j})\log\frac{\bx_{0i}\transpose\bx_{0j}}{1 - \bx_{0i}\transpose\bx_{0j}}\right\}^{1/2} 
% + \sqrt{\frac{\log C}{c^2n}}
% \\&\quad 
 + \frac{C}{\sqrt{n}}\int_{\log \frac{C}{c}}^\infty\sqrt{u}\mathrm{e}^{-u}\mathrm{d}u\to 0
\end{align*}
as $n\to\infty$. Therefore we conclude that $\sup_{\bx\in\calX(\delta)}|J(\bx)| = o_{\prob_0}(1)$. 

\noindent
In the proof below we shall drop the superscript ${(\mathrm{MLE})}$ from $\widehat\bx_i^{(\mathrm{MLE})}$ and write $\widehat\bx_i = \widehat\bx_i^{(\mathrm{MLE})}$ for short. 
We next use the claim \eqref{eqn:uniform_LLN_likelihood} to show that $\widehat\bx_i$ is consistent for $\bx_{0i}$. The proof here is quite similar to that of Theorem 5.7 in \cite{van2000asymptotic} and presented here for completeness. In fact, this implies that $M_n(\bx_{0i})- M(\bx_{0i})\overset{\prob_0}{\to}0$. Furthermore, $\widehat\bx_i$ is the maximizer of $M_n$, implying that
\begin{align*}
M(\bx_{0i}) - M(\widehat\bx_i) &=  M_n(\bx_{0i}) + o_{\prob_0}(1) - M(\widehat\bx_i)\leq M_n(\widehat\bx_i) - M(\widehat\bx_i) + o_{\prob_0}(1)\\
&\leq \sup_{\bx\in\calX(\delta)}|J(\bx)| + o_{\prob_0}(1) = o_{\prob_0}(1). 
\end{align*}
This shows that $\prob_0(M(\bx_{0i}) - M(\widehat\bx_i) \geq \eta)\to 0$ for all $\eta > 0$. Recall that by \eqref{eqn:M_function_iso_maximum} for all $\eps > 0$, there exists some $\eta(\eps) > 0$ not depending on $n$, such that $\|\widehat\bx - \bx_{0i}\| > \eps$ implies $ M(\widehat\bx_i) \leq M(\bx_{0i}) - \eta(\eps)$, although the function $M(\bx_{0i})$ implicitly depends on $n$. Namely, for all $\eps > 0$, there exists some $\eta = \eta(\eps) > 0$ such that 
\[
\prob_0\left(\|\widehat\bx_i - \bx_{0i}\| > \eps\right)\leq \prob_0\left\{M(\bx_{0i}) - M(\widehat\bx_i) \geq \eta(\eps)\right\} \to 0
\]
as $n\to\infty$. This completes the proof of consistency of $\widehat\bx_i$ for $\bx_{0i}$. 

\noindent
We finally show the asymptotic normality of $\widehat\bx_i$. 
Since $\widehat\bx_i$ is consistent for $\bx_{0i}$, it follows that with probability tending to one, $\widehat\bx_i$ is in the interior of $\calX(\delta)$ since $\bx_{0i}$ is. Assume this event occurs. By Taylor's expansion, we have, for $k = 1,2,\ldots,d$, that
\[
0 = \Psi_{nk}(\widehat\bx_i) = \Psi_{nk}(\bx_{0i}) + \frac{\partial\Psi_{nk}}{\partial\bx\transpose}(\bx_{0i})(\widehat\bx_i - \bx_{0i}) + \frac{1}{2}(\widehat\bx_i - \bx_{0i})\transpose\left\{\frac{\partial^2\Psi_{nk}}{\partial\bx\partial\bx\transpose}(\widetilde\bx_k)\right\}(\widehat\bx_i - \bx_{0i}),
\]
where $\widetilde\bx_k$ lies on the line segment linking $\bx_{0i}$ and $\widehat\bx_{0i}$. Since for any $\bx\in\calX(\delta)$, 
\begin{align*}
\left\|\frac{\partial^2\Psi_{nk}}{\partial\bx\partial\bx\transpose}(\bx)\right\|
&\leq \frac{1}{n}\sum_{j\neq i}^n\frac{\{1 + 2(1 - \delta)\}\|\bx_{0j}\|^2}{\delta^2(1 - \delta)^2}
 + \frac{1}{n}\sum_{j\neq i}^n\frac{\{(3 - 2\delta) + 2(2 - \delta)\}\|\bx_{0j}\|^2}{\delta^2(1 - \delta)^2}
 \\&\quad 
 + \frac{1}{n}\sum_{j\neq i}^n\frac{2(2 - \delta)(3 - 2\delta)^2\|\bx_{0j}\|^2}{\delta^3(1 - \delta)^3}
 % \\&
\lesssim \frac{1}{n}\|\bX_0\|_{\mathrm{F}}^2\leq 1,
\end{align*}
it follows that the Hessian of $\Psi_{nk}(\widetilde \bx)$ is bounded in probability. Observe that, 
\begin{align*}
\expect_0\left\{\frac{\partial\bPsi_n(\bx_{0i})}{\partial\bx\transpose}\right\} &= -\frac{1}{n}\sum_{j\neq i}^n\frac{\bx_{0j}\bx_{0j}\transpose}{\bx_{0i}\transpose\bx_{0j}(1 - \bx_{0i}\transpose\bx_{0j})},
\end{align*}
and for any $s,t\in[d]$,
\begin{align*}
\var_0\left\{\frac{\partial\Psi_{ns}(\bx_{0i})}{\partial x_t}\right\} &= \frac{1}{n^2}\sum_{j\neq i}^n\frac{(1 - 2\bx_{0i}\transpose\bx_{0j})^2(x_{0js}x_{0jt})^2}{\{\bx_{0i}\transpose\bx_{0j}(1 - \bx_{0i}\transpose\bx_{0j})\}^3}\to 0
\end{align*}
as $n\to\infty$, we obtain from the law of large numbers that
\[
\frac{\partial\bPsi_n(\bx_{0i})}{\partial\bx\transpose} = -\bG_n(\bx_{0i}) + o_{\prob_0}(1).
\]
Therefore, we conclude from the Taylor's expansion and $\widehat\bx_i - \bx_{0i} = o_{\prob_0}(1)$ that
\begin{align*}
-\bPsi_n(\bx_{0i}) 
& = \left\{-\bG_n(\bx_{0i}) + o_{\prob_0}(1) + \frac{1}{2}(\widehat\bx_i - \bx_{0i})\transpose O_{\prob_0}(1)\right\}(\widehat\bx_i - \bx_{0i})\\
&= \left\{-\bG_n(\bx_{0i}) + o_{\prob_0}(1)\right\}(\widehat\bx_i - \bx_{0i}). 
\end{align*}
Namely, 
\[
\sqrt{n}(\widehat\bx_i - \bx_{0i}) = \{\bG_n(\bx_{0i}) + o_{\prob_0}(1)\}^{-1}\left\{\frac{1}{\sqrt{n}}\sum_{j\neq i}^n\frac{(A_{ij} - \bx_{0i}\transpose\bx_{0j})\bx_{0j}}{\bx_{0i}\transpose\bx_{0j}(1 - \bx_{0i}\transpose\bx_{0j})}\right\}. 
\]
Observe that
\begin{align*}
\sum_{j\neq i}^n\expect_0\left\{\left\|\frac{1}{\sqrt{n}}\frac{(A_{ij} - \bx_{0i}\transpose\bx_{0j})\bx_{0j}}{\bx_{0i}\transpose\bx_{0j}(1 - \bx_{0i}\transpose\bx_{0j})}\right\|^3\right\}
&\leq \frac{1}{n^{3/2}}\sum_{j\neq i}^n\frac{(2 - \delta)^3\|\bx_{0j}\|^3}{\{\delta(1 - \delta)\}^3}\to 0,\\
\var_0\left(\frac{1}{\sqrt{n}}\sum_{j\neq i}^n\frac{(A_{ij} - \bx_{0i}\transpose\bx_{0j})\bx_{0j}}{\bx_{0i}\transpose\bx_{0j}(1 - \bx_{0i}\transpose\bx_{0j})}\right)
& = \bG_n(\bx_{0i}) \to \bG(\bx_{0i})\quad\text{as }n\to\infty,
\end{align*}
it follows from Lyapunov's central limit theorem that $\sqrt{n}(\widehat\bx_i - \bx_{0i})\overset{\calL}{\to}\mathrm{N}(\zero,\bG(\bx_{0i})^{-1})$. The proof is thus completed.
\end{proof}

% section proof_of_theorem_thm:one_dimensional_mle (end)

\subsection{Proof of Theorem \ref{thm:OSE_single_vertex}} % (fold)
\label{sec:proof_of_theorem_thm:ose_single_vertex}

\begin{proof}[\bf Proof of Theorem \ref{thm:OSE_single_vertex}]
The idea of the proof is very similar to that of Theorem 5.45 in \cite{van2000asymptotic}. Using the notation there (which coincides with the notation in Section \ref{sec:proof_of_theorem_thm:one_dimensional_mle}), we denote
\begin{align*}
\bPsi_n(\bx) 
&= \frac{1}{n}\sum_{j\neq i}^n\frac{(A_{ij} - \bx\transpose\bx_{0j})\bx_{0j}}{\bx\transpose\bx_{0j}(1 - \bx\transpose\bx_{0j})},
% \\
\quad
\dot{\bPsi}_0 
% &
= \frac{1}{n}\sum_{j\neq i}^n\frac{\bx_{0j}\bx_{0j}\transpose}{\bx_{0i}\transpose\bx_{0j}(1 - \bx_{0i}\transpose\bx_{0j})},
% \\
\quad
\dot{\bPsi}_{n,0} 
% &
= \frac{1}{n}\sum_{j\neq i}^n\frac{\bx_{0j}\bx_{0j}\transpose}{\widetilde\bx_{i}\transpose\bx_{0j}(1 - \widetilde\bx_{i}\transpose\bx_{0j})}.
\end{align*}
The two main ingredients to prove the asymptotic normality of the one-step estimator are:
\begin{itemize}
  \item[(i)] $\dot{\bPsi}_{n,0} - \dot{\bPsi}_{0} \overset{\prob_0}{\to} 0$. To prove this convergence result, we denote
  \[
  \bGamma_j(\bx) = \frac{\bx_{0j}\bx_{0j}\transpose}{\bx\transpose\bx_{0j}(1 - \bx\transpose\bx_{0j})}.
  \]
  Denote the $(k,l)$-th element of $\bGamma_j$ by $\Gamma_{jkl}$. Clearly, 
  \[
  \frac{\partial\Gamma_{jkl}(\bx)}{\partial\bx} = \frac{(1 - 2\bx\transpose\bx_{0j})\bx_{0j}x_{0jk}x_{jl}}{\{\bx\transpose\bx_{0j}(1 - \bx\transpose\bx_{0j})\}^2},
  \]
  where $\bx_{0j} = [x_{0j1},\ldots,x_{0jd}]\transpose$. 
  This implies that there exists some $\eps > 0$, such that for all $\bx$ in a neighborhood of $\bx_{0i}$ within radius $\eps$, denoted by $B(\bx_{0i},\eps) = \{\bx\in\mathbb{R}^d:\|\bx - \bx_{0i}\| < \eps\}$, 
  \[
  \max_{j\in[n]}\sup_{\bx\in B(\bx_{0i},\eps)}\frac{1}{n}\sum_{k,l\in[d]}
  \left\|
  \frac{\partial\Gamma_{jkl}(\bx)}{\partial\bx}
  \right\|_{\mathrm{F}} \leq C
  \]
  for some constant $C$ not depending on $n$. 
  Therefore, we have, over the event $\{\widetilde\bx_i \in B(\bx_{0i}, \eps)\}$, that
  \begin{align*}
  \|\dot{\bPsi}_{n,0} - \dot{\bPsi}_0\|_{\mathrm{F}}
  &\leq \frac{1}{n}\sum_{j\neq i}\left\|\bGamma_j(\widetilde\bx_i) - \bGamma_j(\bx_{0i})\right\|_{\mathrm{F}}\\
  &\leq \frac{1}{n}\sum_{j\neq i}\left\{\max_{j\in[n]}\sup_{\bx\in B(\bx_{0i},\eps)}\sum_{k,l\in[d]}
  \left\|
  \frac{\partial\Gamma_{jkl}(\bx)}{\partial\bx}
  \right\|_{\mathrm{F}}\right\}\|\widetilde\bx_i - \bx_{0i}\|_2\\
  &\leq C\|\widetilde\bx_i - \bx_{0i}\|_2.
  \end{align*}
  Then, for any $t > 0$, we have
  \begin{align*}
  \prob_0\left(\|\dot{\bPsi}_{n,0} - \dot{\bPsi}_0\|_{\mathrm{F}} > t\right)
  & = \prob_0\left(\|\dot{\bPsi}_{n,0} - \dot{\bPsi}_0\|_{\mathrm{F}} > t, \widetilde\bx_i\in B(\bx_{0i}, \eps)\right)\\
  &\quad + 
  \prob_0\left(\|\dot{\bPsi}_{n,0} - \dot{\bPsi}_0\|_{\mathrm{F}} > t, \widetilde\bx_i\notin B(\bx_{0i}, \eps)\right)\\
  &\leq  \prob_0\left(C\|\widetilde\bx_i - \bx_{0i}\|_2 > t, \widetilde\bx_i\in B(\bx_{0i}, \eps)\right)
  + \prob_0(\|\widetilde\bx_i - \bx_{0i}\|_2 \geq \eps)\\
  &\leq \prob_0\left(\|\widetilde\bx_i - \bx_{0i}\|_2 \geq \frac{t}{C}\right) + \prob_0(\|\widetilde\bx_i - \bx_{0i}\|_2 \geq \eps)
  % \\&
  \to 0,
  \end{align*}
  where we have used the fact that $\widetilde\bx_{i} - \bx_{0i} = O_{\prob_0}(n^{-1/2})$ in the last line of the previous display. This completes the proof of (i).
  \item[(ii)] 
    $\sqrt{n}\|
    \bPsi_n(\widetilde{\bx}_{i}) - \bPsi_n(\bx_{0i}) - \dot{\bPsi}_0(\widetilde{\bx}_{i} - \bx_{0i})
    \| \overset{\prob_0}{\to} 0$. To prove this convergence result, we need to argue that $\partial{\bPsi}_n(\bx_{0i})/\partial\bx\transpose$ is sufficiently close to $\dot{\bPsi}_0$, and the Taylor expansion of $\bPsi_n$ at $\bx_{0i}$ is valid. We first argue that $\partial{\bPsi}_n(\bx_{0i})/\partial\bx\transpose$ is sufficiently close to $\dot{\bPsi}_0$. By the proof of Theorem \ref{thm:one_dimensional_MLE}, 
    \[
    \frac{\partial\bPsi_n\transpose}{\partial\bx}(\bx) = - \frac{1}{n}\sum_{j\neq i}^n\frac{\bx_{0j}\bx_{0j}\transpose}{\bx\transpose\bx_{0j}(1 - \bx\transpose\bx_{0j})} - \frac{1}{n}\sum_{j\neq i}^n\frac{(A_{ij} - \bx\transpose\bx_{0j})(1 - 2\bx\transpose\bx_{0j})\bx_{0j}\bx_{0j}\transpose}{\{\bx\transpose\bx_{0j}(1 - \bx\transpose\bx_{0j})\}^2}.
    \]
    Hence
    \[
    \frac{\partial\bPsi_n\transpose}{\partial\bx}(\bx_{0i}) = \dot{\bPsi}_0 - \frac{1}{n}\sum_{j\neq i}^n\frac{(A_{ij} - \bx_{0i}\transpose\bx_{0j})(1 - 2\bx_{0i}\transpose\bx_{0j})\bx_{0j}\bx_{0j}\transpose}{\{\bx_{0i}\transpose\bx_{0j}(1 - \bx_{0i}\transpose\bx_{0j})\}^2}.
    \]
    Note that
    \begin{align*}
    \var_0\left[\vect\left\{
    \frac{\partial\bPsi_n\transpose}{\partial\bx}(\bx_{0i}) - \dot{\bPsi}_0
    \right\}
    \right]
    & = \frac{1}{n^2}\sum_{j\neq i}^n\frac{(1 - 2\bx_{0i}\transpose\bx_{0j})^2}{\{\bx_{0i}\transpose\bx_{0j}(1 - \bx_{0i}\transpose\bx_{0j})\}^3}\vect(\bx_{0j}\bx_{0j}\transpose)\vect(\bx_{0j}\bx_{0j}\transpose)\transpose\\
    & \asymp \frac{1}{n},
    \end{align*}
    where $\vect(\bSigma)$ is the vectorization of the matrix $\bSigma$ defined to be the vector formed by stacking the columns of $\bSigma$ consecutively. Furthermore,
    \begin{align*}
    \sum_{j\neq i}^n\expect_0\left[
    \left\|\frac{1}{\sqrt{n}}
    \frac{(A_{ij} - \bx_{0i}\transpose\bx_{0j})(1 - 2\bx_{0i}\transpose\bx_{0j})\bx_{0j}\bx_{0j}\transpose}{\{\bx_{0i}\transpose\bx_{0j}(1 - \bx_{0i}\transpose\bx_{0j})\}^2}
    \right\|_{\mathrm{F}}^3
    \right] \asymp\frac{1}{n^{1/2}}\to 0.
    \end{align*}
    Therefore, by Lyapunov's central limit theorem, 
    \[
    \frac{\partial\bPsi_n\transpose}{\partial\bx}(\bx_{0i}) - \dot{\bPsi}_0 = O_{\prob_0}\left(\frac{1}{\sqrt{n}}\right).
    \]
    We now establish the Taylor expansion of $\bPsi_n$ at $\bx_{0i}$. Let $\Psi_{nk}$ be the $k$th coordinate function of $\bPsi_n$, $k = 1,\ldots,d$. By the proof of Theorem \ref{thm:one_dimensional_MLE},
    \begin{align*}
    \frac{\partial^2\Psi_{nk}}{\partial\bx\partial\bx\transpose}(\bx)
    & = \frac{1}{n}\sum_{j\neq i}^n\frac{x_{0jk}(1 - 2\bx\transpose\bx_{0j})}{\{\bx\transpose\bx_{0j}(1 - \bx\transpose\bx_{0j})\}^2}\bx_{0j}\bx_{0j}\transpose
    \\&\quad 
    + \frac{1}{n}\sum_{j\neq i}^n\frac{x_{0jk}\{(1 - 2\bx\transpose\bx_{0j}) + 2(A_{ij} - \bx\transpose\bx_{0j})\}}{\{\bx\transpose\bx_{0j}(1 - \bx\transpose\bx_{0j})\}^2}\bx_{0j}\bx_{0j}\transpose
    \\&\quad
    + \frac{1}{n}\sum_{j\neq i}^n\frac{x_{0jk}\{2(A_{ij} - \bx\transpose\bx_{0j})(1 - 2\bx\transpose\bx_{0j})^2\}}{\{\bx\transpose\bx_{0j}(1 - \bx\transpose\bx_{0j})\}^3}\bx_{0j}\bx_{0j}\transpose.
    \end{align*}
    Therefore, there exists some $\eps > 0$, such that for all $\bx\in B(\bx_{0i}, \eps)$, we have
    \begin{align*}
    \Psi_{nk}(\bx) - \Psi_{nk}(\bx_{0i}) = \frac{\partial\Psi_{nk}}{\partial\bx\transpose}(\bx_{0i})(\bx - \bx_{0i}) + \frac{1}{2}(\bx - \bx_{0i})\transpose\left\{\frac{\partial^2\Psi_{nk}}{\partial\bx\partial\bx\transpose}(\widetilde\btheta_k)\right\}(\bx - \bx_{0i}),
    \end{align*}
    where $\widetilde\btheta_k$ lies on the line segment linking $\bx_{0i}$ and $\bx$. Again, by the proof of Theorem \ref{thm:one_dimensional_MLE}, we see that
    \[
    \max_{k\in [d]}\sup_{\bx\in B(\bx_{0i}, \eps)}\left\|\frac{\partial^2\Psi_{nk}}{\partial\bx\partial\bx\transpose}(\bx)\right\|\leq C
    \]
    for some constant $C > 0$ not depending on $n$. Therefore, 
    \begin{align}\label{eqn:Taylor_expansion_one_dimensional_OSE}
    \left\|\bPsi_n(\bx) - \bPsi_n(\bx_{0i}) - \frac{\partial\bPsi_{n}}{\partial\bx\transpose}(\bx_{0i})(\bx - \bx_{0i})\right\|_{\mathrm{F}}
    \leq Cd\|\bx - \bx_{0i}\|_2^2
    \end{align}
    whenever $\bx\in B(\bx_{0i},  \eps)$. 
    Now we are able to prove (ii). Write
    \begin{align*}
    &
    \sqrt{n}\|
    \bPsi_n(\widetilde{\bx}_{i}) - \bPsi_n(\bx_{0i}) - \dot{\bPsi}_0(\widetilde{\bx}_{i} - \bx_{0i})\|_2\\
    &\quad \leq \sqrt{n}\left\|
    \bPsi_n(\widetilde{\bx}_{i}) - \bPsi_n(\bx_{0i}) - \frac{\partial\bPsi_n}{\partial\bx\transpose}(\bx_{0i})(\widetilde{\bx}_{i} - \bx_{0i})\right\|_{\mathrm{{2}}}
    % \\
    % &\quad\quad
     + \sqrt{n}\left\|\frac{\partial\bPsi_n\transpose}{\partial\bx}(\bx_{0i}) - \dot{\bPsi}_0\right\|_2\|\widetilde\bx_i - \bx_{0i}\|_2\\
    &\quad = \sqrt{n}\left\|
    \bPsi_n(\widetilde{\bx}_{i}) - \bPsi_n(\bx_{0i}) - \frac{\partial\bPsi_n}{\partial\bx\transpose}(\bx_{0i})(\widetilde{\bx}_{i} - \bx_{0i})\right\|_{\mathrm{{2}}} + O_{\prob_0}(n^{-1/2}).
    \end{align*}
    Now over the event $\{\widetilde\bx_i\in B(\bx_{0i}, \eps)\}$, we can apply the Taylor expansion \eqref{eqn:Taylor_expansion_one_dimensional_OSE} and write
    \[
    \left\|
    \bPsi_n(\widetilde{\bx}_{i}) - \bPsi_n(\bx_{0i}) - \frac{\partial\bPsi_n}{\partial\bx\transpose}(\bx_{0i})(\widetilde{\bx}_{i} - \bx_{0i})\right\|_{\mathrm{{2}}}\leq Cd\|\widetilde\bx_{i} - \bx_{0i}\|_2^2.
    \]
    Therefore, for any $t > 0$,
    \begin{align*}
    &\prob_0\left(
    \sqrt{n}\left\|
    \bPsi_n(\widetilde{\bx}_{i}) - \bPsi_n(\bx_{0i}) - \frac{\partial\bPsi_n}{\partial\bx\transpose}(\bx_{0i})(\widetilde{\bx}_{i} - \bx_{0i})\right\|_{\mathrm{{2}}} > t
    \right)\\
    &\quad \leq \prob_0\left(
    \sqrt{n}\left\|
    \bPsi_n(\widetilde{\bx}_{i}) - \bPsi_n(\bx_{0i}) - \frac{\partial\bPsi_n}{\partial\bx\transpose}(\bx_{0i})(\widetilde{\bx}_{i} - \bx_{0i})\right\|_{\mathrm{{2}}} > t, \widetilde\bx_i\in B(\bx_{0i}, \eps)
    \right) \\
    &\quad\quad + \prob_0\left(\|\widetilde\bx_i - \bx_{0i}\|_2 > \eps\right)\\
    &\quad \leq  \prob_0\left(Cd\|\widetilde\bx_i - \bx_{0i}\|_2^2 > \frac{t}{\sqrt{n}}\right) + \prob_0\left(\|\widetilde\bx_i - \bx_{0i}\|_2 > \eps\right)\to 0,
    \end{align*}
    where we have applied the fact that $\widetilde\bx_i - \bx_{0i} = O_{\prob_0}(n^{-1/2})$ to the last line of the previous display. The claim (ii) is thus established. 
\end{itemize}
We are now in a position to prove Theorem \ref{thm:OSE_single_vertex}. By definition and (ii), we have
\begin{align*}
\bPsi_{n,0}\sqrt{n}(\widehat\bx_i^{\mathrm{(OS)}} - \bx_{0i})
& = \bPsi_{n,0}\sqrt{n}(\widetilde\bx_i - \bPsi_{n,0}^{-1}\bPsi_n(\widetilde\bx_i) - \bx_{0i})\\
& = \bPsi_{n,0}\sqrt{n}(\widetilde\bx_i - \bx_{0i}) - \sqrt{n}\{\bPsi_n(\widetilde\bx_i) - \bPsi_n(\bx_{0i})\} + \sqrt{n}\bPsi_n(\bx_{0i})\\
& = (\bPsi_{n,0}  - \dot{\bPsi}_0)\sqrt{n}(\widetilde\bx_i - \bx_{0i}) + \sqrt{n}\bPsi_n(\bx_{0i}) + o_{\prob_0}(1).
\end{align*}
By (i) and the fact that $\sqrt{n}(\widetilde\bx_i - \bx_{0i}) = O_{\prob_0}(1)$, we see that the first term on the right-hand side of the previous display is also $o_{\prob_0}(1)$. Therefore,
\begin{align*}
\bPsi_{n,0}\sqrt{n}(\widehat\bx_i^{\mathrm{(OS)}} - \bx_{0i})
& = \sqrt{n}\bPsi_n(\bx_{0i}) + o_{\prob_0}(1)
 = \frac{1}{\sqrt{n}}\sum_{j\neq i}^n\frac{(A_{ij} - \bx_{0i}\transpose\bx_{0j})\bx_{0j}}{\bx_{0i}\transpose\bx_{0j}(1 - \bx_{0i}\transpose\bx_{0j})} + o_{\prob_0}(1).
\end{align*}
Using the proof of Theorem \ref{thm:one_dimensional_MLE}, we see that the first term in the right-hand side of the above display converges to $\mathrm{N}(\zero, \bG(\bx_{0i}))$ in distribution. Therefore, by (i), the fact that $\dot{\bPsi}_0 \to \bG(\bx_{0i})$ as $n\to\infty$, and Slutsky's theorem, we conclude that 
\begin{align*}
\sqrt{n}(\widehat\bx_i^{\mathrm{(OS)}} - \bx_{0i}) & = 
\bPsi_{n,0}^{-1}\frac{1}{\sqrt{n}}\sum_{j\neq i}^n\frac{(A_{ij} - \bx_{0i}\transpose\bx_{0j})\bx_{0j}}{\bx_{0i}\transpose\bx_{0j}(1 - \bx_{0i}\transpose\bx_{0j})} + o_{\prob_0}(1)\overset{\calL}{\to} \mathrm{N}(\zero, \bG(\bx_{0i})^{-1}).
\end{align*}
\end{proof}

% section proof_of_theorem_thm:ose_single_vertex (end)

\section{Proofs of Theorems \ref{thm:asymptotic_normality_OS} and \ref{thm:convergence_OS}} % (fold)
\label{sec:proof_of_theorem_OS}

\subsection{Some Technical Lemmas for the One-Step Procedure} % (fold)
\label{sec:proofs_of_technical_lemmas}

Before proceeding to the proof of Theorem \ref{thm:asymptotic_normality_OS}, we first establish a collection of technical lemmas for bounding the remainder $\widehat\bR_i$ in \eqref{eqn:OS_expansion}. 
% section proofs_of_technical_lemmas (end)

% section proof_of_theorem_ (end)

\begin{lemma}\label{lemma:two_to_infinity_error_ASE}
Let $\bA\sim\mathrm{RDPG}(\bX_0)$ and assume the conditions in Theorem \ref{thm:asymptotic_normality_OS} holds. Let an estimator $\widetilde\bX\in\mathbb{R}^{n\times d}$ satisfy the approximate linearization property \eqref{eqn:linearization_property} with an orthogonal matrix $\bW\in\mathbb{O}(d)$ (possibly depending on $n$). Then 
\begin{align*}
\|\widetilde\bX\bW - \rho_n^{1/2}\bX_0\|_{2\to\infty} = O_{\prob_0}\left(\frac{(\log n)^{(1\vee\omega)/2}}{\sqrt{n\rho_n}}\right).
\end{align*}

\begin{proof}[\bf Proof% of Lemma \ref{lemma:two_to_infinity_error_ASE}
]
The proof of this lemma is similar to that of Lemma 2 in \cite{tang2017}, except that we consider the case where a sparsity factor $\rho_n$ is taken into account, and the proof is presented here for the sake of completeness. 
Recall from \eqref{eqn:linearization_property} that
% \[
% \widetilde\bX\bW - \rho_n^{1/2}\bX_0 = \rho_n^{-1/2}(\bA - \bP_0)\bzeta_{ij} + \widetilde\bR.
% \]
% Namely, with $\bU_0 = [u_{ik}]_{n\times d}\in\mathbb{R}^{n\times d}$, 
\[
\|\widetilde\bX\bW - \rho_n^{1/2}\bX_0\|_{2\to\infty}\leq \rho_n^{-1/2}\sqrt{d}\max_{i\in[n],k\in[d]}\left|\sum_{j = 1}^n(A_{ij} - \rho_n\bx_{0i}\transpose\bx_{0j})\zeta_{ijk}\right| +  \|\widetilde\bR\|_{\mathrm{F}}.
\]
where $\bzeta_{ij} = [\zeta_{ij1},\ldots,\zeta_{ijd}]\transpose\in\mathbb{R}^d$. 
By Hoeffding's inequality, the union bound, and the condition that $\sup_{i,j\in[n]}\|\bzeta_{ij}\|\lesssim 1/n$, for any $t > 0$, we have,
\begin{align*}
\prob_0\left(\max_{i\in[n],k\in[k]}\left|\sum_{j = 1}^n(A_{ij} - \rho_n\bx_{0i}\transpose\bx_{0j})\zeta_{ijk}\right| > t\right)
\leq 
% \sum_{i = 1}^n\sum_{k = 1}^d\prob_0\left(\left|\sum_{j = 1}^n(A_{ij} - \rho_n\bx_{0i}\transpose\bx_{0j})u_{jk}\right| > t\right)\leq 
2nd\exp\left(-\frac{2t^2}{\sum_{j = 1}^n\zeta_{ijk}^2}\right) = 2nd\exp\left\{-{Knt^2}\right\}
\end{align*}
for some constant $K > 0$. 
Therefore, for any $c > 0$, there exists some constant $C_c > 0$ and $n_c\in\mathbb{N}_+$, such that for all $n \geq n_c$.
\begin{align*}
\prob_0\left(\rho_n^{-1/2}\sqrt{d}\max_{i\in[n],k\in[d]}\left|\sum_{j = 1}^n(A_{ij} - \rho_n\bx_{0i}\transpose\bx_{0j})\zeta_{ijk}\right| > C_c\sqrt{\frac{\log n}{n\rho_n}}\right)\leq \frac{1}{n^c}. 
\end{align*}
This shows that 
\[
\rho_n^{-1/2}\sqrt{d}\max_{i\in[n],k\in[d]}\left|\sum_{j = 1}^n(A_{ij} - \rho_n\bx_{0i}\transpose\bx_{0j})\zeta_{ijk}\right| = O_{\prob_0}\left(\sqrt{\frac{\log n}{n\rho_n}}\right).
\]
The proof is completed by applying the condition that $\|\widetilde\bR\|_{\mathrm{F}} = O_{\prob_0}\left((n\rho_n)^{-1/2}(\log n)^{\omega/2}\right)$. 
\end{proof}

\end{lemma}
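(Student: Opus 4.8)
The plan is to pass through the approximate linearization property \eqref{eqn:linearization_property} and reduce the two-to-infinity norm to a coordinatewise maximum of sums of independent bounded random variables, which I then control by Hoeffding's inequality together with a union bound. First I would observe that the $i$th row of $\widetilde\bX\bW - \rho_n^{1/2}\bX_0$, viewed as a column vector, is exactly $\bW\transpose\widetilde\bx_i - \rho_n^{1/2}\bx_{0i}$, so by definition $\|\widetilde\bX\bW - \rho_n^{1/2}\bX_0\|_{2\to\infty} = \max_{i\in[n]}\|\bW\transpose\widetilde\bx_i - \rho_n^{1/2}\bx_{0i}\|_2$. Substituting \eqref{eqn:linearization_property} and using the triangle inequality splits each row into a stochastic linear term $\rho_n^{-1/2}\sum_{j=1}^n(A_{ij} - \rho_n\bx_{0i}\transpose\bx_{0j})\bzeta_{ij}$ and the remainder $\widetilde\bR_i$. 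Bounding the Euclidean norm of the linear term by $\sqrt{d}$ times its largest coordinate, and bounding $\max_i\|\widetilde\bR_i\|_2 \leq \|\widetilde\bR\|_{\mathrm{F}}$, yields precisely the estimate stated at the top of the proof, reducing the problem to controlling $\max_{i\in[n],k\in[d]}\left|\sum_{j=1}^n(A_{ij} - \rho_n\bx_{0i}\transpose\bx_{0j})\zeta_{ijk}\right|$.

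Next I would apply Hoeffding's inequality for fixed indices $i\in[n]$ and $k\in[d]$. For $j\neq i$ the summands $(A_{ij} - \rho_n\bx_{0i}\transpose\bx_{0j})\zeta_{ijk}$ are independent, mean-zero, and bounded in magnitude by $|\zeta_{ijk}|$ since $A_{ij}\in\{0,1\}$ and $\rho_n\bx_{0i}\transpose\bx_{0j}\in[0,1]$; the single diagonal term $j=i$ is deterministic (as $A_{ii}=0$) of magnitude $O(\rho_n/n)$ and is absorbed as a lower-order contribution. Since $\sup_{i,j}\|\bzeta_{ij}\|\lesssim 1/n$ gives $\sum_{j}\zeta_{ijk}^2 \leq \sum_j \|\bzeta_{ij}\|^2 \lesssim 1/n$, Hoeffding produces a tail of the form $\prob_0(|\cdot| > t)\leq 2\exp(-Knt^2)$ for some constant $K>0$. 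A union bound over the $nd$ pairs $(i,k)$ then gives $\prob_0(\max_{i,k}|\cdot| > t)\leq 2nd\exp(-Knt^2)$, and choosing $t = C_c\sqrt{(\log n)/n}$ with $C_c$ sufficiently large makes the right-hand side at most $n^{-c}$ for any prescribed $c>0$. This establishes $\rho_n^{-1/2}\sqrt{d}\,\max_{i,k}|\cdot| = O_{\prob_0}(\sqrt{(\log n)/(n\rho_n)})$ after reinstating the $\rho_n^{-1/2}$ prefactor.

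Finally I would combine the two pieces: the linear term contributes at rate $(\log n)^{1/2}/\sqrt{n\rho_n}$, while the remainder satisfies $\|\widetilde\bR\|_{\mathrm{F}} = O_{\prob_0}((n\rho_n)^{-1/2}(\log n)^{\omega/2})$ by the hypothesis $\|\widetilde\bR\|_{\mathrm{F}}^2 = O_{\prob_0}((n\rho_n)^{-1}(\log n)^{\omega})$. Taking the maximum of the exponents $1/2$ and $\omega/2$ gives the claimed rate $(\log n)^{(1\vee\omega)/2}/\sqrt{n\rho_n}$. There is no deep obstacle here; the proof is essentially routine once the linearization property is invoked. The one point requiring care is that the concentration must hold \emph{uniformly} over all $n$ rows simultaneously, so I must verify that the sub-Gaussian tail $\exp(-Knt^2)$ with $t\asymp\sqrt{(\log n)/n}$ decays fast enough to defeat the polynomial union-bound factor $nd$; this is exactly why the threshold is taken proportional to $\sqrt{(\log n)/n}$ rather than $n^{-1/2}$.
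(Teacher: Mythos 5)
Your proposal is correct and follows essentially the same route as the paper's proof: the identical reduction via the approximate linearization property to $\rho_n^{-1/2}\sqrt{d}\max_{i\in[n],k\in[d]}\bigl|\sum_{j=1}^n(A_{ij} - \rho_n\bx_{0i}\transpose\bx_{0j})\zeta_{ijk}\bigr| + \|\widetilde\bR\|_{\mathrm{F}}$, then Hoeffding's inequality with variance proxy $\sum_j\zeta_{ijk}^2\lesssim 1/n$, a union bound over the $nd$ pairs with threshold $t\asymp\sqrt{(\log n)/n}$, and finally the hypothesis on $\|\widetilde\bR\|_{\mathrm{F}}$ to produce the $(\log n)^{(1\vee\omega)/2}$ exponent. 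Your explicit handling of the deterministic diagonal term $j=i$ (using $A_{ii}=0$) is a small point the paper leaves implicit, but it does not change the argument.
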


\begin{lemma}\label{lemma:R12k_analysis}
Let $\bA\sim\mathrm{RDPG}(\bX_0)$ with sparsity factor $\rho_n$, and assume the conditions of Theorem \ref{thm:asymptotic_normality_OS} hold. Let an estimator $\widetilde\bX\in\mathbb{R}^{n\times d}$ satisfy the approximate linearization property \eqref{eqn:linearization_property} with an orthogonal matrix $\bW\in\mathbb{O}(d)$ (possibly depending on $n$). Then
\[
\max_{i\in[n]}\left\|\frac{1}{n\sqrt{\rho_n}}\sum_{j = 1}^n\frac{\rho_n\bx_{0j}\bx_{0i}\transpose}{\bx_{0i}\transpose\bx_{0j}(1 - \rho_n\bx_{0i}\transpose\bx_{0j})}(\rho_n^{-1/2}\bW\transpose\widetilde\bx_j - \bx_{0j})\right\| = O_{\prob_0}\left(\frac{(\log n)^{(1\vee\omega)/2}}{n\rho_n^{1/2}}\right). 
\]
\begin{proof}
[\bf Proof]
% [\bf Proof of Lemma \ref{lemma:R12k_analysis}]
Recall by condition \eqref{eqn:linearization_property} that for any $j\in[n]$,
\[
[\bW\transpose\widetilde\bx_j - \rho_n^{1/2}\bx_{0j}]_k = \rho_n^{-1/2}\sum_{a = 1}^n(A_{ja} - \rho_n\bx_{0j}\transpose\bx_{0a})\zeta_{iak} + \widetilde R_{jk}, \quad k = 1,2,\ldots,d,
\]
where $\bzeta_{ij} = [\zeta_{ij1},\ldots,\zeta_{ijd}]\transpose$. 
It follows that for $k = 1,2,\ldots,d$,
\begin{align*}
&\frac{1}{n\sqrt{\rho_n}}\sum_{j = 1}^n\sum_{s = 1}^d\frac{\rho_nx_{0jk}x_{0is}}{\bx_{0i}\transpose\bx_{0j}(1 - \rho_n\bx_{0i}\transpose\bx_{0j})}[\rho_n^{-1/2}\bW\transpose\widetilde\bx_j - \bx_{0j}]_s\\
&\quad = \frac{1}{n\sqrt{\rho_n}}\sum_{s = 1}^d\sum_{j = 1}^n\sum_{a = 1}^n\frac{x_{0jk}x_{0is}}{\bx_{0i}\transpose\bx_{0j}(1 - \rho_n\bx_{0i}\transpose\bx_{0j})}(A_{ja} - \rho_n\bx_{0j}\transpose\bx_{0a})\zeta_{ias}\\
&\quad\quad + \frac{1}{n}\sum_{j = 1}^n\sum_{s = 1}^d\frac{x_{0jk}x_{0is}}{\bx_{0i}\transpose\bx_{0j}(1 - \rho_n\bx_{0i}\transpose\bx_{0j})}\widetilde R_{js}\\
&\quad = \frac{1}{n\sqrt{\rho_n}}\sum_{s = 1}^d\left\{\sum_{j < a}Z_{iksja} + \sum_{j > a}Z_{iksja} + \sum_{j = 1}^nZ_{iksjj}\right\} + \frac{1}{n}\sum_{j = 1}^n\sum_{s = 1}^d\frac{x_{0jk}x_{0is}}{\bx_{0i}\transpose\bx_{0j}(1 - \rho_n\bx_{0i}\transpose\bx_{0j})}\widetilde R_{js},
\end{align*}
where 
\[
Z_{iksja} = \frac{x_{0jk}x_{0is}}{\bx_{0i}\transpose\bx_{0j}(1 - \rho_n\bx_{0i}\transpose\bx_{0j})}(A_{ja} - \rho_n\bx_{0j}\transpose\bx_{0a})\zeta_{ias}.
\]
Observe that by Hoeffding's inequality and the union bound,
\begin{align*}
\prob_0\left(\frac{1}{n\sqrt{\rho_n}}\max_{i\in[n]}\left|\sum_{j < a}Z_{iksja}\right| > t\right)
&\leq 2n\exp\left[-2n^2\rho_nt^2\left\{\sum_{j < a}\left(\frac{\zeta_{ias}x_{0jk}x_{0is}}{\bx_{0i}\transpose{}\bx_{0j}(1 - \rho_n\bx_{0i}\transpose{}\bx_{0j})}\right)^2\right\}^{-1}\right]\\
&\leq 2n\exp\left(-Kn^2\rho_nt^2\right).
\end{align*}
This shows that
\[
\frac{1}{n\sqrt{\rho_n}}\max_{i\in[n]}\left|\sum_{j < a}Z_{iksja}\right| = O_{\prob_0}\left(\sqrt{\frac{\log n}{n^2\rho_n}}\right),
\]
and hence, a similar argument yields that
\[
\frac{1}{n\sqrt{\rho_n}}\max_{i\in[n]}\left|\sum_{j = 1}^n\sum_{a = 1}^nZ_{iksja}\right| = O_{\prob_0}\left(\sqrt{\frac{\log n}{n^2\rho_n}}\right).
\]
In addition, by the fact that $\|\widetilde\bR\|_{\mathrm{F}}^2 = O_{\prob_0}((n\rho_n)^{-1}(\log n)^\omega)$ we have
\begin{align*}
&\max_{i\in[n]}\left|\frac{1}{n}\sum_{j = 1}^n\sum_{s = 1}^d
\frac{x_{0jk}x_{0is}}{\bx_{0i}\transpose\bx_{0j}(1 - \rho_n\bx_{0i}\transpose\bx_{0j})}\widetilde R_{js}
\right|\\
&\quad\leq \frac{1}{n}\left[\sum_{j = 1}^n\sum_{s = 1}^d\max_{i\in[n]}\left\{\frac{x_{0jk}x_{0is}}{\bx_{0i}\transpose\bx_{0j}(1 - \rho_n\bx_{0i}\transpose\bx_{0j})}\right\}^2\right]^{1/2}\|\widetilde\bR\|_{\mathrm{F}}
\lesssim \frac{1}{\sqrt{n}}\|\widetilde\bR\|_{\mathrm{F}} = O_{\prob_0}\left(\frac{(\log n)^{\omega/2}}{n\rho_n^{1/2}}\right).
\end{align*}

Therefore, we conclude that
\begin{align*}
&\max_{i\in[n]}\left\|\frac{1}{n\sqrt{\rho_n}}\sum_{j = 1}^n\frac{\rho_n\bx_{0j}\bx_{0i}\transpose}{\bx_{0i}\transpose\bx_{0j}(1 - \rho_n\bx_{0i}\transpose\bx_{0j})}(\rho_n^{-1/2}\bW\transpose\widetilde\bx_j - \bx_{0j})\right\|\\
&\quad\lesssim \sum_{k = 1}^d\sum_{s = 1}^d\max_{i\in[n]}\left|\frac{1}{n\sqrt{\rho_n}}\sum_{j = 1}^n\sum_{a = 1}^nZ_{iksja}\right|
% \\&\quad\quad
 + \sum_{k = 1}^d\max_{i\in[n]}\left|\frac{1}{n}\sum_{j = 1}^n\sum_{s = 1}^d
\frac{x_{0jk}x_{0is}}{\bx_{0i}\transpose\bx_{0j}(1 - \rho_n\bx_{0i}\transpose\bx_{0j})}\widetilde R_{js}
\right|
\\&\quad
= O_{\prob_0}\left(\frac{(\log n)^{(1\vee\omega)/2}}{n\rho_n^{1/2}}\right),
\end{align*}
and the proof is thus completed. 
\end{proof}
\end{lemma}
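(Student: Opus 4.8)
The plan is to bound the target vector $T_i := \frac{1}{n\sqrt{\rho_n}}\sum_{j=1}^n\frac{\rho_n\bx_{0j}\bx_{0i}\transpose}{\bx_{0i}\transpose\bx_{0j}(1-\rho_n\bx_{0i}\transpose\bx_{0j})}(\rho_n^{-1/2}\bW\transpose\widetilde\bx_j-\bx_{0j})$ coordinate by coordinate, since $d$ is fixed and $\|T_i\|\lesssim\sum_{k=1}^d|[T_i]_k|$. First I would substitute the approximate linearization property \eqref{eqn:linearization_property} into the deviation, obtaining $\rho_n^{-1/2}\bW\transpose\widetilde\bx_j-\bx_{0j}=\rho_n^{-1}\sum_{a=1}^n(A_{ja}-\rho_n\bx_{0j}\transpose\bx_{0a})\bzeta_{ja}+\rho_n^{-1/2}\widetilde\bR_j$. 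After tracking the powers of $\rho_n$ (the prefactor $\rho_n/(n\sqrt{\rho_n})$ combines with the $\rho_n^{-1}$ of the noise term to leave weight $1/(n\sqrt{\rho_n})$, and with the $\rho_n^{-1/2}$ of the remainder to leave weight $1/n$), the $k$-th coordinate of $T_i$ splits into a \emph{noise part}, a triple sum $\frac{1}{n\sqrt{\rho_n}}\sum_{s,j,a}Z_{iksja}$ with $Z_{iksja}=\frac{x_{0jk}x_{0is}}{\bx_{0i}\transpose\bx_{0j}(1-\rho_n\bx_{0i}\transpose\bx_{0j})}(A_{ja}-\rho_n\bx_{0j}\transpose\bx_{0a})\zeta_{jas}$, plus a \emph{remainder part} $\frac{1}{n}\sum_{j,s}\frac{x_{0jk}x_{0is}}{\bx_{0i}\transpose\bx_{0j}(1-\rho_n\bx_{0i}\transpose\bx_{0j})}\widetilde R_{js}$.

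For the noise part, the key observation is that the coefficients multiplying $(A_{ja}-\rho_n\bx_{0j}\transpose\bx_{0a})$ are uniformly $O(1/n)$: on $\calX(\delta)$ the denominator $\bx_{0i}\transpose\bx_{0j}(1-\rho_n\bx_{0i}\transpose\bx_{0j})$ is bounded below by a constant depending only on $\delta$, the coordinates $x_{0jk},x_{0is}$ are bounded, and $\sup_{i,j}\|\bzeta_{ij}\|\lesssim 1/n$ forces $|\zeta_{jas}|\lesssim 1/n$. I would then exploit the symmetry $A_{ja}=A_{aj}$ by splitting $\sum_{j,a}=\sum_{j<a}+\sum_{j>a}+\sum_{j=a}$ and treating each region separately. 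On each off-diagonal region the centered summands are independent and bounded, and $\sum_{j<a}(\text{coeff})^2=O(1)$, so Hoeffding's inequality gives $\prob_0(\frac{1}{n\sqrt{\rho_n}}|\sum_{j<a}Z_{iksja}|>t)\le 2\exp(-Kn^2\rho_nt^2)$; a union bound over $i\in[n]$ then yields the uniform rate $\frac{1}{n\sqrt{\rho_n}}\max_i|\sum_{j,a}Z_{iksja}|=O_{\prob_0}(\sqrt{\log n/(n^2\rho_n)})$. The diagonal $j=a$ contributes only $n$ terms and is of smaller order.

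For the remainder part I would apply Cauchy--Schwarz to extract $\|\widetilde\bR\|_{\mathrm{F}}$: $\max_i|\frac{1}{n}\sum_{j,s}(\cdots)\widetilde R_{js}|\le\frac{1}{n}\{\sum_{j,s}\max_i(\cdots)^2\}^{1/2}\|\widetilde\bR\|_{\mathrm{F}}\lesssim n^{-1/2}\|\widetilde\bR\|_{\mathrm{F}}$, since the bracket contains $O(n)$ bounded terms. Invoking the linearization bound $\|\widetilde\bR\|_{\mathrm{F}}=O_{\prob_0}((n\rho_n)^{-1/2}(\log n)^{\omega/2})$ converts this into $O_{\prob_0}((\log n)^{\omega/2}/(n\rho_n^{1/2}))$. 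Combining the noise rate $O_{\prob_0}((\log n)^{1/2}/(n\rho_n^{1/2}))$ with the remainder rate and taking the larger exponent of $\log n$ produces the claimed $O_{\prob_0}((\log n)^{(1\vee\omega)/2}/(n\rho_n^{1/2}))$; summing over the finitely many indices $k,s\in[d]$ preserves the rate.

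The main obstacle I anticipate is the dependence induced by the symmetry of $\bA$ in the double sum $\sum_{j,a}(A_{ja}-\rho_n\bx_{0j}\transpose\bx_{0a})$: one cannot treat all $n^2$ centered Bernoullis as independent, so the split into upper-triangular, lower-triangular, and diagonal index regions — on which independence genuinely holds — is essential before Hoeffding's inequality can be invoked. The secondary point requiring care is the $\rho_n$-bookkeeping through the substitution, so that the $1/(n\sqrt{\rho_n})$ prefactor and the $\rho_n^{-1}$ from the linearization combine to put exactly the weight $1/(n\sqrt{\rho_n})$ on the independent noise sum.
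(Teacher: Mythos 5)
Your proposal matches the paper's own proof essentially step for step: the same substitution of the linearization property, splitting the target coordinatewise into a triple-sum noise part $\frac{1}{n\sqrt{\rho_n}}\sum_{s,j,a}Z_{iksja}$ and a remainder part with weight $1/n$, the same upper-triangular/lower-triangular/diagonal splitting to restore independence before Hoeffding's inequality with a union bound over $i\in[n]$, and the same Cauchy--Schwarz extraction of $\|\widetilde\bR\|_{\mathrm{F}} = O_{\prob_0}((n\rho_n)^{-1/2}(\log n)^{\omega/2})$, yielding the identical combined rate. As a minor remark, your indexing $\zeta_{jas}$ (from applying \eqref{eqn:linearization_property} at row $j$) is the consistent one --- the paper's displayed $\zeta_{ias}$ appears to be a typo --- but since only the uniform bound $\sup_{i,j\in[n]}\|\bzeta_{ij}\|\lesssim 1/n$ is used, this makes no difference to the argument.
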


\begin{lemma}\label{lemma:R13k_analysis}
Let $\bA\sim\mathrm{RDPG}(\bX_0)$ with sparsity factor $\rho_n$ and assume the conditions of Theorem \ref{thm:asymptotic_normality_OS} hold.Let an estimator $\widetilde\bX\in\mathbb{R}^{n\times d}$ satisfy the approximate linearization property \eqref{eqn:linearization_property} with an orthogonal matrix $\bW\in\mathbb{O}(d)$ (possibly depending on $n$). 
Suppose $\{\balpha_{ijk}:i,j\in[n],k\in[d]\}$ is a collection of deterministic vectors in $\mathbb{R}^d$ with $\sup_{i,j\in[n],k\in[d]}\|\balpha_{ijk}\| < \infty$. Then
% Then 
\[
\max_{i\in[n],k\in[d]}\frac{1}{n\sqrt{\rho_n}}\left|\sum_{j = 1}^n(A_{ij} - \rho_n\bx_{0i}\transpose\bx_{0j})\balpha_{ijk}\transpose(\rho_n^{-1/2}\bW\transpose\widetilde\bx_i - \bx_{0i})\right|= O_{\prob_0}\left(\frac{(\log n)^{1/2 + (1\vee\omega)/2}}{n\rho_n}\right).
\]
% \end{lemma}
\begin{proof}[\bf Proof]
% [\bf Proof of Lemma \ref{lemma:R13k_analysis}]
First observe that by Cauchy-Schwarz inequality, 
\begin{align*}
&
\left|\frac{1}{n\sqrt{\rho_n}}\sum_{j = 1}^n(A_{ij} - \rho_n\bx_{0i}\transpose\bx_{0j})\balpha_{ijk}\transpose(\rho_n^{-1/2}\bW\transpose\widetilde\bx_i - \bx_{0i})\right|
\\&\quad
\leq \frac{1}{\sqrt{\rho_n}}\|\bW\transpose\widetilde\bx_i - \rho_n^{1/2}\bx_{0i}\|\left\|\frac{1}{n\sqrt{\rho_n}}\sum_{j = 1}^n\balpha_{ijk}(A_{ij} - \rho_n\bx_{0i}\transpose\bx_{0j})\right\|.
\end{align*}
% where 
% \[
% \balpha_{ijk} = [\alpha_{ijk1},\ldots,\alpha_{ijkd}]\transpose = \frac{v_{0k}(1 - 2\rho_n\bx_{0i}\transpose\bx_{0j})\bx_{0j}}{\{\bx_{0i}\transpose\bx_{0j}(1 - \rho_n\bx_{0i}\transpose\bx_{0j})\}^2}.
% \]
By Hoeffding's inequality and the union bound, for all $r = 1,2,\ldots,d$,
\begin{align*}
\prob_0\left(\max_{i\in[n],k\in[d]}\left|\frac{1}{n\sqrt{\rho_n}}\sum_{j = 1}^n[\balpha_{ijk}]_r(A_{ij} - \rho_n\bx_{0i}\transpose\bx_{0j})\right| > t\right)
&\leq
% 2nd\exp\left(-\frac{2n^2\rho_nt^2}{\sum_{j = 1}^n\alpha_{ijkr}^2}\right)\leq 
2nd\exp\left(-Kn\rho_nt^2\right)
\end{align*}
for some constant $K > 0$. This shows that
\[
\max_{i\in[n],k\in[d]}\left|\frac{1}{n\sqrt{\rho_n}}\sum_{j = 1}^n[\balpha_{ijk}]_r(A_{ij} - \rho_n\bx_{0i}\transpose\bx_{0j})\right| = O_{\prob_0}\left(\sqrt{\frac{\log n}{n\rho_n}}\right)
\]
Hence, we conclude from Lemma \ref{lemma:two_to_infinity_error_ASE} that
\begin{align*}
&\max_{i\in[n]}\left|\frac{1}{n\sqrt{\rho_n}}\sum_{j = 1}^n(A_{ij} - \rho_n\bx_{0i}\transpose\bx_{0j})\balpha_{ijk}\transpose(\rho_n^{-1/2}\bW\transpose\widetilde\bx_i - \bx_{0i})\right|\\
&\quad\lesssim \frac{\|\widetilde\bX\bW - \rho_n^{1/2}\bX_0\|_{2\to\infty}}{\sqrt{\rho_n}} \sum_{r = 1}^d\left|\max_{i\in[n],k\in[d]}\frac{1}{n\sqrt{\rho_n}}\sum_{j = 1}^n[\balpha_{ijk}]_r(A_{ij} - \rho_n\bx_{0i}\transpose\bx_{0j})\right|
% \\&\quad
 = O_{\prob_0}\left(\frac{(\log n)^{(1/2) + (1\vee\omega)/2}}{n\rho_n}\right).
\end{align*}
The proof is thus completed. 
\end{proof}
\end{lemma}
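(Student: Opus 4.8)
The plan is to exploit the algebraic fact that the summand separates the index $j$ from the estimation error. In the inner product $\balpha_{ijk}\transpose(\rho_n^{-1/2}\bW\transpose\widetilde\bx_i-\bx_{0i})$, the index $j$ enters only through the centered Bernoulli factor $A_{ij}-\rho_n\bx_{0i}\transpose\bx_{0j}$ and the deterministic vector $\balpha_{ijk}$, whereas the vector $\rho_n^{-1/2}\bW\transpose\widetilde\bx_i-\bx_{0i}$ depends on $i$ but not on $j$. Pulling it out of the sum and applying the Cauchy--Schwarz inequality, I would bound the $(i,k)$-th quantity by
\[
\left\|\rho_n^{-1/2}\bW\transpose\widetilde\bx_i-\bx_{0i}\right\|\cdot\left\|\frac{1}{n\sqrt{\rho_n}}\sum_{j=1}^n(A_{ij}-\rho_n\bx_{0i}\transpose\bx_{0j})\balpha_{ijk}\right\|.
\]
The crucial point is that $\widetilde\bx_i$ and the random sum are both measurable functions of the same adjacency matrix $\bA$ and are therefore \emph{not} independent. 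Rather than decoupling them probabilistically, I would bound $\max_{i\in[n]}$ of the first factor and $\max_{i\in[n],k\in[d]}$ of the second factor separately, each by a deterministic high-probability rate, so that the product of the two uniform bounds controls the maximum of the products.

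For the first factor, since $\rho_n^{-1/2}\bW\transpose\widetilde\bx_i-\bx_{0i}=\rho_n^{-1/2}(\bW\transpose\widetilde\bx_i-\rho_n^{1/2}\bx_{0i})$, its maximum over $i$ is exactly $\rho_n^{-1/2}\|\widetilde\bX\bW-\rho_n^{1/2}\bX_0\|_{2\to\infty}$. Because $\widetilde\bX$ satisfies the approximate linearization property, Lemma~\ref{lemma:two_to_infinity_error_ASE} applies and gives $\|\widetilde\bX\bW-\rho_n^{1/2}\bX_0\|_{2\to\infty}=O_{\prob_0}((\log n)^{(1\vee\omega)/2}/\sqrt{n\rho_n})$, so this factor is $O_{\prob_0}((\log n)^{(1\vee\omega)/2}/(\sqrt{n}\,\rho_n))$ uniformly in $i$. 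For the second factor I would work coordinatewise: component $r$ equals $\frac{1}{n\sqrt{\rho_n}}\sum_j[\balpha_{ijk}]_r(A_{ij}-\rho_n\bx_{0i}\transpose\bx_{0j})$, a normalized sum of independent, mean-zero random variables whose ranges are uniformly bounded (since $|[\balpha_{ijk}]_r|\le\sup_{i,j,k}\|\balpha_{ijk}\|<\infty$ and $|A_{ij}-\rho_n\bx_{0i}\transpose\bx_{0j}|\le 1$). Hoeffding's inequality then produces a tail of order $\exp(-Kn\rho_n t^2)$, and a union bound over the $O(nd^2)$ triples $(i,k,r)$ with $t\asymp\sqrt{(\log n)/(n\rho_n)}$ yields the uniform bound $O_{\prob_0}(\sqrt{(\log n)/(n\rho_n)})$ for the whole vector. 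Multiplying the two uniform bounds gives the asserted order.

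The quantitative concentration step is the heart of the argument, and it is also where I expect the only genuine subtlety. The crude Hoeffding bound, which sees each centered Bernoulli as a range-$1$ variable, produces the second factor at rate $\sqrt{(\log n)/(n\rho_n)}$; to obtain the sharp $\rho_n^{-1}$ dependence in the statement rather than $\rho_n^{-3/2}$, I would instead invoke a Bernstein-type inequality, using that the conditional variance of $A_{ij}-\rho_n\bx_{0i}\transpose\bx_{0j}$ is of order $\rho_n$ (as $\bx_{0i}\transpose\bx_{0j}\in[\delta,1-\delta]$), which improves the second factor to $O_{\prob_0}(\sqrt{(\log n)/n})$. In any case, because this remainder is comfortably dominated by the $\rho_n^{-5/2}$-type terms appearing in Theorem~\ref{thm:asymptotic_normality_OS}, the precise power of $\rho_n$ here does not affect the final expansion; the essential output is merely that the maximum over $(i,k)$ is of order $n^{-1}$ up to the stated polylogarithmic and $\rho_n$ factors, which is what the downstream control of the Taylor remainder $\bR_{i1}$ requires.
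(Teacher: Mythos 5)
Your proposal follows essentially the same route as the paper's proof: pull the estimation error $\rho_n^{-1/2}\bW\transpose\widetilde\bx_i-\bx_{0i}$ out of the sum over $j$, apply Cauchy--Schwarz, bound $\max_i\|\bW\transpose\widetilde\bx_i-\rho_n^{1/2}\bx_{0i}\|$ via the two-to-infinity bound of Lemma \ref{lemma:two_to_infinity_error_ASE}, and control the centered sum uniformly over $(i,k,r)$ by coordinatewise concentration plus a union bound; your remark that the two factors are dependent and must therefore each be bounded uniformly (rather than decoupled probabilistically) is exactly how the paper proceeds. The one substantive difference is at the concentration step, and it works in your favor: the paper invokes Hoeffding's inequality, which treats $A_{ij}-\rho_n\bx_{0i}\transpose\bx_{0j}$ as a range-one variable and yields the second factor at rate $\sqrt{(\log n)/(n\rho_n)}$; multiplying by the first factor, which is $O_{\prob_0}\bigl((\log n)^{(1\vee\omega)/2}/(\sqrt{n}\,\rho_n)\bigr)$ after the extra $\rho_n^{-1/2}$, the paper's displayed computation actually delivers $O_{\prob_0}\bigl((\log n)^{1/2+(1\vee\omega)/2}/(n\rho_n^{3/2})\bigr)$, i.e.\ a $\rho_n^{-3/2}$ denominator rather than the $\rho_n^{-1}$ asserted in the lemma's conclusion. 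Your Bernstein-type refinement, exploiting $\var_0(A_{ij}-\rho_n\bx_{0i}\transpose\bx_{0j})\lesssim\rho_n$ on $\calX(\delta)$, sharpens the second factor to $O_{\prob_0}(\sqrt{(\log n)/n})$ and recovers the stated $(n\rho_n)^{-1}$ rate exactly, so on this point your argument is more careful than the paper's own. Your closing observation is also correct: the discrepancy is immaterial downstream, since both rates are dominated by the $O_{\prob_0}\bigl((\log n)^{1\vee\omega}n^{-1}\rho_n^{-5/2}\bigr)$ remainder in Theorem \ref{thm:asymptotic_normality_OS}, using $1/2+(1\vee\omega)/2\le 1\vee\omega$ because $1\vee\omega\ge 1$.
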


\begin{lemma}\label{Lemma:R14k_analysis}
Let $\bA\sim\mathrm{RDPG}(\bX_0)$ with sparsity factor $\rho_n$, and assume the conditions of Theorem \ref{thm:asymptotic_normality_OS} hold. 
% Let $\widetilde\bX = [\widetilde\bx_1,\ldots,\widetilde\bx_n]\transpose$ be the ASE of $\bA$, and $\bW$ be the orthogonal matrix satisfying \eqref{eqn:ASE_expansion}.
Let an estimator $\widetilde\bX\in\mathbb{R}^{n\times d}$ satisfy the approximate linearization property \eqref{eqn:linearization_property} with an orthogonal matrix $\bW\in\mathbb{O}(d)$ (possibly depending on $n$). 
Suppose $\{\bbeta_{ijk}:i,j\in[n],k\in[d]\}$ is a collection of deterministic vectors in $\mathbb{R}^d$ such that $\sup_{i,j\in[n],k\in[d]}\|\bbeta_{ijk}\| < \infty$. Then for each individual $i\in[n]$,
\[
\left|\frac{1}{n\sqrt{\rho_n}}\sum_{j = 1}^n(A_{ij} - \rho_n\bx_{0i}\transpose\bx_{0j})(\rho_n^{-1/2}\bW\transpose\widetilde\bx_j - \bx_{0j})\transpose\bbeta_{ijk}\right| = O_{\prob_0}\left(\frac{(\log n)^{\omega/2}}{n\rho_n^{3/2}}\right)
\]
and
\[
\sum_{i = 1}^n\left\{\frac{1}{n\sqrt{\rho_n}}\sum_{j = 1}^n(A_{ij} - \rho_n\bx_{0i}\transpose\bx_{0j})(\rho_n^{-1/2}\bW\transpose\widetilde\bx_j - \bx_{0j})\transpose\bbeta_{ijk}\right\}^2 = O_{\prob_0}\left(\frac{(\log n)^\omega}{n\rho_n^{3}}\right).
\] 
% \end{lemma}
\begin{proof}[\bf Proof]
% [\bf Proof of Lemma \ref{Lemma:R14k_analysis}]
Denote $\bbeta_{ijk} = [\beta_{ijk1},\ldots,\beta_{ijkd}]\transpose$. 
Recall the approximate linearization property \eqref{eqn:linearization_property} that
\[
[\bW\transpose\widetilde\bx_j - \rho_n^{1/2}\bx_{0j}]_s = \rho_n^{-1/2}\sum_{a = 1}^n(A_{ja} - \rho_n\bx_{0j}\transpose\bx_{0a})\zeta_{ias} + \widetilde R_{js},\quad s = 1,2,\ldots,d,
\]
where $\bzeta_{ia} = [\zeta_{ia1},\ldots,\zeta_{iad}]\transpose$. 
It follows that
\begin{align*}
Q_{ik} &:= 
\frac{1}{n\sqrt{\rho_n}}\sum_{j = 1}^n(A_{ij} - \rho_n\bx_{0i}\transpose\bx_{0j})(\rho_n^{-1/2}\bW\transpose\widetilde\bx_j - \bx_{0j})\transpose\bbeta_{ijk}\\
&
= \frac{1}{n\rho_n^{3/2}}\sum_{s = 1}^d\sum_{j = 1}^n\sum_{a = 1}^nz_{iksja}
% \\&\quad 
+ \frac{1}{n{\rho_n}}\sum_{s = 1}^d\sum_{j = 1}^n(A_{ij} - \rho_n\bx_{0i}\transpose\bx_{0j})\beta_{ijks}\widetilde R_{js},
\end{align*}
where 
% \[
$z_{iksja} = \zeta_{ias}\beta_{ijks}(A_{ij} - \rho_n\bx_{0i}\transpose\bx_{0j})(A_{ja} - \rho_n\bx_{0j}\transpose\bx_{0a})$.
% \]
Clearly, 
\begin{align*}
\frac{1}{n^2\rho_n^3}\max_{i\in[n]}\expect_0\left\{\left(\sum_{s = 1}^d\sum_{j = 1}^n\sum_{a = 1}^nz_{iksja}\right)^2\right\}
& \lesssim \frac{1}{n^2\rho_n^3}\sum_{s = 1}^d\sum_{j = 1}^n\sum_{a = 1}^n\sum_{h = 1}^n\sum_{b = 1}^n\max_{i\in[n]}\expect_0(z_{iksja}z_{ikshb}).
\end{align*}
We now argue that the summation $\sum_{j = 1}^n\sum_{a = 1}^n\sum_{h = 1}^n\sum_{b = 1}^n\max_{i\in[n]}\max_{i\in[n]}\expect_0(z_{iksja}z_{ikshb})$ is upper bounded by $\sup_{i,j}\|\bzeta_{ij}\|^2n^2\rho_n$ up to a multiplicative constant. Note that as the indices $j,a,h,b$ ranging over $[n]$, $\expect_0(z_{iksja}z_{ikshb})$ is nonzero only if the cardinality of the collection of random variables $\{A_{ij}, A_{ih}, A_{aj}, A_{bh}\}$ is $2$ or $1$. These cases occur only if either one of the following cases happens:
\begin{enumerate}
  \item $A_{ij}$ and $A_{ih}$ are the same random variable, and $A_{aj}, A_{bh}$ are the same random variable. This happens only if one the following cases occur:
  \begin{enumerate}
    \item $(i,j) = (i,h),(a,j) = (b,h)\Rightarrow j = h$, $a = b$, and the number of terms is $O(n^2)$;
    \item $(i,j) = (h,i),(a,j) = (b,h)\Rightarrow i = j = h$, $a = b$, and the number of terms is $O(n)$;
    \item $(i,j) = (i,h),(a,j) = (h,b)\Rightarrow j = h = a = b$, and the number of terms is $O(n)$;
    \item $(i,j) = (h,i),(a,j) = (h,b)\Rightarrow i = j = h = a = b$, and the number of terms is $1$;
  \end{enumerate}
  \item $A_{ij}$ and $A_{aj}$ are the same random variable, and $A_{ih}, A_{bh}$ are the same random variable. This happens only if one the following cases occur:
  \begin{enumerate}
    \item $(i,j) = (a,j),(i,h) = (b,h)\Rightarrow i = a = b$, and the number of terms is $O(n^2)$;
    \item $(i,j) = (j,a),(i,h) = (b,h)\Rightarrow i = j = a = b$, and the number of terms is $O(n)$;
    \item $(i,j) = (a,j),(i,h) = (h,b)\Rightarrow i = h = a = b$, and the number of terms is $O(n)$;
    \item $(i,j) = (j,a),(i,h) = (h,b)\Rightarrow i = j = h = a = b$, and the number of terms is $1$;
  \end{enumerate}
  \item $A_{ij}$ and $A_{bh}$ are the same random variable, and $A_{ih}, A_{aj}$ are the same random variable. This happens only if one the following cases occur:
  \begin{enumerate}
    \item $(i,j) = (b,h),(i,h) = (a,j)\Rightarrow i = b = a$, $h = j$, and the number of terms is $O(n)$;
    \item $(i,j) = (h,b),(i,h) = (a,j)\Rightarrow i = j = h = a = b$, and the number of terms is $1$;
    \item $(i,j) = (b,h),(i,h) = (j,a)\Rightarrow j = j = h = a = b$, and the number of terms is $1$;
    \item $(i,j) = (h,b),(i,h) = (j,a)\Rightarrow i = j = h = a = b$, and the number of terms is $1$.
  \end{enumerate}
\end{enumerate}
Therefore, the number of nonzero terms in the summation
\[
\sum_{j = 1}^n\sum_{a = 1}^n\sum_{h = 1}^n\sum_{b = 1}^n\max_{i\in[n]}\max_{i\in[n]}\expect_0(z_{iksja}z_{ikshb})
\]
is $O(n^2)$. Furthermore, the centered second and fourth moments of $\mathrm{Bernoulli}(\rho_n\bx_{0i}\transpose\bx_{0j})$ is upper bounded by $\rho_n$. Therefore, we obtain that
\begin{align*}
\frac{1}{n^2\rho_n^3}\max_{i\in[n]}\expect_0\left\{\left(\sum_{s = 1}^d\sum_{j = 1}^n\sum_{a = 1}^nz_{iksja}\right)^2\right\}
% & \lesssim \frac{1}{n^2\rho_n^3}\sum_{s = 1}^d\sum_{j = 1}^n\sum_{a = 1}^n\sum_{h = 1}^n\sum_{b = 1}^n\max_{i\in[n]}\expect_0(z_{iksja}z_{ikshb})\\
&
\lesssim\frac{1}{n^2\rho_n^3}\sup_{i,j\in[n]}\|\bzeta_{ij}\|^2n^2\rho_n
% \\
% &\leq \frac{1}{\rho_n^2}\|\bU_0\|_{2\to\infty}^2\|\bS_0^{-1/2}\|_2^2
\lesssim \frac{1}{(n\rho_n)^2}.
\end{align*}
In addition, 
\begin{align*}
% &
\max_{i\in[n]}\left|\frac{1}{n{\rho_n}}\sum_{s = 1}^d\sum_{j = 1}^n(A_{ij} - \rho_n\bx_{0i}\transpose\bx_{0j})\beta_{ijks}R_{js}\right|
% \\&\quad
&
\leq \frac{1}{n{\rho_n}}\max_{i\in[n]}\left\{\sum_{s = 1}^d\sum_{j = 1}^n(A_{ij} - \rho_n\bx_{0i}\transpose\bx_{0j})^2\beta_{jks}^2\right\}^{1/2}\|\widetilde\bR\|_{\mathrm{F}}\\
& = O_{\prob_0}\left(\frac{(\log n)^{\omega/2}}{n\rho_n^{3/2}}\right).
\end{align*}
Namely, this implies that for each individual $i\in[n]$,
\[
\left|\frac{1}{n\sqrt{\rho_n}}\sum_{j = 1}^n(A_{ij} - \rho_n\bx_{0i}\transpose\bx_{0j})(\rho_n^{-1/2}\bW\transpose\widetilde\bx_j - \bx_{0j})\transpose\bbeta_{ijk}\right| = O_{\prob_0}\left(\frac{(\log n)^{\omega/2}}{{n\rho_n^{3/2}}}\right)
\]
Furthermore, 
\begin{align*}
\sum_{i = 1}^n\expect_0\left\{\left(\frac{1}{n\rho_n^{3/2}}\sum_{s = 1}^d\sum_{j = 1}^n\sum_{a = 1}^nz_{iksja}\right)^2\right\}
\leq \frac{1}{n\rho_n^3}\max_{i\in[n]}\expect_0\left\{\left(\sum_{s = 1}^d\sum_{j = 1}^n\sum_{a = 1}^nz_{iksja}\right)^2\right\}\lesssim \frac{1}{n\rho_n^2},
\end{align*}
implying that
\[
\sum_{i = 1}^n\left\{\frac{1}{n\rho_n^{3/2}}\sum_{s = 1}^d\sum_{j = 1}^n\sum_{a = 1}^nz_{iksja}\right\}^2 = O_{\prob_0}\left(\frac{1}{n\rho_n^2}\right)
\]
by Markov's inequality. 
Therefore, we conclude that
\begin{align*}
&\sum_{i = 1}^n\left\{\frac{1}{n\sqrt{\rho_n}}\sum_{j = 1}^n(A_{ij} - \rho_n\bx_{0i}\transpose\bx_{0j})(\rho_n^{-1/2}\bW\transpose\widetilde\bx_j - \bx_{0j})\transpose\bbeta_{ijk}\right\}^2\\
&\quad\leq 2\sum_{i = 1}^n\left\{\frac{1}{n\rho_n^{3/2}}\sum_{s = 1}^d\sum_{j = 1}^n\sum_{a = 1}^nz_{iksja}\right\}^2 + 2n\max_{i\in[n]}\left|\frac{1}{n{\rho_n}}\sum_{s = 1}^d\sum_{j = 1}^n(A_{ij} - \rho_n\bx_{0i}\transpose\bx_{0j})\beta_{ijks}R_{js}\right|^2
% \\&\quad
 = O_{\prob_0}\left(\frac{(\log n)^\omega}{n\rho_n^3}\right).
\end{align*}
The proof is thus completed.
\end{proof}
\end{lemma}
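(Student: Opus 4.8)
The plan is to substitute the approximate linearization property \eqref{eqn:linearization_property} for $\bW\transpose\widetilde\bx_j - \rho_n^{1/2}\bx_{0j}$ into the quantity of interest and then separate the result into a term that is quadratic in the centered noise and a term carrying the linearization remainder $\widetilde\bR$. Writing $\bbeta_{ijk} = [\beta_{ijk1},\ldots,\beta_{ijkd}]\transpose$ and using $[\bW\transpose\widetilde\bx_j - \rho_n^{1/2}\bx_{0j}]_s = \rho_n^{-1/2}\sum_a (A_{ja} - \rho_n\bx_{0j}\transpose\bx_{0a})\zeta_{jas} + \widetilde R_{js}$, the summand $(A_{ij}-\rho_n\bx_{0i}\transpose\bx_{0j})(\rho_n^{-1/2}\bW\transpose\widetilde\bx_j - \bx_{0j})\transpose\bbeta_{ijk}$ decomposes, after accounting for the scaling $\tfrac{1}{n\sqrt{\rho_n}}\cdot\rho_n^{-1/2}\cdot\rho_n^{-1/2} = \tfrac{1}{n\rho_n^{3/2}}$, into a sum over $(s,j,a)$ of products $z_{iksja} = \zeta_{jas}\beta_{ijks}(A_{ij}-\rho_n\bx_{0i}\transpose\bx_{0j})(A_{ja}-\rho_n\bx_{0j}\transpose\bx_{0a})$ of two centered, independent Bernoulli factors, plus a linear-in-noise remainder of scale $\tfrac{1}{n\rho_n}$ weighted by $\widetilde R_{js}$. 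I would then bound the two pieces separately.

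For the quadratic piece, I would control its second moment $\expect_0\{(\sum_{s,j,a} z_{iksja})^2\}$. Expanding the square produces expectations $\expect_0(z_{iksja} z_{ikshb})$, each a product of four centered Bernoulli factors drawn from $\{A_{ij},A_{ja},A_{ih},A_{hb}\}$; by independence and the mean-zero property, such an expectation is nonzero only when the four factors pair up into at most two distinct edge variables. The heart of the argument is a bookkeeping over the index quadruple $(j,a,h,b)$: enumerating the ways in which the edges can coincide shows that the number of surviving terms is $O(n^2)$ rather than $O(n^4)$. Combining this count with $\sup_{i,j}\|\bzeta_{ij}\| \lesssim 1/n$, the boundedness of $\bbeta_{ijk}$, and the fact that the centered second and fourth moments of $\mathrm{Bernoulli}(\rho_n\bx_{0i}\transpose\bx_{0j})$ are $O(\rho_n)$, yields $\expect_0\{(\sum z_{iksja})^2\} \lesssim n^2\rho_n \sup_{i,j}\|\bzeta_{ij}\|^2 \lesssim \rho_n$, so that the normalized quadratic piece is $O_{\prob_0}\{(n\rho_n)^{-1}\}$ by Markov's inequality.

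For the remainder piece $\frac{1}{n\rho_n}\sum_{s,j}(A_{ij}-\rho_n\bx_{0i}\transpose\bx_{0j})\beta_{ijks}\widetilde R_{js}$, I would apply the Cauchy--Schwarz inequality, using $|A_{ij}-\rho_n\bx_{0i}\transpose\bx_{0j}|\le 1$ and the boundedness of $\bbeta_{ijk}$ to extract a factor of order $n^{1/2}$, together with $\|\widetilde\bR\|_{\mathrm{F}}^2 = O_{\prob_0}\{(n\rho_n)^{-1}(\log n)^\omega\}$ from \eqref{eqn:linearization_property}; this gives exactly $O_{\prob_0}\{(\log n)^{\omega/2}/(n\rho_n^{3/2})\}$, which dominates the quadratic piece since $(n\rho_n)^{-1} = \rho_n^{1/2}(\log n)^{-\omega/2}\cdot(\log n)^{\omega/2}/(n\rho_n^{3/2})$ is of smaller order. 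This settles the pointwise-in-$i$ estimate. The aggregate bound follows from the same decomposition summed over $i$: the quadratic contributions sum to $O_{\prob_0}\{(n\rho_n^2)^{-1}\}$, bounding $\sum_i \expect_0\{(\sum z)^2\}\le n\max_i\expect_0\{(\sum z)^2\}\lesssim n\rho_n$ and applying Markov, while the remainder contributions are handled through $\sum_i(\cdot)^2 \le n\max_i(\cdot)^2$, giving $O_{\prob_0}\{(\log n)^\omega/(n\rho_n^3)\}$; the latter dominates, producing the claimed rate.

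The main obstacle is the combinatorial counting in the second-moment expansion of the quadratic piece: one must verify carefully that, as $(j,a,h,b)$ range over $[n]^4$, the requirement that the four edge variables coincide in pairs collapses the count from $O(n^4)$ to $O(n^2)$, since an over- or under-count here propagates directly into the final rate and ultimately into the density condition $(\log n)^{2(1\vee\omega)}/(n\rho_n^5)\to 0$ under which the one-step estimator is analyzed.
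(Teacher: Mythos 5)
Your proposal is correct and follows essentially the same route as the paper's own proof: the identical decomposition into the quadratic-in-noise term $\sum_{s,j,a} z_{iksja}$ and the $\widetilde\bR$-weighted remainder, the same $O(n^2)$ combinatorial collapse of the second-moment expansion combined with $\sup_{i,j}\|\bzeta_{ij}\|\lesssim 1/n$ and the $O(\rho_n)$ moment bounds, Cauchy--Schwarz with $\|\widetilde\bR\|_{\mathrm{F}}^2 = O_{\prob_0}\{(n\rho_n)^{-1}(\log n)^{\omega}\}$ for the remainder, and the $n\max_i$ aggregation for the summed bound. Incidentally, your indexing $\zeta_{jas}$ is the one consistent with the linearization property \eqref{eqn:linearization_property} (the paper's displayed substitution writes $\zeta_{ias}$, an apparent typo), and since the bound on $\bzeta$ is uniform in both indices this does not affect the argument.
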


% \begin{proof}[\bf Proof of Lemma \ref{lemma:Taylor_Expansion}]
% \end{proof}

% \end{lemma}
% We begin the proof of Theorem \ref{thm:convergence_OS} with the following two technical lemmas, the proofs of which are deferred to Section \ref{sec:proofs_of_technical_lemmas}.
\begin{lemma}\label{lemma:convergence_score_function}
Let $\bA\sim\mathrm{RDPG}(\bX_0)$ and assume the conditions in Theorem \ref{thm:convergence_OS} holds. Denote 
% \[
$Z = Z(\bA) = \sum_{i = 1}^n\|\sum_{j = 1}^n(A_{ij} - \rho_n\bx_{0i}\transpose\bx_{0j})\bgamma_{ij}\|^2$,
% \]
where $\{\bgamma_{ij}:i,j\in[n]\}$ is a collection of deterministic vectors in $\mathbb{R}^d$ such that $\sup_{i,j\in[n]}\|\bgamma_{ij}\|\lesssim{(n\sqrt{\rho_n})^{-1}}$. 
Then $Z = \expect_0(Z) + o_{\prob_0}(1)$. 
% \end{lemma}
\begin{proof}[\bf Proof]
% [\bf Proof of Lemma \ref{lemma:convergence_score_function}]
The proof of Lemma \ref{lemma:convergence_score_function} relies on the following logarithmic Sobolev concentration inequality:
\begin{lemma}[Theorem 6.7 in \citealp{boucheron2013concentration}]\label{lemma:log_Sobolev_inequality}
Let $\bA,\bA'\in\{0,1\}^{n\times n}$ be two symmetric hollow random adjacency matrices and $Z = Z(\bA)$ be a measurable function of of $\bA$. Denote by $\bA^{(kl)}$ the adjacency matrix obtained by replacing the $(k,l)$ and $(l,k)$ entries of $\bA$ by those of $\bA'$, and $Z_{kl} = Z(\bA^{(kl)})$. If there exists a constant $v > 0$ such that
\[
\prob\left(\sum_{k<l}(Z - Z_{kl})^2 > v\right)\leq \eta,
\]
then for all $\eps > 0$, $\prob(|Z - \expect(Z)| > t) \leq 2\exp\{-t^2/(2v)\} + \eta$. 
\end{lemma}
Let $\bA'$ be another symmetric hollow random adjacency matrix. Denote by $\bA^{(kl)}$ the adjacency matrix obtained by replacing the $(k,l)$ and $(l,k)$ entries of $\bA$ by those of $\bA'$, and $Z_{kl} = Z(\bA^{(kl)})$
Since that $\bA$ and $\bA^{(kl)}$ only differs by the $(k,l)$ and $(l,k)$ entries, and that the entries of $\bA$ and $\bA'$ are binary, we see that when $Z - Z_{kl}\neq 0$, 
% \begin{align*}
$(A_{kl} - A'_{kl})(Z - Z_{kl}) = C_{1kl} + C_{2kl} + c_{kl}$,
% \end{align*}
where
\begin{align*}
C_{1kl} = 2\sum_{a = 1}^n(A_{ka} - \rho_n\bx_{0k}\transpose\bx_{0a})\bgamma_{kl}\transpose\bgamma_{ka},\quad
C_{2kl} = 2\sum_{a = 1}^n(A_{la} - \rho_n\bx_{0l}\transpose\bx_{0a})\bgamma_{lk}\transpose\bgamma_{la},
\end{align*}
and $c_{kl} = (1 - 2\rho_n\bx_{0k}\transpose\bx_{0l})(\|\bgamma_{kl}\|^2 + \|\bgamma_{lk}\|^2) - 2(A_{kl} - \rho_n\bx_{0k}\transpose\bx_{0l})\|\bgamma_{kl}\|^2 - 2(A_{lk} - \rho_n\bx_{0l}\transpose\bx_{0k})\|\bgamma_{lk}\|^2$. 
Since
\begin{align*}
\sum_{k < l}\expect_0(C_{1kl}^2) &= 4\sum_{k < l}\sum_{a = 1}^n\sum_{b = 1}^n\expect_0\{(A_{ka} - \rho_n\bx_{0k}\transpose\bx_{0a})(A_{kb} - \rho_n\bx_{0k}\transpose\bx_{0b})\}(\bgamma_{kl}\transpose\bgamma_{ka})(\bgamma_{kl}\transpose\bgamma_{kb})
\\
&= 4\sum_{k < l}\sum_{a = 1}^n\expect_0\{(A_{ka} - \rho_n\bx_{0k}\transpose\bx_{0a})^2\}(\bgamma_{kl}\transpose\bgamma_{ka})^2\\
&\leq 4\sum_{k < l}\sum_{a = 1}^n\rho_n\bx_{0k}\transpose\bx_{0a}(1 - \rho_n\bx_{0k}\transpose\bx_{0a})\|\bgamma_{kl}\|^2\|\bgamma_{ka}\|^2\lesssim \frac{1}{n\rho_n},
% \\
\end{align*}
\begin{align*}
\sum_{k < l}\expect_0(C_{2kl}^2) &= 4\sum_{k < l}\sum_{a = 1}^n\sum_{b = 1}^n\expect_0\{(A_{la} - \rho_n\bx_{0l}\transpose\bx_{0a})(A_{lb} - \rho_n\bx_{0l}\transpose\bx_{0b})\}(\bgamma_{lk}\transpose\bgamma_{la})(\bgamma_{lk}\transpose\bgamma_{lb})
\\
&= 4\sum_{k < l}\sum_{a = 1}^n\expect_0\{(A_{la} - \rho_n\bx_{0l}\transpose\bx_{0a})^2\}(\bgamma_{lk}\transpose\bgamma_{la})^2\lesssim \frac{1}{n\rho_n},
\\
% \end{align*}
% \begin{align*}
\sum_{k < l}\expect_0(c_{kl}^2) &\leq 6\sum_{k < l}(1 - 2\rho_n\bx_{0k}\transpose\bx_{0l})(\|\bgamma_{kl}\|^4 + \|\bgamma_{lk}\|^4) 
% \\&\quad
 + 6\sum_{k < l}\expect_0\{(A_{kl} - \rho_n\bx_{0k}\transpose\bx_{0l})^2\}(\|\bgamma_{kl}\|^4 + \|\bgamma_{lk}\|^4)\\
&\lesssim \frac{1}{n^2\rho_n^2} + \frac{\rho_n}{n^2\rho_n^2}\lesssim \frac{1}{n^2\rho_n},
\end{align*}
we conclude that $\expect_0\{\sum_{k < l}(Z - Z_{kl})^2\}\leq C/(n\rho_n)$ for some constant $C > 0$. Therefore, by Markov's inequality, 
\[
\prob\left(\sum_{k < l}(Z - Z_{kl})^2 > \frac{1}{\log n}\right)\leq \frac{C\log n}{n\rho_n}\leq \frac{C(\log n)^2}{n\rho_n^5}\to 0.
\]
Invoking Lemma \ref{lemma:log_Sobolev_inequality}, we obtain that
\begin{align*}
\prob_0\left(\left|Z - \expect_0(Z)\right| > \eps\right) &= 
\prob_0\left[\left|Z - \frac{1}{n}\sum_{i = 1}^n\mathrm{tr}\{\bG_n(\bx_{0i})^{-1}\}\right| > \eps\right]
% \\&
\leq 2\exp\left(-\frac{1}{2}\eps^2\log n\right) + \frac{C\log n}{n\rho_n}\to 0
\end{align*}
for all $\eps > 0$. The proof is thus completed. 
\end{proof}
\end{lemma}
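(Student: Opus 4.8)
The statement is a concentration result for the scalar $Z=\sum_{i=1}^n\|\sum_{j=1}^n(A_{ij}-\rho_n\bx_{0i}\transpose\bx_{0j})\bgamma_{ij}\|^2$, regarded as a function of the independent Bernoulli coordinates $\{A_{kl}:k<l\}$ (recall $\bA$ is symmetric and hollow). Writing $\bv_i=\sum_{j=1}^n(A_{ij}-\rho_n\bx_{0i}\transpose\bx_{0j})\bgamma_{ij}$ so that $Z=\sum_{i=1}^n\|\bv_i\|^2$, we see that $Z$ is a sum of squared linear forms and hence a degree-four polynomial in the noise. One could in principle expand $\var_0(Z)$ directly and invoke Chebyshev's inequality, but the fourth-order mixed moments make this route tedious. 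The plan instead is to use a bounded-differences concentration inequality with a \emph{data-dependent} variance proxy (Theorem 6.7 in \citealp{boucheron2013concentration}): if $\sum_{k<l}(Z-Z_{kl})^2\le v$ on an event of probability at least $1-\eta$, where $Z_{kl}$ denotes $Z$ evaluated after replacing the $(k,l)$ and $(l,k)$ entries of $\bA$ by those of an independent copy, then $\prob_0(|Z-\expect_0 Z|>t)\le 2\exp\{-t^2/(2v)\}+\eta$. The reason for preferring this over plain McDiarmid is that the individual differences $|Z-Z_{kl}|$ admit no good deterministic bound (they scale with the random quantity $\|\bv_k\|$), whereas the aggregate $\sum_{k<l}(Z-Z_{kl})^2$ will be small with high probability.

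First I would compute the difference explicitly. Since $\bA$ and its perturbation differ only in one off-diagonal pair, only the $i=k$ and $i=l$ summands of $Z$ change, and replacing $A_{kl}$ by $A_{kl}'$ shifts $\bv_k$ by $-(A_{kl}-A_{kl}')\bgamma_{kl}$. Expanding the squared norm gives
\[
Q_k-Q_k^{(kl)}=2(A_{kl}-A_{kl}')\bv_k\transpose\bgamma_{kl}-(A_{kl}-A_{kl}')^2\|\bgamma_{kl}\|^2,
\]
and symmetrically for $i=l$. Because $Z\neq Z_{kl}$ forces $|A_{kl}-A_{kl}'|=1$, one has $(Z-Z_{kl})^2=\{(A_{kl}-A_{kl}')(Z-Z_{kl})\}^2$, and the latter factorizes into a leading linear-in-noise pair $C_{1kl}=2\sum_a(A_{ka}-\rho_n\bx_{0k}\transpose\bx_{0a})\bgamma_{kl}\transpose\bgamma_{ka}$ and $C_{2kl}=2\sum_a(A_{la}-\rho_n\bx_{0l}\transpose\bx_{0a})\bgamma_{lk}\transpose\bgamma_{la}$, plus a correction $c_{kl}$ collecting the $\|\bgamma_{kl}\|^2$- and $\|\bgamma_{lk}\|^2$-sized terms. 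Thus $(Z-Z_{kl})^2\le 3(C_{1kl}^2+C_{2kl}^2+c_{kl}^2)$.

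The main work — and the step I expect to be the principal obstacle — is bounding $\sum_{k<l}\expect_0(C_{1kl}^2)$, $\sum_{k<l}\expect_0(C_{2kl}^2)$, and $\sum_{k<l}\expect_0(c_{kl}^2)$ with the correct dependence on $n\rho_n$. For $C_{1kl}^2$ the cross terms vanish by independence of the $A_{ka}$, leaving $\sum_a \var_0(A_{ka})(\bgamma_{kl}\transpose\bgamma_{ka})^2$; using $\var_0(A_{ka})\le\rho_n\bx_{0k}\transpose\bx_{0a}\lesssim\rho_n$ together with the hypothesis $\sup_{i,j}\|\bgamma_{ij}\|\lesssim(n\sqrt{\rho_n})^{-1}$, i.e.\ $\|\bgamma_{ij}\|^2\lesssim(n^2\rho_n)^{-1}$, each double sum collapses to order $(n\rho_n)^{-1}$, while the correction $c_{kl}$ (which involves $\|\bgamma_{ij}\|^4$) contributes a term of strictly smaller order since $n\rho_n\to\infty$. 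Hence $\expect_0\{\sum_{k<l}(Z-Z_{kl})^2\}\le C/(n\rho_n)$. The care here is purely combinatorial bookkeeping: tracking which index coincidences make a mixed fourth moment nonzero and checking that the surviving terms do not accumulate beyond $(n\rho_n)^{-1}$.

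Finally, by Markov's inequality $\sum_{k<l}(Z-Z_{kl})^2\le 1/\log n$ holds with probability at least $1-C\log n/(n\rho_n)$, so I would apply the concentration inequality with $v=1/\log n$ and $\eta=C\log n/(n\rho_n)$ to obtain
\[
\prob_0(|Z-\expect_0 Z|>\eps)\le 2\exp\{-\eps^2(\log n)/2\}+\frac{C\log n}{n\rho_n}.
\]
Both terms vanish as $n\to\infty$: the first because $\log n\to\infty$, and the second because the density condition of Theorem \ref{thm:convergence_OS}, namely $(\log n)^{2(1\vee\omega)}/(n\rho_n^5)\to 0$, in particular forces $\log n/(n\rho_n)\to 0$. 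This gives $Z=\expect_0(Z)+o_{\prob_0}(1)$, as claimed.
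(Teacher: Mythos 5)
Your proposal is correct and follows essentially the same route as the paper's own proof: the same logarithmic Sobolev inequality (Theorem 6.7 of \citealp{boucheron2013concentration}), the same decomposition $(A_{kl}-A'_{kl})(Z-Z_{kl})=C_{1kl}+C_{2kl}+c_{kl}$ with matching variance bounds of order $(n\rho_n)^{-1}$ for the linear parts and $(n^2\rho_n)^{-1}$ for the correction, and the same Markov step with $v=1/\log n$. The only cosmetic difference is that you make the elementary inequality $(Z-Z_{kl})^2\leq 3(C_{1kl}^2+C_{2kl}^2+c_{kl}^2)$ explicit, which the paper leaves implicit.
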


\begin{lemma}\label{lemma:uniform_convergence_G}
Let $\bG_n(\bx)$ be defined as in Theorem \ref{thm:asymptotic_normality_OS}, $\bG(\bx)$ defined as in Theorem \ref{thm:one_dimensional_MLE}, $\widetilde\bG(\bx)$ be defined as in Theorem \ref{thm:convergence_OS_Laplacian}. Denote
\[
\widetilde\bG_n(\bx) = \frac{1}{\bmu_n\transpose\bx}\left(\eye_d - \frac{\bx\bmu_n\transpose}{2\bx\transpose\bmu_n}\right)\bG_n(\bx)^{-1}\left(\eye_d - \frac{\bx\bmu_n\transpose}{2\bx\transpose\bmu_n}\right),
\]
where $\bmu_n = (1/n)\sum_{i = 1}^n\bx_{0i}$. Let $\calX(\delta)$ be the set of all $\bx\in\calX$ such that any $\bx,\bu\in\calX(\delta)$ satisfy $\delta\leq \bx\transpose\bu\leq 1 - \delta$, where $\delta > 0$ is some small constant independent of $n$. Then
\[
\sup_{\bx\in\calX(\delta)}\|\bG_n(\bx)^{-1} - \bG(\bx)^{-1}\|_{\mathrm{F}}\to 0,\quad\text{and}\quad
\sup_{\bx\in\calX(\delta)}\|\widetilde\bG_n(\bx) - \widetilde\bG(\bx)\|_{\mathrm{F}}\to 0
\]
as $n\to\infty$.
\end{lemma}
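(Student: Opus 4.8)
The plan is to reduce everything to the single uniform limit $\sup_{\bx\in\calX(\delta)}\|\bG_n(\bx)-\bG(\bx)\|_{\mathrm{F}}\to 0$ and then transfer it to the inverses and to $\widetilde\bG_n$ by elementary perturbation estimates. Write $g(\bx,\bx_1;r)=\bx_1\bx_1\transpose/\{\bx\transpose\bx_1(1-r\bx\transpose\bx_1)\}$, so that $\bG_n(\bx)=\int_\calX g(\bx,\bx_1;\rho_n)F_n(\mathrm{d}\bx_1)$ and $\bG(\bx)=\int_\calX g(\bx,\bx_1;\rho)F(\mathrm{d}\bx_1)$, where $F_n=(1/n)\sum_{j=1}^n\delta_{\bx_{0j}}$ is supported on $\calX(\delta)$. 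The two structural facts that drive the argument are: for all $\bx,\bx_1\in\calX(\delta)$ and all $r\in[0,1]$ the denominator satisfies $\bx\transpose\bx_1(1-r\bx\transpose\bx_1)\geq\delta^2$ (using $\bx\transpose\bx_1\in[\delta,1-\delta]$ and $r\leq 1$), so $g$ is uniformly bounded on the compact set $\overline{\calX(\delta)}\times\overline{\calX(\delta)}\times[0,1]$; and, since the bounded-below denominators make all partial derivatives bounded there, $g$ is jointly Lipschitz, with some constant $L$, in $(\bx,\bx_1,r)$.

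Next I would split $\bG_n(\bx)-\bG(\bx)=\int_\calX\{g(\bx,\bx_1;\rho_n)-g(\bx,\bx_1;\rho)\}F_n(\mathrm{d}\bx_1)+\{\int_\calX g(\bx,\bx_1;\rho)F_n(\mathrm{d}\bx_1)-\int_\calX g(\bx,\bx_1;\rho)F(\mathrm{d}\bx_1)\}$. The first piece is handled by the Lipschitz bound in $r$: its Frobenius norm is at most $L|\rho_n-\rho|\to 0$ uniformly in $\bx$. The second piece is the crux, and converting the pointwise weak convergence delivered by \eqref{eqn:strong_convergence_measure} into convergence that is uniform over $\bx$ is the main obstacle. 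I would resolve it with an equicontinuity / finite-net argument: cover the compact $\calX(\delta)$ by balls $B(\bx^{(1)},\eta),\ldots,B(\bx^{(N)},\eta)$; for $\bx\in B(\bx^{(m)},\eta)$ the Lipschitz bound in the first argument gives $\sup_{\bx_1}\|g(\bx,\bx_1;\rho)-g(\bx^{(m)},\bx_1;\rho)\|_{\mathrm{F}}\leq L\eta$, so both integrals against $F_n$ and against $F$ change by at most $L\eta$ when $\bx$ is replaced by $\bx^{(m)}$. Since condition \eqref{eqn:strong_convergence_measure} is convergence in Kolmogorov distance and hence implies $F_n\Rightarrow F$, the bounded continuous functions $g(\bx^{(m)},\cdot\,;\rho)$ give $\int g(\bx^{(m)},\bx_1;\rho)F_n(\mathrm{d}\bx_1)\to\int g(\bx^{(m)},\bx_1;\rho)F(\mathrm{d}\bx_1)$ for each of the finitely many $m$. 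Combining the three estimates yields $\limsup_n\sup_{\bx\in\calX(\delta)}\|\bG_n(\bx)-\bG(\bx)\|_{\mathrm{F}}\leq 2L\eta$, and letting $\eta\downarrow0$ proves $\bG_n\to\bG$ uniformly. This is exactly the mechanism alluded to in the proof of \eqref{eqn:ASE_convergence} for $\bV_n(\bx)\to\bV(\bx)$.

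Finally I would transfer to the inverses. By the assumed positive-definiteness of $\bG(\bx)$, continuity, and compactness of $\calX(\delta)$, there is $\lambda>0$ with $\bG(\bx)\succeq\lambda\eye_d$ for all $\bx$, so $\|\bG(\bx)^{-1}\|_2\leq\lambda^{-1}$; and the uniform limit just established forces $\bG_n(\bx)\succeq(\lambda/2)\eye_d$ for all large $n$, hence $\|\bG_n(\bx)^{-1}\|_2\leq 2\lambda^{-1}$. Then the identity $\bG_n^{-1}-\bG^{-1}=\bG^{-1}(\bG-\bG_n)\bG_n^{-1}$ gives $\sup_{\bx}\|\bG_n(\bx)^{-1}-\bG(\bx)^{-1}\|_{\mathrm{F}}\leq 2\lambda^{-2}\sup_{\bx}\|\bG_n(\bx)-\bG(\bx)\|_{\mathrm{F}}\to 0$, which is the first claim. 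For the second claim I would note that $\bmu_n\to\bmu$ by \eqref{eqn:strong_convergence_measure}, that $\bmu_n\transpose\bx=(1/n)\sum_j\bx_{0j}\transpose\bx\geq\delta$ and $\bmu\transpose\bx=\int\bx_1\transpose\bx\,F(\mathrm{d}\bx_1)\geq\delta$ are bounded away from zero on $\calX(\delta)$, and that the projector factor $\eye_d-\bx\bmu_n\transpose/(2\bx\transpose\bmu_n)$ is uniformly bounded with the denominator bounded below. Since $\widetilde\bG_n$ is obtained from $(\bmu_n,\bG_n^{-1})$ and $\widetilde\bG$ from $(\bmu,\bG^{-1})$ through a map that is jointly continuous and Lipschitz on the region where $\bmu_n\transpose\bx$ is bounded below, the uniform convergences $\bmu_n\to\bmu$ and $\bG_n^{-1}\to\bG^{-1}$ (the latter just proved) propagate through products and quotients to give $\sup_{\bx\in\calX(\delta)}\|\widetilde\bG_n(\bx)-\widetilde\bG(\bx)\|_{\mathrm{F}}\to 0$, completing the proof.
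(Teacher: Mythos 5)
Your proof is correct and follows essentially the same route as the paper's: the paper establishes equicontinuity of the family $\{\be_s\transpose\bG_n(\cdot)\be_t\}_{n\geq 1}$ via the same $\delta^{-2}$-type lower bounds on the denominators and invokes the Arzel\`a--Ascoli theorem to upgrade the pointwise convergence delivered by \eqref{eqn:strong_convergence_measure} (and $\rho_n\to\rho$) to uniform convergence on the compact set $\calX(\delta)$, which is precisely what your finite-net argument proves by hand, with the $|\rho_n-\rho|$ contribution split off explicitly rather than folded into the pointwise limit. Your remaining steps --- the resolvent identity $\bG_n^{-1}-\bG^{-1}=\bG^{-1}(\bG-\bG_n)\bG_n^{-1}$ combined with uniform lower spectral bounds ($\bG(\bx)\succeq\bDelta$, $\bG_n(\bx)\succeq(1/2)\bDelta$ eventually), and the Lipschitz propagation of $\bmu_n\to\bmu$ through the bounded factors $1/(\bmu_n\transpose\bx)$ and $\eye_d-\bx\bmu_n\transpose/(2\bx\transpose\bmu_n)$ --- coincide with the paper's treatment of the inverse and of $\widetilde\bG_n$.
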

\begin{proof}[\bf Proof]
% [\bf Proof of Lemma \ref{lemma:uniform_convergence_G}]
We first show that $\bG_n(\bx) \to \bG(\bx)$ as $n\to\infty$ uniformly for all $\bx\in\calX(\delta)$. It suffices to show that for all $s,t\in[d]$, 
\[
\sup_{\bx\in\calX(\delta)}|\be_s\transpose\bG_n(\bx)\be_t - \be_s\transpose\bG(\bx)\be_s| \to 0
\]
as $n\to\infty$. Observe that $\be_s\transpose\bG_n(\bx)\be_t - \be_s\transpose\bG(\bx)\be_s\to 0$ as $n\to\infty$ for each $\bx\in\calX(\delta)$. Furthermore,
\begin{align*}
\sup_{n\geq 1,s,t,\in[d]}\left\|
\frac{\partial}{\partial\bx}\be_s\transpose\bG_n(\bx)\be_t
\right\|_2
& \leq \sup_{n\geq 1,s,t,\in[d]}\frac{1}{n}\sum_{j = 1}^n\left\|
\frac{[\bx_{0j}]_s[\bx_{0j}]_t(1 - 2\rho_n\bx\transpose\bx_{0j})\bx_{0j}}{\{\bx\transpose\bx_{0j}(1 - \rho_n\bx\transpose\bx_{0j})\}^2}
\right\|_2\\
&\leq \sup_{n\geq 1,s,t,\in[d]}\frac{1}{n}\sum_{j = 1}^n\frac{3}{\delta^4} = \frac{3}{\delta^4} <\infty.
\end{align*}
This means that the function class $(\be_s\transpose\bG_n(\bx)\be_t)_{n = 1}^\infty$ is a uniformly Lipschitz function class and hence is equicontinuous. Since $\calX(\delta)$ is compact, it follows from the Arzela-Ascoli theorem that the convergence $\be_s\transpose\bG_n(\bx)\be_t - \be_s\transpose\bG(\bx)\be_s\to 0$ is also uniform.
% , \emph{i.e.},
% \[
% \lim_{n\to\infty}\sup_{\bx\in\calX(\delta)}|\be_s\transpose\bG_n(\bx)\be_t - \be_s\transpose\bG(\bx)\be_s| =0.
% \]

\vspace*{1ex}
\noindent
We next show that $\sup_{\bx\in\calX(\delta)}\|\bG_n(\bx)^{-1} - \bG(\bx)^{-1}\|_{\mathrm{F}}\to 0$. This immediately follows from the inequality
\[
\sup_{\bx\in\calX(\delta)}\|\bG_n(\bx) - \bG(\bx)\|_{\mathrm{F}}\leq \sup_{\bx\in\calX(\delta)}\|\bG_n(\bx)^{-1}\|_{\mathrm{F}}\|\bG_n(\bx) - \bG(\bx)\|_{\mathrm{F}}\|\bG(\bx)^{-1}\|_{\mathrm{F}},
\]
the fact that $\bG_n(\bx)\succeq(1/2)\bDelta$ and $\bG(\bx)\succeq \bDelta$ for sufficiently large $n$, and the uniform convergence of $\bG_n(\bx)\to\bG(\bx)$ for all $\bx\in\calX(\delta)$. 

\vspace*{1ex}
\noindent We finally show that $\widetilde\bG_n(\bx) - \widetilde\bG(\bx)$ uniformly for all $\bx\in\calX(\delta)$. This result also follows from the fact that
\begin{align*}
&\sup_{\bx\in\calX(\delta)}\left|\frac{1}{\bmu_n\transpose\bx} - \frac{1}{\bmu\transpose\bx}\right|\leq \frac{1}{\delta^2}\|\bmu_n - \bmu\|\to 0,\quad
\sup_{\bx\in\calX(\delta)}\left\|\frac{\bx\bmu_n\transpose}{(\bmu_n\transpose\bx)^2} - \frac{\bx\bmu\transpose}{(\bmu\transpose\bx)^2}\right\|_{\mathrm{F}}\leq \frac{3}{\delta^4}\|\bmu_n - \bmu\|\to 0,
\end{align*}
and the uniform convergence result $\sup_{\bx\in\calX(\delta)}\|\bG_n(\bx)^{-1} - \bG(\bx)^{-1}\|_{\mathrm{F}}\to 0$. The proof is thus completed.
\end{proof}

% subsection subsection_name (end)

\subsection{Proof of Theorem \ref{thm:asymptotic_normality_OS}} % (fold)
\label{sub:proof_of_theorem_thm:asymptotic_normality_os}

% subsection some_technical_lemmas (end)
% The proofs of these lemmas are deferred to Section \ref{sec:proofs_of_technical_lemmas}. 

% subsection proof_of_theorem_thm:asymptotic_normality_os (end)

\begin{proof}[\bf Proof of Theorem \ref{thm:asymptotic_normality_OS}]
Let $\bW$ be the matrix satisfying \eqref{eqn:linearization_property}. 
For any $\bX = [\bx_1,\ldots,\bx_n]\transpose\in\mathbb{R}^{n\times d}$, denote $\bH_i(\bX) = (1/n)\sum_{j = 1}^n\bx_{j}\{(\bx_{i}\transpose\bx_j)(1 - \bx_i\transpose\bx_j)\}^{-1}\bx_{j}\transpose$. By definition, 
\begin{align*}
\bW\transpose\widehat\bx_i - \rho_n^{1/2}\bx_{0i}
% \\
% &\quad 
& = (\bW\transpose\widetilde \bx_i - \rho_n^{1/2}\bx_{0i})
\\&\quad 
% \quad
 + \bW\transpose\bH_i(\widetilde\bX)^{-1}\bW\frac{1}{n\sqrt{\rho_n}}\sum_{j = 1}^n\left\{\frac{(A_{ij} - \widetilde\bx_i\transpose\widetilde\bx_j)(\rho_n^{-1/2}\bW\transpose\widetilde\bx_j)}{\rho_n^{-1}\widetilde\bx_{i}\transpose\widetilde\bx_{j}(1 - \widetilde\bx_i\transpose\widetilde\bx_j)} - \frac{(A_{ij} - \rho_n\bx_{0i}\transpose\bx_{0j})\bx_{0j}}{\bx_{0i}\transpose\bx_{0j}(1 - \rho_n\bx_{0i}\transpose\bx_{0j})}\right\}\\
&\quad
% \quad
 + \frac{1}{n\sqrt{\rho_n}}\sum_{j = 1}^n\frac{(A_{ij} - \rho_n\bx_{0i}\transpose\bx_{0j})}{\bx_{0i}\transpose\bx_{0j}(1 - \rho_n\bx_{0i}\transpose\bx_{0j})}(\bW\transpose\bH_i(\widetilde\bX)^{-1}\bW)\bx_{0j}\\
& = (\bW\transpose\widetilde \bx_i - \rho_n^{1/2}\bx_{0i})
\\&\quad  + \bG_n(\bx_{0i})^{-1}\frac{1}{n\sqrt{\rho_n}}\sum_{j = 1}^n\left\{\frac{(A_{ij} - \widetilde\bx_i\transpose\widetilde\bx_j)(\rho_n^{-1/2}\bW\transpose\widetilde\bx_j)}{\rho_n^{-1}\widetilde\bx_{i}\transpose\widetilde\bx_{j}(1 - \widetilde\bx_i\transpose\widetilde\bx_j)} - \frac{(A_{ij} - \rho_n\bx_{0i}\transpose\bx_{0j})\bx_{0j}}{\bx_{0i}\transpose\bx_{0j}(1 - \rho_n\bx_{0i}\transpose\bx_{0j})}\right\}\\
&\quad + \{\bW\transpose\bH_i(\widetilde\bX)^{-1}\bW - \bG_n(\bx_{0i})^{-1}\}
\\&\quad \times\frac{1}{n\sqrt{\rho_n}}\sum_{j = 1}^n\left\{\frac{(A_{ij} - \widetilde\bx_i\transpose\widetilde\bx_j)(\rho_n^{-1/2}\bW\transpose\widetilde\bx_j)}{\rho_n^{-1}\widetilde\bx_{i}\transpose\widetilde\bx_{j}(1 - \widetilde\bx_i\transpose\widetilde\bx_j)} - \frac{(A_{ij} - \rho_n\bx_{0i}\transpose\bx_{0j})\bx_{0j}}{\bx_{0i}\transpose\bx_{0j}(1 - \rho_n\bx_{0i}\transpose\bx_{0j})}\right\}\\
&\quad + \frac{1}{n\sqrt{\rho_n}}\sum_{j = 1}^n\frac{(A_{ij} - \rho_n\bx_{0i}\transpose\bx_{0j})}{\bx_{0i}\transpose\bx_{0j}(1 - \rho_n\bx_{0i}\transpose\bx_{0j})}\{\bW\transpose\bH_i(\widetilde\bX)^{-1}\bW - \bG_n(\bx_{0i})^{-1}\}\bx_{0j}\\
&\quad + \frac{1}{n\sqrt{\rho_n}}\sum_{j = 1}^n\frac{(A_{ij} - \rho_n\bx_{0i}\transpose\bx_{0j})}{\bx_{0i}\transpose\bx_{0j}(1 - \rho_n\bx_{0i}\transpose\bx_{0j})}\bG_n(\bx_{0i})^{-1}\bx_{0j}\\
& = (\bW\transpose\widetilde \bx_i - \rho_n^{1/2}\bx_{0i}) + \bG_n(\bx_{0i})^{-1}\bR_{i1} + \bR_{i2}\bR_{i1} + \bR_{i3}
\\&\quad 
 + \frac{1}{n\sqrt{\rho_n}}\sum_{j = 1}^n\frac{(A_{ij} - \rho_n\bx_{0i}\transpose\bx_{0j})}{\bx_{0i}\transpose\bx_{0j}(1 - \rho_n\bx_{0i}\transpose\bx_{0j})}\bG_n(\bx_{0i})^{-1}\bx_{0j},
\end{align*}
where
\begin{align*}
\bR_{i1} &= \frac{1}{n\sqrt{\rho_n}}\sum_{j = 1}^n\left\{\frac{(A_{ij} - \widetilde\bx_i\transpose\widetilde\bx_j)(\rho_n^{-1/2}\bW\transpose\widetilde\bx_j)}{\rho_n^{-1}\widetilde\bx_{i}\transpose\widetilde\bx_{j}(1 - \widetilde\bx_i\transpose\widetilde\bx_j)} - \frac{(A_{ij} - \rho_n\bx_{0i}\transpose\bx_{0j})\bx_{0j}}{\bx_{0i}\transpose\bx_{0j}(1 - \rho_n\bx_{0i}\transpose\bx_{0j})}\right\},\\
\bR_{i2} &= \bW\transpose\bH_i(\widetilde\bX)^{-1}\bW - \bG_n(\bx_{0i})^{-1},\\
\bR_{i3} & = \frac{1}{n\sqrt{\rho_n}}\sum_{j = 1}^n\frac{(A_{ij} - \rho_n\bx_{0i}\transpose\bx_{0j})}{\bx_{0i}\transpose\bx_{0j}(1 - \rho_n\bx_{0i}\transpose\bx_{0j})}\bR_{i2}\bx_{0j}.
\end{align*}
We first analyze $\bR_{i1}$. Denote the function $\bphi_{ij}:\mathbb{R}^d\times\mathbb{R}^d\to\mathbb{R}$ by
\[
\bphi_{ij}(\bu,\bv) = \frac{(A_{ij} - \rho_n\bu\transpose\bv)\bv}{\bu\transpose\bv(1 - \rho_n\bu\transpose\bv)},\quad i,j\in[n],
\]
and let $\bphi_{ij} = [\phi_{ij1},\ldots,\phi_{ijd}]\transpose$. 
By Taylor's expansion, we have, if $\|\bu - \bx_{0i}\| < \eps$ and $\|\bv - \bx_{0j}\| < \eps$ for sufficiently small $\eps > 0$, and $\delta \leq \min_{i,j\in[n]}\bx_{0i}\transpose\bx_{0j} \leq \max_{i,j\in[n]}\bx_{0i}\transpose\bx_{0j}\leq 1 - \delta$ for some constant $\delta > 0$, then 
\begin{align*}
&\phi_{ijk}(\bu, \bv) - \phi_{ijk}(\bx_{0i}, \bx_{0j}) \\
&\quad = -\left\{\frac{\rho_n x_{0jk} \bx_{0j}}{\bx_{0i}\transpose\bx_{0j}(1 - \rho_n \bx_{0i}\transpose\bx_{0j})}\right\}\transpose(\bu - \bx_{0i}) - \left\{\frac{\rho_n \bx_{0i} x_{0jk}}{\bx_{0i}\transpose\bx_{0j}(1 - \rho_n \bx_{0i}\transpose\bx_{0j})}\right\}\transpose(\bv - \bx_{0j})\\
&\quad\quad - \left[\frac{x_{0jk}(A_{ij} - \rho_n \bx_{0i}\transpose\bx_{0j})(1 - 2\rho_n \bx_{0i}\transpose\bx_{0j})\bx_{0j}}{\{\bx_{0i}\transpose\bx_{0j}(1 - \rho_n \bx_{0i}\transpose\bx_{0j})\}^2}\right]\transpose(\bu - \bx_{0i})\\
&\quad\quad + \left[\frac{(A_{ij} - \rho_n \bx_{0i}\transpose\bx_{0j})\{\bx_{0i}\transpose\bx_{0j}(1 - \rho_n \bx_{0i}\transpose\bx_{0j})\be_k - x_{0jk}(1 - 2\rho_n \bx_{0i}\transpose\bx_{0j})\bx_{0i}\}}{\{\bx_{0i}\transpose\bx_{0j}(1 - \rho_n \bx_{0i}\transpose\bx_{0j})\}^2}\right]\transpose(\bv - \bx_{0j})
% \\&\quad\quad
 + R_{ijk},
\end{align*}
where $\max_{i,j\in[n],k\in[d]}|R_{ijk}|\leq C_\delta \max(\|\bu - \bu_0\|^2,\|\bv - \bv_0\|^2)$ for some constant $C_\delta$ only depending on $\delta$. Applying the above fact to $\bR_{i1}$, we derive
% By Lemma \ref{lemma:Taylor_Expansion} we have
\begin{align*}
\bR_{i1} 
% & = \frac{1}{n\sqrt{\rho_n}}\sum_{j = 1}^n\{\bphi_{ij}(\rho_n^{-1/2}\bW\transpose\widetilde\bx_i, \rho_n^{-1/2}\bW\transpose\widetilde\bx_j) - \bphi_{ij}(\bx_{0i}, \bx_{0j})\}\\
& =  - \frac{1}{n\sqrt{\rho_n}}\sum_{j = 1}^n\frac{\rho_n\bx_{0j}\bx_{0j}\transpose}{\bx_{0i}\transpose\bx_{0j}(1 - \rho_n\bx_{0i}\transpose\bx_{0j})}(\rho_n^{-1/2}\bW\transpose\widetilde\bx_i - \bx_{0i}) \\
&\quad - \frac{1}{n\sqrt{\rho_n}}\sum_{j = 1}^n\frac{\rho_n \bx_{0j}\bx_{0i}\transpose}{\bx_{0i}\transpose\bx_{0j}(1 - \rho_n\bx_{0i}\transpose\bx_{0j})}(\rho_n^{-1/2}\bW\transpose\widetilde\bx_j - \bx_{0j})\\
& \quad - \frac{1}{n\sqrt{\rho_n}}\sum_{j = 1}^n\left[\frac{(A_{ij} - \rho_n\bx_{0i}\transpose\bx_{0j})(1 - 2\rho_n\bx_{0i}\transpose\bx_{0j})\bx_{0j}\bx_{0j}\transpose}{\{\bx_{0i}\transpose\bx_{0j}(1 - \rho_n\bx_{0i}\transpose\bx_{0j})\}^2}\right](\rho_n^{-1/2}\bW\transpose\widetilde\bx_i - \bx_{0i})\\
&\quad + \frac{1}{n\sqrt{\rho_n}}\sum_{j = 1}^n
% \left[
\frac{(A_{ij} - \rho_n\bx_{0i}\transpose\bx_{ij})\{\bx_{0i}\transpose\bx_{0j}(1 - \rho_n\bx_{0i}\transpose\bx_{0j})\eye_d - (1 - 2\rho_n\bx_{0i}\transpose\bx_{0j})\bx_{0j}\bx_{0i}\transpose\}}
{\{\bx_{0i}\transpose\bx_{0j}(1 - \rho_n\bx_{0i}\transpose\bx_{0j})\}^2}
% \right]
% \\&\quad \quad\quad\quad\quad\quad\quad\times
(\rho_n^{-1/2}\bW\transpose\widetilde\bx_j - \bx_{0j})\\
&\quad + \frac{1}{n\sqrt{\rho_n}}\sum_{j = 1}^n\bR_{ij}\\
& = - \bR_{i11} - \bR_{i12} - \bR_{i13} + \bR_{i14} + \frac{1}{n\sqrt{\rho_n}}\sum_{j = 1}^n\bR_{ij},
\end{align*}
where $\bR_{ij}$'s are such that $\max_{i,j\in[n]}\|\bR_{ij}\|\lesssim \|\rho_n^{-1/2}\widetilde\bX\bW - \bX_0\|_{2\to\infty}^2$ when $\|\rho_n^{-1/2}\widetilde\bX\bW - \bX_0\|_{2\to\infty}$ is sufficiently small. 
% Let $\bR_{i11} = [R_{i111},\ldots,R_{i11d}]\transpose$. 
Clearly,  
\[
\bR_{i11} = \frac{1}{n}\sum_{j = 1}^n\frac{\bx_{0j}\bx_{0j}\transpose}{\bx_{0i}\transpose\bx_{0j}(1 - \rho_n\bx_{0i}\transpose\bx_{0j})}(\bW\transpose\widetilde\bx_i - \rho_n^{1/2}\bx_{0i}) = \bG_n(\bx_{0i})(\bW\transpose\widetilde\bx_i - \rho_n^{1/2}\bx_{0i}).
\]
Furthermore, Lemma \ref{lemma:R12k_analysis} shows that $\max_{i\in[n]}\|\bR_{i12}\| = O_{\prob_0}((n\sqrt{\rho_n})^{-1}(\log n)^{(1\vee\omega)/2})$. 
% Denote $\bR_{i12} = [R_{i121},\ldots,R_{i12d}]\transpose$, $\bR_{i13} = [R_{i131},\ldots,R_{i13d}]\transpose$, $\bR_{i14} = [R_{i141},\ldots,R_{i14d}]\transpose$, and $\bR_{ij} = [R_{ij1},\ldots,R_{ijd}]\transpose$. 
In addition, we have $\max_{i\in[n]}\|\bR_{i13}\| = O_{\prob_0}((n\rho_n)^{-1}(\log n)^{1/2 + (1\vee\omega)/2})$ by Lemma \ref{lemma:R13k_analysis}, $\|\bR_{i14}\| = O_{\prob_0}((n\rho_n^{3/2})^{-1}(\log n)^{\omega/2})$ by Lemma \ref{Lemma:R14k_analysis}, and 
\[
\max_{i\in[n]}\frac{1}{n\sqrt{\rho_n}}\sum_{j = 1}^n\max_{j\in[n]}\|\bR_{ij}\| = O_{\prob_0}\left(\frac{(\log n)^{1\vee\omega}}{n\rho_n^{5/2}}\right)
\]
by Lemma \ref{lemma:two_to_infinity_error_ASE}. 
This shows that
\begin{align*}
\|\bR_{i1} + \bR_{i11}\| &= \left\|\bR_{i1} + \bG_n(\bx_{0i})(\bW\transpose\widetilde\bx_i - \rho_n^{1/2}\bx_{0i})\right\|\\ 
&\leq \|\bR_{i12}\| + \|\bR_{i13}\| + \|\bR_{i14}\| + \frac{1}{n\sqrt{\rho_n}}\sum_{j = 1}^n\max_{i,j\in[n]}\|\bR_{ij}\|
= O_{\prob_0}\left(\frac{(\log n)^{1\vee\omega}}{n\rho_n^{5/2}}\right),
\end{align*}
and hence,
\[
\|\bR_{i1}\|\leq \|\bG_n(\bx_{0i})\|\|\widetilde\bX\bW - \rho_n^{1/2}\bX_0\|_{2\to\infty} + O_{\prob_0}\left(\frac{(\log n)^{1\vee\omega}}{n\rho_n^{5/2}}\right) = O_{\prob_0}\left(\sqrt{\frac{(\log n)^{1\vee\omega}}{n\rho_n}}\right)
\]
by Lemma \ref{lemma:two_to_infinity_error_ASE}.
% and that $\|\bR_{i1}\| = O_{\prob_0}(n^{-1/2})$ by the asymptotic normality of $\sqrt{n}(\bW\transpose\widetilde\bx_i - \rho_n^{1/2}\bx_{0i})$.

Next we focus on $\bR_{i2}$. Since the function $(\bu, \bv)\mapsto \{\bu\transpose\bv(1 - \rho_n\bu\transpose\bv)\}^{-1}\bv\bv\transpose$ is Lipschitz continuous in a neighborhood of $(\bx_{0i},\bx_{0j})$, it follows immediately that 
\[
\max_{i\in[n]}\|\bW\transpose\bH_i(\widetilde\bX)\bW - \bG_n(\bx_{0i})\|_{\mathrm{F}}\lesssim \|\rho_n^{-1/2}\widetilde\bX\bW - \bX_0\|_{2\to\infty}
\]
when $\|\rho_n^{-1/2}\widetilde\bX\bW - \bX_0\|_{2\to\infty} \leq C_c\rho_n^{-1}\sqrt{n^{-1}}(\log n)^{(1\vee\omega)/2}$. Namely, 
\[
\max_{i\in[n]}\|\bW\transpose\bH_i(\widetilde\bX)\bW - \bG_n(\bx_{0i})\|_{\mathrm{F}} = O_{\prob_0}\left(\frac{(\log n)^{(1\vee\omega)/2}}{\rho_n\sqrt{n}}\right)
\] 
by Lemma \ref{lemma:two_to_infinity_error_ASE}. Furthermore, since $\bG_n(\bx_{0i})\to \bG(\bx_{0i})$ as $n\to\infty$, $\bG_n(\bx_{0i}) - \bDelta$ is positive definite for sufficiently large $n$, and
\[
|\lambda_d(\bW\transpose\bH_i(\widetilde\bX)\bW) - \lambda_d(\bG_n(\bx_{0i}))|\leq \|\bW\transpose\bH_i(\widetilde\bX)\bW - \bG_n(\bx_{0i})\|_{\mathrm{F}}^2,
\]
(see, for example, \citealp{hoffman2003variation}), we conclude that 
\[
\min_{i\in[n]}\lambda_d(\bW\transpose\bH_i(\widetilde\bX)\bW)\geq\min_{i\in[n]}\lambda_d(\bG_n(\bx_{0i})) - \max_{i\in[n]}\|\bW\transpose\bH_i(\widetilde\bX)\bW - \bG_n(\bx_{0i})\|_{\mathrm{F}}^2\geq\lambda_d(\bDelta) - o_{\prob_0}(1),
\]
namely, $\max_{i\in[n]}\lambda_d^{-1}(\bW\transpose\bH_i(\widetilde\bX)\bW) = O_{\prob_0}(1)$. Therefore,
\begin{align*}
\max_{i\in[n]}\|\bR_{i2}\|_{\mathrm{F}}
& = \max_{i\in[n]}\|\{\bW\transpose\bH_i(\widetilde\bX)\bW\}^{-1}\{\bW\transpose\bH_i(\widetilde\bX)\bW - \bG_n(\bx_{0i})\}\bG_n(\bx_{0i})^{-1}\|_{\mathrm{F}}\\
&\lesssim \max_{i\in[n]}\lambda_d^{-1}(\bW\transpose\bH_i(\widetilde\bX)\bW)\max_{i\in[n]}\|\bG_n(\bx_{0i})^{-1}\|_{\mathrm{F}}\max_{i\in[n]}\|\bW\transpose\bH_i(\widetilde\bX)\bW - \bG_n(\bx_{0i})\|_{\mathrm{F}}\\
&\leq O_{\prob_0}(1)\|\bDelta^{-1}\|_{\mathrm{F}}\max_{i\in[n]}\|\bW\transpose\bH_i(\widetilde\bX)\bW - \bG_n(\bx_{0i})\|_{\mathrm{F}} = O_{\prob_0}\left(\frac{(\log n)^{(1\vee\omega)/2}}{\rho_n\sqrt{n}}\right)
\end{align*}
We finally move forward to analyze $\bR_{i3}$. Since
\begin{align*}
\max_{i\in[n]}\|\bR_{i3}\|_{\mathrm{F}}\leq \max_{i\in[n]}\|\bR_{i2}\|_{\mathrm{F}}\sum_{k = 1}^d\frac{1}{n\sqrt{\rho_n}}\max_{i\in[n]}\left|\sum_{j = 1}^n\frac{(A_{ij} - \rho_n\bx_{0i}\transpose\bx_{0j})x_{0jk}}{\bx_{0i}\transpose\bx_{0j}(1 - \rho_n\bx_{0i}\transpose\bx_{0j})}\right|,
\end{align*}
and by Hoeffding's inequality and the union bound,
\begin{align*}
\prob_0\left(\max_{i\in[d]}\left|\sum_{j = 1}^n\frac{(A_{ij} - \rho_n\bx_{0i}\transpose\bx_{0j})x_{0jk}}{\bx_{0i}\transpose\bx_{0j}(1 - \rho_n\bx_{0i}\transpose\bx_{0j})}\right| > t\sqrt{n\log n}\right)
% \\&\quad
& = 2n\exp\left[-\frac{2t^2n\log n}{\sum_{j = 1}^nx_{0jk}^2\{\bx_{0i}\transpose\bx_{0j}(1 - \rho_n\bx_{0i}\transpose\bx_{0j})\}^{-2}}\right]\\
% &\quad
&
\leq 2\exp\left\{-(Mt^2 - 1)\log n\right\}
\end{align*}
for some constant $M > 0$. Hence, 
\[
\sum_{k = 1}^d\frac{1}{n\sqrt{\rho_n}}\max_{i\in[n]}\left|\sum_{j = 1}^n\frac{(A_{ij} - \rho_n\bx_{0i}\transpose\bx_{0j})x_{0jk}}{\bx_{0i}\transpose\bx_{0j}(1 - \rho_n\bx_{0i}\transpose\bx_{0j})}\right| = O_{\prob_0}\left(\sqrt{\frac{\log n}{n\rho_n}}\right),
\]
and hence, $\max_{i\in[n]}\|\bR_{i3}\|_{\mathrm{F}} = O_{\prob_0}(\rho_n^{-3/2}n^{-1}(\log n)^{1/2 + (1\vee\omega)/2})$. Therefore, we conclude that
\begin{align*}
\bW\transpose\widehat\bx_i - \rho_n^{1/2}\bx_{0i}
& = (\bW\transpose\widetilde \bx_i - \rho_n^{1/2}\bx_{0i}) + \bG_n(\bx_{0i})^{-1}\bR_1 + \bR_2\bR_1 + \bR_3 
\\&\quad
+ \frac{1}{n\sqrt{\rho_n}}\sum_{j = 1}^n\frac{(A_{ij} - \rho_n\bx_{0i}\transpose\bx_{0j})}{\bx_{0i}\transpose\bx_{0j}(1 - \rho_n\bx_{0i}\transpose\bx_{0j})}\bG_n(\bx_{0i})^{-1}\bx_{0j}\\
& = \frac{1}{n\sqrt{\rho_n}}\sum_{j = 1}^n\frac{(A_{ij} - \rho_n\bx_{0i}\transpose\bx_{0j})}{\bx_{0i}\transpose\bx_{0j}(1 - \rho_n\bx_{0i}\transpose\bx_{0j})}\bG_n(\bx_{0i})^{-1}\bx_{0j} + \widehat\bR_i,
\end{align*}
where
\begin{align*}
\|\widehat\bR_i\| &= \left\|\bG_n(\bx_{0i})^{-1}(\bR_{i1} + \bR_{i11}) + \bR_{i2}\bR_{i1} + \bR_{i3}\right\|
% \\&
\leq \|\bDelta_n^{-1}\|_2\|\|\bR_{i1} + \bR_{i11}\| + \left(\|\bR_{i2}\|_2\|\bR_{i1}\| + \|\bR_{i3}\|\right)
\\& 
= O_{\prob_0}\left(\frac{(\log n)^{1\vee\omega}}{n\rho_n^{5/2}}\right) + O_{\prob_0}\left(\frac{(\log n)^{1\vee\omega}}{n\rho_n^{3/2}}\right) + O_{\prob_0}\left(\frac{(\log n)^{1/2 + (1\vee\omega)/2}}{n\rho_n^{3/2}}\right)
% \\&
= O_{\prob_0}\left(\frac{(\log n)^{1\vee\omega}}{n\rho_n^{5/2}}\right). 
\end{align*}
We now proceed to prove that $\sum_{i = 1}^n\|\widehat\bR_i\|^2 = O_{\prob_0}((n\rho_n^5)^{-1}(\log n)^{2(1\vee\omega)})$. Observe that by Lemma \ref{Lemma:R14k_analysis} we have
\begin{align*}
\sum_{i = 1}^n\left\|\bG_n(\bx_{0i})^{-1}( - \bR_{i12} - \bR_{i13} + \bR_{i14})\right\|^2
% \\&\quad
&\leq 3n\left\|\bG_n(\bx_{0i})^{-1}\right\|_{\mathrm{F}}^2\left\{\max_{i\in[n]}\|\bR_{i12}\|^2 + \max_{i\in[n]}\|\bR_{i13}\|^2\right\} + 3\sum_{i = 1}^n\|\bR_{i14}\|^2\\
% \\&\quad 
&\leq n\left\|\bDelta^{-1}\right\|_{\mathrm{F}}^2\left\{\max_{i\in[n]}\|\bR_{i12}\|^2 + \max_{i\in[n]}\|\bR_{i13}\|^2\right\} + \sum_{i = 1}^n\|\bR_{i14}\|^2\\ 
& = O_{\prob_0}\left(\frac{(\log n)^{2(1\vee\omega)}}{n\rho_n^{3 }}\right),
% \\&\quad\lesssim \frac{\log n}{n\rho_n^2},
\end{align*}
and that
\begin{align*}
\sum_{i = 1}^n\left\|\bG_n(\bx_{0i})^{-1}\frac{1}{n\sqrt{\rho_n}}\sum_{j = 1}^n\bR_{ij}\right\|^2
% &\leq \sum_{i = 1}^n\|\bG_n(\bx_{0i})^{-1}\|_{\mathrm{F}}^2\left(\frac{1}{n\sqrt{\rho_n}}\sum_{j = 1}^n\|\bR_{ij}\|\right)^2\\
&\leq n\|\bDelta^{-1}\|_{\mathrm{F}}^2\left(\frac{1}{n\sqrt{\rho_n}}\sum_{j = 1}^n\max_{i\in[n]}\|\bR_{ij}\|\right)^2 = O_{\prob_0}\left(\frac{(\log n)^{2(1\vee\omega)}}{n\rho_n^5}\right).
\end{align*}
Besides, by the above derivation we have
\begin{align*}
\sum_{i = 1}^n\|\bR_{i2}\bR_{i1}\|^2
&\leq \max_{i\in[n]}\|\bR_{i2}\|_{\mathrm{F}}^2\sum_{i = 1}^n\|\bR_{i1}\|^2\\
&\leq O_{\prob_0}\left(\frac{(\log n)^{1\vee\omega}}{n^2\rho_n}\right)\left\{n\max_{i\in[n]}\|\bG_n(\bx_{0i})\|_{\mathrm{F}}^2\|\widetilde\bX\bW - \rho_n^{1/2}\bX_0\|^2_{\mathrm{F}}\right\}\\
&\quad + O_{\prob_0}\left(\frac{(\log n)^{1\vee\omega}}{n^2\rho_n}\right)\left\{\sum_{i = 1}^n\left\|\bR_{i12} + \bR_{i13} - \bR_{i14} - \frac{1}{n\sqrt{\rho_n}}\sum_{j = 1}^n\bR_{ij}\right\|^2\right\}\\
% &\lesssim O_{\prob_0}\left(\frac{\log n}{n\rho_n^2}\right)\left\{O_{\prob_0}(1) + \sum_{i = 1}^n\left\|\bR_{i12} + \bR_{i13} - \bR_{i14}\right\| + \sum_{i = 1}^n\left\| \frac{1}{n\sqrt{\rho_n}}\sum_{j = 1}^n\bR_{ij}\right\|^2\right\}\\
& = O_{\prob_0}\left(\frac{(\log n)^{2(1\vee\omega)}}{n^2\rho_n^2}\right),
\end{align*}
and 
\begin{align*}
\sum_{i = 1}^n\|\bR_{i3}\|_{\mathrm{F}}^2 \leq n\max_{i\in[n]}\|\bR_{i3}\|_{\mathrm{F}}^2 = O_{\prob_0}\left(\frac{(\log n)^{2(1\vee\omega)}}{n\rho_n^3}\right).
\end{align*}
Therefore, we conclude that
\begin{align*}
\sum_{i = 1}^n\|\widehat\bR_i\|_{\mathrm{F}}^2
&\leq 4\sum_{i = 1}^n\left\|\bG_n(\bx_{0i})^{-1}( - \bR_{i12} - \bR_{i13} + \bR_{i14})\right\|^2 + 4\sum_{i = 1}^n\left\|\bG_n(\bx_{0i})^{-1}\frac{1}{n\sqrt{\rho_n}}\sum_{j = 1}^n\bR_{ij}\right\|^2\\
&\quad + 4\sum_{i = 1}^n\|\bR_{i2}\bR_{i1}\|^2 + 4\sum_{i = 1}^n\|\bR_{i3}\|_{\mathrm{F}}^2
% \\& 
= O_{\prob_0}\left(\frac{(\log n)^{2(1\vee\omega)}}{n\rho_n^5}\right).
\end{align*}
The proof is thus completed.
\end{proof}

\subsection{Proof of Theorem \ref{thm:convergence_OS}} % (fold)
\label{sub:proof_of_theorem_thm:convergence_os}

% subsection proof_of_theorem_thm:convergence_os (end)

\begin{proof}[\bf Proof of Theorem \ref{thm:convergence_OS}]
Let $(\bW)_{n = 1}^\infty = (\bW_n)_{n = 1}^\infty\subset\mathbb{O}(d)$ be a sequence of orthogonal matrices satisfying \eqref{eqn:linearization_property}. 
Denote 
\[
\bgamma_{ij} = \frac{1}{n\sqrt{\rho_n}}\frac{\bG_n(\bx_{0i})^{-1}\bx_{0j}}{\bx_{0i}\transpose\bx_{0j}(1 - \rho_n\bx_{0i}\transpose\bx_{0j})}. 
\]
First note that $\bG_n(\bx_{0i})^{-1}\succeq\bDelta$ for sufficiently large $n$, and hence,
\begin{align}\label{eqn:gammaij_bound}
\sup_{i,j\in[n]}\|\bgamma_{ij}\|\leq\frac{1}{n\sqrt{\rho_n}}\sup_{i,j\in[n]}\frac{\|\bG_n(\bx_{0i})^{-1}\|\|\bx_{0j}\|}{\bx_{0i}\transpose\bx_{0j}(1 - \rho_n\bx_{0i}\transpose\bx_{0j})}\leq \frac{1}{n\sqrt{\rho_n}}\frac{\|\bDelta\|_2}{\delta(1 - \delta)}\lesssim\frac{1}{n\sqrt{\rho_n}}.
\end{align}
Also observe that
\begin{align*}
% &
\expect_0\left(\sum_{i = 1}^n\left\|\sum_{j = 1}^n(A_{ij} - \rho_n\bx_{0i}\transpose\bx_{0j})\bgamma_{ij}\right\|^2\right)
% \\ &\quad
&
= \sum_{i = 1}^n\sum_{a = 1}^n\sum_{b = 1}^n\expect_0\left\{(A_{ia} - \rho_n\bx_{0i}\transpose\bx_{0a})(A_{ib} - \rho_n\bx_{0i}\transpose\bx_{0b})\bgamma_{ia}\transpose\bgamma_{ib}\right\}\\
&
% \quad
= \frac{1}{n^2\rho_n}\sum_{i = 1}^n\sum_{a = 1}^n\frac{\expect_0\{(A_{ia} - \rho_n\bx_{0i}\transpose\bx_{0a})^2\}}{\{\bx_{0i}\transpose\bx_{0a}(1 - \rho_n\bx_{0i}\transpose\bx_{0a})\}^2}\bx_{0a}\transpose\bG_n^{-2}(\bx_{0i})\bx_{0a}\\
&
% \quad
= \frac{1}{n}\sum_{i = 1}^n\frac{1}{n}\sum_{a = 1}^n\frac{\mathrm{tr}\{\bG_n(\bx_{0i})^{-1}\bx_{0a}\bx_{0a}\transpose\bG_n^{-1}(\bx_{0i})\}}{\bx_{0i}\transpose\bx_{0a}(1 - \rho_n\bx_{0i}\transpose\bx_{0a})}\\
% &
% \quad
% = \frac{1}{n}\sum_{i = 1}^n\mathrm{tr}\left[\bG_n(\bx_{0i})^{-1}\left\{\frac{1}{n}\sum_{a = 1}^n\frac{\bx_{0a}\bx_{0a}\transpose}{\bx_{0i}\transpose\bx_{0a}(1 - \rho_n\bx_{0i}\transpose\bx_{0a})}\right\}\bG_n(\bx_{0i})^{-1}\right]\\
&
% \quad
= \frac{1}{n}\sum_{i = 1}^n\mathrm{tr}\{\bG_n(\bx_{0i})^{-1}\}.
\end{align*}
By Theorem \ref{thm:asymptotic_normality_OS} and Lemma \ref{lemma:convergence_score_function}, we can write
\begin{align*}
% &
\left\|\widehat\bX\bW - \bX_0\right\|_{\mathrm{F}}^2
% \\&\quad
& = \sum_{i = 1}^n\left\|\sum_{j = 1}^n(A_{ij} - \rho_n\bx_{0i}\transpose\bx_{0j})\bgamma_{ij}\right\|^2 + 2\sum_{i = 1}^n\widehat\bR_i\transpose\sum_{j = 1}^n(A_{ij} - \rho_n\bx_{0i}\transpose\bx_{0j})\bgamma_{ij} + \sum_{i = 1}^n\|\widehat\bR_i\|^2\\
% &\quad 
 & = \frac{1}{n}\sum_{i = 1}^n\mathrm{tr}\{\bG_n(\bx_{0i})^{-1}\} + 2\sum_{i = 1}^n\widehat\bR_i\transpose\sum_{j = 1}^n(A_{ij} - \rho_n\bx_{0i}\transpose\bx_{0j})\bgamma_{ij} + o_{\prob_0}(1) + O_{\prob_0}\left(\frac{(\log n)^{2(1\vee\omega)}}{n\rho_n^5}\right)\\
% &\quad 
 & = \frac{1}{n}\sum_{i = 1}^n\mathrm{tr}\{\bG_n(\bx_{0i})^{-1}\} + 2\sum_{i = 1}^n\widehat\bR_i\transpose\sum_{j = 1}^n(A_{ij} - \rho_n\bx_{0i}\transpose\bx_{0j})\bgamma_{ij} + o_{\prob_0}(1).
\end{align*}
By Cauchy-Schwarz inequality,
\begin{align*}
\left|\sum_{i = 1}^n\widehat\bR_i\transpose\sum_{j = 1}^n(A_{ij} - \rho_n\bx_{0i}\transpose\bx_{0j})\bgamma_{ij}\right|
&\leq \sum_{i = 1}^n\|\widehat\bR_i\|\left\|\sum_{j = 1}^n(A_{ij} - \rho_n\bx_{0i}\transpose\bx_{0j})\gamma_{ijk}\right\|
\\&
\leq \left(\sum_{i = 1}^n\|\widehat\bR_i\|^2\right)^{1/2}\left\{\sum_{i = 1}^n\left\|\sum_{j = 1}^n(A_{ij} - \rho_n\bx_{0i}\transpose\bx_{0j})\bgamma_{ij}\right\|^2\right\}^{1/2}\\
&= O_{\prob_0}\left(\frac{(\log n)^{1\vee\omega}}{\sqrt{n\rho_n^5}}\right)\left\{\frac{1}{n}\sum_{i = 1}^n\mathrm{tr}\{\bG_n(\bx_{0i})^{-1}\} + o_{\prob_0}(1)\right\}^{1/2}
% \\&
 = o_{\prob_0}(1).
\end{align*}
Hence, by condition \eqref{eqn:strong_convergence_measure} and the uniform convergence of $\bG_n(\bx)^{-1}\to\bG(\bx)^{-1}$ for all $\bx$ (Lemma \ref{lemma:uniform_convergence_G}), we obtain (see, for example, Exercise 3 in Section 4.4 of \citealp{chung2001course})
\[
\frac{1}{n}\sum_{i = 1}^n\mathrm{tr}\{\bG_n(\bx_{0i})\} = \int \mathrm{tr}\{\bG_n(\bx)^{-1}\}F_n(\mathrm{d}\bx)
\to\int_\calX\mathrm{tr}\{\bG(\bx)^{-1}\}F(\mathrm{d}\bx). 
\]
This completes the first part of the theorem. For the second part, we observe that
\begin{align*}
&\sum_{j = 1}^n\expect_0\left\{\left\|\frac{1}{\sqrt{n\rho_n}}\frac{(A_{ij} - \rho_n\bx_{0i}\transpose\bx_{0j})}{\bx_{0i}\transpose\bx_{0j}(1 - \rho_n\bx_{0i}\transpose\bx_{0j})}\bG_n(\bx_{0i})^{-1}\bx_{0j}\right\|^3\right\}\\
&\quad\leq \frac{1}{(n\rho_n)^{3/2}}\sum_{j = 1}^n\frac{\expect_0\{|A_{ij} - \rho_n\bx_{0i}\transpose\bx_{0j}|^3\}}{\{\bx_{0i}\transpose\bx_{0j}(1 - \rho_n\bx_{0i}\transpose\bx_{0j})\}^3}\|\bG_n^{-1}(\bx_{0i})\|_2^3\|\bx_{0j}\|^3
\lesssim\frac{1}{\sqrt{n\rho_n}}\lesssim\frac{(\log n)^{1\vee\omega}}{\sqrt{n\rho_n^5}}\to 0,
\end{align*}
and
\begin{align*}
\sum_{j = 1}^n\var_0\left\{\frac{1}{\sqrt{n\rho_n}}\frac{(A_{ij} - \rho_n\bx_{0i}\transpose\bx_{0j})\bG_n(\bx_{0i})^{-1}\bx_{0j}}{\bx_{0i}\transpose\bx_{0j}(1 - \rho_n\bx_{0i}\transpose\bx_{0j})}\right\}
% \\&\quad
& = \sum_{j \neq i}^n\var_0\left\{\frac{1}{\sqrt{n\rho_n}}\frac{(A_{ij} - \rho_n\bx_{0i}\transpose\bx_{0j})}{\bx_{0i}\transpose\bx_{0j}(1 - \rho_n\bx_{0i}\transpose\bx_{0j})}\bG_n(\bx_{0i})^{-1}\bx_{0j}\right\}\\
% &\quad 
& = \frac{1}{n\rho_n}\sum_{j \neq i}^n\frac{\rho_n\bG_n(\bx_{0i})^{-1}\bx_{0j}\bx_{0j}\transpose\bG_n(\bx_{0i})^{-1}}{\bx_{0i}\transpose\bx_{0j}(1 - \rho_n\bx_{0i}\transpose\bx_{0j})}\\
& = \bG_n(\bx_{0i})^{-1}\to \bG(\bx_{0i})^{-1}. 
\end{align*}
It follows from the Lyapunov's central limit theorem and Theorem \ref{thm:asymptotic_normality_OS} that
\[
\sqrt{n}(\bW\transpose\widehat\bx_i - \bx_{0i})
= \sum_{j = 1}^n\frac{1}{\sqrt{n\rho_n}}\frac{(A_{ij} - \rho_n\bx_{0i}\transpose\bx_{0j})}{\bx_{0i}\transpose\bx_{0j}(1 - \rho_n\bx_{0i}\transpose\bx_{0j})}\bG_n(\bx_{0i})^{-1}\bx_{0j} + o_{\prob_0}(1) \overset{\calL}{\to} \mathrm{N}(\zero, \bG(\bx_{0i})^{-1}). 
\]
The proof is thus completed.
\end{proof}

% section proof_of_theorem_thm:asymptotic_normality_os (end)
\section{Proof of Theorems \ref{thm:LSE_limit_theorem} and \ref{thm:LSE_transform_initial_condition} (Limit Theorems for the LSE)} % (fold)
\label{sec:proof_of_theorem_thm:lse_limit_theorem}

In this section of the proofs, we introduce following notations: 
Let $\one = [1,\ldots,1]\transpose\in\mathbb{R}^n$ be the $n$-dimensional vector with coordinates being ones, $\bP_0 = \expect_0(\bA) = \rho_n\bX_0\bX_0\transpose$, $\bT = \mathrm{diag}(\rho_n\bX_0\bX_0\transpose\one)$, $\bD = \mathrm{diag}(\bA\one)$, and $\widetilde\bE = n\rho_n\{\calL(\bA) - \calL(\bP_0)\}$. We further denote $\calL(\bA) = \sum_{i = 1}^n\widetilde{\lambda}_i(\widetilde{\bu}_{\bA})_i(\widetilde{\bu}_{\bA})_i\transpose$ the spectral decomposition of $\calL(\bA)$ with $|\widetilde{\lambda}_1|\geq\ldots\geq|\widetilde{\lambda}_n|$, $\calL(\bP_0) = \widetilde\bU_\bP\widetilde{\bS}_\bP\widetilde\bU_\bP\transpose$ the (compact) spectral decomposition of $\calL(\bP_0)$, where $\widetilde\bU_\bP\in\mathbb{O}(n, d)$, and $\widetilde{\bS}_\bP = \mathrm{diag}[\lambda_1\{\calL(\bP_0)\},\ldots, \lambda_d\{\calL(\bP_0)\}]$. We observe that the LSE can be written as $\breve{\bX} = \widetilde{\bU}_\bA\widetilde{\bS}_\bA^{1/2}$, where $\widetilde\bU_{\bA} = [(\widetilde{\bu}_{\bA})_1,\ldots,(\widetilde{\bu}_{\bA})_d]$, and $\widetilde{\bS}_\bA = \mathrm{diag}(|\widetilde\lambda_1|,\ldots,|\widetilde\lambda_d|)$. 

\subsection{Proof of the Limit \eqref{eqn:LSE_convergence}} % (fold)
\label{sub:proof_of_the_limit_eqn:lse_convergence}

% subsection proof_of_the_limit_eqn:lse_convergence (end)

\begin{proof}[\bf Proof of the limit \eqref{eqn:LSE_convergence}]
The proof is almost the same as the proof in \cite{tang2018} and we only sketch the argument and present the difference. Using the proof there, we obtain the following local expansion of $\breve\bX\bW - \bY_0$ for some orthogonal alignment matrix $\bW$ that may depend on $n$:
\begin{align}\label{eqn:LSE_expansion}
\breve{\bX}\bW - \bY_0 = \bT^{-1/2}(\bA - \bP_0)\bT^{-1/2}\bY_0(\bY_0\transpose\bY_0)^{-1} + \frac{1}{2}\bT^{-1}(\bT - \bD)\bY_0 + \breve\bR,
\end{align}
where $\|\breve{\bR}\|_{\mathrm{F}} = O((n\rho_n)^{-1})$ with high probability. Using argument developed in Appendix B.4 in \cite{tang2018} through the application of the concentration inequalities developed in \cite{boucheron2003} (Theorem 5 and Theorem 6 there), we obtain that $n\rho_n\|\breve{\bX}\bW - \bY_0\|_{\mathrm{F}}^2$ is concentrated around 
\begin{align*}
&n\rho_n\expect_0\left\{\left\|
\bT^{-1/2}(\bA - \bP_0)\bT^{-1/2}\bY_0(\bY_0\transpose\bY_0)^{-1} + \frac{1}{2}\bT^{-1}(\bT - \bD)\bY_0
\right\|_{\mathrm{F}}^2\right\}\\
&\quad
= n\rho_n\expect_0\left\{\|\bT^{-1/2}(\bA - \bP_0)\bT^{-1/2}\bY_0(\bY_0\transpose\bY_0)^{-1}\|_{\mathrm{F}}^2\right\}
+ \frac{n\rho_n}{4}\expect_0\left\{\|\bT^{-1}(\bT - \bD)\bY_0\|_{\mathrm{F}}^2\right\}\\
&\qquad + n\rho_n\mathrm{tr}\left[\expect_0\left\{\bY_0\transpose\bT^{-1}(\bT - \bD)\bT^{-1/2}(\bA - \bP_0)\bT^{-1/2}\bY_0(\bY_0\transpose\bY_0)^{-1}\right\}\right]
\end{align*}
almost surely.
The only difference is that the expected values in \cite{tang2018} is taken with regard to both $\bA$ and $\bP$, whereas the expected values here is taken with respect to $\bA$ only as $\bP$ is deterministic in the current setup.
Furthermore, using the argument developed in page 2407 of \cite{tang2018} where the expected value is taken with regard to $\bA$ conditioning on $\bP_0$, we obtain
\begin{align*}
&\expect_0\left\{\|\bT^{-1/2}(\bA - \bP_0)\bT^{-1/2}\bY_0(\bY_0\transpose\bY_0)^{-1}\|_{\mathrm{F}}^2\right\}
% &\quad
= \mathrm{tr}\left\{(\bY_0\transpose\bY_0)^{-1}\bY_0\transpose\widetilde\bM\bY_0(\bY_0\transpose\bY_0)^{-1}\right\},
\end{align*}
where
\[
\widetilde\bM = \mathrm{diag}\left\{\frac{1}{n^2\rho_n}
\sum_{j = 1}^n\frac{ \bx_{01}\transpose\bx_{0j}(1 - \rho_n\bx_{01}\transpose\bx_{0j}) }{{\bx_{01}\transpose\bmu_n\bx_{0j}\transpose\bmu_n}},\ldots, 
\frac{1}{n^2\rho_n}\sum_{j = 1}^n\frac{ \bx_{0n}\transpose\bx_{0j}(1 - \rho_n\bx_{0n}\transpose\bx_{0j}) }{{\bx_{0n}\transpose\bmu_n\bx_{0j}\transpose\bmu_n}}
\right\}.
\]
The key difference is that instead of using the strong law of large numbers applied to the random latent positions $\bx_{01},\ldots,\bx_{0n}$, we make use of the assumption \eqref{eqn:strong_convergence_measure} to show the desired convergence results. By construction of $\bY_0$, it is easy to see that $\bY_0\transpose\bY_0\to\widetilde\bDelta$ as $n\to\infty$. Furthermore, 
\begin{align*}
\bY_0\transpose\widetilde\bM\bY_0
& = \frac{1}{n^2\rho_n}\sum_{i = 1}^n\by_{0i}\by_{0i}\transpose\sum_{j = 1}^n\frac{ \bx_{0i}\transpose\bx_{0j}(1 - \rho_n\bx_{0i}\transpose\bx_{0j}) }{{\bx_{0i}\transpose\bmu_n\bx_{0j}\transpose\bmu_n}}\\
& = \frac{1}{n^3\rho_n}\sum_{i = 1}^n\frac{\bx_{0i}\bx_{0i}\transpose}{\bx_{0i}\transpose\bmu_n}\sum_{j = 1}^n\frac{ \bx_{0i}\transpose\bx_{0j}(1 - \rho_n\bx_{0i}\transpose\bx_{0j}) }{{\bx_{0i}\transpose\bmu_n\bx_{0j}\transpose\bmu_n}}\\
& = \frac{1}{n^3\rho_n}\sum_{j = 1}^n\sum_{i = 1}^n\frac{\bx_{0i}\bx_{0i}\transpose}{\bx_{0i}\transpose\bmu_n}\frac{ \bx_{0i}\transpose\bx_{0j}(1 - \rho_n\bx_{0i}\transpose\bx_{0j}) }{{\bx_{0i}\transpose\bmu_n\bx_{0j}\transpose\bmu_n}}\\
& = \frac{1}{n^2\rho_n}\sum_{j = 1}^n\frac{1}{\bx_{0j}\transpose\bmu_n}\frac{1}{n}\sum_{i = 1}^n\frac{ \bx_{0i}\transpose\bx_{0j}(1 - \rho_n\bx_{0i}\transpose\bx_{0j}) \bx_{0i}\bx_{0i}\transpose}{{(\bx_{0i}\transpose\bmu_n)^2}}\\
& = \frac{1}{n\rho_n}\left[\frac{1}{n}\sum_{j = 1}^n\widetilde\bV_n(\bx_{0j})\right],
\end{align*}
where
\[
\widetilde\bV_n(\bx) = \frac{1}{n}\sum_{i = 1}^n\frac{ \bx_{0i}\transpose\bx(1 - \rho_n\bx_{0i}\transpose\bx) \bx_{0i}\bx_{0i}\transpose}{{(\bx_{0i}\transpose\bmu_n)^2}} \to \widetilde\bV(\bx)
 = \int_\calX \frac{ \bx_{1}\transpose\bx(1 - \rho\bx_{1}\transpose\bx) \bx_{1}\bx_{1}\transpose}{{(\bx_{1}\transpose\bmu)^2}}F(\mathrm{d}\bx_1)
\]
uniformly over $\bx\in\calX$ (the argument for proving the uniform convergence is the same as that in the proof of Lemma \ref{lemma:uniform_convergence_G}). Hence we conclude that
\[
n\rho_n\mathrm{tr}\{(\bY_0\transpose\bY_0)^{-1}\bY_0\transpose\widetilde\bM\bY_0(\bY_0\transpose\bY_0)^{-1}\}
\to \mathrm{tr}\left\{\widetilde\bDelta^{-1}\int_\calX \bV(\bx)F(\mathrm{d}\bx)\widetilde\bDelta^{-1}\right\}.
\]
A similar argument can be applied to derive that
\[
\frac{n\rho_n}{4}\expect_0\left\{\|\bT^{-1}(\bT - \bD)\bY_0\|_{\mathrm{F}}^2\right\}\to
\frac{1}{4}\mathrm{tr}\left\{\int_\calX\frac{\bx_1\bx_1\transpose}{(\bx_1\transpose\bmu)^2}\left(1 - \frac{\rho\bx_1\transpose\bDelta\bx_1}{\bx_1\transpose\bmu}\right)F(\mathrm{d}\bx_1)\right\}
\]
and that
\begin{align*}
&n\rho_n\mathrm{tr}\left[\expect_0\left\{\bY_0\transpose\bT^{-1}(\bT - \bD)\bT^{-1/2}(\bA - \bP_0)\bT^{-1/2}\bY_0(\bY_0\transpose\bY_0)^{-1}\right\}\right]\\
&\quad\to \rho\mathrm{tr}\left[
\iint_{\calX^2}\frac{(\bx_1\transpose\bx_2\bx_2\transpose\bx_1)}{(\bx_1\transpose\bmu)^2(\bx_2\transpose\bmu)}\bx_1\bx_2\transpose{F}(\mathrm{d}\bx_1)F(\mathrm{d}\bx_2)\widetilde\bDelta^{-1}
\right] - \mathrm{tr}\left\{\int_\calX\frac{\bx_1\bx_1\transpose}{(\bx_1\transpose\bmu)^2}F(\mathrm{d}\bx_1)\right\}
\end{align*}
The proof is completed by combining the above derivations. 

\end{proof}

\subsection{Proof of the Asymptotic Normality \eqref{eqn:LSE_normality} Under the Sparse regime (ii)} % (fold)
\label{sub:proof_of_the_asymptotic_normality_eqn:lse_normality_under_the_sparse_regime_ii_}

% subsection proof_of_the_asymptotic_normality_eqn:lse_normality_under_the_sparse_regime_ii_ (end)

\begin{proof}[\bf Proof of the asymptotic normality \eqref{eqn:LSE_normality} under the sparse regime (ii)]
We first prove the asymptotic normality of the rows of the LSE under the condition (ii), and this is a simple modification of Appendix B.1 in \cite{tang2018}. 

Let $\tau:[n]\to[K]$ be the cluster assignment function such that $\tau(i) = k$ when $\bx_{0i} = \bnu_k$, and let $n_k = \sum_{i = 1}^n\mathbbm{1}(\bx_{0i} = \bnu_k)$. Clearly, $n_k/n\to \pi_k$ for each $k\in[K]$. Since $F = \sum_{k = 1}^K\pi_k\delta_{\bnu_k}$, then there exists a permutation matrix $\bPi$ such that 
% \\
% \[
$
% \begin{bmatrix*}
% \bnu_1 & \cdots & \bnu_n
% \end{bmatrix*}\transpose
[\bnu_1,\ldots,\bnu_1,\bnu_2,\ldots,\bnu_2,\ldots,\bnu_K,\ldots,\bnu_K]\transpose
% \begin{bmatrix*}
% \bnu_1\transpose\\\vdots\\ \bnu_K\transpose
% \end{bmatrix*}
 = \bPi\transpose\bX_0 = \bPi\transpose
 [\bx_{01},\ldots,\bx_{0n}]\transpose.
%  \begin{bmatrix*}
% \bx_{01} & \cdots & \bx_{0n}
% \end{bmatrix*}\transpose.
$
% \]

Now suppose for a fixed $i\in[n]$, $\bx_{0i} = \bnu_k$ for a $k\in[K]$ that depends on $i$. Let $\bPi_k$ be the permutation matrix within the $k$th cluster. In the case where $1 < k < K$, $\bPi_k$ has the form
\[
\bPi_k = \bPi\begin{bmatrix*}
\eye_{n_1} & & & & & & \\
           & \ddots & & & & & \\
           & & \eye_{n_{k - 1}}\\
           & & & \bQ_k &  \\
           & & & & \eye_{n_{k + 1}}\\
           & & & & & \ddots\\
           & & & & & & \eye_{n_K}
\end{bmatrix*}\bPi\transpose
\]
for some $n_k\times n_k$ permutation matrix $\bQ_k$. A similar block diagonal $\bPi_k$ can be explicitly written down when $k = 1$ or $k = K$. Without loss of generality, we may assume that $k\neq 1$ and $k\neq K$. Furthermore, the adjacency matrix $\bA$ can be written as the following block matrix form
\[
\bPi\transpose\bA\bPi = \begin{bmatrix*}
\bA_{\star11} & \bA_{\star1k} & \bA_{\star12}\\
\bA_{\star1k}\transpose & \bA_{ kk}  & \bA_{\star 2k}\transpose\\
\bA_{\star21} & \bA_{\star 2k} & \bA_{\star22}
\end{bmatrix*},
\]
where $\bA_{kk} = [A_{mj}:\tau(m) = \tau(j) = k]$, $\bA_{\star 11} = [A_{mj}:\tau(m), \tau(j) < k]$, $\bA_{\star 22} = [A_{mj}:\tau(m), \tau(j) > k]$, $\bA_{\star 12} = [A_{mj}:\tau(m) < k, \tau(j) > k]$, and
\[
\begin{bmatrix*}
\bA_{\star1k}\\\bA_{\star2k}
\end{bmatrix*} = [\bA_{mj}:\tau(m) \neq k, \tau(j) = k].
\]
Clearly, the entries of $\bA_{kk}$ are independent $\mathrm{Bernoulli}(\rho_n\langle \bnu_k,\bnu_k\rangle)$ random variables, where $\langle\cdot,\cdot\rangle$ denotes the Euclidean inner product, and the entries of $\bA_{\star1k},\bA_{\star2k}$ are independent $\mathrm{Bernoulli}(\rho_n\langle\bnu_l,\bnu_k\rangle)$ random variables. Therefore,
\[
\bA_{\star kk}\overset{\calL}{=} \bQ_k\bA_{ kk}\bQ_k\transpose, \quad
\bA_{\star 1k}\overset{\calL}{=} \bA_{\star 1k}\bQ_k\transpose,\quad
\bA_{\star 2k}\overset{\calL}{=} \bA_{\star 2k}\bQ_k\transpose.
\]
Hence,
\begin{align*}
\bPi_k\bA\bPi_k\transpose = 
\bPi
\begin{bmatrix*}
\bA_{\star11} & \bA_{\star 1k}\bQ_k\transpose & \bA_{\star 12}\\
\bQ_k\bA_{\star1k}\transpose & \bQ_k\bA_{kk}\bQ_k\transpose & \bQ_k\bA_{\star 2k}\transpose\\
\bA_{\star 21} & \bA_{\star 2k}\bQ_k\transpose & \bA_{\star 22}
\end{bmatrix*}
\bPi\transpose
 \overset{\calL}{=} \bPi
\begin{bmatrix*}
\bA_{\star11} & \bA_{\star1k} & \bA_{\star12}\\
\bA_{\star1k}\transpose & \bA_{ kk}  & \bA_{\star 2k}\transpose\\
\bA_{\star21} & \bA_{\star 2k} & \bA_{\star22}
\end{bmatrix*}\bPi\transpose
= \bA. 
\end{align*}
Therefore, $\bPi_k\breve{\bX} \overset{\calL}{ = }\breve{\bX}$, $\bPi_k\bX_0 = \bX_0$, and $\bPi_k\bY_0 = \bY_0$. Observe that the orthogonal alignment matrix $\bW$ does not change with the permutation matrix $\bPi_k$. Hence, this further implies the exchangeability of the rows within the $k$th cluster for the remainder matrix $\bR$:
\[
\bPi_k\breve{\bR} = \bPi_k(\breve{\bX}\bW - \bY_0) = \bPi_k\breve{\bX}\bW - \bY_0 \overset{\calL}{ = } \breve{\bX}\bW - \bY_0 = \breve{\bR}.
\]
Let $\breve{\br}_j$ be the $j$th row of $\breve\bR$, $j\in [n]$. Clearly, by the exchangeability within clusters, for any $i,j\in[n]$ such that $\tau(j) = \tau(i) = k$, we see that $\breve{\br}_j = \breve{\br}_i$. Using the fact that $\bE_0(\|\breve\bR\|_{\mathrm{F}}^2) = O((n\rho_n)^{-2})$ (which can be easily derived using the fact that with probability at least $1 - n^{-3}$, $\|\breve\bR\|_{\mathrm{F}} \lesssim (n\rho_n)^{-1}$; also see Appendix B.1 in \citealp{tang2018}), we see that for any $i\in[n]$ with $\tau(i) = k$,
\begin{align*}
n^2\rho_n\expect_0(\|\breve{\br}_i\|_2^2) &= n^2\rho_n\frac{1}{n_k}\sum_{j:\tau(j) = \tau(i) = k}\expect_0(\|\breve{\br}_j\|_2^2) \leq 
% {n^2}{\rho_n}\frac{\expect_0(\|\breve\bR\|_{\mathrm{F}}^2)}{n_k}=
  \frac{n^2\rho_n}{n_k}\expect(\|\breve\bR\|_{\mathrm{F}}^2)
% \\&
\lesssim\frac{n\rho_n}{\pi_k + o(1)}(n\rho_n)^{-2}\to 0.
\end{align*}
This shows that $\breve{\br}_i = o_{\prob_0}(n\rho_n^{1/2})$. 

The rest of the proof is devoted to prove the asymtotic normality of the $i$th row of
\[
\bT^{-1/2}(\bA - \bP_0)\bT^{-1/2}\bY_0(\bY_0\transpose\bY_0)^{-1} + \frac{1}{2}\bT^{-1}(\bT - \bD)\bY_0,
\]
and is almost the same as that of Appendix B.1 in \cite{tang2018}. Let $\breve\bx_i$ be the $i$th row of $\breve\bX$. In essence, using the expansion \eqref{eqn:LSE_expansion}, we have
\begin{align*}
n\rho_n^{1/2}(\bW\transpose\breve\bx_i - \by_{0i})
& = \sum_{j\neq i}\left\{
\widetilde\bDelta_n^{-1}\frac{\bx_{0j}}{\sqrt{\bx_{0i}\transpose\bmu_n}\bx_{0j}\transpose\bmu_n} - \frac{\bx_{0i}}{2(\bx_{0i}\transpose\bmu_n)^{3/2}}
\right\}\frac{A_{ij} - \rho_n\bx_{0i}\transpose\bx_{0j}}{\sqrt{n\rho_n}}
+ o_{\prob_0}(1),
\end{align*}
where $\widetilde\bDelta_n = \bY_0\transpose\bY_0$. 
The sum of the third moment of the sum of independent random vectors in the first term of the right-hand side of the previous display is bounded above by
\begin{align*}
&\frac{1}{(n\rho_n)^{3/2}}\sum_{j\neq i}^n\left\|\widetilde\bDelta_n^{-1}\frac{1}{(\bx_{0i}\transpose\bmu_n)^{1/2}\bx_{0j}\transpose\bmu_n}
- \frac{\bx_{0i}}{2(\bx_{0i}\transpose\bmu_n)^{3/2}}
\right\|_2^3\expect_0(|A_{ij} - \rho_n\bx_{0i}\transpose\bx_{0j}|^3)
\lesssim \frac{n\rho_n}{(n\rho_n)^{3/2}}\to0.
\end{align*}
The variance of the sum of these independent random vectors is
\begin{align*}
&\frac{1}{n}\sum_{j\neq i}\left\{
\widetilde\bDelta_n^{-1}\frac{\bx_{0j}}{\sqrt{\bx_{0i}\transpose\bmu_n}\bx_{0j}\transpose\bmu_n} - \frac{\bx_{0i}}{2(\bx_{0i}\transpose\bmu_n)^{3/2}}
\right\} \bx_{0i}\transpose\bx_{0j}(1 - \rho_n\bx_{0i}\transpose\bx_{0j})
\left\{
\widetilde\bDelta_n^{-1}\frac{\bx_{0j}}{\sqrt{\bx_{0i}\transpose\bmu_n}\bx_{0j}\transpose\bmu_n} - \frac{\bx_{0i}}{2(\bx_{0i}\transpose\bmu_n)^{3/2}}
\right\}\transpose\\
&\quad
= \frac{1}{n}\sum_{j\neq i}\left\{
\frac{\widetilde\bDelta_n^{-1}(\bx_{0i}\transpose\bmu_n)\bx_{0j}}{({\bx_{0i}\transpose\bmu_n})^{3/2}} - \frac{\bx_{0i}\bmu_n\transpose\bx_{0j}}{2(\bx_{0i}\transpose\bmu_n)^{3/2}}
\right\} \frac{\bx_{0i}\transpose\bx_{0j}(1 - \rho_n\bx_{0i}\transpose\bx_{0j})}{(\bx_{0j}\transpose\bmu_n)^2}
\left\{
\frac{\widetilde\bDelta_n^{-1}(\bx_{0i}\transpose\bmu_n)\bx_{0j}}{({\bx_{0i}\transpose\bmu_n})^{3/2}} - \frac{\bx_{0i}\bmu_n\transpose\bx_{0j}}{2(\bx_{0i}\transpose\bmu_n)^{3/2}}
\right\}\transpose\\
&\quad = \frac{1}{n(\bx_{0i}\transpose\bmu_n)}
\sum_{j\neq i}
\left(\widetilde\bDelta_n^{-1} - \frac{\bx_{0i}\bmu_n\transpose}{2\bx_{0i}\transpose\bmu_n}\right) 
\frac{\bx_{0i}\transpose\bx_{0j}(1 - \rho_n\bx_{0i}\transpose\bx_{0j})\bx_{0j}\bx_{0j}\transpose}{(\bx_{0j}\transpose\bmu_n)^2}
\left(\widetilde\bDelta_n^{-1} - \frac{\bx_{0i}\bmu_n\transpose}{2\bx_{0i}\transpose\bmu_n}\right)\transpose\\
&\quad \to \widetilde\bSigma(\bx_{0i}).
\end{align*}
The proof of the asymptotic normality of $\bW\transpose\breve{\bx}_i - \by_{0i}$  is thus completed by the Lyapunov's central limit theorem. 
\end{proof}

% subsection proof_of_the_asymptotic_normality_eqn:lse_normality_under_the_dense_regime_i_ (end)

\subsection{Some Technical Lemmas for the LSE} % (fold)
\label{sub:some_technical_lemmas_for_the_lse}

% subsection some_technical_lemmas_for_the_lse (end)

We next focus on the proof of the asymptotic normality \eqref{eqn:LSE_normality} of the rows of the LSE under the condition (i). 
The proof strategy is enormously different than that presented in \cite{tang2018}, where the authors make use of the exchangeability property of $\bA$ and $\bx_{01},\ldots,\bx_{0n}$ and such a exchangeability no longer holds in our current setup. 
Here we shall take a different approach and follow the framework of \cite{cape2019signal} to establish the asymptotic normality \eqref{eqn:LSE_normality}. In preparation for doing so, we need to establish a collection of preliminary results first. 
\begin{lemma}\label{lemma:Laplacian_moment_bound}
Assume the conditions of Theorem \ref{thm:LSE_limit_theorem} hold under the dense regime (i). Denote
\[
\widetilde{\bE}_1 = n\bT^{-1/2}(\bA - \bP_0)\bT^{-1/2} + \frac{n}{2}\bT^{-1/2}\bP_0\bT^{-3/2}(\bT - \bD) + \frac{n}{2}\bT^{-3/2}(\bT - \bD)\bP_0\bT^{-1/2}.
\]
Let $\widetilde{\bu}_{01},\ldots,\widetilde{\bu}_{0d}$ be the column vectors of $\widetilde\bU_\bP$. 
Then there exist constants $C_{\widetilde\bE} > 0$, $\nu > 0$, such that 
\begin{align*}
\prob_0\left[\bigcup_{t = 1}^2
\bigcup_{i = 1}^n\bigcup_{k = 1}^d\left\{|\be_i\transpose\widetilde\bE_1^t\widetilde{\bu}_{0k}| > C_{\widetilde\bE}^t(\log n)^{2t}n^{t/2}\|\widetilde\bu_{0k}\|_\infty\right\}
\right]\leq \exp\{-\nu(\log n)^2\}.
\end{align*}
\end{lemma}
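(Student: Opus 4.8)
The plan is to prove this moment/concentration bound by reducing it to the already-established Lemma~\ref{lemma:moment_bound_ASE} (the adjacency-matrix analogue). The key observation is that $\widetilde\bE_1$ is built from three pieces, each of which is a simple conjugation of the centered noise matrix $\bE = \bA - \bP_0$ by deterministic, uniformly bounded diagonal factors. Under the dense regime $\rho_n\equiv 1$, the diagonal matrix $\bT = \mathrm{diag}(\bX_0\bX_0\transpose\one)$ satisfies $\bT \asymp n\eye_n$ entrywise, because every diagonal entry is $\sum_{j}\bx_{0i}\transpose\bx_{0j}\asymp n$ by condition~\eqref{eqn:strong_convergence_measure} together with $\bx_{0i}\transpose\bx_{0j}\in[\delta,1-\delta]$. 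Consequently $\|n\bT^{-1/2}\|_2, \|n\bT^{-1}\|_2 = O(1)$ deterministically, and similarly $\bP_0 = \bX_0\bX_0\transpose$ has $\|\bP_0\|_2\lesssim n$. First I would record these deterministic spectral bounds on $\bT^{\pm 1/2}$, $\bP_0$, and note $\bT - \bD = \mathrm{diag}((\bP_0-\bA)\one) = -\mathrm{diag}(\bE\one)$, so the second and third summands of $\widetilde\bE_1$ are themselves linear in the independent entries $\{E_{ij}\}_{i<j}$.

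Next I would establish the moment bound for $t=1$. Writing $\widetilde\bE_1 = \bT^{-1/2}\bE\bT^{-1/2}\cdot n + (\text{diagonal-noise terms})$, each of the three terms, when hit by $\be_i\transpose(\cdot)\widetilde\bu_{0k}$, becomes a linear form $\sum_{a<b}E_{ab}w_{ab}$ in the independent centered Bernoulli variables, with deterministic coefficients $w_{ab}$ that are uniformly $O(\|\widetilde\bu_{0k}\|_\infty)$ in magnitude after absorbing the $O(1)$ factors from $n\bT^{-1/2}$, $\bP_0\bT^{-1}$, etc. The plan is to apply the same multigraph/spanning-tree combinatorial moment estimate used in Lemma~\ref{lemma:moment_bound_ASE} (following \citealp{erdos2013}) directly to these linear and, for $t=2$, quadratic forms. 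For $t=1$ this is essentially Hoeffding/Bernstein applied to a single linear form; for $t=2$ one expands $\widetilde\bE_1^2$ and bounds $\expect_0(|\be_i\transpose\widetilde\bE_1^2\widetilde\bu_{0k}|^p)$ by the same index-partitioning argument, yielding $\expect_0(|\be_i\transpose\widetilde\bE_1^t\widetilde\bu_{0k}|^p)\le C_{\widetilde\bE}^{tp}(2tp)^{tp}n^{tp/2}\|\widetilde\bu_{0k}\|_\infty^p$ for $p\le\lceil(\log n)^2\rceil$. The scaling $n^{t/2}$ rather than $(n\rho_n)^{t/2}=n^{t/2}$ matches because $\rho_n\equiv 1$ here.

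Having the moment bound, the tail estimate follows by Markov's inequality exactly as in facts (d)--(e) of the ASE proof: choosing $p = \lfloor(\log n)^2/(4t)\rfloor$ gives
\begin{align*}
\prob_0\left(|\be_i\transpose\widetilde\bE_1^t\widetilde\bu_{0k}| > C_{\widetilde\bE}^t(\log n)^{2t}n^{t/2}\|\widetilde\bu_{0k}\|_\infty\right)
\le \left\{\frac{2pt}{(\log n)^2}\right\}^{pt} \le \exp\{-c(\log n)^2\}
\end{align*}
for some $c>0$. Taking the union bound over $t\in\{1,2\}$, $i\in[n]$, and $k\in[d]$ multiplies this by a factor of $2nd$, which is absorbed since $(\log n)^2\gg\log n$; this produces the final bound $\exp\{-\nu(\log n)^2\}$ for a suitable $\nu>0$.

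The main obstacle I anticipate is the bookkeeping for the $t=2$ term, where $\widetilde\bE_1^2$ is a sum of nine cross-products of the three constituent matrices, and the ones involving the diagonal noise $\mathrm{diag}(\bE\one)$ reuse the same randomness $\{E_{ij}\}$ that appears in $\bT^{-1/2}\bE\bT^{-1/2}$. Unlike the clean single-matrix case of Lemma~\ref{lemma:moment_bound_ASE}, one must verify that this shared randomness does not create correlations that violate the spanning-tree counting; the resolution is that each $E_{ab}$ still appears with total degree controlled by the walk structure, so the combinatorial bound on the number of nonvanishing index tuples remains $O(n^{tp/2})$ after accounting for the extra $\bP_0$ and diagonal factors (which contribute only deterministic $O(1)$ weights, not additional summation indices). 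Carefully tracking that the diagonal factors $\mathrm{diag}(\bE\one)$ contract two summation indices into one—thereby not increasing the effective number of free indices—is the crux, and I would handle it by treating $\mathrm{diag}(\bE\one)\bP_0\bT^{-1/2}$ as a rank-structured perturbation whose $(i,j)$ entry is $E$-linear with a bounded deterministic profile.
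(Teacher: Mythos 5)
Your proposal is correct in outline and shares the paper's skeleton---Erd\H{o}s-type high-moment bounds, Markov's inequality with $p \asymp (\log n)^2$, and a union bound over $i$, $k$, $t$---but it organizes the $t = 2$ case differently. The paper does \emph{not} run a single unified chaos-moment bound for $\widetilde{\bE}_1^2$. It expands the square into five cross terms plus a remainder $\bR_{\widetilde{\bE}_1}^{(2)}$ collecting everything quadratic in $\bT - \bD$, and then mixes tools: the genuinely walk-structured terms, of the form $n\sum_{i_1,i_2}H_{ii_1}H_{i_1i_2}\,w(i_1,i_2)$ with $|w|\lesssim \|\widetilde\bu_{0k}\|_\infty$, are handled by adapting (not merely citing) the proof of Lemma \ref{lemma:moment_bound_ASE}, since that lemma as stated only covers $\be_i\transpose\bE^k\bv$ for a deterministic vector $\bv$; the mixed terms coupling $(\bA - \bP_0)$ with $\bP_0\bT^{-3/2}(\bT - \bD)$ are factorized into products of two linear forms, each controlled separately by Hoeffding on events of probability $1 - \exp\{-c(\log n)^2\}$ (no independence between the factors is needed, only intersection of the events); and the remainder is dispatched deterministically via $\|\bR^{(2)}_{\widetilde{\bE}_1}\|_\infty\lesssim\|\bT - \bD\|_\infty^2\lesssim n(\log n)^2$, using Chernoff for $\|\bT-\bD\|_\infty$. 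Your unified route buys a cleaner statement, but the step you flag as the crux is real: for bridged terms with index pattern $E_{ij}(\bP_0)_{jl}\sum_m E_{lm}$, the deterministic factor $\bP_0$ inserts an \emph{extra free summation index} (it is not diagonal, so your parenthetical that such factors add no summation indices is literally wrong, though harmless), and one must redo the index-partition counting rather than invoke Lemma \ref{lemma:moment_bound_ASE} directly; your heuristic that the $O(n^{-1})$ bridge weights summed over $n$ values contribute $O(1)$ is exactly why the counting survives, so the plan is executable---it is precisely the bookkeeping the paper avoids by factorization plus Hoeffding.

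One smaller inaccuracy to repair at $t = 1$: your claim that the coefficients $w_{ab}$ are uniformly $O(\|\widetilde\bu_{0k}\|_\infty)$ over all pairs $a<b$ would, via Hoeffding, only yield a bound of order $n\|\widetilde\bu_{0k}\|_\infty\log n$, which exceeds the target $\sqrt{n}(\log n)^2\|\widetilde\bu_{0k}\|_\infty$. What actually saves the argument is that in the leading term $n\bT^{-1/2}\bE\bT^{-1/2}$ only the $n$ pairs $\{i,j\}$, $j\in[n]$, carry nonnegligible weight, while in the diagonal-noise terms the coefficients are $O(n^{-1}\|\widetilde\bu_{0k}\|_\infty)$ over $O(n^2)$ pairs, so in both cases the $\ell_2$-norm of the coefficient vector is $O(\sqrt{n}\,\|\widetilde\bu_{0k}\|_\infty)$. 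The paper handles these even more coarsely---Hoeffding for the first term and the deterministic chain $\|\bT^{-1/2}\|_\infty\|\bP_0\|_\infty\|\bT^{-3/2}\|_\infty\|\bT-\bD\|_\infty$ with a Chernoff bound on $\|\bT-\bD\|_\infty$ for the rest---so your $t=1$ conclusion stands once the coefficient accounting is stated correctly.
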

\begin{proof}[\bf Proof]
% [\bf Proof of Lemma \ref{lemma:Laplacian_moment_bound}]
Let $H_{ij} = n^{-1/2}(A_{ij} - \bx_{0i}\transpose\bx_{0j})$ and $\bH = [H_{ij}]_{n\times n}$. We first consider the case of $t = 1$. Let $\widetilde{\bu}_{0k} = [\widetilde{u}_{01k},\ldots,\widetilde{u}_{0nk}]\transpose$. Then for any $i\in[n]$ and $k\in [d]$, we have
\begin{align*}
|\be_i\transpose\widetilde\bE_1\widetilde{\bu}_{0k}|
&\leq |\be_i\transpose n\bT^{-1/2}(\bA - \bP_0)\bT^{-1/2}\widetilde{\bu}_{0k}| + \frac{n}{2}|\be_i\transpose\bT^{-1/2}\bP_0\bT^{-3/2}(\bT - \bD)\widetilde{\bu}_{0k}|
\\&\quad
 + \frac{n}{2}|\be_i\transpose\bT^{-3/2}(\bT - \bD)\bP_0\bT^{-1/2}\widetilde{\bu}_{0k}|\\
&\leq |\be_i\transpose n\bT^{-1/2}(\bA - \bP_0)\bT^{-1/2}\widetilde{\bu}_{0k}| + n\|\bT^{-1/2}\|_\infty\|\bP_0\|_\infty\|\bT^{-3/2}\|_\infty\|\bT - \bD\|_\infty \|\widetilde{\bu}_{0k}\|_\infty.
\end{align*}
Since $\|\bT^{-1}\|_\infty = \|\bT^{-1/2}\|_2 = O(n^{-1/2})$, $\|\bP_0\|_\infty = O(n)$, and $\|\widetilde{\bS}_\bP\|_\infty = \|\widetilde\bS_\bP\|_2 = O(1)$, the second term of the previous display is upper bounded by a constant multiple of $\|\widetilde{\bu}_{0k}\|_\infty\|\bT - \bD\|_\infty$.
By the Chernoff bound, we see that $\|\bT - \bD\|_\infty$ is upper bounded by a constant multiple of $\sqrt{n}\log n$ with probability at least $1 - \exp\{-c(\log n)^2\}$ for a constant $c > 0$. Hence we conclude
\[
n\|\bT^{-1/2}\|_\infty\|\bP_0\|_\infty\|\bT^{-3/2}\|_\infty\|\bT - \bD\|_\infty \|\widetilde{\bu}_{0k}\|_\infty\lesssim (\log n)n^{1/2}\|\widetilde{\bu}_{0k}\|_\infty
\]
with probability at least $1 - \exp\{-c(\log n)^2\}$. For the first term, it has the form
\[
\left|\sum_{j = 1}^n\frac{\{A_{ij} - \expect_0(A_{ij})\}\widetilde{u}_{0jk}}{(\bx_{0i}\transpose\bmu_n\bx_{0j}\transpose\bmu_n)^{1/2}}\right|,
\]
which can be upper bounded by a constant of multiple of $\log n\asymp  (\log n)n^{1/2}\|\widetilde{\bu}_{0k}\|_\infty$ with probability at least $1 - \exp\{-c(\log n)^2\}$ for a constant $c > 0$ by Hoeffding's inequality. Hence we conclude from the union bound over $i\in[n]$ and $k\in[d]$ that 
\begin{align*}
\prob_0\left[
\bigcup_{i = 1}^n\bigcup_{k = 1}^d\left\{|\be_i\transpose\widetilde\bE_1\widetilde{\bu}_{0k}| > C_{\widetilde\bE_1}(\log n)^{2}n^{1/2}\|\widetilde{\bu}_{0k}\|_\infty\right\}
\right]\leq \exp\{-\nu_1(\log n)^2\}
\end{align*}
for some constants $C_{\widetilde\bE_1} > 0$ and $\nu_1 > 0$ because $ne^{-c(\log n)^2}\leq e^{-c/2(\log n)^2}$ for sufficiently large $n$. 

% \vspace*{1ex}
\noindent
We next consider the case where $t = 2$, which is slightly more involved. Write
\begin{align*}
\be_i\transpose\widetilde\bE_1^2\widetilde{\bu}_{0k}
& = n^2\be_i\transpose\bT^{-1/2}(\bA - \bP_0)\bT^{-1}(\bA - \bP_0)\bT^{-1/2}\widetilde{\bu}_{0k}
% \\&\quad
 + \frac{n^2}{2}\be_i\transpose\bT^{-1/2}(\bA - \bP_0)\bT^{-1}\bP_0\bT^{-3/2}(\bT - \bD)\widetilde{\bu}_{0k}\\
&\quad + \frac{n^2}{2}\be_i\transpose\bT^{-1/2}(\bA - \bP_0)\bT^{-2}(\bT - \bD)\bP_0\bT^{-1/2}\widetilde{\bu}_{0k}\\ 
&\quad + \frac{n^2}{2}\be_i\transpose\bT^{-1/2}\bP_0\bT^{-3/2}(\bT - \bD)\bT^{-1/2}(\bA - \bP_0)\bT^{-1/2}\widetilde{\bu}_{0k}\\
&\quad + \frac{n^2}{2}\be_i\transpose\bT^{-3/2}(\bT - \bD)\bP_0\bT^{-1}(\bA - \bP_0)\bT^{-1/2}\widetilde{\bu}_{0k}
% \\&\quad
 + \be_i\transpose\bR_{\widetilde{\bE}_1}^{(2)}\widetilde{\bu}_{0k},
\end{align*}
where
\begin{align*}
\|\bR_{\widetilde{\bE}_1}^{(2)}\|_\infty
& \lesssim n^2\|\bT^{-1/2}\|_\infty^2\|\bP_0\|_\infty^2\|\bT^{-3/2}\|_\infty^2\|\bT - \bD\|_\infty^2
\lesssim \|\bT - \bD\|_\infty^2.
\end{align*}
Since Chernoff bound implies $\|\bT - \bD\|_\infty^2\lesssim n(\log n)^2$ with probability at least $1 - 2\exp\{-c(\log n)^2\}$, it follows that
\[
|\be_i\transpose\bR_{\widetilde{\bE}_1}^{(2)}\widetilde{\bu}_{0k}|\leq \|\bR_{\widetilde{\bE}_1}^{(2)}\|_\infty\|\widetilde\bu_{0k}\|_\infty\lesssim n(\log n)^2\|\widetilde\bu_{0k}\|_\infty
\]
with probability at least $1 - 2\exp\{-c(\log n)^2\}$. We then analyze the first five terms on the right-hand side of $\be_i\transpose\widetilde{\bE}_1^2\widetilde{\bu}_{0k}$ separately. Let $i\in[n]$ and $k\in[d]$ be fixed. 
\begin{itemize}
  \item[(1)] For the first term, we observe that
  \begin{align*}
  &n^2\be_i\transpose\bT^{-1/2}(\bA - \bP_0)\bT^{-1}(\bA - \bP_0)\bT^{-1/2}\widetilde{\bu}_{0k}\\
  &\quad = \sum_{i_1 = 1}^n\sum_{i_2 = 1}^n\frac{\{A_{ii_1} - \expect_0(A_{ii_1})\}}{(\bx_{0i}\transpose\bmu_n)^{1/2}(\bx_{0i_1}\transpose\bmu_n)^{1/2}}\frac{\{A_{i_1i_2} - \expect_0(A_{i_1i_2})\}}{(\bx_{0i_1}\transpose\bmu_n)^{1/2}(\bx_{0i_2}\transpose\bmu_n)^{1/2}}\widetilde{u}_{0i_2k}\\
  &\quad = 
  n\sum_{i_1 = 1}^n
  \sum_{i_2 = 1}^n H_{ii_1}H_{i_1i_2}
  \frac{\widetilde{u}_{0i_2k}}
  {(\bx_{0i}\transpose\bmu_n)^{1/2}
   (\bx_{0i_1}\transpose\bmu_n)
   (\bx_{0i_2}\transpose\bmu_n)^{1/2}}.
  \end{align*}
  By exploiting the proof of Lemma \ref{lemma:moment_bound_ASE}, we immediately conclude that
  \[
  \expect_0\{|n^2\be_i\transpose\bT^{-1/2}(\bA - \bP_0)\bT^{-1}(\bA - \bP_0)\bT^{-1/2}\widetilde{\bu}_{0k}|^p\}\leq n^p(2Cp)^{2p}\|\widetilde{\bu}_{0k}\|_\infty^p
  \]
  for any $p$ for a constant $C \geq 1$, and hence, with $p = \lfloor(\log n)^2/(4C)\rfloor$, 
  \begin{align*}
  &\prob_0\left\{|n^2\be_i\transpose\bT^{-1/2}(\bA - \bP_0)\bT^{-1}(\bA - \bP_0)\bT^{-1/2}\widetilde{\bu}_{0k}| > C^2(\log n)^4n\|\widetilde\bu_{0k}\|_\infty\right\}\\
  &\quad\leq \frac{n^p(2Cp)^{2p}\|\widetilde{\bu}_{0k}\|_\infty^p}{C^{2p}(\log n)^{4p}n^p\|\widetilde\bu_{0k}\|_\infty^p} = \left\{\frac{2p}{(\log n)^2}\right\}^{2p}
  = \exp\{-c(\log n)^2\}
  \end{align*}
  by Markov's inequality for a constant $c > 0$.
  \item[(2)] For the second term, write
  \begin{align*}
  &n^2\be_i\transpose \bT^{-1/2}(\bA - \bP_0)\bT^{-1}\bP_0\bT^{-3/2}(\bT - \bD)\widetilde{\bu}_{0k}\\
  &\quad
  =-\sum_{j = 1}^n\sum_{l = 1}^n\sum_{m = 1}^n
  \frac{\{A_{ij} - \expect_0(A_{ij})\}\{A_{lm} - \expect_0(A_{lm})\}\by_{0j}\transpose\by_{0l}\widetilde{u}_{0lk}}
  {(\bx_{0i}\transpose\bmu_n\bx_{0j}\transpose\bmu_n)^{1/2}\bx_{0l}\transpose\bmu_n} \\
  &\quad
  =-\left[\sum_{j = 1}^n\frac{\{A_{ij} - \expect_0(A_{ij})\}\by_{0j}}{(\bx_{0i}\transpose\bmu_n\bx_{0j}\transpose\bmu_n)^{1/2}}\right]\transpose\left[
  \sum_{l = 1}^n\sum_{m = 1}^n\frac{\{A_{lm} - \expect_0(A_{lm})\}\by_{0l}\widetilde{u}_{0lk}}{\bx_{0l}\transpose\bmu_n}
  \right]\\
  &\quad=-\left[\frac{1}{\sqrt{n}}\sum_{j = 1}^n\frac{\{A_{ij} - \expect_0(A_{ij})\}\bx_{0j}}{(\bx_{0i}\transpose\bmu_n)^{1/2}\bx_{0j}\transpose\bmu_n}\right]\transpose\left[
  \sum_{l = 1}^n\sum_{m = 1}^n\frac{\{A_{lm} - \expect_0(A_{lm})\}\bx_{0l}\widetilde{u}_{0lk}}{\sqrt{n}(\bx_{0l}\transpose\bmu_n)^{3/2}}
  \right]
  \end{align*}
  For the first factor on the right-hand side of the last display, we obtain directly from Hoeffding's inequality that
  \begin{align*}
  \left\|\frac{1}{\sqrt{n}}\sum_{j = 1}^n\frac{\{A_{ij} - \expect_0(A_{ij})\}\bx_{0j}}{(\bx_{0i}\transpose\bmu_n)^{1/2}\bx_{0j}\transpose\bmu_n}\right\|_2\leq C\log n
  \end{align*}
  with probability at least $1 - 2\exp\{-c(\log n)^2\}$ for constants $c,C > 0$. The second factor can be written as
  \begin{align*}
  &\sum_{l = 1}^n\sum_{m = 1}^n\frac{\{A_{lm} - \expect_0(A_{lm})\}\bx_{0l}\widetilde{u}_{0lk}}{\sqrt{n}(\bx_{0l}\transpose\bmu_n)^{3/2}}\\
  &\quad = \sum_{l < m}\left\{\frac{\bx_{0l}\widetilde{u}_{0lk}}{\sqrt{n}(\bx_{0l}\transpose\bmu_n)^{3/2}} + \frac{\bx_{0m}\widetilde{u}_{0mk}}{\sqrt{n}(\bx_{0m}\transpose\bmu_n)^{3/2}}\right\}\{A_{lm} - \expect_0(A_{lm})\}
   - \sum_{l = 1}^n\frac{(\bx_{0l}\transpose\bx_{0l})\bx_{0l}\widetilde{u}_{0lk}}{\sqrt{n}(\bx_{0l}\transpose\bmu_n)^{3/2}}\\
  &\quad = \sum_{l < m}\left\{\frac{\bx_{0l}\widetilde{u}_{0lk}}{\sqrt{n}(\bx_{0l}\transpose\bmu_n)^{3/2}} + \frac{\bx_{0m}\widetilde{u}_{0mk}}{\sqrt{n}(\bx_{0m}\transpose\bmu_n)^{3/2}}\right\}\{A_{lm} - \expect_0(A_{lm})\} + O(\sqrt{n}\|\widetilde{\bu}_{0k}\|_{\infty}).
  \end{align*}
  Similarly, the first sum on the right-hand side of the previous display is bounded by a constant multiple of $\sqrt{n}\|\widetilde{\bu}_{0k}\|_\infty\log n$ with probability at least $1 - 2\exp\{-c(\log n)^2\}$ for a constant $c > 0$ by Hoeffding's inequality. Hence we conclude that
  \begin{align*}
  &\prob_0\left\{|
  \be_i\transpose n^2\bT^{-1/2}(\bA - \bP_0)\bT^{-1}\bP_0\bT^{-3/2}(\bT - \bD)\widetilde{\bu}_{0k}| > Cn(\log n)^4\|\widetilde{\bu}_{0k}\|_\infty
  \right\}\\
  &\quad
  \leq 
  \prob_0\left\{|
  \be_i\transpose n^2\bT^{-1/2}(\bA - \bP_0)\bT^{-1}\bP_0\bT^{-3/2}(\bT - \bD)\widetilde{\bu}_{0k}| > C\sqrt{n}(\log n)^2\|\widetilde{\bu}_{0k}\|_\infty
  \right\}\\
  &\quad\leq 2\exp\{-c(\log n)^2\}
  \end{align*}
  for some constants $C, c> 0$.

  \item[(3)] The third term can be written as
  \begin{align*}
  &\frac{n^2}{2}\be_i\transpose\bT^{-1/2}(\bA - \bP_0)\bT^{-1/2}(\eye - \bT^{-1}\bD)(\bY_0\bY_0\transpose\widetilde{\bu}_{0k})\\
  &\quad
  = \frac{n}{2}\sum_{j = 1}^n
  \left\{
  \frac{A_{ij} - \expect_0(A_{ij})}{(\bx_{0i}\transpose\bmu_n)^{1/2}(\bx_{0j}\transpose\bmu_n)^{1/2}}\right\}
  \left\{1 - \frac{\sum_{l = 1}^nA_{jl}}{\sum_{l = 1}^n\expect_0(A_{jl})}\right\}\be_j\transpose\bY_0\bY_0\transpose\widetilde{\bu}_{0k}\\
  &\quad
  = -\frac{1}{2}\sum_{j = 1}^n\sum_{l = 1}^n
  \left\{
  \frac{A_{ij} - \expect_0(A_{ij})}{(\bx_{0i}\transpose\bmu_n)^{1/2}(\bx_{0j}\transpose\bmu_n)^{1/2}}
  \right\}\left\{
  \frac{A_{jl} - \expect_0(A_{jl})}{\bx_{0j}\transpose\bmu_n}\be_j\transpose\bY_0\bY_0\transpose\widetilde{\bu}_{0k}\right\}\\
  &\quad
  = -\frac{n}{2}\sum_{i_1 = 1}^n\sum_{i_2 = 1}^nH_{ii_1}H_{i_1i_2}\frac{\be_{i_1}\transpose\bY_0\bY_0\transpose\widetilde{\bu}_{0k}}{(\bx_{0i}\transpose\bmu_n)^{1/2}(\bx_{0i_1}\transpose\bmu_n)^{3/2}}.
  \end{align*}
  Since
  \begin{align*}
  \max_{i,j\in[n]}\left|\frac{\be_j\transpose\bY_0\bY_0\transpose\widetilde{\bu}_{0k}}{(\bx_{0i}\transpose\bmu_n)^{1/2}(\bx_{0j}\transpose\bmu_n)^{3/2}}\right|
  &\lesssim \max_{j\in [n]}\left|\by_{0j}\transpose\sum_{m = 1}^n\by_{0m}(\be_m\transpose\bu_{0k})\right|\lesssim
  \|\widetilde\bu_{0k}\|_\infty,
  \end{align*}
  we then apply the proof of Lemma \ref{lemma:moment_bound_ASE} and obtain that the $p$th moment on the right-hand side can be bounded as follows
  \[
  \expect_0\left\{\left|
  \frac{n^2}{2}\be_i\transpose\bT^{-1/2}(\bA - \bP_0)\bT^{-1/2}(\eye - \bT^{-1}\bD)(\bY_0\bY_0\transpose\widetilde{\bu}_{0k})
  \right|^p
  \right\}\leq
   n^p(2Cp)^{2p}\|\widetilde\bu_{0k}\|_\infty^p.
  \]
  Hence we conclude by Markov's inequality that
  \[
  \prob_0\left\{
  \left|
  \frac{n^2}{2}\be_i\transpose\bT^{-1/2}(\bA - \bP_0)\bT^{-1/2}(\eye - \bT^{-1}\bD)(\bY_0\bY_0\transpose\widetilde{\bu}_{0k})
  \right|
   > Cn(\log n)^4\|\widetilde\bu_{0k}\|_\infty
  \right\}\leq \exp\{-c(\log n)^2\}
  \]
  for some constants $C, c > 0$. The argument for the detailed derivation is exactly the same as (1).
  \item[(4)] For the fourth term, write
  \begin{align*}
  &n^2\be_i\transpose\bT^{-1/2}\bP_0\bT^{-3/2}(\bT - \bD)\bT^{-1/2}(\bA - \bP_0)\bT^{-1/2}\widetilde\bu_{0k}\\
  &\quad
  = \by_{0i}\transpose[\by_{01},\ldots,\by_{0n}]\mathrm{diag}\left\{\left(
  \frac{\sum_{l = 1}^n\expect_0(A_{jl}) - \sum_{l = 1}^nA_{jl}}{\bx_{0j}\transpose\bmu_n}
  \right)_{j = 1}^n\right\}
  \left[\frac{A_{jm} - \expect_0(A_{jm})}{(\bx_{0j}\transpose\bmu_n\bx_{0m}\transpose\bmu_n)^{1/2}}\right]_{n\times n}\widetilde{\bu}_{0k}\\
  &\quad
  = -\frac{1}{n}\sum_{j = 1}^n\sum_{l = 1}^n\sum_{m = 1}^n
  \frac{\{(A_{jl} - \expect_0(A_{jl})\}\{(A_{jm} - \expect_0(A_{jm})\}\bx_{0i}\transpose\bx_{0j}}{(\bx_{0j}\transpose\bmu_n)(\bx_{0i}\transpose\bmu_n\bx_{0j}\transpose\bmu_n)^{1/2}(\bx_{0j}\transpose\bmu_n\bx_{0m}\transpose\bmu_n)^{1/2}}\widetilde{u}_{0mk}
  \\
  % \\&\quad
  &\quad
  = -\frac{1}{n}\sum_{l = 1}^n\sum_{j = 1}^n\sum_{m = 1}^n
  \frac{\{(A_{lj} - \expect_0(A_{lj})\}\{(A_{jm} - \expect_0(A_{jm})\}\bx_{0i}\transpose\bx_{0j}}{(\bx_{0j}\transpose\bmu_n)(\bx_{0i}\transpose\bmu_n\bx_{0j}\transpose\bmu_n)^{1/2}(\bx_{0j}\transpose\bmu_n\bx_{0m}\transpose\bmu_n)^{1/2}}\widetilde{u}_{0mk}
  = -\frac{1}{n}\sum_{l = 1}^nz_{ikl},
  \end{align*}
  where
  \begin{align*}
  z_{ikl}
  & = \sum_{j = 1}^n\sum_{m = 1}^n
  \frac{\{(A_{lj} - \expect_0(A_{lj})\}\{(A_{jm} - \expect_0(A_{jm})\}\bx_{0i}\transpose\bx_{0j}}{(\bx_{0j}\transpose\bmu_n)(\bx_{0i}\transpose\bmu_n\bx_{0j}\transpose\bmu_n)^{1/2}(\bx_{0j}\transpose\bmu_n\bx_{0m}\transpose\bmu_n)^{1/2}}\widetilde{u}_{0mk}\\
  & = n\sum_{i_1 = 1}^n\sum_{i_2 = 1}^nH_{li_1}H_{i_1i_2}\frac{\widetilde{u}_{0i_2k}}{(\bx_{0i_1}\transpose\bmu_n)(\bx_{0i}\transpose\bmu_n\bx_{0i_1}\transpose\bmu_n)^{1/2}(\bx_{0i_1}\transpose\bmu_n\bx_{0i_2}\transpose\bmu_n)^{1/2}}.
  \end{align*}
  An argument similar to the proof of Lemma \ref{lemma:moment_bound_ASE} yields
  \[
  \prob_0(|z_{ikl}| > Cn(\log n)^4\|\widetilde\bu_{0k}\|_\infty)\leq \exp\{-c(\log n)^2\}
  \]
  for some constants $C, c> 0$ that does not depend on $i,k,l$. 
  Since $\log n\ll(\log n)^2$, we apply the union bound to obtain
  \[
  \prob_0\left(\left|\frac{1}{n}\sum_{l = 1}^nz_{ikl}\right| > Cn(\log n)^2\|\widetilde\bu_{0k}\|_\infty\right)
  \leq \sum_{l = 1}^n\prob_0(|z_{ikl}| > Cn(\log n)^4\|\widetilde\bu_{0k}\|_\infty)\leq \exp\{-c(\log n)^2\},
  \]
  for some constant $c > 0$. 
  \item[(5)] We write the fifth term as
  \begin{align*}
  &\frac{n^2}{2}\be_i\transpose\bT^{-3/2}(\bT - \bD)\bP_0\bT^{-1}(\bA - \bP_0)\bT^{-1/2}\widetilde{\bu}_{0k}\\
  &\quad = \frac{1}{2}\left\{\sum_{j = 1}^n\frac{\expect_0(A_{ij}) - A_{ij}}{\bx_{0i}\transpose\bmu_n}\right\}\be_i\transpose\bY_0\bY_0\transpose\left[\frac{A_{lm} - \expect_0(A_{lm})}{(\bx_{0l}\transpose\bmu_n\bx_{0m}\transpose\bmu_n)^{1/2}}\right]_{n\times n}\widetilde{\bu}_{0k}\\
  &\quad = \frac{1}{2}\left\{\sum_{j = 1}^n\frac{\expect_0(A_{ij}) - A_{ij}}{\sqrt{n}\bx_{0i}\transpose\bmu_n}\right\}
  \frac{1}{\sqrt{n}}\sum_{l = 1}^n\sum_{m = 1}^n\frac{\bx_{0i}\transpose\bx_{0l}\{A_{lm} - \expect_0(A_{lm})\}\widetilde{u}_{0mk}}{(\bx_{0i}\transpose\bmu_n\bx_{0l}\transpose\bmu_n)^{1/2}(\bx_{0l}\transpose\bmu_n\bx_{0m}\transpose\bmu_n)^{1/2}}.
  \end{align*}
  By Hoeffding's inequality, the first factor can be bounded:
  \begin{align*}
  \prob_0\left(\left|\sum_{j = 1}^n\frac{\expect_0(A_{ij}) - A_{ij}}{\sqrt{n}\bx_{0i}\transpose\bmu_n}\right| > C\log n\right)\leq 2\exp\{-c(\log n)^2\}.
  \end{align*}
  We further write the second factor as
  \begin{align*}
  &\frac{1}{\sqrt{n}}\sum_{l = 1}^n\sum_{m = 1}^n\frac{\bx_{0i}\transpose\bx_{0l}\{A_{lm} - \expect_0(A_{lm})\}\widetilde{u}_{0mk}}{(\bx_{0i}\transpose\bmu_n\bx_{0l}\transpose\bmu_n)^{1/2}(\bx_{0l}\transpose\bmu_n\bx_{0m}\transpose\bmu_n)^{1/2}}\\
  &\quad = 
  \sum_{l < m}\frac{\zeta_{iklm}}{\sqrt{n}}\{A_{lm} - \expect_0(A_{lm})\}
  % \\&\quad\quad 
  - \sum_{l = 1}^m\frac{\bx_{0l}\transpose\bx_{0l}\bx_{0i}\transpose\bx_{0l}\widetilde{u}_{0lk}}{\sqrt{n}(\bx_{0i}\transpose\bmu_n\bx_{0l}\transpose\bmu_n)^{1/2}\bx_{0l}\transpose\bmu_n}\\
  &\quad = 
  \sum_{l < m}\frac{\zeta_{iklm}}{\sqrt{n}}\{A_{lm} - \expect_0(A_{lm})\} + O(\sqrt{n}\|\widetilde\bu_{0k}\|_\infty),
  \end{align*}
  where
  \[
  \zeta_{iklm} = \left\{
  \frac{\bx_{0i}\transpose\bx_{0l}\widetilde{u}_{0mk}}{(\bx_{0i}\transpose\bmu_n\bx_{0l}\transpose\bmu_n)^{1/2}(\bx_{0l}\transpose\bmu_n\bx_{0m}\transpose\bmu_n)^{1/2}}
  + 
  \frac{\bx_{0i}\transpose\bx_{0m}\widetilde{u}_{0lk}}{(\bx_{0i}\transpose\bmu_n\bx_{0m}\transpose\bmu_n)^{1/2}(\bx_{0m}\transpose\bmu_n\bx_{0l}\transpose\bmu_n)^{1/2}}
  \right\}.
  \]
  Clearly, $\max_{i,k,l,m}|\zeta_{iklm}|\lesssim \|\widetilde{\bu}_{0k}\|_\infty$. It follows from Hoeffding's inequality that
  \begin{align*}
  \prob_0\left\{\left|
  \sum_{l < m}\frac{\zeta_{iklm}}{\sqrt{n}}\{A_{lm} - \expect_0(A_{lm})\}
  \right| > C\sqrt{n}\|\widetilde{\bu}_{0k}\|_\infty(\log n)\right\}\leq \exp\{-c(\log n)^2\}
  \end{align*}
  for some constants $C, c > 0$. We thus conclude that
  \begin{align*}
  &\prob_0\left\{\left|
  \frac{n^2}{2}\be_i\transpose\bT^{-3/2}(\bT - \bD)\bP_0\bT^{-1}(\bA - \bP_0)\bT^{-1/2}\widetilde{\bu}_{0k}
  \right|
  > C\sqrt{n}\|\widetilde{\bu}_{0k}\|_\infty(\log n)^2
  \right\}
  % \\
  % &\quad
  % \leq \prob_0\left(\left|\sum_{j = 1}^n\frac{\expect_0(A_{ij}) - A_{ij}}{\sqrt{n}\bx_{0i}\transpose\bmu_n}\right| > C\log n\right)
  % \prob_0\left(\left|
  % \sum_{l < m}\frac{\zeta_{iklm}}{\sqrt{n}}\{A_{lm} - \expect_0(A_{lm})\}
  % \right| > C\sqrt{n}\|\widetilde{\bu}_{0k}\|_\infty(\log n)\right)\\
  % &\quad
  \leq 2\exp\{-c(\log n)^2\}
  \end{align*}
  for some constants $C, c > 0$. 
\end{itemize}
Now combining the analyses (1)-(5) together with the bound on $|\be_i\transpose\bR_{\widetilde{\bE}_1}^{(2)}\widetilde{\bu}_{0k}|$ yields
\begin{align*}
\prob_0\left\{|\be_i\transpose\widetilde{\bE}_1^2\widetilde\bu_{0k}| > Cn(\log n)^4\|\widetilde\bu_{0k}\|_\infty\right\}\leq \exp\{-c(\log n)^2\}
\end{align*}
for some constants $C, c > 0$. Hence, we can take the union bound over all $i\in[n]$ and $k\in [d]$ to conclude that
\begin{align*}
\prob_0\left[\bigcup_{i\in[n]}\bigcup_{k\in [d]}\left\{|\be_i\transpose\widetilde{\bE}_1^2\widetilde\bu_{0k}| > C_{\widetilde{\bE}_1}^2n(\log n)^4\|\widetilde\bu_{0k}\|_\infty\right\}\right]\leq \exp\{-\nu_2(\log n)^2\}
\end{align*}
The proof is completed by taking the union bound over $t = 1$ and $t = 2$. 
\end{proof}

\begin{lemma}\label{lemma:Laplacian_normality}
Assume the conditions of Theorem \ref{thm:LSE_limit_theorem} hold and assume that $\rho_n^{-1}\lesssim (\log n)^\eps$ for some $\eps > 0$. Then for any fixed $i\in[n]$, 
\[
  \left[\frac{1}{\sqrt{n\rho_n}}\be_i\transpose\widetilde{\bE}(\sqrt{n}\bY_0)\right]\transpose = \frac{1}{\sqrt{n\rho_n}}\sum_{j = 1}^n\widetilde{E}_{ij}\sqrt{n}\by_{0j}
  \overset{\calL}{\to}\mathrm{N}(\zero, \widetilde{\bDelta}\widetilde{\bSigma}(\bx_{0i})\widetilde{\bDelta})
  \]
\end{lemma}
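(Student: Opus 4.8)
The plan is to follow the signal-plus-noise framework of \cite{cape2019signal} already employed for the ASE in this appendix, reducing the $i$th row of $\widetilde{\bE}\sqrt{n}\bY_0$ to a single sum of independent mean-zero vectors and then invoking Lyapunov's central limit theorem. First I would linearize the normalized Laplacian. Writing $\calL(\bA) = \bD^{-1/2}\bA\bD^{-1/2}$ and $\calL(\bP_0) = \bT^{-1/2}\bP_0\bT^{-1/2} = \bY_0\bY_0\transpose$, I would expand $\bD^{-1/2} = \bT^{-1/2}\{\eye_n + \bT^{-1}(\bD - \bT)\}^{-1/2}$ through a Neumann/Taylor series. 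This is legitimate because $\|\bT^{-1}\|_\infty = O((n\rho_n)^{-1})$ and, by the Chernoff bound used in the proof of Lemma \ref{lemma:Laplacian_moment_bound}, $\|\bT - \bD\|_\infty = O_{\prob_0}(\sqrt{n\rho_n}\log n)$, so that $\|\bT^{-1}(\bD - \bT)\|_\infty = O_{\prob_0}((n\rho_n)^{-1/2}\log n) = o_{\prob_0}(1)$ under the density hypothesis $\rho_n^{-1}\lesssim(\log n)^\eps$. Collecting the first-order terms gives $\widetilde{\bE} = \rho_n\widetilde{\bE}_1 + (\text{higher order})$, with $\widetilde{\bE}_1$ exactly the matrix appearing in Lemma \ref{lemma:Laplacian_moment_bound}.

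The higher-order remainder must then be shown to be $o_{\prob_0}(1)$ after multiplication by $(n\rho_n)^{-1/2}\sqrt{n}\bY_0$. Since the columns of $\bY_0$ lie in the column space of $\widetilde{\bU}_\bP$, I would write $\sqrt{n}\bY_0 = \sqrt{n}\,\widetilde{\bU}_\bP\bM$ for a matrix $\bM$ with $\|\bM\|_2 = O(1)$, so that the quadratic remainder is governed by $\be_i\transpose\widetilde{\bE}_1^2\widetilde{\bu}_{0k}$. The $t = 2$ tail bound of Lemma \ref{lemma:Laplacian_moment_bound} (or its $\rho_n^{-1}\lesssim(\log n)^\eps$ analogue) together with $\|\widetilde{\bU}_\bP\|_{2\to\infty} = O(n^{-1/2})$ then yields the desired $o_{\prob_0}(1)$ bound, exactly as in steps (1)--(5) of the ASE normality proof.

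Next I would analyze the leading term $(n\rho_n)^{-1/2}\be_i\transpose(\rho_n\widetilde{\bE}_1)\sqrt{n}\bY_0$. Decomposing $\widetilde{\bE}_1$ into its direct part $n\bT^{-1/2}\bE\bT^{-1/2}$ and the two degree corrections, and summing the $i$th row against $\sqrt{n}\by_{0m} = \bx_{0m}(\bx_{0m}\transpose\bmu_n)^{-1/2}$, the direct part and the \emph{diagonal} correction $\tfrac{n}{2}\bT^{-3/2}(\bT - \bD)\bP_0\bT^{-1/2}$ (whose $i$th row only sees $(\bT - \bD)_{ii} = -\sum_l E_{il}$) combine into a single sum over the edges incident to $i$,
\[
\frac{1}{\sqrt{n\rho_n}}\sum_{j\neq i}E_{ij}\bg_{ij},\quad
\bg_{ij} = \frac{1}{(\bx_{0i}\transpose\bmu_n)^{1/2}}\left\{\frac{\bx_{0j}}{\bx_{0j}\transpose\bmu_n} - \frac{\widetilde\bDelta_n\bx_{0i}}{2\bx_{0i}\transpose\bmu_n}\right\},
\]
where $E_{ij} = A_{ij} - \rho_n\bx_{0i}\transpose\bx_{0j}$. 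The remaining \emph{off-diagonal} correction $\tfrac{n}{2}\bT^{-1/2}\bP_0\bT^{-3/2}(\bT - \bD)$ produces a genuine double sum over all edges; a direct second-moment computation shows that its per-edge coefficients are $O((n^{3}\rho_n)^{-1/2})$, so its variance after scaling is $O(n^{-1})$ and it is absorbed into the $o_{\prob_0}(1)$ remainder.

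Finally I would apply Lyapunov's central limit theorem to $(n\rho_n)^{-1/2}\sum_{j\neq i}E_{ij}\bg_{ij}$, a sum of independent mean-zero random vectors. The third-moment Lyapunov ratio is $O((n\rho_n)^{-1/2})\to 0$ because $|E_{ij}|\leq 1$ and $\sup_{j}\|\bg_{ij}\| = O(1)$, while the variance equals
\[
\frac{1}{n}\sum_{j\neq i}\bx_{0i}\transpose\bx_{0j}(1 - \rho_n\bx_{0i}\transpose\bx_{0j})\bg_{ij}\bg_{ij}\transpose.
\]
Using the identity $\{\eye_d - \widetilde\bDelta\bx_{0i}\bmu\transpose/(2\bmu\transpose\bx_{0i})\}\bx_{0j}/(\bmu\transpose\bx_{0j}) = \bx_{0j}/(\bmu\transpose\bx_{0j}) - \widetilde\bDelta\bx_{0i}/(2\bmu\transpose\bx_{0i})$, this variance factors into the sandwich $\{\eye_d - \widetilde\bDelta\bx_{0i}\bmu\transpose/(2\bmu\transpose\bx_{0i})\}$ times the integral factor of $\widetilde\bSigma(\bx_{0i})$ times its transpose, which by condition \eqref{eqn:strong_convergence_measure} and the uniform integral-convergence argument of Lemma \ref{lemma:uniform_convergence_G} tends to $\widetilde\bDelta\widetilde\bSigma(\bx_{0i})\widetilde\bDelta$. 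The main obstacle is the first-order reduction: establishing the $\rho_n^{-1}\lesssim(\log n)^\eps$ analogue of the dense-regime moment bound of Lemma \ref{lemma:Laplacian_moment_bound}, and the careful bookkeeping that confirms only the edge-incident single sum survives while every degree-correction double sum is lower-order.
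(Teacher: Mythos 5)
Your architecture coincides with the paper's proof of this lemma: the same first-order expansion of $\calL(\bA)-\calL(\bP_0)$ acting on $\sqrt{n}\bY_0$ (the paper imports the decomposition \eqref{eqn:Laplacian_decomposition} from Lemma B.4 of \cite{tang2018} rather than re-deriving it by a Neumann series, but the content is identical), the same identification that the direct term $n\bT^{-1/2}(\bA-\bP_0)\bT^{-1/2}$ and the \emph{left} degree correction $\tfrac{n}{2}\bT^{-3/2}(\bT-\bD)\bP_0\bT^{-1/2}$ collapse into the single sum $(n\rho_n)^{-1/2}\sum_{j}E_{ij}\bg_{ij}$, $E_{ij}=A_{ij}-\rho_n\bx_{0i}\transpose\bx_{0j}$, with exactly your $\bg_{ij}$, the same sandwich identity for the variance yielding $\widetilde\bDelta\widetilde\bSigma(\bx_{0i})\widetilde\bDelta$, and Lyapunov's theorem with ratio $O((n\rho_n)^{-1/2})$. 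Your Chebyshev/second-moment treatment of the right degree correction $\tfrac{n}{2}\bT^{-1/2}\bP_0\bT^{-3/2}(\bT-\bD)$ --- per-edge coefficients $O((n^{3}\rho_n)^{-1/2})$, variance $O(n^{-1})$ --- is a legitimate substitute for the paper's Hoeffding bound on the entries of $\sqrt{n\rho_n}\,\widetilde\bU_\bP\transpose(\bT-\bD)\bT^{-1}\widetilde\bU_\bP$; both give $o_{\prob_0}(1)$.

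The genuine flaw is your remainder step. The second-order remainder of the Laplacian expansion is \emph{not} governed by $\be_i\transpose\widetilde\bE_1^2\widetilde{\bu}_{0k}$: the square $\widetilde\bE_1^2$ contains terms such as $n^2\bT^{-1/2}(\bA-\bP_0)\bT^{-1}(\bA-\bP_0)\bT^{-1/2}$, which never arise when you expand $\bD^{-1/2}=\bT^{-1/2}\{\eye_n+\bT^{-1}(\bD-\bT)\}^{-1/2}$, while the actual remainder consists of the cross terms $\bT^{-1/2}(\bA-\bP_0)(\bD^{-1/2}-\bT^{-1/2})$, $(\bD^{-1/2}-\bT^{-1/2})(\bA-\bP_0)\bD^{-1/2}$, $\bT^{-3/2}(\bT-\bD)\bP_0(\bD^{-1/2}-\bT^{-1/2})$, plus a residual $\widetilde\bR^{(-1/2)}$ with $\|\widetilde\bR^{(-1/2)}\|_\infty=O_{\prob_0}((n\rho_n)^{-1}\log n)$. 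These require individual bounds (the paper's items (2)--(5) in this very proof, not the ASE proof), using the refinement $\bD^{-1/2}-\bT^{-1/2}=\tfrac12\bT^{-3/2}(\bT-\bD)+O_{\prob_0}((n\rho_n)^{-3/2}\log n)$, an expectation computation involving $\bP_0-\bP_0\circ\bP_0$ for the first cross term, and the $O_{\prob_0}(1)$ size (by a CLT) of $\sum_j E_{ij}\bx_{0j}(n\rho_n)^{-1/2}(\bx_{0j}\transpose\bmu_n)^{-1}$ for the second --- none of which is delivered by a power-of-$\widetilde\bE_1$ tail bound. Relatedly, Lemma \ref{lemma:Laplacian_moment_bound} plays no role in this lemma: it is used only to verify condition A4 of the eigenvector framework of \cite{cape2019signal} in the proof of \eqref{eqn:LSE_normality}, where powers $\widetilde\bE^t$ genuinely appear through the von Neumann series for $\widetilde\bU_\bA$. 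So what you flag as the main obstacle --- extending that moment bound to $\rho_n^{-1}\lesssim(\log n)^\eps$ --- is a non-obstacle here, while the real remaining work, the term-by-term control of the cross remainders under the stated sparsity, is exactly what your sketch leaves implicit.
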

\begin{proof}[\bf Proof]
% [\bf Proof of Lemma \ref{lemma:Laplacian_normality}]
We begin the proof by first remarking that Lemma B.4 of \cite{tang2018} also holds when $\bA\sim\mathrm{RDPG}(\bX_0)$ with a deterministic latent position matrix $\bX_0$ and recalling the decomposition of the random vector of interest in Appendix B of \cite{tang2018}:
\begin{equation}
\label{eqn:Laplacian_decomposition}
\begin{aligned}
\frac{1}{\sqrt{n\rho_n}}\be_i\transpose\widetilde{\bE}(\sqrt{n}\bY_0)
& = {\sqrt{n\rho_n}}\be_i\transpose\bT^{-1/2}(\bA - \bP_0)\bT^{-1/2}(\sqrt{n}\bY_0)
% \\&\quad 
+ \frac{\sqrt{n\rho_n}}{2}\be_i\transpose\bT^{-3/2}(\bT - \bD)\bP_0\bT^{-1/2}(\sqrt{n}\bY_0)\\
&\quad + \frac{\sqrt{n\rho_n}}{2}\be_i\transpose\bT^{-1/2}\bP_0\bT^{-3/2}(\bT - \bD)(\sqrt{n}\bY_0)
\\&\quad
 + \frac{\sqrt{n\rho_n}}{2}\be_i\transpose\bT^{-1/2}(\bA - \bP_0)(\bD^{-1/2} - \bT^{-1/2})(\sqrt{n}\bY_0)\\
&\quad + \frac{\sqrt{n\rho_n}}{2}\be_i\transpose(\bD^{-1/2} - \bT^{-1/2})(\bA - \bP_0)\bD^{-1/2}(\sqrt{n}\bY_0)\\
&\quad + \frac{\sqrt{n\rho_n}}{2}\be_i\transpose\bT^{-3/2}(\bT - \bD)\bP_0(\bD^{-1/2} - \bT^{-1/2})(\sqrt{n}\bY_0)
\\&\quad
 + \frac{\sqrt{n\rho_n}}{2}\be_i\transpose\widetilde\bR^{(-1/2)}(\sqrt{n}\bY_0),
\end{aligned}
\end{equation}
where $\|\widetilde\bR^{(-1/2)}\|_\infty = O_{\prob_0}((n\rho_n)^{-1}\log n)$ by exploiting the proof of (B.11) and (B.13) in Appendix B.2 of \cite{tang2018}, together with the fact that $\|\bP_0\|_\infty = O(n\rho_n)$. We prove the desired asymptotic normality result by showing that the decomposition on the right-hand side of \eqref{eqn:Laplacian_decomposition} is dominated by the first two terms, whereas the remainders are asymptotically negligible.

\vspace*{2ex}\noindent
We next show that the remainders are $o_{\prob_0}(1)$ by establishing the following results.
\begin{itemize}
  \item[(1)] ${\sqrt{n\rho_n}}\|\be_i\transpose\bT^{-1/2}\bP_0\bT^{-3/2}(\bT - \bD)(\sqrt{n}\bY_0)\|_2 = o_{\prob_0}(1)$. Observing that $\|\widetilde\bS_\bP\|_2 = O(1)$ and $\|\widetilde\bU_\bP\|_{2\to\infty} = \|\bY_0\bS_\bP^{-1/2}\|_{2\to\infty}\lesssim n^{-1/2}$, we then write
  \begin{align*}
  {\sqrt{n\rho_n}}\|\be_i\transpose\bT^{-1/2}\bP_0\bT^{-3/2}(\bT - \bD)(\sqrt{n}\bY_0)\|_2 
  % \\
  &\leq n\sqrt{\rho_n}\|\widetilde{\bU}_\bP\widetilde{\bS}_\bP\widetilde{\bU}_\bP\transpose(\bT - \bD)\bT^{-1}\widetilde{\bU}_\bP\widetilde{\bS}_\bP^{1/2}\|_{2\to\infty}\\
  &\lesssim n\sqrt{\rho_n}\|\widetilde{\bU}_\bP\|_{2\to\infty}\|\widetilde{\bU}_\bP\transpose(\bT - \bD)\bT^{-1}\widetilde{\bU}_\bP\|_2\\
  &\lesssim \|\sqrt{n\rho_n}\widetilde{\bU}_\bP\transpose(\bT - \bD)\bT^{-1}\widetilde{\bU}_\bP\|_2.
  \end{align*}
  Let $\widetilde{u}_{0jk}$ be the $(j, k)$ entry of $\widetilde\bU_{\bP}$. Clearly, $\max_{j,k}|\widetilde{u}_{0jk}|\leq\|\widetilde\bU_{\bP}\|_{2\to\infty}\lesssim n^{-1/2}$. We consider the $(k, l)$ entry of $\sqrt{n\rho_n}\widetilde{\bU}_\bP\transpose(\bT - \bD)\bT^{-1}\widetilde{\bU}_\bP$, which can be written as
  \begin{align*}
  \sum_{i = 1}^n\sum_{j = 1}^n\frac{\rho_{n}\bx_{0i}\transpose\bx_{0j} - A_{ij}}{\sqrt{n\rho_n(\bx_{0j}\transpose\bmu_n)^2}}\widetilde{u}_{0ik}\widetilde{u}_{0il}
  & = -2\sum_{i < j}\frac{A_{ij} - \rho_{n}\bx_{0i}\transpose\bx_{0j}}{\sqrt{n\rho_n}(\bx_{0j}\transpose\bmu_n)}\widetilde{u}_{0ik}\widetilde{u}_{0il} + \sum_{i = 1}^n\frac{\rho_n\bx_{0i}\transpose\bx_{0j}}{\sqrt{n\rho_n}(\bx_{0j}\transpose\bmu_n)}\widetilde{u}_{0ik}\widetilde{u}_{0il}\\
  & = -2\sum_{i < j}\frac{A_{ij} - \rho_{n}\bx_{0i}\transpose\bx_{0j}}{\sqrt{n\rho_n}(\bx_{0j}\transpose\bmu_n)}\widetilde{u}_{0ik}\widetilde{u}_{0il} + o\left(1\right).
  \end{align*}
  By Hoeffding's inequality, for any $t > 0$,
  \begin{align*}
  \prob_0\left(
  \left|\sum_{i < j}\frac{A_{ij} - \rho_{n}\bx_{0i}\transpose\bx_{0j}}{\sqrt{n\rho_n}(\bx_{0j}\transpose\bmu_n)}\widetilde{u}_{0ik}\widetilde{u}_{0il}\right| > t
  \right)
  &\leq 2\exp\left\{-t^2\left(\sum_{i < j}\frac{\widetilde{u}_{0ik}^2\widetilde{u}_{0il}^2}{n\rho_n}\right)^{-1}\right\} = 2\exp(-n\rho_nt^2)\to 0.
  \end{align*}
  Hence we conclude that
  \[
  {\sqrt{n\rho_n}}\|\be_i\transpose\bT^{-1/2}\bP_0\bT^{-3/2}(\bT - \bD)(\sqrt{n}\bY_0)\|_2 
  \lesssim
  \sum_{k,l\in[d]}\left|\sum_{i < j}\frac{A_{ij} - \rho_{n}\bx_{0i}\transpose\bx_{0j}}{\sqrt{n\rho_n}(\bx_{0j}\transpose\bmu_n)}\widetilde{u}_{0ik}\widetilde{u}_{0il}\right| + o\left(1\right) = o_{\prob_0}(1).
  \]

  \item[(2)] $\sqrt{n\rho_n}\|\be_i\transpose\bT^{-1/2}(\bA - \bP_0)(\bD^{-1/2} - \bT^{-1/2})(\sqrt{n}\bY_0)\|_2 = o_{\prob_0}(1)$.
  By Lemma B.4 of \cite{tang2018},
\[
\bD^{-1/2} - \bT^{-1/2} = \frac{1}{2}\bT^{-3/2}(\bT - \bD) + O_{\prob_0}((n\rho_n)^{-3/2}\log n). 
\]
Therefore,
\begin{align*}
&\sqrt{n\rho_n}\be_i\transpose\bT^{-1/2}(\bA - \bP_0)(\bD^{-1/2} - \bT^{-1/2})\sqrt{n}\bY_0\\
&\quad = \frac{1}{2}\sqrt{n\rho_n}\be_i\transpose\bT^{-1/2}(\bA - \bP_0)(\bT - \bD)\bT^{-3/2}\sqrt{n}\bY_0 + \sqrt{n}O_{\prob_0}\{\|\bA - \bP_0\|_2(n\rho_n)^{-3/2}\log n\}\\
&\quad = \frac{1}{2}\sqrt{n\rho_n}\be_i\transpose\bT^{-1/2}(\bA - \bP_0)(\bT - \bD)\bT^{-3/2}\sqrt{n}\bY_0 + O_{\prob_0}(\sqrt{n}(n\rho_n)^{-1}\log n)\\
&\quad = \frac{1}{2}\sqrt{n\rho_n}\be_i\transpose\bT^{-1/2}(\bA - \bP_0)(\bT - \bD)\bT^{-3/2}\sqrt{n}\bY_0 + o_{\prob_0}(1).
\end{align*}
Let $[\bY_{0}]_{*k}$ be the $k$th column of $\bY_0$, and $\widetilde\bbeta_k = \sqrt{n}\bT^{-3/2}[\bY_0]_{*k}$. Then
we consider
\begin{align*}
Z_{ik}
& = -\frac{1}{2}\sqrt{n\rho_n}\be_i\transpose\bT^{-1/2}(\bA - \bP_0)(\bT - \bD)\bT^{-3/2}\sqrt{n}[\bY_0]_{*k}
.
% \\
% & = \frac{1}{2(\bx_{0i}\transpose\bmu_n)^{1/2}}\sum_{j = 1}^n\sum_{l = 1}^n(A_{ij} - \rho_n\bx_{0i}\transpose\bx_{0j})(A_{ij} - \rho_n\bx_{0i}\transpose\bx_{0j})\widetilde[\bbeta_{k}]_l.
\end{align*}
Observe that
\[
\expect_0(Z_{ik}) = \frac{1}{2}\be_i\transpose\bT^{-1/2}(\bP_0 - \bP_0\circ\bP_0)\bT^{-3/2}\sqrt{n}[\bY_{0}]_{*k},
\]
by the computation of $\expect_0\{(\bA - \bP_0)(\bT - \bD)\}$, where $\circ$ is the Hadamard (entry-wise) matrix product operator, and that $\|\bY_0\|_\infty\lesssim n^{-1/2}$, we obtain
\begin{align*}
|\expect_0(Z_{ik})|
& \leq \frac{1}{2}\|\bT^{-1/2}\|_\infty\|\bP_0\|_\infty\|\bT^{-3/2}\|_\infty\sqrt{n}\|\bY_0\|_\infty
% \\&
 = O\{(n\rho_n)^{-1/2}(n\rho_n)(n\rho_n)^{-3/2}\} = o(1).
\end{align*}
Hence $Z_{ik} = o_{\prob_0}(1)$ by Markov's inequality, and hence, 
\[
\sqrt{n\rho_n}\be_i\transpose\bT^{-1/2}(\bA - \bP_0)(\bD^{-1/2} - \bT^{-1/2})\sqrt{n}\bY_0 = o_{\prob_0}(1).\]

  \item[(3)] $\sqrt{n\rho_n}\|\be_i\transpose(\bD^{-1/2} - \bT^{-1/2})(\bA - \bP_0)\bD^{-1/2}(\sqrt{n}\bY_0)\|_2 = o_{\prob_0}(1)$. Again, by Lemma B.4 of \cite{tang2018}, we have
  % \[
  $\bD^{-1/2} - \bT^{-1/2} = O_{\prob_0}((n\rho_n)^{-1}\log n)$. Therefore,
  % \]
  \begin{align*}
  &\sqrt{n\rho_n}\|\be_i\transpose(\bD^{-1/2} - \bT^{-1/2})(\bA - \bP_0)\bD^{-1/2}(\sqrt{n}\bY_0)\|_2\\
  &\quad\leq \sqrt{n\rho_n}\|\be_i\transpose(\bD^{-1/2} - \bT^{-1/2})(\bA - \bP_0)\bT^{-1/2}(\sqrt{n}\bY_0)\|_2\\
  &\qquad + 
   \sqrt{n\rho_n}\|\be_i\transpose(\bD^{-1/2} - \bT^{-1/2})(\bA - \bP_0)(\bD^{-1/2} - \bT^{-1/2})(\sqrt{n}\bY_0)\|_2\\
  &\quad\leq \sqrt{n\rho_n}\|\bD^{-1/2} - \bT^{-1/2}\|_2
  \left\|
  \sum_{j = 1}^n\frac{A_{ij} - \rho_n\bx_{0i}\transpose\bx_{0j}}{\sqrt{n\rho_n\bx_{0j}\transpose\bmu_n}}\frac{\bx_{0j}}{\sqrt{\bx_{0j}\transpose\bmu_n}}
  \right\|_2\\
  &\qquad + \sqrt{n\rho_n}\|\bD^{-1/2} - \bT^{-1/2}\|_2^2\|\bA - \bP_0\|_2\|\sqrt{n}\bY_0\|_2\\
  &\quad = O_{\prob_0}\left((n\rho_n)^{-1/2}\log n\right)\left\|
  \sum_{j = 1}^n\frac{A_{ij} - \rho_n\bx_{0i}\transpose\bx_{0j}}{\sqrt{n\rho_n\bx_{0j}\transpose\bmu_n}}\frac{\bx_{0j}}{\sqrt{\bx_{0j}\transpose\bmu_n}}
  \right\|_2\\
  &\qquad + O_{\prob_0}\left\{(n\rho_n)^{1/2}(n\rho_n)^{-2}(\log n)^2(n\rho_n)^{1/2}\sqrt{n}\right\}\\
  &\quad = O_{\prob_0}\left((n\rho_n)^{-1/2}\log n\right)\left\|
  \sum_{j = 1}^n\frac{A_{ij} - \rho_n\bx_{0i}\transpose\bx_{0j}}{\sqrt{n\rho_n\bx_{0j}\transpose\bmu_n}}\frac{\bx_{0j}}{\sqrt{\bx_{0j}\transpose\bmu_n}}
  \right\|_2 + o_{\prob_0}(1).
  \end{align*}
  By the Lyapunov's central limit theorem,
  \[
  \sum_{j = 1}^n\frac{A_{ij} - \rho_n\bx_{0i}\transpose\bx_{0j}}{\sqrt{n\rho_n\bx_{0j}\transpose\bmu_n}}\frac{\bx_{0j}}{\sqrt{\bx_{0j}\transpose\bmu_n}} = O_{\prob_0}(1).
  \]
  Hence we conclude that $\sqrt{n\rho_n}\|\be_i\transpose(\bD^{-1/2} - \bT^{-1/2})(\bA - \bP_0)\bD^{-1/2}(\sqrt{n}\bY_0)\|_2 = o_{\prob_0}(1)$.

  \item[(4)] $\sqrt{n\rho_n}\|\be_i\transpose\bT^{-3/2}(\bT - \bD)\bP_0(\bD^{-1/2} - \bT^{-1/2})(\sqrt{n}\bY_0)\|_2 = o_{\prob_0}(1)$. Since $\bD$ and $\bT$ are diagonal matrices, we obtain 
  \[
  \bD^{-1/2} - \bT^{-1/2} = O_{\prob_0}((n\rho_n)^{-1}\log n)
  \quad\Longrightarrow\quad
  \|\bD^{-1/2} - \bT^{-1/2}\|_\infty = O_{\prob_0}((n\rho_n)^{-1}\log n).
  \]
  Similarly, $\|\bT - \bD\|_\infty = O_{\prob_0}((n\rho_n)^{1/2}\log n)$.
  It follows that
  \begin{align*}
  &\sqrt{n\rho_n}\|\be_i\transpose\bT^{-3/2}(\bT - \bD)\bP_0(\bD^{-1/2} - \bT^{-1/2})(\sqrt{n}\bY_0)\|_2\\
  &\quad\leq \sqrt{n\rho_n}\|\be_i\transpose\bT^{-3/2}(\bT - \bD)\bP_0(\bD^{-1/2} - \bT^{-1/2})(\sqrt{n}\bY_0)\|_\infty\\
  &\quad\leq \sqrt{n\rho_n}\|\bT^{-3/2}\|_\infty\|\bT - \bD\|_\infty\|\bP_0\|_\infty\|\bD^{-1/2} - \bT^{-1/2}\|_\infty\sqrt{n}\|\bY_0\|_\infty\\
  &\quad \lesssim (n\rho_n)^{-1}O_{\prob_0}((n\rho_n)^{1/2}\log n)(n\rho_n)O_{\prob_0}((n\rho_n)^{-1}\log n)\\
  &\quad = O_{\prob_0}\{(n\rho_n)^{-1/2}(\log n)^2\} = o_{\prob_0}(1).
  \end{align*}

  \item[(5)] $\sqrt{n\rho_n}\|\be_i\transpose\widetilde\bR^{(-1/2)}(\sqrt{n}\bY_0)\|_2 = o_{\prob_0}(1)$. This is a simple consequence of the result $\|\widetilde\bR^{(-1/2)}\|_\infty = O_{\prob_0}((n\rho_n)^{-1}\log n)$ and $\|\bY_0\|_\infty \lesssim n^{-1/2}$.
\end{itemize}

\vspace*{2ex}\noindent
We next show the asymptotic normality of the first two terms. Denote $\widetilde{\bDelta}_n = \bY_0\transpose\bY_0$. According to the aforementioned analysis, we write
\begin{align*}
\frac{1}{\sqrt{n\rho_n}}\be_i\transpose\widetilde\bE(\sqrt{n}\bY_0)
& = \sqrt{n\rho_n}\be_i\transpose\bT^{-1/2}(\bA - \bP_0)\bT^{-1/2}(\sqrt{n}\bY_0)
\\&\quad
 + \frac{\sqrt{n\rho_n}}{2}\be_i\transpose\bT^{-3/2}(\bT - \bD)\bP\bT^{-1/2}(\sqrt{n}\bY_0)
 % \\&\quad
 + o_{\prob_0}(1)\\
& = \frac{1}{\sqrt{n\rho_n}}\sum_{j = 1}^n\frac{A_{ij} - \rho_n\bx_{0i}\transpose\bx_{0j}}{(\bx_{0i}\transpose\bmu_n)^{1/2}(\bx_{0j}\transpose\bmu_n)}\bx_{0j}\transpose \\
&\quad
 - \frac{1}{2\sqrt{n\rho_n}(\bx_{0i}\transpose\bmu_n)^{3/2}}\sum_{j = 1}^n(A_{ij} - \rho_n\bx_{0i}\transpose\bx_{0j})(\widetilde\bDelta_n\bx_{0i})\transpose + o_{\prob_0}(1).
\end{align*}
Equivalently, we have
\begin{align*}
\left[\frac{1}{\sqrt{n\rho_n}}\be_i\transpose\widetilde\bE(\sqrt{n}\bY_0)\right]\transpose
& = \frac{1}{\sqrt{n\rho_n}}\sum_{j = 1}^n\left\{\frac{\bx_{0j}}{(\bx_{0i}\transpose\bmu_n)^{1/2}(\bx_{0j}\transpose\bmu_n)} - \frac{1}{2}\frac{\widetilde\bDelta_n\bx_{0i}}{(\bx_{0i}\transpose\bmu_n)^{3/2}}\right\}(A_{ij} - \rho_n\bx_{0i}\transpose\bx_{0j})
% \frac{1}{\sqrt{n\rho_n}}\sum_{j = 1}^n\left\{\frac{\bx_{0j}}{(\bx_{0i}\transpose\bmu_n)^{1/2}(\bx_{0j}\transpose\bmu_n)} - \frac{1}{2}\frac{\widetilde\bDelta_n\bx_{0i}}{(\bx_{0i}\transpose\bmu_n)^{3/2}}\right\}(A_{ij} - \rho_n\bx_{0i}\transpose\bx_{0j}) 
+ o_{\prob_0}(1).
\end{align*}
The first term on the right-hand side of the previous display is a sum of independent mean-zero random vectors, the variance of which is
\begin{align*}
&\frac{1}{{n\rho_n}}\sum_{j = 1}^n\left\{\frac{\bx_{0j}}{(\bx_{0j}\transpose\bmu_n)} - \frac{1}{2}\frac{\widetilde\bDelta_n\bx_{0i}}{\bx_{0i}\transpose\bmu_n}\right\}\frac{\rho_n\bx_{0i}\transpose\bx_{0j}(1 - \rho_n\bx_{0i}\transpose\bx_{0j})}{(\bx_{0i}\transpose\bmu_n)}
\left\{\frac{\bx_{0j}}{(\bx_{0j}\transpose\bmu_n)} - \frac{1}{2}\frac{\widetilde\bDelta_n\bx_{0i}}{\bx_{0i}\transpose\bmu_n}\right\}\transpose\\
&\quad = \frac{1}{n}\sum_{j = 1}^n\left\{{\bx_{0j}} - \frac{1}{2}\frac{\widetilde\bDelta_n\bx_{0i}\bmu_n\transpose\bx_{0j}}{\bx_{0i}\transpose\bmu_n}\right\}
\frac{\bx_{0i}\transpose\bx_{0j}(1 - \rho_n\bx_{0i}\transpose\bx_{0j})}
    {(\bx_{0i}\transpose\bmu_n)(\bx_{0j}\transpose\bmu_n)^2}
\left\{{\bx_{0j}} - \frac{1}{2}\frac{\widetilde\bDelta_n\bx_{0i}\bmu_n\transpose\bx_{0j}}{\bx_{0i}\transpose\bmu_n}\right\}\transpose\\
&\quad = \widetilde{\bDelta}_n\frac{1}{n}\sum_{j = 1}^n\left\{\widetilde\bDelta_n^{-1} - \frac{1}{2}\frac{\bx_{0i}\bmu_n\transpose}{\bx_{0i}\transpose\bmu_n}\right\}
\frac{\bx_{0i}\transpose\bx_{0j}(1 - \rho_n\bx_{0i}\transpose\bx_{0j})\bx_{0j}\bx_{0j}\transpose}
    {(\bx_{0i}\transpose\bmu_n)(\bx_{0j}\transpose\bmu_n)^2}
\left\{\widetilde\bDelta_n^{-1} - \frac{1}{2}\frac{\bx_{0i}\bmu_n\transpose}{\bx_{0i}\transpose\bmu_n}\right\}\transpose\widetilde\bDelta_n\\
&\quad \to \widetilde\bDelta\widetilde{\bSigma}(\bx_{0i})\widetilde\bDelta.
\end{align*}
The proof is then completed by the Lyapunov's central limit theorem. 
\end{proof}

\subsection{Proof of the Asymptotic Normality \eqref{eqn:LSE_normality} Under the Dense Regime (i)} % (fold)
\label{sub:proof_of_the_asymptotic_normality_eqn:lse_normality_under_the_dense_regime_i_}

\begin{proof}[\bf Proof of the asymptotic normality \eqref{eqn:LSE_normality} under the dense regime (i)]
Let $\widetilde{E}_{ij}$ be the $(i, j)$th entry. In particular, we need to verify that the following conditions hold:
\begin{itemize}
  \item[A1.] $\rho_n\to 0$ with $n\rho_n\gtrsim (\log n)^{c_2}$ for some $c_2 > 0$. This condition automatically holds since $\rho\equiv 1$ under the dense regime (i).
  \item[A2.] $\lambda_d\{n\rho_n\calL(\bP_0)\}\gtrsim n\rho_n$ and $\lambda_d\{n\rho_n\calL(\bP_0)\}^{-1}\lambda_1\{n\rho_n\calL(\bP_0)\} = O(1)$. We see that 
  \[
  \lambda_k\{n\calL(\bP_0)\} = n\lambda_k(\bY_0\bY_0\transpose) = n\lambda_k(\bY_0\transpose\bY_0) \asymp n\lambda_k(\widetilde\bDelta) \asymp n
  \]
  because $\bY_0\transpose\bY_0\to \widetilde\bDelta$ as $n\to\infty$ and $\widetilde\bDelta$ is strictly positive definite.
  \item[A3.] There exists constants $C, c > 0$ such that $\|\widetilde{\bE}\|_2\leq C(n\rho_n)^{1/2}$ with probability at least $1 - n^c$ for all $n\geq n_0(C, c)$. This is the result of \cite{oliveira2009concentration} (also see Lemma B.1 of \citealp{tang2018}). 
  \item[A4.] There exists constants $C_{\widetilde\bE}, \nu > 0, \xi > 1$ such that for all $1\leq t\leq 2$, for each standard basis vector $\be_i$, and for each column vector $\widetilde\bu_{0k}$ of $\widetilde\bU_0$, 
  \[
  |\langle \be_i, \widetilde{\bE}^t\widetilde\bu_{0k}\rangle| \leq (C_\bE n\rho_n)^{t/2}(\log n)^{t\xi}\|\widetilde\bu_{0k}\|_\infty
  \]
  with probability at least $1 - \exp\{-\nu(\log n)^\xi\}$, provided that $n\geq n_0(C_{\widetilde\bE}, \nu, \xi)$. We show that this condition holds with $\xi = 2$ by applying Lemma \ref{lemma:Laplacian_moment_bound}. Following the proof of Lemma \ref{lemma:Laplacian_normality} (in particular, the derivation of equation \eqref{eqn:Laplacian_decomposition}), we obtain 
  \[
  \widetilde\bE = \widetilde\bE_1 + \widetilde\bE_2,
  \]
  where
  \begin{align*}
  \widetilde\bE_1& = n\bT^{-1/2}(\bA - \bP_0)\bT^{-1/2} + \frac{n}{2}\bT^{-1/2}\bP_0\bT^{-3/2}(\bT - \bD) + \frac{n}{2}\bT^{-3/2}(\bT - \bD)\bP_0\bT^{-1/2},
  \\
  % \]
  % \begin{align*}
  \widetilde\bE_2
  & = \frac{n}{2}\bT^{-1/2}(\bA - \bP_0)(\bD^{-1/2} - \bT^{-1/2}) + \frac{n}{2}(\bD^{-1/2} - \bT^{-1/2})(\bA - \bP_0)\bT^{-1/2}\\
  &\quad + \frac{n}{2}(\bD^{-1/2} - \bT^{-1/2})(\bA - \bP_0)(\bD^{-1/2} - \bT^{-1/2})
  + \frac{n}{2}\bT^{-3/2}(\bT - \bD)\bP_0(\bD^{-1/2} - \bT^{-1/2})\\
  &\quad + \frac{n}{2}\widetilde\bR^{(-1/2)},
  \end{align*}
  and the remainder $\widetilde\bR^{(-1/2)}$ satisfies $\max\{\|\widetilde\bR^{(-1/2)}\|_2, \|\widetilde\bR^{(-1/2)}\|_\infty\}\leq C(\log n)/n$ with probability at least $1 - 2\exp\{-c(\log n)^2\}$ for some constants $C, c > 0$. 
  Since Chernoff bound together with the matrix Bernstein's inequality imply that with probability at least $1 - \exp\{-c(\log n)^2\}$, 
  \begin{align*}
  &\|\bD - \bT\|_\infty = \|\bD - \bT\|_2\leq C\sqrt{n}\log n\\
  &\|\bD^{-1/2} - \bT^{-1/2}\|_\infty = \|\bD^{-1/2} - \bT^{-1/2}\|_2 \leq Cn^{-1}\log n,\\
  &\|\bA - \bP_0\|_2\leq C\sqrt{n}\log n
  \end{align*}
  for some constants $C, c > 0$, then we immediately obtain that
  \[
  \|\widetilde\bE_1\|_2 \leq 3C\sqrt{n}\log n
  \]
  with probability at least $1 - \exp\{-c(\log n)^2\}$. 
  Furthermore, one exploits the analysis of Appendix B.2 in \cite{tang2018} and conclude that there exists constant $C_{\widetilde{\bE}_2} ,\nu > 0$, such that with probability at least $1 - \exp\{-\nu(\log n)^2\}$, 
  \[
  \|\widetilde\bE_2\|_2\leq C_{\widetilde{\bE}_2}(\log n)^2
  \]
  for sufficiently large $n$. 

  Therefore, by Lemma \ref{lemma:Laplacian_moment_bound} with $t = 2$, with probability at least $1 - \exp\{-\nu(\log n)^2\}$ for some constant $\nu > 0$, we have
  \begin{align*}
  |\be_i\transpose\widetilde\bE^2\widetilde{\bu}_{0k}|
  &\lesssim  |\be_i\transpose\widetilde\bE_1^2\widetilde{\bu}_{0k}| + \|\widetilde\bE_1\|_2\|\widetilde\bE_2\|_2 + \|\widetilde\bE_2\|_2^2\\
  &\lesssim (\log n)^4n\|\widetilde\bu_{0k}\|_\infty + (\log n)^3\sqrt{n} + (\log n)^4\\
  &\lesssim (\log n)^4n\|\widetilde\bu_{0k}\|_\infty
  \end{align*}
  for all $i\in [n]$ and $k\in [d]$, where we have used the fact that $\|\widetilde\bu_{0k}\|_\infty \geq n^{-1/2}$. 

  We now consider the case with $t = 1$. Then by Lemma \ref{lemma:Laplacian_moment_bound} again with $t = 1$, we obtain with probability at least $1 - \exp\{-\nu(\log n)^2\}$, for all $i\in [n]$ and $k\in [d]$, 
  \begin{align*}
  |\be_i\transpose\widetilde\bE\widetilde{\bu}_{0k}|
  &\leq |\be_i\transpose\widetilde\bE_1\widetilde{\bu}_{0k}| + 
  |\be_i\transpose\widetilde\bE_2\widetilde{\bu}_{0k}|
  \leq |\be_i\transpose\widetilde\bE_1\widetilde{\bu}_{0k}| + \|\widetilde{\bE}_2\|_2\\
  &\leq C_{\widetilde{\bE}_1}n^{1/2}(\log n)^2\sqrt{n}\|\widetilde\bu_{0k}\|_\infty + C_{\widetilde{\bE}_2}(\log n)^2\\
  &\leq (C_{\widetilde{\bE}_1} + C_{\widetilde{\bE}_2})(\log n)^2\sqrt{n}\|\widetilde\bu_{0k}\|_\infty.
  \end{align*}  
  Then we apply the union bound over $t = 1$ and $t = 2$ to conclude that there exists constants $\widetilde{C}_{\widetilde{\bE}}, \nu > 0$, such that for sufficiently large $n$, with probability at least $1 - \exp\{-\nu(\log n)^2\}$, 
  \[
  |\be_i\transpose\widetilde\bE^t\widetilde{\bu}_{0k}|\leq (C_{\widetilde{\bE}})^t (\log n)^{2t}n^{t/2}\|\widetilde\bu_{0k}\|_\infty
  \]
  for all $i\in [n], k\in [d]$, and $t = 1,2$.

  \item[A5.] $\bY_0\transpose\bY_0\to \widetilde\bDelta$, and for each fixed $i\in[n]$,
  \[
  \left[\frac{1}{\sqrt{n\rho_n}}\be_i\transpose\widetilde{\bE}(\sqrt{n}\bY_0)\right]\transpose = \frac{1}{\sqrt{n\rho_n}}\sum_{j = 1}^n\widetilde{E}_{ij}\sqrt{n}\by_{0j}
  \]
  converges in distribution to a centered multivariate normal distribution. This is exactly the result of Lemma \ref{lemma:Laplacian_normality}. In particular, the covariance matrix is given by $\widetilde\bDelta\widetilde\bSigma(\bx_{0i})\widetilde\bDelta$.

  \item[A6.] $(\log n)^{2\xi}/(n\rho_n)\to 0$ and 
  \[
  \rho_n^{-1/2}\max\{(\log n)^{2\xi},\|\widetilde\bU_0\transpose\widetilde\bE\widetilde\bU_0\|_2 + 1\}\|\widetilde\bU_0\|_{2\to\infty} = o_{\prob_0}(1).
  \]
  By (B.17) in Lemma B.4 of \cite{tang2018}, we see that $\|\widetilde\bU_0\transpose\widetilde\bE\widetilde\bU_0\|_2 = O_{\prob_0}(n^{-1})$. With $\xi = 2$ and $\rho_n\equiv 1$, the above statement is equivalent to 
  \[
  \max\{(\log n)^{2\xi},\|\widetilde\bU_0\transpose\widetilde\bE\widetilde\bU_0\|_2 + 1\}n^{-1/2} = o_{\prob_0}(1),
  \]
  which automatically holds. 
\end{itemize}
Hence, by Theorem 3 in \cite{cape2019signal}, there exists orthogonal matrices $\bW^*$, $\bW_\bX$ (possibly depending on $n$), such that for any fixed $i\in [n]$, 
\[
n\bW_\bX\transpose\{\bW^*(\widetilde\bU_\bA\transpose\be_i) - (\widetilde\bU_\bP\transpose\be_i)\}\overset{\calL}{\to}\mathrm{N}(\zero, \widetilde\bDelta^{-1/2}\widetilde\bSigma(\bx_{0i})\widetilde\bDelta^{-1/2}).
\]
In particular, the matrix $\bW_\bX$ can be taken such that $\bY_0 = \widetilde\bU_\bP\widetilde{\bS}_{\bP}^{1/2}\bW_\bX$, and $\bW^*$ can be taken as the product of the left and right singular vector matrices of $\widetilde\bU_\bP\transpose\widetilde\bU_\bA$. Then we proceed to compute
\begin{align*}
n\{\bW_\bX\transpose\bW^*(\breve{\bX}\transpose\be_i) - \by_{0i}\}
& = n\{\bW_\bX\transpose\bW^*(\breve{\bX}\transpose\be_i) - (\bY_0\transpose\be_i)\}\\
& = n\{\bW_\bX\transpose\bW^*(\widetilde\bS_\bA^{1/2}\widetilde{\bU}_{\bA}\transpose\be_i) - (\bW_\bX\transpose\widetilde{\bS}_{\bP}^{1/2}\widetilde\bU_\bP\transpose\be_i)\}\\
& = n\bW_\bX\transpose\{\bW^*(\widetilde\bS_\bA^{1/2}\widetilde{\bU}_{\bA}\transpose\be_i) - \widetilde{\bS}_{\bP}^{1/2}\widetilde\bU_\bP\transpose\be_i\}\\
& = n\bW_\bX\transpose\{(\bW^*\widetilde\bS_\bA^{1/2} - \widetilde\bS_\bP^{1/2}\bW^*)(\widetilde{\bU}_{\bA}\transpose\be_i) + \widetilde{\bS}_{\bP}^{1/2}(\bW^*\widetilde\bU_\bA\transpose\be_i - \widetilde\bU_\bP\transpose\be_i)\}\\
& = n\bW_\bX\transpose\{(\bW^*\widetilde\bS_\bA^{1/2} - \widetilde\bS_\bP^{1/2}\bW^*)(\widetilde{\bU}_{\bA}\transpose\be_i)\} + n\bW_\bX\transpose\widetilde\bS_\bP^{1/2}\{\bW^*(\bU_\bA\transpose\be_i) - (\widetilde\bU_\bP\transpose\be_i)\}.
\end{align*}
By Proposition B.2 and Lemma B.3 \cite{tang2018}, we have
\begin{align*}
\|\bW^*\widetilde\bS_\bA^{1/2} - \widetilde\bS_\bP^{1/2}\bW^*\|_2
&\leq \|(\bW^* - \widetilde\bU_\bP\transpose\widetilde\bU_\bA)\widetilde\bS_\bA^{1/2}\|_2
    + \|\widetilde\bU_\bP\transpose\widetilde\bU_\bA\bS_\bA^{1/2} - \widetilde\bS_\bP^{1/2}\widetilde\bU_\bP\transpose\widetilde\bU_\bA\|_2
    + \|\bS_\bP^{1/2}(\widetilde\bU_\bP\transpose\widetilde\bU_\bA - \bW^*)\|_2\\
&\leq \|\bW^* - \widetilde\bU_\bP\transpose\widetilde\bU_\bA\|_2(\|\widetilde\bS_\bA^{1/2}\|_2 + \|\widetilde\bS_\bP^{1/2}\|_2) + O_{\prob_0}(n^{-1})\\
& = O_{\prob_0}(n^{-1}).
\end{align*}
By Theorem 1 of \cite{cape2019signal}, we have
\begin{align*}
\|\widetilde\bU_\bA\|_{2\to\infty}&\leq \|\widetilde\bU_\bP\|_{2\to\infty} + \|\widetilde\bU_\bA - \widetilde\bU_\bP\bW^*\|_{2\to\infty}\\
&\leq \|\widetilde\bU_\bP\|_{2\to\infty} + O_{\prob_0}\left(\frac{1}{\sqrt{n}}\min\{\sqrt{d}(\log n)^2\|\bU_0\|_{2\to\infty}, 1\}\right)\\
&\lesssim \frac{1}{\sqrt{n}} + O_{\prob_0}\left(\frac{(\log n)^2}{n}\right) = O_{\prob_0}\left(\frac{1}{\sqrt{n}}\right).
\end{align*}
Therefore, we obtain
\begin{align*}
\|n\bW_\bX\transpose\{(\bW^*\widetilde\bS_\bA^{1/2} - \widetilde\bS_\bP^{1/2}\bW^*)(\widetilde{\bU}_{\bA}\transpose\be_i)\}\|_2
&\leq n\|\bW^*\widetilde\bS_\bA^{1/2} - \widetilde\bS_\bP^{1/2}\bW^*\|_2\|\widetilde{\bU}_{\bA}\transpose\be_i\|_2\\
&\leq n\|\bW^*\widetilde\bS_\bA^{1/2} - \widetilde\bS_\bP^{1/2}\bW^*\|_2\|\widetilde\bU_\bA\|_{2\to\infty}\\
& = o_{\prob_0}(1).
\end{align*}
This further implies that
\begin{align*}
n\{\bW_\bX\transpose\bW^*(\breve{\bX}\transpose\be_i) - \by_{0i}\}
& = n\bW_\bX\transpose\widetilde\bS_\bP^{1/2}\{\bW^*(\bU_\bA\transpose\be_i) - (\widetilde\bU_\bP\transpose\be_i)\} + o_{\prob_0}(1)\\
& = n\bW_\bX\transpose\widetilde\bS_\bP^{1/2}\bW_\bX[\bW_\bX\transpose\{\bW^*(\bU_\bA\transpose\be_i) - (\widetilde\bU_\bP\transpose\be_i)\}] + o_{\prob_0}(1).
\end{align*}
Using the asymptotic normality 
\[
n\bW_\bX\transpose\{\bW^*(\bU_\bA\transpose\be_i) - (\widetilde\bU_\bP\transpose\be_i)\}\overset{\cal}{\to}\mathrm{N}(\zero, \widetilde\bDelta^{-1/2}\widetilde\bSigma(\bx_{0i})\widetilde\bDelta^{-1/2})
\]
and the fact that $\bW_\bX\transpose\widetilde\bS^{1/2}_{\bP}\bW_\bX\to \widetilde\bDelta^{1/2}$, we apply Slutsky's theorem to conclude that
\[
n\{\bW\transpose(\breve{\bX}\transpose\be_i) - \by_{0i}\}\overset{\calL}{\to}\mathrm{N}(\zero, \widetilde\bSigma(\bx_{0i}))
\]
with the choice of $\bW = (\bW_\bX\transpose\bW^*)\transpose$.
\end{proof}

% section proof_of_theorem_thm:lse_limit_theorem (end)

% section proof_of_theorem (end)

\subsection{Proof of Theorem \ref{thm:LSE_transform_initial_condition}} % (fold)
\label{sec:proof_of_theorem_thm:lse_transform_initial_condition}

\begin{proof}[\bf Proof of Theorem \ref{thm:LSE_transform_initial_condition}]
We adopt the notations used in Section \ref{sec:proof_of_theorem_thm:lse_limit_theorem}.
Following the decomposition \ref{eqn:LSE_expansion}, we immediately see that
\begin{align*}
\widetilde{\bX}\bW - \rho_n^{1/2}\bX_0 
& = \bD^{1/2}\breve\bX\bW - \bT^{1/2}\bY_0\\
& = (\bD^{1/2} - \bT^{1/2})\breve{\bX}\bW + \bT^{1/2}(
\breve{\bX}\bW - \bY_0)\\
& = (\bD^{1/2} - \bT^{1/2})\breve{\bX}\bW + (\bA - \bP_0)\bT^{-1/2}\bY_0(\bY_0\transpose\bY_0)^{-1}
+\frac{1}{2}\bT^{-1/2}(\bT - \bD)\bY_0 + \bT^{1/2}\breve{\bR},
\end{align*}
where the remainder $\breve{\bR}$ satisfies the following concentration property: For any constant $c > 0$, there exists a constant $C > 0$ such that $\|\breve{\bR}\|_{\mathrm{F}} \leq C(n\rho_n)^{-1}$ with probability at least $1 - n^c$. Let $t_i$ denote the $i$th diagonal element of $\bT$ and $d_i$ the $i$th diagonal element of $\bD$. Clearly,
\begin{align*}
\left|\sqrt{d_i} - \sqrt{t_i} - \frac{d_i - t_i}{\sqrt{2}t_i}\right|
& = \frac{(d_i - t_i)^2}{2\sqrt{t_i}(\sqrt{d}_i + \sqrt{t_i})^2}
\leq \frac{(d_i - t_i)^2}{2t_i^{3/2}}.
\end{align*}
Since $\min_{i\in [n]}t_i^{3/2} = O((n\rho_n)^{3/2})$, it follows from Hoeffding's inequality and the union bound that
\begin{align*}
\left\|\bD^{1/2} - \bT^{1/2} - \frac{1}{2}\bT^{-1/2}(\bD - \bT)\right\|_2 = O_{\prob_0}\left\{\frac{(\log n)^2}{(n\rho_n)^{1/2}}\right\}.
\end{align*}
Observe that $\widetilde\bDelta_n = \bY_0\transpose\bY_0$, that $\bY_0$ and $\breve{\bX}$ have spectra bounded away from $0$ and $\infty$, and that $\|\bT\|_2 = O(n\rho_n)$. Therefore,
\begin{align*}
\widetilde{\bX}\bW - \rho_n^{1/2}\bX_0 
& = (\bD^{1/2} - \bT^{1/2})\breve{\bX}\bW + (\bA - \bP_0)\bT^{-1/2}\bY_0(\bY_0\transpose\bY_0)^{-1}
+\frac{1}{2}\bT^{-1/2}(\bT - \bD)\bY_0 + \bT^{1/2}\breve{\bR}\\
& = \frac{1}{2}\bT^{-1/2}(\bD - \bT)\breve{\bX}\bW + O_{\prob_0}\left\{\frac{(\log n)^2}{(n\rho_n)^{1/2}}\right\} + (\bA - \bP_0)\bT^{-1/2}\bY_0\bDelta_n^{-1} \\
&\quad +\frac{1}{2}\bT^{-1/2}(\bT - \bD)\bY_0 + O_{\prob_0}\{(n\rho_n)^{-1/2}\}\\
& = (\bA - \bP_0)\bT^{-1/2}\bY_0\bDelta_n^{-1} + \frac{1}{2}\bT^{-1/2}(\bD - \bT)(\breve{\bX}\bW - \bY_0) + O_{\prob_0}\left\{\frac{(\log n)^2}{(n\rho_n)^{1/2}}\right\},
\end{align*}
where the remainders $O_{\prob_0}\{(n\rho_n)^{-1/2}(\log n)^2\}$ are with respect to $\|\cdot\|_{\mathrm{F}}$. Since we obtain from Appendix B.2 and Appendix B.3 in \cite{tang2018} that
\begin{align*}
\|\bA - \bP_0\|_2 = O_{\prob_0}\{(n\rho_n)^{1/2}\},\quad
\|\bT - \bD\|_2 = O_{\prob_0}\{(n\rho_n)^{1/2}\log n\},\quad
\|\bY_0\|_{\mathrm{F}} = O(1),\quad
\|\widetilde\bDelta_n^{-1}\|_{\mathrm{F}} = O(1),
\end{align*}
we further write
\begin{align*}
\|\bT^{-1/2}(\bD - \bT)(\breve{\bX}\bW - \bY_0)\|_{\mathrm{F}}
&\leq \|\bT^{-1/2}\|_2\|\bD - \bT\|_2\|\breve{\bX}\bW - \bY_0\|_{\mathrm{F}}\\
&\leq \|\bT^{-1/2}\|_2\|\bD - \bT\|_2
% \left\{
\|\bT^{-1/2}\|_2\|\bA -\bP_0\|_2\|\bT^{-1/2}\|_2\|\bY_0\|_{\mathrm{F}}\|\widetilde\bDelta_n^{-1}\|_2\\
&\quad + \frac{1}{2}\|\bT^{-1/2}\|_2\|\bD - \bT\|_2\|\bT^{-1}\|_2\|\bT - \bD\|_2\|\bY_0\|_{\mathrm{F}}\\
&\quad + \|\bT^{-1/2}\|_2\|\bD - \bT\|_2\|\breve\bR\|_{\mathrm{F}}\\
% \right\}\\
% &\leq O\{(n\rho_n)^{-1}\}O_{\prob_0}\{(n\rho_n)^{1/2}\log n\}O_{\prob_0}\{(n\rho_n)^{1/2}\}O\{(n\rho_n)^{-1/2}\}\\
% &\quad + O\{(n\rho_n)^{-3/2}\}O_{\prob_0}\{(n\rho_n)(\log n)^2\}\\
% &\quad + O\{(n\rho_n)^{-1/2}\}O_{\prob_0}\{(n\rho_n)^{1/2}(\log n)\}O_{\prob_0}\{(n\rho_n)^{-1}\}\\
& = O_{\prob_0}\left\{(n\rho_n)^{-1/2}(\log n)^2\right\}.
\end{align*}
Therefore, we can write
\[
\|\widetilde{\bX}\bW - \rho_n^{1/2}\bX_0 - (\bA - \bP_0)\bT^{-1/2}\bY_0(\bY_0\transpose\bY_0)^{-1}\|_{\mathrm{F}} = O_{\prob_0}\left\{(n\rho_n)^{-1/2}(\log n)^2\right\}.
\]
Since for each fixed $i$,
\begin{align*}
\be_i\transpose(\bA - \bP_0)\bT^{-1/2}\bY_0(\bY_0\transpose\bY_0)^{-1}
& = \sum_{j = 1}^n\left(\frac{A_{ij} - \rho_n\bx_{0i}\transpose\bx_{0j}}{\sqrt{n\rho_n\bx_{0j}\transpose\bmu_n}}\right)\by_{0j}\transpose\widetilde{\bDelta}_n^{-1}
% \\&
 = \sum_{j = 1}^n\left(\frac{A_{ij} - \rho_n\bx_{0i}\transpose\bx_{0j}}{\sqrt{n\rho_n\bx_{0j}\transpose\bmu_n}}\right)\frac{\bx_{0j}\transpose\widetilde{\bDelta}_n^{-1}}{\sqrt{n\bx_{0j}\transpose\bmu_n}}\\
& = \rho_n^{-1/2}\sum_{j = 1}^n\left({A_{ij} - \rho_n\bx_{0i}\transpose\bx_{0j}}\right)\frac{\bx_{0j}\transpose\widetilde{\bDelta}_n^{-1}}{n\bx_{0j}\transpose\bmu_n},
\end{align*}
the proof is completed by observing that
\[
\sup_{i,j\in[n]}\left\|\frac{\widetilde{\bDelta}_n^{-1}\bx_{0j}}{n\bx_{0j}\transpose\bmu_n}\right\|_2\leq \sup_{i,j\in[n]}\frac{\|\widetilde\bDelta_n^{-1}\|_2\|\bx_{0j}\|_2}{n\bx_{0j}\transpose\bmu_n} \lesssim\frac{1}{n}.
\]
\end{proof}

% section proof_of_theorem_thm:lse_transform_initial_condition (end)

\section{Proofs of Theorems \ref{thm:asymptotic_normality_OS_Laplacian} and \ref{thm:convergence_OS_Laplacian}} % (fold)
\label{sec:proof_of_theorems_OSLaplacian}

\subsection{Proof of Theorem \ref{thm:asymptotic_normality_OS_Laplacian}} % (fold)
\label{sub:proof_of_theorem_thm:asymptotic_normality_os_laplacian}

% subsection proof_of_theorem_thm:asymptotic_normality_os_laplacian (end)

\begin{proof}[\bf Proof of Theorem \ref{thm:asymptotic_normality_OS_Laplacian}]
Let $(\bW)_{n = 1}^\infty = (\bW_n)_{n = 1}^\infty\subset\mathbb{O}(d)$ be the sequence of orthogonal matrices satisfying \eqref{eqn:linearization_property}. 
Define a function $\bh:\mathbb{R}^n\times\mathbb{R}^{n\times d}\to\mathbb{R}^d$ by
\[
\bh(\bx, \bZ) = [h_1(\bx,\bZ),\ldots,h_d(\bx,\bZ)]\transpose = \frac{\bx}{\sqrt{(1/n)\sum_{j = 1}^n\bx\transpose\bz_j}},\quad \text{where }\bZ = [\bz_1,\ldots,\bz_n]\transpose\in\mathbb{R}^{n\times d}.
\]
Simple algebra shows that for $k = 1,\ldots,d$
\begin{align*}
\frac{\partial h_k}{\partial \bx\transpose}(\rho_n^{1/2}\bx_{0i},\rho_n^{1/2}\bX_0) & = \rho_n^{-1/2}\left(\bx_{0i}\transpose\bmu_n\right)^{-3/2}\left(\frac{1}{n}\sum_{j = 1}^n\bx_{0i}\transpose\bx_{0j}\be_k\transpose - \frac{1}{2n}\sum_{j = 1}^n\be_k\transpose\bx_{0i}\bx_{0j}\transpose\right),\\
\frac{\partial h_k}{\partial \bz_j\transpose}(\rho_n^{1/2}\bx_{0i},\rho_n^{1/2}\bX_0)& = -\frac{1}{2n\sqrt{\rho_n}}\left(\bx_{0i}\transpose\bmu_n\right)^{-3/2}\be_k\transpose\bx_{0i}\bx_{0i}\transpose,\\
\frac{\partial^2 h_k}{\partial\bx\partial \bx\transpose}(\rho_n^{1/2}\bx_{0i},\rho_n^{1/2}\bX_0)& = \rho_n^{-1}(\bmu_n\transpose\bx_{0i})^{-3/2}\left\{-\frac{1}{2}\be_k\bmu_n\transpose - \frac{1}{2}\bmu_n\be_k\transpose + \frac{3}{4}(\be_k\transpose\bx_{0i})(\bmu_n\transpose\bx_{0i})^{-1}\bmu_n\bmu_n\transpose\right\},\\
\frac{\partial^2 h_k}{\partial\bx\partial \bz_j\transpose}(\rho_n^{1/2}\bx_{0i},\rho_n^{1/2}\bX_0)& = -\frac{1}{2n\rho_n}(\bmu_n\transpose\bx_{0i})^{-3/2}\left\{\be_k\bx_{0i}\transpose + (\be_k\transpose\bx_{0i})\eye - \frac{3}{2n}(\be_k\transpose\bx_{0i})(\bmu_n\transpose\bx_{0i})^{-1}\bx_{0i}\bx_{0i}\transpose\right\},\\
\frac{\partial^2 h_k}{\partial\bz_l\partial \bz_j\transpose}(\rho_n^{1/2}\bx_{0i},\rho_n^{1/2}\bX_0)& = \frac{3}{4n^2\rho_n}(\be_k\transpose\bx_{0i})(\bmu_n\transpose\bx_{0i})^{-5/2}\bx_{0i}\bx_{0i}\transpose.
\end{align*}
Note that
\[
\sup_{j\in[n]}\left\|\frac{\partial^2 h_k}{\partial\bx\partial\bz_j\transpose}(\rho_n^{1/2}\bx_{0i},\rho_n^{1/2}\bX_0)\right\|_{\mathrm{F}} = O\left(\frac{1}{n\rho_n}\right),\quad\sup_{j,l\in[n]}\left\|\frac{\partial^2 h_k}{\partial\bz_l\partial\bz_j\transpose}(\rho_n^{1/2}\bx_{0i},\rho_n^{1/2}\bX_0)\right\|_{\mathrm{F}} = O\left(\frac{1}{n^2\rho_n}\right).
\]
It follows from Taylor's expansion that
\begin{align*}
\bh(\bW\transpose\widehat\bx_i, \widetilde\bX\bW) &= \bh(\rho_n^{1/2}\bx_{0i}, \rho_n^{1/2}\bX_0) + \frac{\partial \bh}{\partial\bx\transpose}(\rho_n^{1/2}\bx_{0i}, \rho_n^{1/2}\bX_0)(\bW\transpose\widehat\bx_i - \rho_n^{1/2}\bx_{0i})\\
&\quad + \sum_{j = 1}^n\frac{\partial \bh}{\partial\bz_j\transpose}(\rho_n^{1/2}\bx_{0i}, \rho_n^{1/2}\bX_0)(\bW\transpose\widetilde\bx_j - \rho_n^{1/2}\bx_{0j})
% \\&\quad
 + \bR_{\bx_i} + \sum_{j = 1}^n\bR_{\bx_i\bz_j} + \sum_{j = 1}^n\sum_{l = 1}^n\bR_{\bz_j\bz_l},
\end{align*}
where 
\[
\max_{i\in[n]}\|\bR_{\bx_i}\|\lesssim\frac{(\log n)^{1\vee\omega}}{\rho_n^2n} ,\quad\sup_{i,j\in[n]}\|\bR_{\bx_i\bz_j}\|\lesssim \frac{(\log n)^{1\vee\omega}}{n^2\rho_n^2},\quad\sup_{i,j,l\in[n]}\|\bR_{\bz_l\bz_j}\|\lesssim \frac{(\log n)^{1\vee\omega}}{n^3\rho_n^2}
\] 
provided that
\[
\|\widetilde\bX\bW - \rho_n^{1/2}\bX_0\|_{2\to\infty} \leq C_c\frac{(\log n)^{(1\vee\omega)/2}}{\sqrt{n\rho_n}},\quad
\|\widehat\bX\bW - \rho_n^{1/2}\bX_{0}\|_{2\to\infty} \leq C_c\frac{(\log n)^{(1\vee\omega)/2}}{\sqrt{n\rho_n}}
\]
for some constant $C_c > 0$. 
Note that by Theorem \ref{thm:asymptotic_normality_OS}, we have
\[
\|\widehat\bX\bW - \rho_n^{1/2}\bX_0\|_{2\to\infty}\leq \sum_{k = 1}^d\max_{i\in[n]}\left|\frac{1}{n\sqrt{\rho_n}}\sum_{j = 1}^n\frac{(A_{ij} - \rho_n\bx_{0i}\transpose\bx_{0j})[\bG_n(\bx_{0i})^{-1}\bx_{0j}]_k}{\bx_{0i}\transpose\bx_{0j}(1 - \rho_n\bx_{0i}\transpose\bx_{0j})}\right| + O_{\prob_0}\left(\frac{(\log n)^{1\vee\omega}}{n\rho_n^{5/2}}\right).
\]
By Hoeffding's inequality and the union bound, we see that
\[
\max_{i\in[n]}\left|\frac{1}{n\sqrt{\rho_n}}\sum_{j = 1}^n\frac{(A_{ij} - \rho_n\bx_{0i}\transpose\bx_{0j})[\bG_n(\bx_{0i})^{-1}\bx_{0j}]_k}{\bx_{0i}\transpose\bx_{0j}(1 - \rho_n\bx_{0i}\transpose\bx_{0j})}\right| = O_{\prob_0}\left(\sqrt{\frac{\log n}{n\rho_n}}\right).
\]
Thus, we conclude that $\|\widehat\bX\bW - \rho_n^{1/2}\bX_0\|_{2\to\infty} = O_{\prob_0}((n\rho_n)^{-1/2}(\log n)^{(1\vee\omega)/2})$. Invoking this fact and Lemma \ref{lemma:two_to_infinity_error_ASE}, we see that
\begin{align*}
\sqrt{n}(\bW\transpose\widehat\by_i - \by_{0i})& = \bh(\bW\transpose\widehat\bx_i, \widetilde\bX\bW) - \bh(\rho_n^{1/2}\bx_{0i}, \rho_n^{1/2}\bX_0)\\
% & = \frac{\partial \bh}{\partial\bx\transpose}(\rho_n^{1/2}\bx_{0i}, \rho_n^{1/2}\bX_0)(\bW\transpose\widehat\bx_i - \rho_n^{1/2}\bx_{0i})\\
% &\quad + \sum_{j = 1}^n\frac{\partial \bh}{\partial\bt_j\transpose}(\rho_n^{1/2}\bx_{0i}, \rho_n^{1/2}\bX_0)(\bW\transpose\widetilde\bx_j - \rho_n^{1/2}\bx_{0j}) + O_{\prob_0}\left(\frac{\log n}{n\rho_n^2}\right)\\
&= \rho_n^{-1/2}\left(\bmu_n\transpose\bx_{0i}\right)^{-3/2}\left\{\frac{1}{n}\sum_{j = 1}^n\left(\bx_{0i}\transpose\bx_{0j}\eye_d - \frac{1}{2}\bx_{0i}\bx_{0j}\transpose\right)\right\}(\bW\transpose\widehat\bx_i - \rho_n^{1/2}\bx_{0i})\\
&\quad + \bR_{i1}^{(\mathrm{L})} + \bR_{i2}^{(\mathrm{L})}, 
\end{align*}
where
\begin{align*}
\bR_{i1}^{(\mathrm{L})} &= \sum_{j = 1}^n\boldsymbol{\Xi}_{ij}(\bW\transpose\widetilde\bx_j - \rho_n^{1/2}\bx_{0j}),
\quad
\boldsymbol{\Xi}_{ij} = [\bxi_{ij1},\ldots,\bxi_{ijd}]\transpose = -\frac{1}{2n\sqrt{\rho_n}}\left(\bx_{0i}\transpose\bmu_n\right)^{-3/2}\bx_{0i}\bx_{0i}\transpose,
\end{align*}
and
\[
\max_{i\in[n]}\|\bR_{i2}^{(\mathrm{L})}\|\leq \max_{i\in[n]}\|\bR_{\bx_i}\| + n\max_{j\in[n]}\|\bR_{\bx_i\bz_j}\| + n^2\max_{j,l}\|\bR_{\bz_j\bz_l}\| = O_{\prob_0}\left(\frac{(\log n)^{1\vee\omega}}{n\rho_n^2}\right).
\]
By an argument that is similar to the proof of Lemma \ref{lemma:R12k_analysis}, we see that 
\[
\max_{i\in[n]}\|\bR_{i1}^{(\mathrm{L})}\|\lesssim \sum_{k = 1}^d\max_{i\in[n]}\left|\sum_{j = 1}^n\boldsymbol{\xi}_{ijk}\transpose(\bW\transpose\widetilde\bx_j - \rho_n^{1/2}\bx_{0j})\right| = O_{\prob_0}\left(\frac{(\log n)^{(1\vee\omega)/2}}{n\rho_n}\right).
\]
% implying that $\max_{i\in[n]}\|\bR_{i1}^{(\mathrm{L})} (n\rho_n)^{-2}\log n$ and $\|\bR_{i1}^{(\mathrm{L})}\| = O_{\prob_0}((n\rho_n)^{-1}\sqrt{\log n})$. 
Hence we conclude that
\begin{align*}
\sqrt{n}(\bW\transpose\widehat\by_i - \by_{0i}) &= \rho_n^{-1/2}\left(\bmu_n\transpose\bx_{0i}\right)^{-3/2}\left\{\frac{1}{n}\sum_{j = 1}^n\left(\bx_{0i}\transpose\bx_{0j}\eye_d - \frac{1}{2}\bx_{0i}\bx_{0j}\transpose\right)\right\}(\bW\transpose\widehat\bx_i - \rho_n^{1/2}\bx_{0i}) + \bR_i^{(\mathrm{L})}\\
&=\rho_n^{-1/2}\frac{1}{\sqrt{\bmu_n\transpose\bx_{0i}}}\left(\eye_d - \frac{\bx_{0i}\bmu_n\transpose}{2\bmu_n\transpose\bx_{0i}}\right)(\bW\transpose\widehat\bx_i - \rho_n^{1/2}\bx_{0i}) + \bR_i^{(\mathrm{L})},
\end{align*}
where $\max_{i\in[n]}\|\bR_i^{(\mathrm{L})}\| = O_{\prob_0}((n\rho_n^2)^{-1}(\log n)^{1\vee\omega})$. 
% We next show that $\sum_{i = 1}^n\|\bR_i^{(\mathrm{L})}\|^2 = O_{\prob_0}\left((n\rho_n^2)^{-1}{\log n}\right)$. Note that
This further implies that 
\[
\sum_{i = 1}^n\|\bR_i^{(\mathrm{L})}\|^2 = O_{\prob_0}\left((n\rho_n^4)^{-1}{(\log n)^{2(1\vee\omega)}}\right).
\]
% This completes the proof of the first part of the Theorem. 
The proof is thus completed. 
\end{proof}

\subsection{Proof of Theorem \ref{thm:convergence_OS_Laplacian}} % (fold)
\label{sub:proof_of_theorem_thm:convergence_os_laplacian}

% subsection proof_of_theorem_thm:convergence_os_laplacian (end)

\begin{proof}[\bf Proof of Theorem \ref{thm:convergence_OS_Laplacian}]
Let $(\bW)_{n = 1}^\infty = (\bW_n)_{n = 1}^\infty\subset\mathbb{O}(d)$ be the sequence of orthogonal matrices satisfying \eqref{eqn:linearization_property}. 
Denote 
\[
\bgamma_{ij} = \frac{1}{n\sqrt{\rho_n}}(\bmu_n\transpose\bx_{0i})^{-1/2}\left(\eye_d - \frac{\bx_{0i}\bmu_n\transpose}{\bmu_n\transpose\bx_{0i}}\right)\frac{\bG_n(\bx_{0i})^{-1}\bx_{0j}}{\bx_{0i}\transpose\bx_{0j}(1 - \rho_n\bx_{0i}\transpose\bx_{0j})}. 
\]
First note that $\|\bG_n(\bx_{0i})^{-1}\|_2\leq\|\bDelta^{-1}\|_2$ for sufficiently large $n$, and hence,
\begin{align}\label{eqn:gammaij_bound}
\sup_{i,j\in[n]}\|\bgamma_{ij}\|\leq\frac{1}{n\sqrt{\rho_n}}\delta^{-1/2}\left(1 + \frac{1}{\delta}\right)\sup_{i,j\in[n]}\frac{\|\bG_n(\bx_{0i})^{-1}\|\|\bx_{0j}\|}{\bx_{0i}\transpose\bx_{0j}(1 - \rho_n\bx_{0i}\transpose\bx_{0j})}\lesssim\frac{1}{n\sqrt{\rho_n}}.
\end{align}
Also observe that
\begin{align*}
\expect_0\left(\sum_{i = 1}^n\left\|\sum_{j = 1}^n(A_{ij} - \rho_n\bx_{0i}\transpose\bx_{0j})\bgamma_{ij}\right\|^2\right)
% \\ &\quad
&= \sum_{i = 1}^n\sum_{a = 1}^n\sum_{b = 1}^n\expect_0\left\{(A_{ia} - \rho_n\bx_{0i}\transpose\bx_{0a})(A_{ib} - \rho_n\bx_{0i}\transpose\bx_{0b})\bgamma_{ia}\transpose\bgamma_{ib}\right\}\\
% &\quad= \frac{1}{n^2\rho_n}\sum_{i = 1}^n\sum_{a = 1}^n\frac{\expect_0\{(A_{ia} - \rho_n\bx_{0i}\transpose\bx_{0a})^2\}}{\{\bx_{0i}\transpose\bx_{0a}(1 - \rho_n\bx_{0i}\transpose\bx_{0a})\}^2}\bx_{0a}\transpose\bG_n^{-2}(\bx_{0i})\bx_{0a}\\
% &\quad= \frac{1}{n}\sum_{i = 1}^n\frac{1}{n}(\bmu_n\transpose\bx_{0i})^{-1}\sum_{a = 1}^n\frac{1}{\bx_{0i}\transpose\bx_{0a}(1 - \rho_n\bx_{0i}\transpose\bx_{0a})}
% \mathrm{tr}\left\{\left(\eye_d - \frac{\bx_{0i}\bmu_n\transpose}{\bmu_n\transpose\bx_{0i}}\right)\bG_n(\bx_{0i})^{-1}\bx_{0a}\bx_{0a}\transpose\bG_n^{-1}(\bx_{0i})\left(\eye_d - \frac{\bx_{0i}\bmu_n\transpose}{\bmu_n\transpose\bx_{0i}}\right)\transpose\right\}
% \\
&
% \quad
= \frac{1}{n}\sum_{i = 1}^n\mathrm{tr}\left[\frac{1}{(\bmu_n\transpose\bx_{0i})}\left(\eye_d - \frac{\bx_{0i}\bmu_n\transpose}{\bmu_n\transpose\bx_{0i}}\right)\bG_n(\bx_{0i})^{-1}\left(\eye_d - \frac{\bx_{0i}\bmu_n\transpose}{\bmu_n\transpose\bx_{0i}}\right)\transpose\right]\\
&
% \quad
= \frac{1}{n}\sum_{i = 1}^n\mathrm{tr}\{\widetilde \bG_n(\bx_{0i})\}.
\end{align*}
Denote
\[
\widehat\bR_i^{(\mathrm{L})} = (\bmu_n\transpose\bx_{0i})^{-1/2}\left(\eye_d - \frac{\bx_{0i}\bmu_n\transpose}{2\bx_{0i}\transpose\bmu_n}\right)\widehat\bR_i + \rho_n^{1/2}\bR_i^{(\mathrm{L})}.
\]
Clearly, $\sum_{i = 1}^n\|\widehat\bR_i^{(\mathrm{L})}\|^2 = O_{\prob_0}((n\rho_n^5)^{-1}(\log n)^2)$. 
By Theorem \ref{thm:asymptotic_normality_OS_Laplacian} and Lemma \ref{lemma:convergence_score_function}, we can write
\begin{align*}
&n\rho_n\left\|\widehat\bY\bW - \bY_0\right\|_{\mathrm{F}}^2\\
&\quad = \sum_{i = 1}^n\left\|\sum_{j = 1}^n(A_{ij} - \rho_n\bx_{0i}\transpose\bx_{0j})\bgamma_{ij}\right\|^2 + 2\sum_{i = 1}^n(\widehat\bR_i^{(\mathrm{L})})\transpose\sum_{j = 1}^n(A_{ij} - \rho_n\bx_{0i}\transpose\bx_{0j})\bgamma_{ij} + \sum_{i = 1}^n\|\widehat\bR_i^{(\mathrm{L})}\|^2\\
&\quad = \frac{1}{n}\sum_{i = 1}^n\mathrm{tr}\{\widetilde\bG_n(\bx_{0i})\} + 2\sum_{i = 1}^n(\widehat\bR_i^{(\mathrm{L})})\transpose\sum_{j = 1}^n(A_{ij} - \rho_n\bx_{0i}\transpose\bx_{0j})\bgamma_{ij} + o_{\prob_0}(1) + O_{\prob_0}\left(\frac{(\log n)^{2(1\vee\omega)}}{n\rho_n^5}\right)\\
&\quad = \frac{1}{n}\sum_{i = 1}^n\mathrm{tr}\{\widetilde\bG_n(\bx_{0i})\} + 2\sum_{i = 1}^n(\widehat\bR_i^{(\mathrm{L})})\transpose\sum_{j = 1}^n(A_{ij} - \rho_n\bx_{0i}\transpose\bx_{0j})\bgamma_{ij} + o_{\prob_0}(1).
\end{align*}
By Cauchy-Schwarz inequality and Lemma \ref{lemma:convergence_score_function},
\begin{align*}
\left|\sum_{i = 1}^n(\widehat\bR_i^{(\mathrm{L})})\transpose\sum_{j = 1}^n(A_{ij} - \rho_n\bx_{0i}\transpose\bx_{0j})\bgamma_{ij}\right|
&\leq \sum_{i = 1}^n\|\widehat\bR_i^{(\mathrm{L})}\|\left\|\sum_{j = 1}^n(A_{ij} - \rho_n\bx_{0i}\transpose\bx_{0j})\gamma_{ijk}\right\|\\
&\leq \left(\sum_{i = 1}^n\|\widehat\bR_i^{(\mathrm{L})}\|^2\right)^{1/2}\left\{\sum_{i = 1}^n\left\|\sum_{j = 1}^n(A_{ij} - \rho_n\bx_{0i}\transpose\bx_{0j})\bgamma_{ij}\right\|^2\right\}^{1/2}\\
& = o_{\prob_0}(1).
\end{align*}
Furthermore, by condition \eqref{eqn:strong_convergence_measure} and Lemma \ref{lemma:uniform_convergence_G}, we see that
\[
\frac{1}{n}\sum_{i = 1}^n\mathrm{tr}\{\widetilde\bG_n(\bx_{0i})\} = \int \mathrm{tr}\{\widetilde\bG_n(\bx)\}F_n(\mathrm{d}\bx)\to \int \mathrm{tr}\{\widetilde\bG(\bx)\}F(\mathrm{d}\bx).
\]
This completes the proof of the first part of the theorem. For the second part, we see that
\begin{align*}
% &\left(\bmu_n\transpose\bx_{0i}\right)^{-3/2}\left\{\frac{1}{n}\sum_{j = 1}^n\left(\bx_{0i}\transpose\bx_{0j}\eye_d - \frac{1}{2}\bx_{0i}\bx_{0j}\transpose\right)\right\}\bG(\bx_{0i})^{-1}\left(\bmu_n\transpose\bx_{0i}\right)^{-3/2}\left\{\frac{1}{n}\sum_{j = 1}^n\left(\bx_{0i}\transpose\bx_{0j}\eye_d - \frac{1}{2}\bx_{0i}\bx_{0j}\transpose\right)\right\}\transpose\\
% &\quad = 
\frac{1}{\bmu_n\transpose\bx_{0i}}\left(\eye_d - \frac{\bx_{0i}\bmu_n\transpose}{2\bmu_n\transpose\bx_{0i}}\right)\bG(\bx_{0i})^{-1}\left(\eye_d - \frac{\bx_{0i}\bmu_n\transpose}{2\bmu_n\transpose\bx_{0i}}\right) = \widetilde\bG_n(\bx_{0i})\to \widetilde\bG(\bx_{0i}).
\end{align*}
The result directly follows from the asymptotic normality of $\sqrt{n}(\bW\transpose\widehat\bx_i - \rho_n^{1/2}\bx_{0i})$. The proof is thus completed. 
\end{proof}

% section proof_of_theorems_thm:asymptotic_normality_os_laplacian_and_thm:convergence_os_laplacian (end)

\section{Positive Definite Stochastic Block Models} % (fold)
\label{sec:positive_definite_stochastic_block_models}

In this section, we show that the asymptotic covariance matrix of the one-step estimator \eqref{eqn:one_step_estimator} and the ASE, under the conditions of Theorem \ref{thm:asymptotic_normality_OS}, are identical, when the underlying random dot product graph coincides with a stochastic block model with a positive definite block probability matrix.  This is established in Theorem \ref{thm:SBM_normality_OS} below.
\begin{theorem}\label{thm:SBM_normality_OS}
Let $\bA\sim\mathrm{RDPG}(\bX_0)$ with a sparsity factor $\rho_n$ for some $\bX_0 = [\bx_{01},\ldots,\bx_{0n}]\transpose\in\calX^n$. Assume that the conditions of Theorem \ref{thm:asymptotic_normality_OS} hold, and denote $\rho = \lim_{n\to\infty}\rho_n$. Further assume that there exist $d$ linearly independent vectors $\bnu_1,\ldots,\bnu_d\in\calX$ and a probability vector $\bpi = [\pi_1,\ldots,\pi_d]$, such that for each $i\in[n]$, $\bx_{0i} = \bnu_k$ for some $k\in [d]$ with
\[
\frac{1}{n}\sum_{i = 1}^n\mathbbm{1}(\bx_{0i} = \bnu_k) = \pi_k,\quad k \in [d].
\]
Let $\tau$ be the cluster assignment function $\tau:[n]\to [d]$ such that $\tau(i) = k$ if and only if $\bx_{0i} = \bnu_k$. 
Denote $\widehat\bX = [\widehat{\bx}_1,\ldots,\widehat{\bx}_n]\transpose$  the one-step estimator \eqref{eqn:one_step_estimator} based on an initial estimator $\widetilde{\bX}$ that satisfies the approximate linearization property. Then there exists a sequence of orthogonal matrices $(\bW)_{n = 1}^\infty = (\bW_n)_{n = 1}^\infty\subset\mathbb{O}(d)$, such that for each fixed $i\in[n]$, 
\[
\sqrt{n}(\bW\transpose\widehat{\bx}_i - \bnu_k) \overset{\calL}{\to}\mathrm{N}(\zero, \bSigma(\bnu_k))
\]
if $\tau(i) = k$, where $\bSigma(\bnu_k)$ is the same as the asymptotic covariance matrix of the $i$th row of the ASE, and can be computed using the formula in Theorem \ref{thm:ASE_limit_theorem}:
\[
\bSigma(\bnu_k) = \bDelta^{-1}\left[\sum_{l = 1}^d\pi_l\{\bnu_k\transpose\bnu_l(1 - \rho\bnu_k\transpose\bnu_l)\}\bnu_l\bnu_l\transpose\right]\bDelta^{-1},\quad\text{and}\quad \bDelta = \sum_{l = 1}^d\pi_l\bnu_l\bnu_l\transpose.
\]
In other words, the asymptotic covariance matrix of the one-step estimator is the same as that of the ASE. 
\end{theorem}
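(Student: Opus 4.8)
The plan is to reduce the statement to a purely algebraic identity between two explicit $d\times d$ matrices. By Theorem \ref{thm:convergence_OS} the one-step estimator is locally efficient, so there is a sequence of orthogonal alignment matrices $(\bW)_{n=1}^\infty = (\bW_n)_{n=1}^\infty\subset\mathbb{O}(d)$ with $\sqrt{n}(\bW\transpose\widehat\bx_i - \rho_n^{1/2}\bx_{0i})\overset{\calL}{\to}\mathrm{N}(\zero,\bG(\bx_{0i})^{-1})$ for each fixed $i$; when $\tau(i)=k$ this reads $\sqrt{n}(\bW\transpose\widehat\bx_i - \rho_n^{1/2}\bnu_k)\overset{\calL}{\to}\mathrm{N}(\zero,\bG(\bnu_k)^{-1})$ (and in the dense regime $\rho_n\equiv 1$ the centering is exactly $\bnu_k$). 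Since the distributional convergence is already supplied, everything reduces to proving the covariance identity $\bG(\bnu_k)^{-1}=\bSigma(\bnu_k)$ under the hypothesis that the block probability matrix $\bB=[\bnu_k\transpose\bnu_l]$ is positive definite, equivalently that the $d\times d$ matrix $\bN=[\bnu_1,\ldots,\bnu_d]\transpose$ of distinct latent positions is invertible.

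For the identity I would introduce the diagonal matrices $\bLambda=\mathrm{diag}(\pi_1,\ldots,\pi_d)$ and $\bC=\mathrm{diag}(c_1,\ldots,c_d)$ with $c_l=\bnu_k\transpose\bnu_l(1-\rho\bnu_k\transpose\bnu_l)$, and rewrite the three relevant matrices (specialized to $F=\sum_l\pi_l\delta_{\bnu_l}$) as
\begin{align*}
\bDelta=\bN\transpose\bLambda\bN,\qquad \bG(\bnu_k)=\bN\transpose\bLambda\bC^{-1}\bN,\qquad \bSigma(\bnu_k)=\bDelta^{-1}(\bN\transpose\bLambda\bC\bN)\bDelta^{-1}.
\end{align*}
Because $\bN$ is invertible, $\bDelta^{-1}=\bN^{-1}\bLambda^{-1}(\bN\transpose)^{-1}$, and substituting this into the expression for $\bSigma(\bnu_k)$ the outer factors telescope (the adjacent $(\bN\transpose)^{-1}\bN\transpose$ and $\bN\bN^{-1}$ cancel, and the diagonal matrices commute), leaving $\bSigma(\bnu_k)=\bN^{-1}\bC\bLambda^{-1}(\bN\transpose)^{-1}$. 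Inverting the factored form of $\bG(\bnu_k)$ gives $\bG(\bnu_k)^{-1}=\bN^{-1}(\bLambda\bC^{-1})^{-1}(\bN\transpose)^{-1}=\bN^{-1}\bC\bLambda^{-1}(\bN\transpose)^{-1}$, so the two coincide. Combined with the first paragraph this yields $\sqrt{n}(\bW\transpose\widehat\bx_i-\bnu_k)\overset{\calL}{\to}\mathrm{N}(\zero,\bSigma(\bnu_k))$, with the closed form for $\bSigma(\bnu_k)$ read off from Theorem \ref{thm:ASE_limit_theorem}.

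I would also record the conceptual reason this equality holds, since it mirrors the Marshall-Olkin argument used earlier to establish $\bSigma(\bx)\succeq\bG(\bx)^{-1}$ in general. Writing $\bX_0=\bU_0\bS_0^{1/2}\bV_0\transpose$, the diagonal weight matrix $\bD_n(\bx)$ is constant within each block (its $j$th diagonal entry depends on $j$ only through $\tau(j)$), while $\mathrm{col}(\bU_0)$ coincides with the span of the $d$ block-indicator vectors; hence $\bD_n(\bx)$ leaves $\mathrm{col}(\bU_0)$ invariant, which forces $\bU_0\transpose\bD_n(\bx)^{-1}\bU_0=(\bU_0\transpose\bD_n(\bx)\bU_0)^{-1}$ and turns the general inequality into an equality. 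The only real subtlety, and the thing to get right, is that the collapse of $\bSigma$ onto $\bG^{-1}$ uses the invertibility of $\bN$ in an essential way: it is precisely the positive-definiteness of the block probability matrix (equivalently, that the number of blocks equals the embedding dimension $d$) that makes the block structure of $\bD_n$ align perfectly with the $d$-dimensional range of $\bU_0$. I would take care to verify, via condition \eqref{eqn:strong_convergence_measure}, that the finite-sample quantities $\bDelta_n,\bG_n(\bnu_k),\bSigma_n(\bnu_k)$ converge to the stated limit forms before passing the identity to the limit, and to match the centering convention.
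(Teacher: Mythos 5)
Your proposal is correct and follows essentially the same route as the paper's proof: invoke Theorem \ref{thm:convergence_OS} to obtain $\sqrt{n}(\bW\transpose\widehat{\bx}_i - \rho_n^{1/2}\bnu_k)\overset{\calL}{\to}\mathrm{N}(\zero,\bG(\bnu_k)^{-1})$ and then reduce everything to the algebraic identity $\bG(\bnu_k)^{-1}=\bSigma(\bnu_k)$, proved by factoring $\bDelta$, $\bG(\bnu_k)$, and $\bSigma(\bnu_k)$ through the invertible matrix $\bN_0=[\bnu_1,\ldots,\bnu_d]\transpose$ and diagonal matrices exactly as in the paper (whether one telescopes $\bSigma(\bnu_k)$, as you do, or expands $\bG(\bnu_k)^{-1}$ and inserts $\bDelta^{-1}\bDelta$ factors, as the paper does, is purely cosmetic). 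Your two side remarks are sound and in fact sharper than the paper's write-up: the displayed centering at $\bnu_k$ is only literally correct when $\rho_n\equiv 1$ (otherwise it should read $\rho_n^{1/2}\bnu_k$, a point the paper's proof glosses over), and your invariant-subspace observation --- that $\bD_n(\bx)$ is constant on blocks whose indicator vectors span $\mathrm{col}(\bU_0)$ precisely because the number of blocks equals $d$, forcing $\bU_0\transpose\bD_n(\bx)^{-1}\bU_0=(\bU_0\transpose\bD_n(\bx)\bU_0)^{-1}$ --- correctly identifies the equality case of the Marshall--Olkin inequality used earlier to show $\bSigma(\bx)\succeq\bG(\bx)^{-1}$ in general.
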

\begin{proof}[\bf Proof]
According to Theorem \ref{thm:convergence_OS} in the manuscript, we have
\[
\sqrt{n}(\bW\transpose\widehat{\bx}_i - \bnu_k) \overset{\calL}{\to}\mathrm{N}(\zero, \bG(\bnu_k)^{-1})
\]
if $\bx_{0i}$ matches with $\bnu_k$, where
\[
\bG(\bnu_k) = \sum_{l = 1}^d\frac{\pi_l\bnu_l\bnu_l\transpose}{\bnu_k\transpose\bnu_l(1 - \rho\bnu_k\transpose\bnu_l)}.
\]
It is therefore sufficient to show that $\bG(\bnu_k)^{-1} = \bSigma(\bnu_k)$ for all $k\in[K]$. 
% We begin with the singular value decomposition of the matrix
% \[
% \bN_0 = \begin{bmatrix*}
% \bnu_1\transpose \\\vdots \\ \bnu_d\transpose
% \end{bmatrix*} = \bU_\bN\bS_\bN\bV_\bN\transpose,
% \]
% where $\bU_\bN,\bV_\bN\in\mathbb{O}(d)$, and $\bS_\bN = \mathrm{diag}\{\sigma_1(\bN_0),\ldots,\sigma_d(\bN_0)\}$. Since $\bB_0$ is full rank, 
Since $\bnu_1,\ldots,\bnu_d$ are linearly independent,
it follows that $\bN_0 = [\bnu_1,\ldots,\bnu_d]\transpose$ is invertible. Thus, we can write
\[
\bDelta = \sum_{l = 1}^d\pi_l\bnu_l\bnu_l\transpose = \bN_0\transpose\mathrm{diag}(\bpi)\bN_0,\quad
\bDelta^{-1} = \bN_0^{-1}\mathrm{diag}(\bpi)^{-1}\bN_0^{-\mathrm{T}},
\]
where $\bN_0^{-\mathrm{T}}$ is the shorthand notation for $(\bN_0^{-1})\transpose = (\bN_0\transpose)^{-1}$.
Therefore,
\begin{align*}
\bG(\bnu_k)^{-1} 
& = \left[\sum_{l = 1}^d\frac{\pi_l\bnu_l\bnu_l\transpose}{\bnu_k\transpose\bnu_l(1 - \rho\bnu_k\transpose\bnu_l)}\right]^{-1} = \left[\bN_0\transpose\mathrm{diag}\left\{\frac{\pi_1}{\bnu_k\transpose\bnu_1(1 - \rho\bnu_k\transpose\bnu_1)},\ldots,\frac{\pi_d}{\bnu_k\transpose\bnu_d(1 - \rho\bnu_k\transpose\bnu_d)}\right\}\bN_0\right]^{-1}\\
& = \bN_0^{-1}\mathrm{diag}\left\{\frac{\bnu_k\transpose\bnu_1(1 - \rho\bnu_k\transpose\bnu_1)}{\pi_1},\ldots,\frac{\bnu_k\transpose\bnu_d(1 - \rho\bnu_k\transpose\bnu_d)}{\pi_d}\right\}\bN_0^{-{\mathrm{T}}}\\
& = \bDelta^{-1}\bN_0\transpose\mathrm{diag}(\bpi)\mathrm{diag}\left\{\frac{\bnu_k\transpose\bnu_1(1 - \rho\bnu_k\transpose\bnu_1)}{\pi_1},\ldots,\frac{\bnu_k\transpose\bnu_d(1 - \rho\bnu_k\transpose\bnu_d)}{\pi_d}\right\}\mathrm{diag}(\bpi)\bN_0\bDelta^{-1}\\
& = \bDelta^{-1}\bN_0\transpose\mathrm{diag}\left\{{\pi_1}{\bnu_k\transpose\bnu_1(1 - \rho\bnu_k\transpose\bnu_1)},\ldots,{\pi_d}{\bnu_k\transpose\bnu_d(1 - \rho\bnu_k\transpose\bnu_d)}\right\}\bN_0\bDelta^{-1}\\
& = \bDelta^{-1}\left[\sum_{l = 1}^d\pi_l\{\bnu_k\transpose\bnu_l(1 - \rho\bnu_k\transpose\bnu_l)\}\bnu_l\bnu_l\transpose\right]\bDelta^{-1} = \bSigma(\bnu_k).
\end{align*}
The proof is thus completed.
\end{proof}

% section positive_definite_stochastic_block_models (end)

\section{Further Discussion of Sparse Graphs} % (fold)
\label{sec:further_discussion_of_sparse_graphs}

This section provides further discussion on the decaying rate of the sparsity factor $\rho_n$. The discussion is motivated by the comparison of different conditions of the average degrees in the random graph model for different estimators. In Theorem \ref{thm:asymptotic_normality_OS} and Theorem \ref{thm:asymptotic_normality_OS_Laplacian}, which establish the asymptotic characterizations of the one-step estimators, we have imposed the assumption that the sparsity factor $\rho_n$ for the random dot product graph is either constantly $1$, or converges to $0$ with the requirement that $n\rho_n$ grows at a polynomial rate of $n$. Here we use the polynomial rate of $n$ to describe the case where $n\rho_n\gtrsim n^{\eta}$ for a strictly positive constant $\eta > 0$. In contrast, the limit theorem of the ASE (\emph{i.e.}, Theorem \ref{thm:ASE_limit_theorem}) only requires that $n\rho_n$ grows at a polynomial rate of $\log n$. 
% Clearly, the condition that $n\rho_n/(\log n)^4\to\infty$ is weaker than $n\rho_n$ growing at an polynomial rate of $n$. 
From the graph theory perspective, 
$n\rho_n$ controls the average expected degree of the random graph model, \emph{i.e.}, the sparsity level of the graph. Therefore, the limit theorem of the ASE (Theorem \ref{thm:ASE_limit_theorem}) allows sparser graphs because the average expected degree grows at a polynomial rate of $\log n$, whereas the limit theorems of the one-step estimators (Theorems \ref{thm:asymptotic_normality_OS} and \ref{thm:asymptotic_normality_OS_Laplacian}) require the graphs to be relatively denser as the average expected degree  grows at a polynomial rate of $n$. The graphs in the latter scenario are only considered as moderately sparse. 

%We remark that the theoretical properties established for the proposed one-step procedure (for both the latent position matrix $\bX$ and the population LSE $\bY$) rely on the relatively stronger sparsity condition that $n\rho_n\to\infty$ at least at a polynomial rate of $n$.  
This relatively stronger sparsity condition is a consequence of the proof technique employed here. The condition we propose may be weakened. Nevertheless, the proof strategy is a standard approach for establishing the asymptotic normality of the one-step estimators (see Section 5.7 of \citealp{van2000asymptotic}), and the standard proof strategy inevitably leads to the requirement that $n\rho_n$ must be lower bounded by a polynomial of $n$. 
% Another plausible explanation for the challenge of the Newton-Raphson's method for efficient estimation of sparse random dot product graphs may be related to the slow convergence behavior of 
To illustrate this result, we consider the following simplified problem: Assume that the underlying random dot product graph is of dimension $1$, \emph{i.e.}, the latent position matrix $\bX_0$ is an $n\times 1$ column vector, and suppose we focus on estimating a single latent position $x_{0i}$ with the knowledge of the rest of the latent positions $(x_{0j})_{j\neq i}$. Formally, we take the initial estimator $\widetilde\bX$ for the entire latent position matrix to be of the form
\[
e_j\transpose\widetilde\bX = \left\{
\begin{aligned}
&\rho_n^{1/2}x_{0i},&\quad\text{if }j\neq i,\\
&\be_i\transpose\bX^{\mathrm{(ASE)}},&\quad\text{if }j = i.
\end{aligned}
\right..
\]
Here we choose $\widetilde{x}_i$ for $x_{0i}$ as the $i$th row of the ASE, \emph{i.e.}, $\widetilde{x}_i = \be_i\transpose\bX^{\mathrm{ASE}}$, but it is also possible to consider more general estimators. Denote $\widehat{x}_i^{\mathrm{(ASE)}} = \be_i\transpose\widehat{\bX}^{\mathrm{(ASE)}}$. Then the $i$th row of the one-step estimator \eqref{eqn:one_step_estimator} can be written as
\begin{align*}
\widehat{x}_i 
& = \widehat{x}_i^{\mathrm{(ASE)}} + \left\{\frac{1}{n}\sum_{j\neq i}\frac{\rho_n^{1/2}x_{0j}}{\widehat{x}_i^{\mathrm{(ASE)}}(1 - \rho_n^{1/2}\widehat{x}_i^{\mathrm{(ASE)}}x_{0j})} + \frac{1}{n(1 - (\widehat{x}_i^{\mathrm{(ASE)}}) ^ 2)}\right\}^{-1}\\
&\quad\times \left\{ \frac{1}{n}\sum_{j\neq i}\frac{A_{ij} - \rho_n^{1/2}\widehat{x}_i^{\mathrm{(ASE)}}x_{0j}}{\widehat{x}_i^{\mathrm{(ASE)}}(1 - \rho_n^{1/2}\widehat{x}_i^{\mathrm{(ASE)}} x_{0j} )} - \frac{\widehat{x}_i^{\mathrm{(ASE)}}}{n(1 - (\widehat{x}_i^{\mathrm{(ASE)}}) ^ 2)} \right\}.
\end{align*}
Since $x_i^{\mathrm{(ASE)}} = \rho_n^{1/2}x_{0i} + O_{\prob_0}(n^{-1/2}) = O_{\prob_0}(\rho_n^{1/2})$ by the asymptotic normality and the fact that $n\rho_n\geq1$, it follows that $1 - (x_i^{\mathrm{(ASE)}})^2$ stays bounded away from $0$ and $1$ with probability going to $1$, and hence, 
\[
\frac{1}{n(1 - (\widehat{x}_i^{\mathrm{(ASE)}}) ^ 2)} = O_{\prob_0}(n^{-1}),\quad\text{and}\quad
\frac{\widehat{x}_i^{\mathrm{(ASE)}}}{n(1 - (\widehat{x}_i^{\mathrm{(ASE)}}) ^ 2)} = O_{\prob_0}(n^{-1}).
\]
It follows that
\begin{align*}
\widehat{x}_i 
% & = \widehat{x}_i^{\mathrm{(ASE)}} + \left\{\frac{1}{n}\sum_{j\neq i}\frac{\rho_n^{1/2}x_{0j}}{\widehat{x}_i^{\mathrm{(ASE)}}(1 - \rho_n^{1/2}\widehat{x}_i^{\mathrm{(ASE)}}x_{0j})} + O_{\prob_0}(n^{-1})\right\}^{-1}
% % \\&\quad\times 
% \left\{ \frac{1}{n}\sum_{j\neq i}\frac{A_{ij} - \rho_n^{1/2}\widehat{x}_i^{\mathrm{(ASE)}}x_{0j}}{\widehat{x}_i^{\mathrm{(ASE)}}(1 - \rho_n^{1/2}\widehat{x}_i^{\mathrm{(ASE)}} x_{0j}) } + O_{\prob_0}(n^{-1}) \right\}\\
& = \widehat{x}_i^{\mathrm{(ASE)}} + 
\left[
\left\{\frac{1}{n}\sum_{j\neq i}\frac{\rho_n^{1/2}x_{0j}}{\widehat{x}_i^{\mathrm{(ASE)}}(1 - \rho_n^{1/2}\widehat{x}_i^{\mathrm{(ASE)}}x_{0j})}\right\}^{-1} + O_{\prob_0}(n^{-1})\right]
\\&\quad\times 
\left\{ \frac{1}{n}\sum_{j\neq i}\frac{(A_{ij} - \rho_n^{1/2}\widehat{x}_i^{\mathrm{(ASE)}}x_{0j})}{\widehat{x}_i^{\mathrm{(ASE)}}(1 - \rho_n^{1/2}\widehat{x}_i^{\mathrm{(ASE)}} x_{0j}) } + O_{\prob_0}(n^{-1}) \right\}\\
& = \widehat{x}_i^{\mathrm{(ASE)}} + \left\{\frac{1}{n}\sum_{j\neq i}\frac{\rho_n^{1/2}x_{0j}}{\widehat{x}_i^{\mathrm{(ASE)}}(1 - \rho_n^{1/2}\widehat{x}_i^{\mathrm{(ASE)}}x_{0j})}\right\}^{-1}
\left\{
\frac{1}{n}\sum_{j\neq i}\frac{A_{ij} - \rho_n^{1/2}\widehat{x}_i^{\mathrm{(ASE)}}x_{0j}}{\widehat{x}_i^{\mathrm{(ASE)}}(1 - \rho_n^{1/2}\widehat{x}_i^{\mathrm{(ASE)}} x_{0j} )}\right\} + O_{\prob_0}(n^{-1}).
\end{align*}
Now we use the notation of Section 5.7 of \cite{van2000asymptotic}. Denote
\begin{align*}
\Psi_n(x) & = \frac{1}{n}\sum_{j\neq i}\frac{A_{ij} - \rho_n^{1/2}xx_{0j}}{x(1 - \rho_n^{1/2}xx_{0j})},
% \\
\dot{\Psi}_{n,0} 
% & 
= -\frac{1}{n}\sum_{j\neq i}\frac{\rho_n^{1/2}x_{0j}}{\widehat{x}_i^{\mathrm{(ASE)}}(1 - \rho_n^{1/2}\widehat{x}_i^{\mathrm{(ASE)}}x_{0j})},
% \\
\dot{\Psi}_0
 % & 
 = -\int_\calX \frac{x_1}{1 - \rho x_{0i}x_1}F(\mathrm{d}x_1).
\end{align*}
Then the above equation can be written as
\[
\widehat{x}_i = \widehat{x}_i^{\mathrm{(ASE)}} - \dot{\Psi}_{n,0}^{-1}\Psi_n(\widehat{x}^{\mathrm{(ASE)}}_i) + O_{\prob_0}(n^{-1}),
\]
and following the method in \cite{van2000asymptotic},
\begin{align*}
\dot{\Psi}_{n,0}\sqrt{n}(\widehat{x}_i - \rho_n^{1/2}x_{0i}) &= \dot{\Psi}_{n,0}\sqrt{n}(\widehat{x}_i^{\mathrm{(ASE)}} - \rho_n^{1/2}x_{0i}) - \sqrt{n}\Psi_n(\widehat{x}_i^{\mathrm{(ASE)}}) + O_{\prob_0}(n^{-1/2})\\
& = \dot{\Psi}_{n,0}\sqrt{n}(\widehat{x}_i^{\mathrm{(ASE)}} - \rho_n^{1/2}x_{0i})
- \sqrt{n}\left\{\Psi_n(\widehat{x}_i^{\mathrm{(ASE)}}) - \Psi_n(\rho_n^{1/2}x_{0i})\right\}\\
&\quad - \sqrt{n}\Psi_n(\rho_n^{1/2}x_{0i}) + O_{\prob_0}(n^{-1/2})\\
& = (\dot{\Psi}_{n,0} - \dot{\Psi}_0)\sqrt{n}(\widehat{x}_i^{\mathrm{(ASE)}} - \rho_n^{1/2}x_{0i})\\
&\quad + \dot{\Psi}_0\sqrt{n}(\widehat{x}_i^{\mathrm{(ASE)}} - \rho_n^{1/2}x_{0i})
- \sqrt{n}\left\{\Psi_n(\widehat{x}_i^{\mathrm{(ASE)}}) - \Psi_n(\rho_n^{1/2}x_{0i})\right\}\\ 
&\quad - \sqrt{n}\Psi_n(\rho_n^{1/2}x_{0i}) + O_{\prob_0}(n^{-1/2})\\
& = (\dot{\Psi}_{n,0} - \dot{\Psi}_0) O_{\prob_0}(1)\\
&\quad + \dot{\Psi}_0\sqrt{n}(\widehat{x}_i^{\mathrm{(ASE)}} - \rho_n^{1/2}x_{0i})
- \sqrt{n}\left\{\Psi_n(\widehat{x}_i^{\mathrm{(ASE)}}) - \Psi_n(\rho_n^{1/2}x_{0i})\right\}\\ 
&\quad - \sqrt{n}\Psi_n(\rho_n^{1/2}x_{0i}) + O_{\prob_0}(n^{-1/2}).
\end{align*}
To establish the asymptotic normality of $\sqrt{n}(\widehat{x}_i - \rho_n^{1/2}x_{0i})$, the two main standard ingredients employed in Section 5.7 of \cite{van2000asymptotic} are:
\begin{itemize}
  \item[(a)] $\dot{\Psi}_{n,0}\overset{\prob_0}{\to} \dot{\Psi}_0$. This condition holds without requiring that $n\rho_n$ grows polynomially in $n$. Instead, we only need $n\rho_n\to\infty$. Similar to the proof of Theorem \ref{thm:OSE_single_vertex} in Section \ref{sec:proof_of_theorem_thm:ose_single_vertex}, if we denote 
  \[
  \Gamma_j(x) = \frac{\rho_n^{1/2}x_{0j}}{x(1 - \rho_n^{1/2}xx_{0j})}, 
  \]
  then the derivative is
  \[
  \Gamma'_j(x) = -\frac{\rho_n^{1/2}x_{0j}(1 - 2\rho_n^{1/2}xx_{0j})}{x^2(1 - \rho_n^{1/2}xx_{0j})^2},
  \]
  and this implies that there exists a sufficiently small $\eps >0$, such that for all $x\in\mathbb{R}$ with $\sqrt{n}|x - \rho_n^{1/2}x_{0i}| < \eps(n\rho_n)^{1/2}$,
  \begin{align*}
  1 - 2\rho_n^{1/2}xx_{0j} &\leq 1 + 2\rho_n^{1/2}|x - \rho_n^{1/2}x_{0i}|x_{0j} + 2\rho_nx_{0i}x_{0j}\leq 1 + 4\rho_n = O(1),\\
  % \\&
  1 - \rho_n^{1/2}xx_{0j} &\geq 1 - \rho_n^{1/2}|x - \rho_n^{1/2}x_{0i}|x_{0j} - \rho_nx_{0i}x_{0j}
  % \\&
  \geq 1 - \rho_n(\eps + x_{0i}x_{0j}) = O(1),\\
  x ^ 2&\geq (x - \rho_n^{1/2}x_{0i} + \rho_n^{1/2}x_{0i})^2\geq (\rho_n^{1/2}x_{0i} - |x - \rho_n^{1/2}x_{0i}|)^2\\
  &\geq (\rho_n^{1/2}x_{0i} - \rho_n^{1/2}\eps)^2 = \rho_n(x_{0i} - \eps)^2\gtrsim \rho_n,
  \end{align*}
  and hence,
  \[
  \max_{j\in[n]}\sup_{x:\sqrt{n}|x - \rho_n^{1/2}x_{0i}| < \eps(n\rho_n)^{1/2}}|\Gamma_j'(x)|\lesssim \rho_n^{-1/2}.
  \]
  Therefore, by the mean-value theorem, over the event $\{\sqrt{n}|\widehat{x}_i^{(\mathrm{ASE})} - \rho_n^{1/2}x_{0i}| < \eps(n\rho_n)^{1/2}\}$, 
  \begin{align*}
  |\dot{\Psi}_{n,0} - \dot{\Psi}_0|
  & \leq\frac{1}{n}\sum_{j \neq i}|\Gamma_j(\widehat{x}^{\mathrm{(ASE)}}_i) - \Gamma_j(\rho_n^{1/2}x_{0i})| + \left|\frac{1}{n}\sum_{j \neq i}\Gamma_j(\rho_n^{1/2}x_{0i}) - \dot{\Psi}_0\right|\\
  & \leq \max_{j\in[n]}\sup_{x:\sqrt{n}|x - \rho_n^{1/2}x_{0i}| < \eps(n\rho_n)^{1/2}}|\Gamma_j'(x)||\widehat{x}_i^{\mathrm{(ASE)}} - \rho_n^{1/2}x_{0i}| + \left|\frac{1}{n}\sum_{j \neq i}\Gamma_j(\rho_n^{1/2}x_{0i}) - \dot{\Psi}_0\right|\\
  &\leq C\rho_n^{-1/2}|\widehat{x}_i^{\mathrm{(ASE)}} - \rho_n^{1/2}x_{0i}| + o(1)
  \end{align*}
  for some constant $C > 0$.
  This is because the second term goes to $0$ as $n\to\infty$ due condition \eqref{eqn:strong_convergence_measure} in the main text. Therefore, for any $t > 0$ and sufficiently large $n$,
  \begin{align*}
    \prob_0\left(|\dot{\Psi}_{n,0} - \dot{\Psi}_0| > t\right)
    & = \prob_0\left(|\dot{\Psi}_{n,0} - \dot{\Psi}_0| > t, 
        \sqrt{n}|\widehat{x}_i^{\mathrm{(ASE)}} - \rho_n^{1/2}x_{0i}| < \eps(n\rho_n)^{1/2}
        \right)\\
    &\quad + 
    \prob_0\left(|\dot{\Psi}_{n,0} - \dot{\Psi}_0| > t, 
        \sqrt{n}|\widehat{x}_i^{\mathrm{(ASE)}} - \rho_n^{1/2}x_{0i}| \geq \eps(n\rho_n)^{1/2}\right)\\
    &\leq  \prob_0\left(C\rho_n^{-1/2}|\widehat{x}_i^{(\mathrm{ASE})} - \rho_n^{1/2}x_{0i}| > t/2, \sqrt{n}|\widehat{x}_i^{\mathrm{(ASE)}} - \rho_n^{1/2}x_{0i}| < \eps(n\rho_n)^{1/2}\right)\\
    &\quad + \prob_0(\sqrt{n}|\widehat{x}_i^{(\mathrm{ASE})} - \rho_n^{1/2}x_{0i}| \geq \eps(n\rho_n)^{1/2})\\
    &\leq \prob_0\left(\sqrt{n}|\widehat{x}_i^{(\mathrm{ASE})} - \rho_n^{1/2}x_{0i}| > \frac{(n\rho_n)^{1/2}t}{2C}\right)
    \\&\quad
     + \prob_0(\sqrt{n}|\widehat{x}_i^{(\mathrm{ASE})} - \rho_n^{1/2}x_{0i}| \geq \eps(n\rho_n)^{1/2})
    % \\&
    \to 0
  \end{align*}
  where we have used the fact $\sqrt{n}(\widehat{x}_i^{\mathrm{(ASE)}} - \rho_n^{1/2}x_{0i}) = O_{\prob_0}(1)$ and $n\rho_n\to\infty$.
  This completes the verification of $\dot{\Psi}_{n,0}\overset{\prob_0}{\to}\dot{\Psi}_0$.
  \item[(b)] $\sqrt{n}\{\Psi_n(\widehat{x}_i^{\mathrm{(ASE)}}) - \Psi_n(\rho_n^{1/2}x_{0i})\} - \dot{\Psi}_{0}\sqrt{n}(\widehat{x}_i^{\mathrm{(ASE)}} - \rho_n^{1/2}x_{0i}) = o_{\prob_0}(1)$. We remark that this condition holds provided that $n\rho_n^{3}\to 0$ as $n\to\infty$. Observe that
  \begin{align*}
  \Psi'_n(x) & = 
  \frac{1}{n}\sum_{j\neq i}\frac{-\rho_n^{1/2}xx_{0j}(1 - \rho_n^{1/2}xx_{0j}) - (A_{ij} - \rho_n^{1/2}xx_{0j})(1 - 2\rho_n^{1/2}xx_{0j})}{x^2(1 - \rho_n^{1/2}xx_{0j})^2}\\
  & = -\frac{1}{n}\sum_{j\neq i}\Gamma_j(x) - \frac{1}{n}\sum_{j\neq i}\frac{(A_{ij} - \rho_n^{1/2}xx_{0j})(1 - 2\rho_n^{1/2}xx_{0j})}{x^2(1 - \rho_n^{1/2}xx_{0j})^2}
  \end{align*}
  and that
  \begin{align*}
  \dot{\Psi}_{0}
  & = \left\{\dot{\Psi}_{0} + \frac{1}{n}\sum_{j\neq i}\Gamma_j(\rho_n^{1/2}x_{0j})\right\}
   - \left\{\frac{1}{n}\sum_{j\neq i}\Gamma_j(\rho_n^{1/2}x_{0j}) +  \Psi_n'(\rho_n^{1/2}x_{0i})\right\} + \Psi'(\rho_n^{1/2}x_{0i})\\
  & = o(1) - \frac{1}{n}\sum_{j\neq i}\frac{(A_{ij} - \rho_nx_{0i}x_{0j})(1 - 2\rho_nx_{0i}x_{0j})}{\rho_nx_{0i}^2(1 - \rho_nx_{0i}x_{0j}) ^ 2} + \Psi_n'(\rho_n^{1/2}x_{0i})\\
  & = o_{\prob_0}(1) + \Psi_n'(\rho_n^{1/2}x_{0i})
  \end{align*}
  by condition \eqref{eqn:strong_convergence_measure} and the fact that
  \begin{align*}
  % &
  \var\left\{\frac{1}{n}\sum_{j\neq i}\frac{(A_{ij} - \rho_nx_{0i}x_{0j})(1 - 2\rho_nx_{0i}x_{0j})}{\rho_nx_{0i}^2(1 - \rho_nx_{0i}x_{0j}) ^ 2}\right\}
  % \\&\quad 
  &= \frac{1}{n^2}\sum_{j \neq i}\frac{\rho_nx_{0i}x_{0j}(1 - \rho_nx_{0i}x_{0j})(1 - 2\rho_nx_{0i}x_{0j})^2}{\rho_n^2x_{0i}^4(1 - \rho_nx_{0i}x_{0j})^4}\\
  &\lesssim \frac{1}{n\rho_n}\to 0.
  \end{align*}
  Then we apply the fact that $\sqrt{n}(\widehat{x}_i^{(\mathrm{ASE})} - \rho_n^{1/2}x_{0i}) = O_{\prob_0}(1)$ and the mean-value theorem
   % applied to the function $\Psi_n'(\cdot)$ 
   to derive
  \begin{align*}
  &\sqrt{n}\{\Psi_n(\widehat{x}_i^{\mathrm{(ASE)}}) - \Psi_n(\rho_n^{1/2}x_{0i})\} - \dot{\Psi}_{0}\sqrt{n}(\widehat{x}_i^{\mathrm{(ASE)}} - \rho_n^{1/2}x_{0i})\\
  &\quad = \sqrt{n}\{\Psi_n(\widehat{x}_i^{\mathrm{(ASE)}}) - \Psi_n(\rho_n^{1/2}x_{0i})\}
  - \Psi'_n(\rho_n^{1/2}x_{0i})\sqrt{n}(\widehat{x}_i^{\mathrm{(ASE)}} - \rho_n^{1/2}x_{0i})\\
  &\quad\quad - o_{\prob_0}(1)\sqrt{n}(\widehat{x}_i^{\mathrm{(ASE)}} - \rho_n^{1/2}x_{0i})\\
  &\quad = \sqrt{n}\{\Psi_n(\widehat{x}_i^{\mathrm{(ASE)}}) - \Psi_n(\rho_n^{1/2}x_{0i})\}
  - \Psi'_n(\rho_n^{1/2}x_{0i})\sqrt{n}(\widehat{x}_i^{\mathrm{(ASE)}} - \rho_n^{1/2}x_{0i}) + o_{\prob_0}(1)\\
  &\quad = \Psi_n''(\widetilde{x}_i) \sqrt{n}(\widehat{x}_i^{\mathrm{(ASE)}} - \rho_n^{1/2}x_{0i}) ^ 2 + o_{\prob_0}(1)\\
  &\quad = \Psi_n''(\widetilde{x}_i) O_{\prob_0}(n^{-1/2}) + o_{\prob_0}(1),
  \end{align*}
  where $\widetilde{x}_i$ lies between $\rho_n^{1/2}x_{0i}$ and $\widehat{x}_i^{(\mathrm{ASE})}$.
  Since the above derivation is equivalent, we see that ingredient (b) holds if and only if $\Psi''(\widetilde{x}_i) = o_{\prob_0}(n^{1/2})$. Thus we compute
  \begin{align*}
  \Psi_n''(\widetilde{x}_i)
  & = -\frac{1}{n}\sum_{j\neq i}\Gamma'_j(\widetilde{x}_i)
   - \frac{\rho_n^{1/2}}{\widetilde{x}_i^2}
   \frac{1}{n}
   \sum_{j\neq i}
   \frac{x_{0j}(1 - 2\rho_n^{1/2}\widetilde{x}_ix_{0j}) + 2(A_{ij} - \rho_n^{1/2}\widetilde{x}_ix_{0j})x_{0j}}{(1 - \rho_n^{1/2}\widetilde{x}_ix_{0j})^2}
   \\
  &\quad - \frac{1}{\widetilde{x}_i^3}\frac{1}{n}\sum_{j\neq i}\frac{2(A_{ij} - \rho_n^{1/2}\widetilde{x}_ix_{0j})(1 - 2\rho_n^{1/2}\widetilde{x}_ix_{0j})^2 (1 - \rho_n^{1/2}\widetilde{x}_ix_{0j})}{(1 - \rho_n^{1/2}\widetilde{x}_ix_{0j})^4}
  \\
  & = -\frac{1}{n}\sum_{j\neq i}\Gamma'_j(\widetilde{x}_i)
  % \\&\quad 
   - \frac{\rho_n^{1/2}}{n\widetilde{x}_i^2}
   % \frac{1}{n}
   \sum_{j\neq i}
   \frac{x_{0j}(1 - 2\rho_n^{1/2}\widetilde{x}_ix_{0j}) + 2(A_{ij} - \rho_n^{1/2}\widetilde{x}_{i}x_{0j})x_{0j}}{(1 - \rho_n^{1/2}\widetilde{x}_ix_{0j})^2}
   \\
  &\quad - 
  \frac{1}{n\widetilde{x}_i^3}
  % \frac{1}{n}
  \sum_{j\neq i}\frac{2\rho_n^{1/2}(\rho_n^{1/2}x_{0i} - \widetilde{x}_i)x_{0j}(1 - 2\rho_n^{1/2}\widetilde{x}_ix_{0j})^2 (1 - \rho_n^{1/2}\widetilde{x}_ix_{0j})}{(1 - \rho_n^{1/2}\widetilde{x}_ix_{0j})^4}\\
  &\quad - \frac{1}{n\widetilde{x}_i^3}
  % \frac{1}{n}
  \sum_{j\neq i}\frac{2(A_{ij} - \rho_nx_{0i}x_{0j})(1 - 2\rho_n^{1/2}\widetilde{x}_ix_{0j})^2 (1 - \rho_n^{1/2}\widetilde{x}_ix_{0j})}{(1 - \rho_n^{1/2}\widetilde{x}_ix_{0j})^4}.
  \end{align*}
  Similarly, there exists a sufficiently small $\eps > 0$, such that over the event $\{\sqrt{n}|\widehat{x}_i^{\mathrm{(ASE)}} - \rho_n^{1/2}x_{0i}| < \eps(n\rho_n)^{1/2}\}$, we have
  % \[
  \begin{align*}
  |\rho_n^{1/2}x_{0i} - \widetilde{x}_i|
  &
  \leq |\rho_n^{1/2}x_{0i} - \widehat{x}_i^{\mathrm{(ASE)}}| < \eps\rho_n^{1/2},
  \\
  % \]
  % \begin{align*}
  |1 - 2\rho_n^{1/2}\widetilde{x}_ix_{0j}| &\leq 1 + 2\rho_n^{1/2}|\widetilde{x}_i - \rho_n^{1/2}x_{0i}|x_{0j} + 2\rho_nx_{0i}x_{0j}\leq 1 + 4\rho_n = O(1),\\
  % \\&
  1 - \rho_n^{1/2}\widetilde{x}_ix_{0j} &\geq 1 - \rho_n^{1/2}|\widetilde{x}_i - \rho_n^{1/2}x_{0i}|x_{0j} - \rho_nx_{0i}x_{0j}
  % \\&
  \geq 1 - \rho_n(\eps + x_{0i}x_{0j}) = O(1),\\
  |\widetilde{x}_i|&\geq -|\widetilde{x}_i - \rho_n^{1/2}x_{0i}| + \rho_n^{1/2}x_{0i}
  \geq \rho_n^{1/2}x_{0i} - \rho_n^{1/2}\eps\gtrsim \rho_n^{1/2},
  \end{align*}
  and
  \[
  \max_{j\in[n]}|\Gamma_j'(\widetilde{x}_i)|\lesssim\rho_n^{-1/2}.
  \]
  Since the event $\{\sqrt{n}|\widehat{x}_i^{\mathrm{(ASE)}} - \rho_n^{1/2}x_{0i}| < \eps(n\rho_n)^{1/2}\}$ has probability going to $1$, then with probability going to $1$,
  \begin{align*}
  \Psi_n''(\widetilde{x}_i)
  & = -\frac{1}{n}\sum_{j\neq i}\Gamma'_j(\widetilde{x}_i)
  % \\&\quad 
   - \frac{\rho_n^{1/2}}{\widetilde{x}_i^2}
   \frac{1}{n}
   \sum_{j\neq i}
   \frac{x_{0j}(1 - 2\rho_n^{1/2}\widetilde{x}_ix_{0j}) + 2(A_{ij} - \rho_n^{1/2}\widetilde{x}_{i}x_{0j})x_{0j}}{(1 - \rho_n^{1/2}\widetilde{x}_ix_{0j})^2}
   \\
  &\quad - 
  \frac{\rho_n^{1/2}}{\widetilde{x}_i^3}\frac{1}{n}\sum_{j\neq i}\frac{2(\rho_n^{1/2}x_{0i} - \widetilde{x}_i)x_{0j}(1 - 2\rho_n^{1/2}\widetilde{x}_ix_{0j})^2 (1 - \rho_n^{1/2}\widetilde{x}_ix_{0j})}{(1 - \rho_n^{1/2}\widetilde{x}_ix_{0j})^4}\\
  &\quad - \frac{1}{\widetilde{x}_i^3}\frac{1}{n}\sum_{j\neq i}\frac{2(A_{ij} - \rho_nx_{0i}x_{0j})(1 - 2\rho_n^{1/2}\widetilde{x}_ix_{0j})^2 (1 - \rho_n^{1/2}\widetilde{x}_ix_{0j})}{(1 - \rho_n^{1/2}\widetilde{x}_ix_{0j})^4}\\
  & = O(\rho_n^{-1/2}) + O(\rho_n^{-1/2}) + O(\rho_n^{-1}) + O(\rho_n^{-3/2}) = O(\rho_n^{-3/2}).
  \end{align*}
  Hence, we see that $\Psi''_n(\widetilde{x}_i) = O_{\prob_0}(\rho_n^{-3/2})$. Since we need to require $\Psi''_n(\widetilde{x}_i) = o_{\prob_0}(\sqrt{n})$ in order to establish the asymptotic normality of the one-step estimator, it is hence essential to require that $\Psi''_n(\widetilde{x}_i) = O_{\prob_0}(\rho_n^{-3/2}) = o_{\prob_0}(\sqrt{n})$, which in turn requires that $\rho_n^3n\to\infty$. This is equivalent to require that $n\rho_n = \{n^2(n\rho_n^3)\}^{1/3} = n^{2/3}(n\rho_n^3)^{1/3}\gg n^{2/3}$. 
\end{itemize}
Through the above derivation, we observe that in order the standard technique introduced in Section 5.7 of \cite{van2000asymptotic} works for establishing the asymptotic normality of the one-step estimator, it is necessary to require that $n\rho_n$ is lower bounded by a polynomial of $n$, \emph{i.e.}, the graph is moderately sparse. 

We also remark that the condition $n\rho_n$ being lower bounded by a polynomial of $n$ is not a sufficient condition for the asymptotic normality of the one-step estimator. The following theorem illustrates this result by providing an example showing that the asymptotic normality of the one-step estimator also occurs under  specific setups when $(\log n)^4/(n\rho_n)\to 0$. The proof technique employed below requires a non-standard treatment of the score function and the Fisher information matrix in contrast to the classical technique in \cite{van2000asymptotic}. Note that  generalizing   the technique below to random dot product graphs  with latent dimension $d$ greater than $1$ is non-trivial. 

\begin{theorem}\label{thm:Sparse_ER_graph}
% Consider the Erd\"os-R\'enyi graph model in the context of a random dot product graph as follows: 
Let $\bA\sim\mathrm{RDPG}(\bX_0)$ with a sparsity factor $\rho_n$, where the latent position matrix is $\bX_0 = [p,\ldots,p]\transpose$ for some $p\in (0, 1)$. Fix $i\in[n]$ and let $\widehat{\bX}^{\mathrm{(ASE)}}$ be the ASE of $\bA$ into $\mathbb{R}^1$. Consider an initial estimator $\widetilde\bX$ of the form
\[
e_j\transpose\widetilde\bX = \left\{
\begin{aligned}
&\rho_n^{1/2}p,&\quad\text{if }j\neq i,\\
&e_i\transpose\bX^{\mathrm{(ASE)}},&\quad\text{if }j = i.
\end{aligned}
\right.
\]
Let $\widehat\bX = [\widehat{x}_1,\ldots,\widehat{x}_n]\transpose$ be the one-step estimator defined by \eqref{eqn:one_step_estimator} with the initial estimator $\widetilde\bX = [\widetilde{x}_{1},\ldots,\widehat{x}_n]\transpose$ defined as above. Under the condition that $\rho_n\to 0$ but $(\log n)^4/(n\rho_n)\to 0$, we have
\[
\sqrt{n}(\widehat{x}_j - \rho_n^{1/2}p)\overset{\calL}{\to}\mathrm{N}(0, 1),\quad j = 1,\ldots,n.
\]
\end{theorem}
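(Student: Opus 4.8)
The plan is to exploit a special algebraic feature of this one-dimensional Erd\H{o}s--R\'enyi model: when every latent position other than the $i$-th is held at the truth, the estimating equation becomes \emph{exactly linear} after clearing a positive common factor, so that a single Fisher-scoring step lands precisely on the explicit maximum likelihood estimator, \emph{independently of the starting value}. This bypasses the Taylor expansion of the score and the resulting second-derivative term $\Psi_n''$, whose control was exactly what forced the polynomial density condition $n\rho_n^3\to\infty$ in the standard argument preceding the theorem.

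Write $a = \rho_n^{1/2}p$ for the common scaled latent position and $b = \be_i\transpose\widehat\bX^{(\mathrm{ASE})}$. Since $d=1$ the alignment matrix $\bW$ is a sign, which I fix so that $b>0$; Theorem~\ref{thm:ASE_limit_theorem} (valid since $(\log n)^4/(n\rho_n)\to 0$) then gives $b - a = O_{\prob_0}(n^{-1/2})$, and because $n\rho_n\to\infty$ this yields $b/a = 1 + O_{\prob_0}((n\rho_n)^{-1/2})$, so $b\asymp a\asymp\rho_n^{1/2}$ and $b$ stays bounded away from $0$ with probability tending to one.

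Consider first the crucial case $j=i$. I would separate the self-term $l=i$ from the terms $l\neq i$ (for which $\widetilde x_l = a$) in both the score and the plug-in Fisher information of \eqref{eqn:one_step_estimator}. For the sum restricted to $l\neq i$, the score equals $\{b(1-ab)\}^{-1}\frac1n\sum_{l\neq i}(A_{il}-ba)$ and the information equals $\frac{n-1}{n}\,a\{b(1-ab)\}^{-1}$; the common factor $b(1-ab)$ cancels and a direct computation shows the corresponding update is
\[
b + \frac{1}{(n-1)a}\sum_{l\neq i}(A_{il}-ba) = \frac{1}{(n-1)a}\sum_{l\neq i}A_{il} =: x^*,
\]
which does not depend on $b$: the initial-estimator error $(b-a)$ cancels identically, and $x^*$ is the explicit MLE solving the linear equation $\sum_{l\neq i}(A_{il}-xa)=0$, so that $x^* - a = \frac{1}{(n-1)a}\sum_{l\neq i}(A_{il}-a^2)$. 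I would then treat the full one-step estimator as a perturbation of $x^*$: writing $H_{\mathrm{full}} = H_m + H_e$ and $S_{\mathrm{full}} = S_m + S_e$, where $H_e,S_e$ collect the $l=i$ contributions, one has $H_{\mathrm{full}}\asymp 1$ and $S_eH_m - S_mH_e = O_{\prob_0}(\rho_n^{1/2}n^{-1} + n^{-3/2})$, whence $\widehat x_i - a = (x^*-a) + O_{\prob_0}(\rho_n^{1/2}n^{-1})$, a remainder that is $o_{\prob_0}(1)$ after multiplying by $\sqrt n$.

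It remains to apply a central limit theorem to $\sqrt n(x^*-a) = \frac{\sqrt n}{(n-1)a}\sum_{l\neq i}(A_{il}-a^2)$, a normalized sum of $n-1$ independent centered $\mathrm{Bernoulli}(a^2)$ variables. Its variance is $\frac{n}{n-1}(1-a^2)\to 1$ since $a^2=\rho_np^2\to 0$, and the Lyapunov ratio is of order $(n\rho_n)^{-1/2}\to 0$; hence $\sqrt n(\widehat x_i - a)\overset{\calL}{\to}\mathrm{N}(0,1)$. For $j\neq i$ the argument is easier, since $\widetilde x_j = a$ is already exact and no cancellation is needed: the same splitting gives $\widehat x_j - a = \frac{1}{na}\sum_{l\neq i,j}(A_{jl}-a^2) + o_{\prob_0}(n^{-1/2})$, and the identical Lyapunov CLT yields $\mathrm{N}(0,1)$. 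The main obstacle is the bookkeeping of the corrections in the $j=i$ case: the self-term corrections carry an amplification of order $1/a\asymp\rho_n^{-1/2}$, and one must check that each such factor is always paired with enough powers of $n^{-1/2}$ (from the $1/n$ weight of a single summand together with $b\asymp\rho_n^{1/2}$) so that the net contribution is $O_{\prob_0}((n\rho_n)^{-1/2})$ rather than diverging. The conceptual point is that the exact cancellation removes any dependence on the merely $\sqrt n$-consistent initial estimator from the leading term, which is precisely what permits the far weaker sparsity requirement.
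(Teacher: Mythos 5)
Your proposal is correct and follows essentially the same route as the paper's proof: a direct computation showing that, because all off-$i$ positions are held at the truth, the one-step update collapses to the explicit MLE $\frac{1}{(n-1)a}\sum_{l\neq i}A_{il}$ plus an $o_{\prob_0}(n^{-1/2})$ remainder — the paper reaches the same identity $\widehat{x}_i = \frac{1}{n}\widetilde{x}_i + \frac{1}{n\rho_n^{1/2}p}\sum_{j\neq i}A_{ij} + o_{\prob_0}(n^{-1/2})$ via stochastic-order bookkeeping, and handles $\widehat{x}_k$ for $k\neq i$ with the same $O_{\prob_0}(n^{-1/2}(n\rho_n)^{-1/2})$ bound on the single term involving $\widetilde{x}_i$ — followed by the identical Lyapunov CLT with variance $\frac{n}{n-1}(1-\rho_np^2)\to 1$. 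Your refinement of making the cancellation of the initial estimator \emph{exact} (cancelling the common factor $b(1-ab)$ between score and information, so the off-diagonal update lands on $x^*$ independently of $b$, and then treating the hollow-diagonal self-terms as an explicit perturbation $\{S_eH_m - S_mH_e\}/\{H_m H_{\mathrm{full}}\}$) is a tidier organization of the same argument, not a different method.
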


\begin{proof}[\bf Proof of Theorem \ref{thm:Sparse_ER_graph}]
We first consider $\widehat{x}_i$. Write
\begin{align*}
\widehat{x}_i & = \widetilde{x}_i + \left\{\frac{1}{n}\sum_{j = 1}^n\frac{\widetilde{x}_j^2}{\widetilde{x}_i\widetilde{x}_j(1 - \widetilde{x}_i\widetilde{x}_j)}\right\}^{-1}
\left\{
\frac{1}{n}\sum_{j = 1}^n\frac{(A_{ij} - \widetilde{x}_i\widetilde{x}_j)\widetilde{x}_j}{\widetilde{x}_i\widetilde{x}_j(1 - \widetilde{x}_i\widetilde{x}_j)}
\right\}\\
& = \widetilde{x}_i + \left\{\frac{1}{n}\sum_{j = 1}^n\frac{\widetilde{x}_j}{(1 - \widetilde{x}_i\widetilde{x}_j)}\right\}^{-1}
\left\{
\frac{1}{n}\sum_{j = 1}^n\frac{(A_{ij} - \widetilde{x}_i\widetilde{x}_j)}{(1 - \widetilde{x}_i\widetilde{x}_j)}
\right\}\\
& = \widetilde{x}_i + \left\{\frac{1}{n}\sum_{j\neq i}\frac{\rho_n^{1/2}p}{1 - \rho_n^{1/2}\widetilde{x}_ip} + \frac{\widetilde{x}_i}{n(1 - \widetilde{x}_i^2)}\right\}^{-1}\left\{\frac{1}{n}\sum_{j \neq i}\frac{(A_{ij} - \rho_n^{1/2}\widetilde{x}_ip)}{(1 - \rho_n^{1/2}\widetilde{x}_ip)} - \frac{\widetilde{x}_i^2}{n(1 - \widetilde{x}_i^2)}\right\}\\
& = \widetilde{x}_i + \left\{\frac{p}{1 - \rho_n^{1/2}\widetilde{x}_ip} - \frac{p}{n(1 - \rho_n^{1/2}\widetilde{x}_ip)} + \frac{\widetilde{x}_i}{n\rho_n^{1/2}(1 - \widetilde{x}_i^2)}\right\}^{-1}\\
&\quad\qquad\quad\times\left\{\frac{1}{n\rho_n^{1/2}}\sum_{j \neq i}\frac{(A_{ij} - \rho_n^{1/2}\widetilde{x}_ip)}{(1 - \rho_n^{1/2}\widetilde{x}_ip)} - \frac{\widetilde{x}_i^2}{n\rho_n^{1/2}(1 - \widetilde{x}_i^2)}\right\}.
\end{align*}
Observe that $\widetilde{x}_i = \rho_n^{1/2}p + O_{\prob_0}(n^{-1/2})$, $1 - \rho_n^{1/2}\widetilde{x}_ip = 1 - \rho_np^2 + o_{\prob_0}(1)$, $1 - \widetilde{x}_i^2 =1 - \rho_np^2 + o_{\prob_0}(1)$ by the continuous mapping theorem, we see that
\begin{align*}
 - \frac{p}{n(1 - \rho_n^{1/2}\widetilde{x}_ip)} + \frac{\widetilde{x}_i}{n\rho_n^{1/2}(1 - \widetilde{x}_i^2)}
& = \frac{-p}{n\{1 - \rho_np^2 + o_{\prob_0}(1)\}} + \frac{\rho_n^{1/2}p + O_{\prob_0}(n^{-1/2})}
{n\rho_n^{1/2}\{1 - \rho_np^2 + o_{\prob_0}(1)\}}\\ 
& = \frac{-p}{n\{1 - \rho_np^2 + o_{\prob_0}(1)\}} + \frac{\rho_n^{1/2}[p + O_{\prob_0}\{(n\rho_n)^{-1/2}\}]}
{n\rho_n^{1/2}\{1 - \rho_np^2 + o_{\prob_0}(1)\}}
% \\&
 = O_{\prob_0}(n^{-1}),
\\
\frac{\widetilde{x}_i^2}{n\rho_n^{1/2}(1 - \widetilde{x}_i^2)}
& = \frac{\rho_n[p + O_{\prob_0}\{(n\rho_n)^{-1/2}]^2\}}{n\rho_n^{1/2}\{1 - \rho_np^2 + o_{\prob_0}(1)\}} = O_{\prob_0}\left(\frac{\rho_n^{1/2}}{n}\right).
\end{align*}
Therefore, we proceed to compute
\begin{align*}
\widehat{x}_i & =
\widetilde{x}_i + \left\{\frac{p}{1 - \rho_n^{1/2}\widetilde{x}_ip} - \frac{p}{n(1 - \rho_n^{1/2}\widetilde{x}_ip)} + \frac{\widetilde{x}_i}{n\rho_n^{1/2}(1 - \widetilde{x}_i^2)}\right\}^{-1}\\
&\quad\qquad\quad\times\left\{\frac{1}{n\rho_n^{1/2}}\sum_{j \neq i}\frac{(A_{ij} - \rho_n^{1/2}\widetilde{x}_ip)}{(1 - \rho_n^{1/2}\widetilde{x}_ip)} - \frac{\widetilde{x}_i^2}{n\rho_n^{1/2}(1 - \widetilde{x}_i^2)}\right\}\\
& = \widetilde{x}_i + \left\{\frac{p}{1 - \rho_n^{1/2}\widetilde{x}_ip} + O_{\prob_0}(n^{-1})\right\}^{-1}\left\{\frac{1}{n\rho_n^{1/2}}\sum_{j \neq i}\frac{(A_{ij} - \rho_n^{1/2}\widetilde{x}_ip)}{(1 - \rho_n^{1/2}\widetilde{x}_ip)} + O_{\prob_0}(n^{-1})\right\}\\
& = \widetilde{x}_i + \left\{\frac{1 - \rho_n^{1/2}p\widetilde{x}_i}{p} + o_{\prob_0}(1)\right\}\left\{\frac{1}{n\rho_n^{1/2}}\sum_{j \neq i}\frac{(A_{ij} - \rho_n^{1/2}\widetilde{x}_ip)}{(1 - \rho_n^{1/2}\widetilde{x}_ip)} + O_{\prob_0}(n^{-1})\right\},
\end{align*}
where the last inequality is due to the continuous mapping theorem. Since
\begin{align*}
\frac{1}{n\rho_n^{1/2}}\sum_{j\neq i}\frac{(A_{ij} - \rho_n^{1/2}\widetilde{x}_ip)}{(1 - \rho_n^{1/2}\widetilde{x}_ip)}
& = \frac{1}{n\rho_n^{1/2}}\sum_{j\neq i}\frac{(A_{ij} - \rho_np^2) + (\rho_np^2 - \rho_n^{1/2}\widetilde{x}_ip)}{(1 - \rho_n^{1/2}\widetilde{x}_ip)}\\
& = \frac{1}{(1 - \rho_n^{1/2}\widetilde{x}_ip)}\sum_{j\neq i}\frac{(A_{ij} - \rho_np^2)}{n\rho_n^{1/2}} + \frac{n - 1}{n\rho_n^{1/2}}\frac{(\rho_np^2 - \rho_n^{1/2}\widetilde{x}_ip)}{(1 - \rho_n^{1/2}\widetilde{x}_ip)}\\
& = \frac{O_{\prob_0}(n^{-1/2})}{1 - \rho_np + o_{\prob_0}(1)}
   + \frac{n - 1}{n}\frac{p(\rho_n^{1/2}p - \widetilde{x}_i)}{\{1 - \rho_np^2 + o_{\prob_0}(1)\}}
   % \\&
 = O_{\prob_0}(n^{-1/2}),
\end{align*}
it follows that
\begin{align*}
\widehat{x}_i & =
\widetilde{x}_i + \frac{1}{n\rho_n^{1/2}}\sum_{j\neq i}\frac{(A_{ij} - \rho_n^{1/2}\widetilde{x}_ip)}{p} + \frac{1 - \rho_n^{1/2}p\widetilde{x}_i}{p}O_{\prob_0}(n^{-1}) + o_{\prob_0}(n^{-1/2})\\
& = \widetilde{x}_i + \frac{1}{n\rho_n^{1/2}}\sum_{j\neq i}\frac{(A_{ij} - \rho_n^{1/2}\widetilde{x}_ip)}{p} + o_{\prob_0}(n^{-1/2})
% \\&
 = \frac{1}{n}\widetilde{x}_i + \frac{1}{n\rho_n^{1/2}}\sum_{j\neq i}\frac{A_{ij}}{p} + o_{\prob_0}(n^{-1/2}).
\end{align*}
Hence we conclude that
\begin{align*}
\sqrt{n}(\widehat{x}_i - \rho_n^{1/2}p) & = \frac{1}{\sqrt{(n - 1)\rho_n}}\sum_{j\neq i}\frac{A_{ij} - \rho_np^2}{p} + o_{\prob_0}(1)
\overset{\calL}{\to}\mathrm{N}(0, 1 - \rho p^2) = \mathrm{N}(0, 1)
.
\end{align*}
by the central limit theorem.
% ,
% \[
% \frac{1}{\sqrt{(n - 1)\rho_n}}\sum_{j\neq i}\frac{A_{ij} - \rho_np^2}{p}
% \overset{\calL}{\to}\mathrm{N}(0, 1 - \rho p^2) = \mathrm{N}(0, 1)
% \]
% because $\rho_n\to 0$. 
This establishes the asymptotic normality for $\widehat{x}_j$ with $j = i$.

% \vspace*{2ex}
\noindent
Next we consider $\widehat{x}_k$ with $k\neq i$. Write
\begin{align*}
\widehat{x}_k
& = \widetilde{x}_k + \left\{\frac{1}{n}\sum_{j = 1}^n\frac{\widetilde{x}_j}{\widetilde{x}_k(1 - \widetilde{x}_k\widetilde{x}_j)}\right\}^{-1}
\left\{\frac{1}{n}\sum_{j = 1}^n\frac{A_{kj} - \widetilde{x}_k\widetilde{x}_j}{\widetilde{x}_k(1 - \widetilde{x}_k\widetilde{x}_j)}\right\}
\\
& = \rho_n^{1/2}p + \left\{\frac{1}{n(1 - \rho_np^2)} + \frac{\widetilde{x}_i}{n\rho_n^{1/2}p(1 - \rho_n^{1/2}p\widetilde{x}_i)} + \frac{1}{n}\sum_{j\notin\{i,k\}}
\frac{\rho_n^{1/2}p}{\rho_n^{1/2}p(1 - \rho_np^2)}
\right\}^{-1}\\
&\qquad\qquad\quad\times
\left\{\frac{1}{n}\sum_{j\notin\{i,k\}}\frac{A_{kj} - \rho_np^2}{\rho_n^{1/2}p(1 - \rho_np^2)} + \frac{1}{n}\frac{A_{ki} - \rho_n^{1/2}p\widetilde{x}_i}{\rho_n^{1/2}p(1 - \rho_n^{1/2}p\widetilde{x}_{i})} - \frac{\rho_n^{1/2}p}{n(1 - \rho_np^2)}\right\}\\
& =\rho_n^{1/2}p + 
\left\{
\frac{1}{n(1 - \rho_np^2)} + \frac{\rho_n^{1/2}[p + O_{\prob_0}\{(n\rho_n)^{-1/2}\}]}{n\rho_n^{1/2}p(1 - \rho_np^2 + O_{\prob_0}\{(n\rho_n)^{-1/2}\})} + \frac{n - 2}{n}
\frac{1}{1 - \rho_np^2}
\right\}^{-1}\\
&\qquad\qquad\quad\times
\left\{
\frac{1}{n}\sum_{j\notin\{i,k\}}\frac{A_{kj} - \rho_np^2}{\rho_n^{1/2}p(1 - \rho_np^2)} + \frac{1}{n}\frac{A_{ki} - \rho_n^{1/2}p\widetilde{x}_i}{\rho_n^{1/2}p(1 - \rho_n^{1/2}p\widetilde{x}_{i})} - \frac{\rho_n^{1/2}p}{n(1 - \rho_np^2)}
\right\}\\
& = \rho_n^{1/2}p + \left\{o_{\prob_0}(1) + {(1 - \rho_np^2)}
\right\}
% \\&\qquad\qquad\quad\times
\left\{
\frac{1}{n\rho_n^{1/2}}\sum_{j\notin \{i,k\}}\frac{A_{kj} - \rho_np^2}{p(1 - \rho_np^2)} + O_{\prob_0}\left(\frac{1}{\sqrt{n}\sqrt{n\rho_n}}\right)
\right\}\\
& = \rho_n^{1/2}p + o_{\prob_0}(n^{-1/2}) + \frac{1}{n\rho_n^{1/2}}\sum_{j\neq \{i,k\}}\frac{A_{kj} - \rho_np^2}{p}.
\end{align*}
Therefore,
\[
\sqrt{n}(\widehat{x}_k - \rho_n^{1/2}p) = \frac{1}{\sqrt{n\rho_n}}\sum_{j\neq k}\frac{A_{kj} - \rho_np^2}{p} + o_{\prob_0}(1)\overset{\calL}{\to}\mathrm{N}(0, 1)
\]
because $\var_0(A_{kj}/\rho_n^{1/2}) = p^2(1 - \rho_np^2)\to p^2$ as $\rho_n\to 0$. The proof is thus completed.
\end{proof}

% section further_discussion_of_sparse_graphs (end)

\section{Additional Simulated Example} % (fold)
\label{sec:additional_simulated_example}

\subsection{A Sparser Stochastic Block Model} % (fold)
\label{sub:a_sparser_stochastic_block_model}
In the manuscript, we mainly consider general dense random dot product graphs in simulated examples in Section \ref{sec:numerical_examples}. 
This section presents the analysis of a synthetic example of a sparser stochastic block model. In particular, we adopt the basic simulation setup in \ref{sub:a_simulated_example} and make modification as follows. The underlying sampling model is still a random dot product graph, but the sparsity level of the graph changes with the number of vertices. The block probability matrix is given by a constant multiple of $\bB$, namely,
\[
\bB = (\bX_0^*)(\bX_0^*)\transpose,\quad\text{where}\quad
(\bX_0^*)\transpose{} = 
\alpha_n
\begin{bmatrix*}
0.3 & 0.3 & 0.6\\
0.3 & 0.6 & 0.3\\
\end{bmatrix*},
\]
and $\alpha_n$ is a scaling factor that varies with the number of vertices $n$ specified later. The cluster assignment function $\tau:[n]\to[3]$ is such that
\[
\frac{1}{n}\sum_{i = 1}^n\mathbbm{1}\{\tau(i) = 1\}\to 0.3,\quad
\frac{1}{n}\sum_{i = 1}^n\mathbbm{1}\{\tau(i) = 2\}\to 0.3,\quad
\frac{1}{n}\sum_{i = 1}^n\mathbbm{1}\{\tau(i) = 3\}\to 0.4,
\]
as $n\to\infty$. The specification of $\tau$ remains the same as in Section \ref{sub:a_simulated_example}. Let $\bpi = [0.3, 0.3, 0.4]\transpose$. The scaling factor $\alpha_n$ is defined as follows:
\[
\alpha_n = \sqrt{\frac{\calD}{n\bpi\transpose\widetilde{\bB}\bpi}},
\quad\text{where}\quad
\widetilde{\bB} = \begin{bmatrix*}
0.3 & 0.3 \\
0.3 & 0.6 \\
0.6 & 0.3
\end{bmatrix*}\begin{bmatrix*}
0.3 & 0.3 & 0.6\\
0.3 & 0.6 & 0.3\\
\end{bmatrix*}.
\]
It is designed such that the average expected degree of vertices in the resulting graph is fixed at a constant $\calD$ when the number of vertices $n$ varies. Here we set $\calD = 300$. The remaining settings are the same as those in Section \ref{sub:a_simulated_example}, and we compute the Rand indices of the GMM-based clustering algorithm applied to the four estimates (the ASE, the OSE-A, the LSE, and the OSE-L). The results are tabulated in Table \ref{table:sparseSBM_simulation_RI_GMM} and visualized in Figure \ref{fig:RI_sparseSBM_K3}. 
Note that in contrast to Table \ref{table:SBM_simulation_RI_GMM} and Figure \ref{fig:RI_SBM_K3} in the manuscript, the Rand indices here decrease as the number of vertices $n$ increases. This is because the overall sparsity of the graphs increase as $n$ increases, and the clustering accuracy fundamentally depends on the overall sparsity of the stochastic block model (see, for example, \citealp{zhang2016minimax}). We see that the one-step estimators (both for the latent positions and for the population LSE) outperform the spectral estimators (the ASE and the LSE) when $n$ increases. The differences in the Rand indices are statistically significant at level $\alpha = 0.01$ for $n\geq 900$. In particular, when $n \geq 900$, the OSE-A and the OSE-L yield better results than the 
ASE and the LSE, respectively.  These numerical results are in accordance with the fact that asymptotically, the ASE and the LSE are dominated by the OSE-A and OSE-L, respectively, even when the stochastic block model exhibits increasing sparsity level as the number of vertices increases. 
\begin{table}[htbp]
\centering%
\caption{Rand indices of the GMM-based clustering algorithm using different estimates for Section \ref{sub:a_sparser_stochastic_block_model}. The number of vertices $n$ ranges over $\{500, 600, \ldots, 1200\}$, and for each $n$, the Rand indices are averaged over $10000$ Monte Carlo replicates of adjacency matrices, with the standard errors included in parentheses. }
\vspace*{1ex}
% \small
\begin{tabular}{c c c c c}
    \hline\hline
    % \begin{table}[]
% \begin{tabular}{lllll}
  Estimates & ASE & OSE-A & LSE & OSE-L \\
  \hline
  $n = 500$  & {\bf 0.99942} ($1.3\times10^{-5}$) & 0.99709 ($2.8\times10^{-5}$)     & 0.99879 ($1.9\times 10^{-5}$) & 0.99708 ($2.8\times 10^{-5}$)     \\
  $n = 600$  & {\bf 0.99557} ($3.3\times10^{-5}$) & 0.99551 ($3.3\times10^{-5}$)     & 0.99295 ($4.5\times 10^{-5}$) & 0.99548 ($3.4\times 10^{-5}$)   \\
  $n = 700$  & { 0.99011} ($4.7\times10^{-5}$) & {\bf 0.99015} ($4.7\times10^{-5}$)     & 0.98585 ($6.0\times 10^{-5}$) & 0.99008 ($4.8\times 10^{-5}$)      \\
  $n = 800$  & { 0.98467} ($5.4\times10^{-5}$) & {\bf 0.98470} ($5.5\times10^{-5}$)     & 0.97885 ($7.0\times 10^{-5}$) & 0.98464 ($5.5\times 10^{-5}$)      \\
  $n = 900$  & 0.97939 ($6.1\times10^{-5}$)    & { 0.97982} ($5.9\times10^{-5}$)& 
  0.97258 ($7.6\times 10^{-5}$) & {\bf 0.97990} ($5.8\times 10^{-5}$)    \\
  $n = 1000$ & 0.97516 ($6.7\times10^{-5}$)    & { 0.97565} ($6.4\times10^{-5}$)& 
  0.96761 ($8.7\times 10^{-5}$) & {\bf 0.97576} ($6.4\times 10^{-5}$)\\
  $n = 1100$ & 0.97138 ($7.0\times10^{-5}$)    & { 0.97179} ($6.8\times10^{-5}$)& 
  0.96249 ($9.2\times 10^{-5}$) & {\bf 0.97192} ($6.7\times 10^{-5}$)      \\
  $n = 1200$ & 0.96816 ($6.7\times10^{-5}$)    & { 0.96848} ($6.6\times10^{-5}$)& 
  0.95886 ($8.7\times 10^{-5}$) & {\bf 0.96866} ($6.3\times 10^{-5}$)  \\
% \end{tabular}
% \end{table}
    \hline\hline
  \end{tabular}%
\label{table:sparseSBM_simulation_RI_GMM}
% \begin{tabnote}
% \end{tabnote}
\end{table}
\begin{figure}[t]
  \centerline{\includegraphics[width=.9\textwidth]{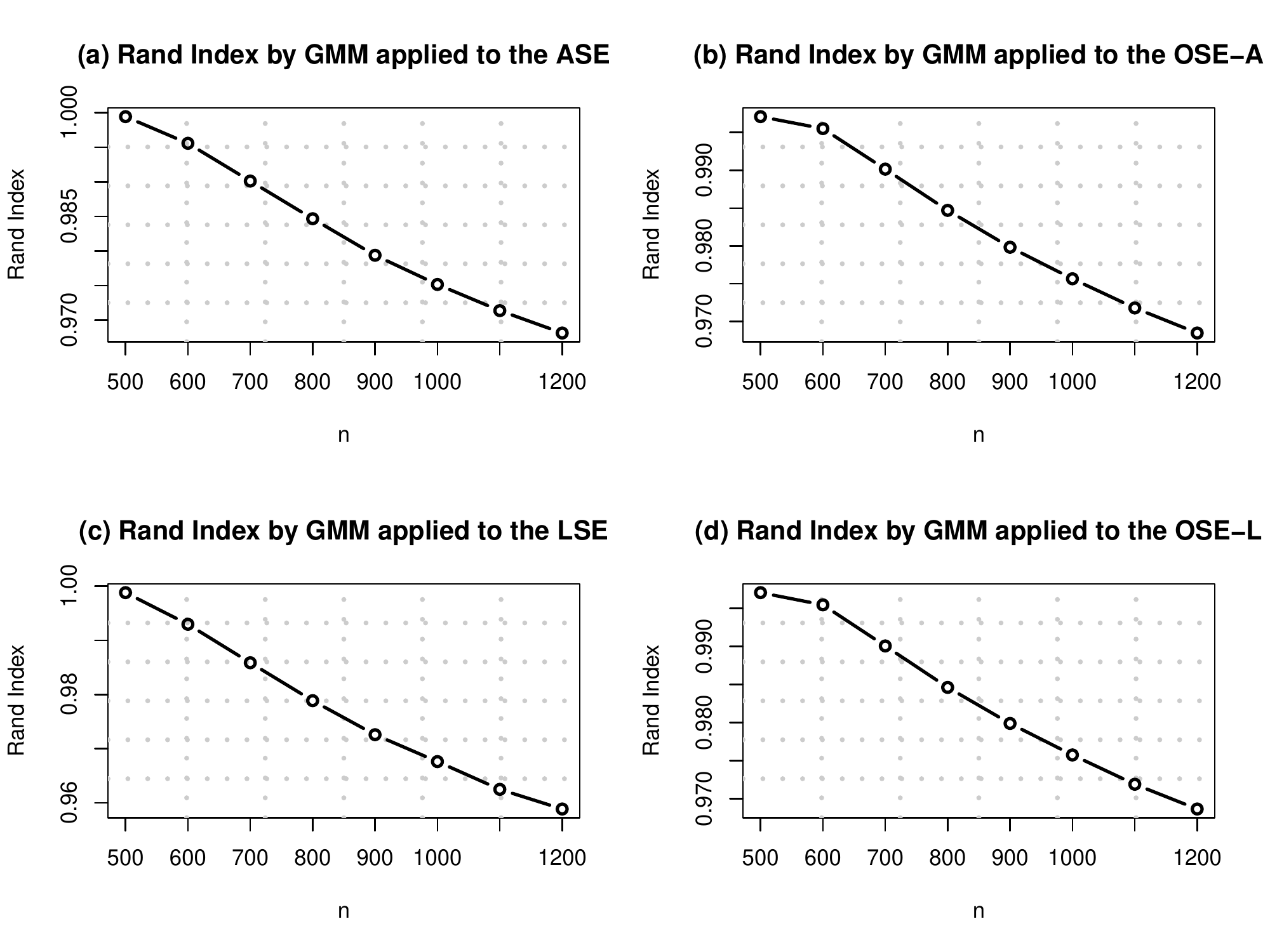}}
  \caption{The Rand indices of the GMM-model-based clustering method applied to different estimates (the ASE, the OSE-A, the LSE, and the OSE-L) when the number of vertices $n$ ranges in $\{500,600,\ldots,1200\}$. The results are averaged based on $10000$ Monte Carlo replicates. }
  \label{fig:RI_sparseSBM_K3}
\end{figure}

% subsection a_sparser_stochastic_block_model (end)
\subsection{Beyond Stochastic Block Models} % (fold)
\label{sub:beyond_stochastic_block_models}

We have considered numerical examples of random dot product graphs that can be classified as  stochastic block models (with positive semidefinite block probability matrices), and the primary focus there is vertex clustering. In this subsection, we  study a  numerical example of a random dot product graph that does not belong to the standard stochastic block model, and   focus on the inference for   latent positions rather than vertex clustering.

Consider a random dot product graph with $n$ vertices and latent dimension $d = 1$, where the latent position $x_{0i}$ for the $i$th vertex is set to
\[
x_{0i} = 0.8\sin\left\{\frac{\pi(i - 1)}{n - 1}\right\} + 0.1,\quad i = 1,2,\ldots,n.
\]
Let $\bX_0 = [x_{01},\ldots,x_{0n}]\transpose$ and suppose an adjacency matrix $\bA$ is generated from $\mathrm{RDPG}(\bX_0)$. The four estimators involved are:
\begin{itemize}
  \item The ASE, denoted by $\widehat{\bX}^{(\mathrm{ASE})}$;
  \item The one-step estimator initialized at the ASE (OSE-A), denoted by $\widehat{\bX}$;
  \item The LSE, denoted by $\breve{\bX}$;
  \item The one-step estimator for the population LSE, denoted by $\widehat{\bY}$.
\end{itemize}
We focus on the following two objectives:
\begin{itemize}
  \item[(i)] The comparison between the ASE and the OSE-A, and the comparison between the LSE and the OSE-L, via the sum of squares errors. This amounts to the comparison between
  \[
  SSE_{\mathrm{ASE}} = \|\widehat{\bX}^{\mathrm{(ASE)}}\bW - \bX_0\|_2^2\quad\text{and}\quad
  SSE_{\mathrm{OSE-A}} = \|\widehat{\bX}\bW - \bX_0\|_2^2,
  \]
  and the comparison between 
  \[
  SSE_{\mathrm{LSE}} = \|\breve{\bX}\bW - \bY_0\|_2^2\quad\text{and}\quad
  SSE_{\mathrm{OSE-L}} = \|\widehat{\bY}\bW - \bY_0\|_2^2.
  \]
  \item[(ii)] Performance of vertex-wise confidence intervals (CIs) for latent positions and the coordinates of the population LSE derived from Theorem \ref{thm:convergence_OS} and Theorem \ref{thm:convergence_OS_Laplacian}. In preparation for doing so, we need to derive the vertex-wise confidence intervals for the latent positions and the population LSE. Let $\widehat{\bX} = [\widehat{x}_1,\ldots,\widehat{x}_n]\transpose$ be the one-step estimator initialized at the ASE. By Theorem \ref{thm:convergence_OS}, 
  \[
  \sqrt{n}(|\widehat{x}_i| - x_{0i})\overset{\calL}{\to} \mathrm{N}(0, G(x_{0i})^{-1}),
  \]
  where
  \[
  G(x_{0i}) = \int\frac{x_1}{x_{0i}(1 - x_{0i}x_1)}F(\mathrm{d}x_1).
  \]
  To compute a $1 - \alpha$ confidence interval for $x_{0i}$, we need to estimate $G(x_{0i})$ using the one-step estimator $\widehat{\bX}$ because neither $x_{0i}$ nor the function form of $G$ is accessible from the data:
  \[
  \widehat{G}(\widehat{x}_i) = \frac{1}{n}\sum_{j = 1}^n\frac{\widehat{x}_j}{\widehat{x}_i(1 - \widehat{x}_i\widehat{x}_j)}
  \]
  By the Slutsky's theorem, a $1 - \alpha$ confidence interval for $x_{0i}$ is given by
  \begin{align}\label{eqn:CI_OSEA}
  \left(|\widehat{x}_i| - q_{z}(1 - \alpha/2)\sqrt{\frac{1}{\widehat{G}(\widehat{x}_i)n}},
  |\widehat{x}_i| + q_{z}(1 - \alpha/2)\sqrt{\frac{1}{\widehat{G}(\widehat{x}_i)n}}
  \right),
  \end{align}
  where $q_z(1 - \alpha/2)$ is the $1 - \alpha/2$ quantile of the standard normal distribution. Similarly, the asymptotic normality
  \[
  n(|\widehat{y}_i| - y_{0i})\overset{\calL}{\to}\mathrm{N}(0, \widetilde{G}(x_{0i}))
  \]
  from Theorem \ref{thm:convergence_OS_Laplacian} can be employed to construct a $1 - \alpha$ confidence interval for the coordinate $y_{0i}$ of the population LSE $\bY_0$, where $\widehat{\bY} = [\widehat{y}_1,\ldots,\widehat{y}_n]\transpose$ is the one-step estimator for the population LSE. The corresponding asymptotic variances can be estimated using the one-step estimator $\widehat{\bX}$ for $\bX$:
  \[
  \frac{1}{4\widehat{\mu}\widehat{x}_{i}\widehat{G}(\widehat{x}_i)},\quad\text{where}\quad
  \widehat{\mu} = \frac{1}{n}\sum_{j = 1}^n\widehat{x}_j.
  \]
  Therefore, a $1 - \alpha$ confidence interval for $y_{0i}$ is given by
  \begin{align}\label{eqn:CI_OSEL}
  \left(|\widehat{y}_i| - q_{z}(1 - \alpha/2)\sqrt{\frac{1}{4n^2\widehat{\mu}\widehat{x}_{i}\widehat{G}(\widehat{x}_i)}},
  |\widehat{y}_i| + q_{z}(1 - \alpha/2)\sqrt{\frac{1}{4n^2\widehat{\mu}\widehat{x}_{i}\widehat{G}(\widehat{x}_i)}}
  \right).
  \end{align}
\end{itemize}
% Before presenting the numerical results, 

 We draw  $1000$ independent adjacency matrices   from the aforementioned random dot product graph model. For objective (i), we aim  to compare   different estimators via the sum of squares errors (SSEs). The SSEs are computed across the $1000$ Monte Carlo replicates, and the boxplots of $SSE_{\mathrm{ASE}},SSE_{\mathrm{OSE-A}}, nSSE_{\mathrm{LSE}}, nSSE_{\mathrm{OSE-L}}$ are presented in Figure \ref{fig:boxplot_SSE}. Note that $SSE_{\mathrm{LSE}}$ and $SSE_{\mathrm{OSE-L}}$ are scaled by a factor of $n$ (the number of vertices) such that they are comparable with $SSE_{\mathrm{ASE}}$ and $\mathrm{SSE}_{\mathrm{OSE-A}}$ in the same boxplot. We can clearly see that the sum of squares errors of the OSE-A is smaller than that of the ASE, and the sum of squares errors of the OSE-L is also smaller than that of the LSE. The difference between $\mathrm{SSE}_{\mathrm{ASE}}$ and $\mathrm{SSE}_{\mathrm{OSE-A}}$ and that between $\mathrm{SSE}_{\mathrm{LSE}}$ and $\mathrm{SSE}_{\mathrm{OSE-L}}$ are both statistically significant at level $\alpha = 0.01$. These results are in accordance with the theory in Section \ref{sec:an_efficient_one_step_estimator} and \ref{sub:a_plug_in_estimator_for_the_normalized_laplacian_matrix} of the manuscript. 
\begin{figure}[htbp]
  \centerline{\includegraphics[width=1\textwidth]{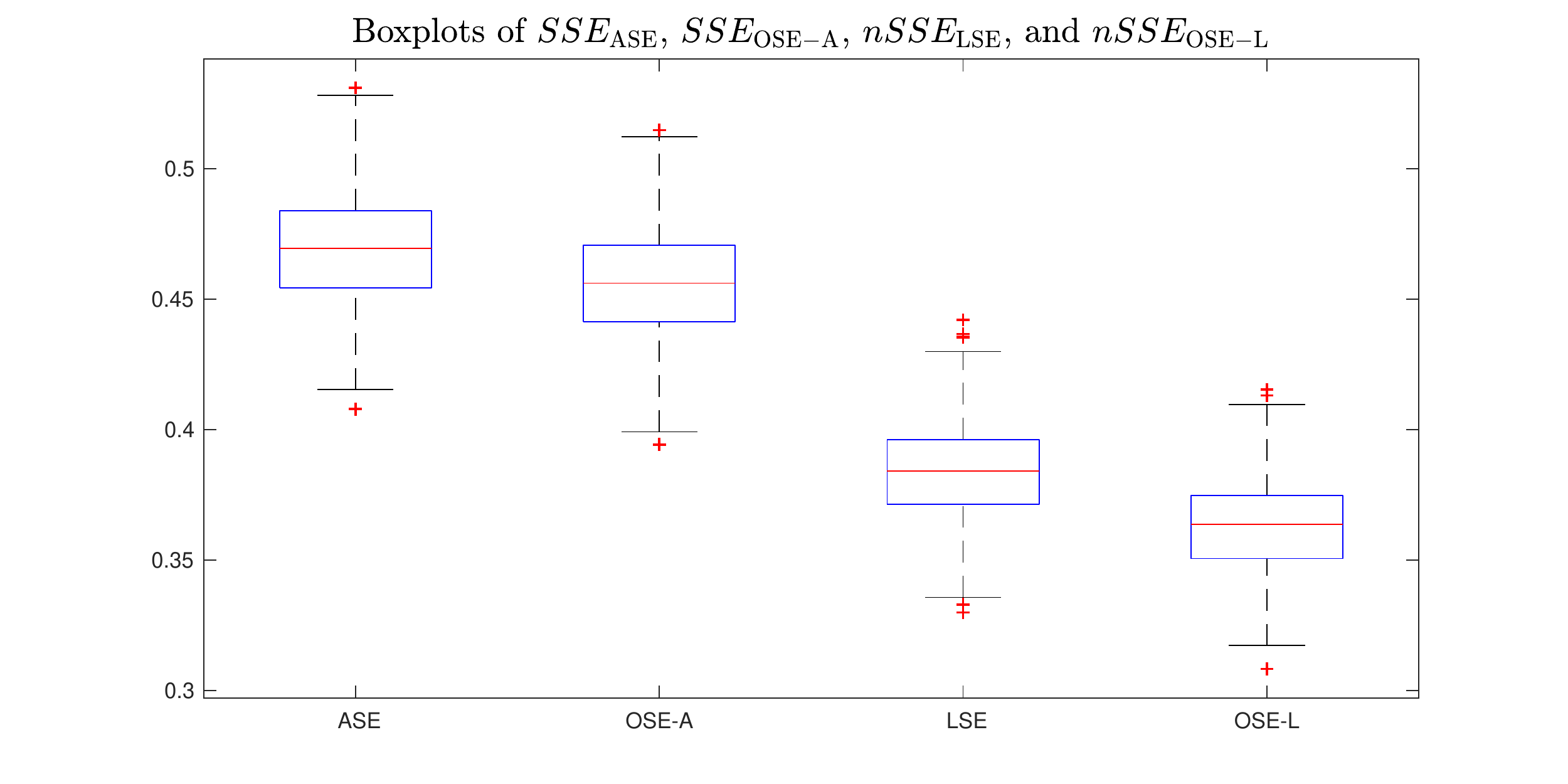}}
  \caption{The boxplots of $SSE_{\mathrm{ASE}}$, $SSE_{\mathrm{OSE-A}}$, $nSSE_{\mathrm{LSE}}$, and $nSSE_{\mathrm{OSE-L}}$ across $1000$ Monte Carlo replicates.}
  \label{fig:boxplot_SSE}
\end{figure}
% \begin{figure}[htbp]
%   \centerline{\includegraphics[width=1\textwidth]{figures/}}
%   \caption{The boxplots of $SSE_{\mathrm{LSE}}$ and $SSE_{\mathrm{OSE-L}}$ (scaled by a factor of $n$, the number of vertices) across $1000$ Monte Carlo replicates.}
%   \label{fig:boxplot_LSE_OSEL}
% \end{figure}

For objective (ii), we
construct the vertex-wise 95\% confidence intervals for both $\bX_0$ and $\bY_0$ based on each realization of the adjacency matrix, and compute the corresponding empirical coverage probabilities for each vertex $i \in [n]$. The results are visualized in Figures \ref{fig:Coverage_probability_OSEA} and \ref{fig:Coverage_probability_OSEL}, respectively. These figures clearly demonstrate that the empirical coverage probabilities concentrate near the nominal coverage probability, verifying the usefulness of the proposed method for obtaining vertex-wise confidence intervals. 
\begin{figure}[htbp]
  \centerline{\includegraphics[width=1\textwidth]{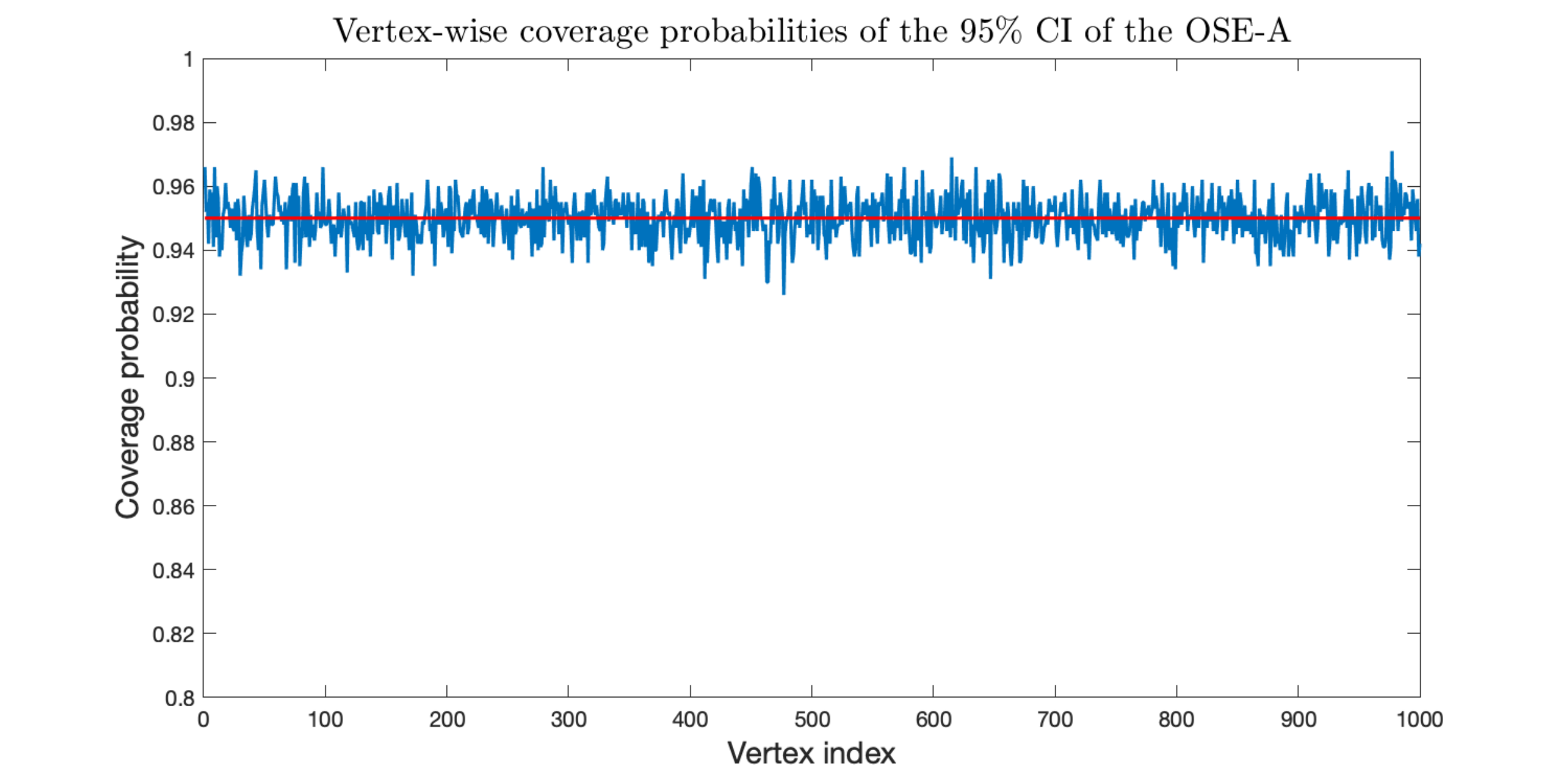}}
  \caption{Coverage probabilities of the vertex-wise confidence intervals for the latent positions $\bX_0 = [x_{01},\ldots,x_{0n}]\transpose$. The red horizontal line correspond to the nominal coverage probability $95\%$. }
  \label{fig:Coverage_probability_OSEA}
\end{figure}
\begin{figure}[htbp]
  \centerline{\includegraphics[width=1\textwidth]{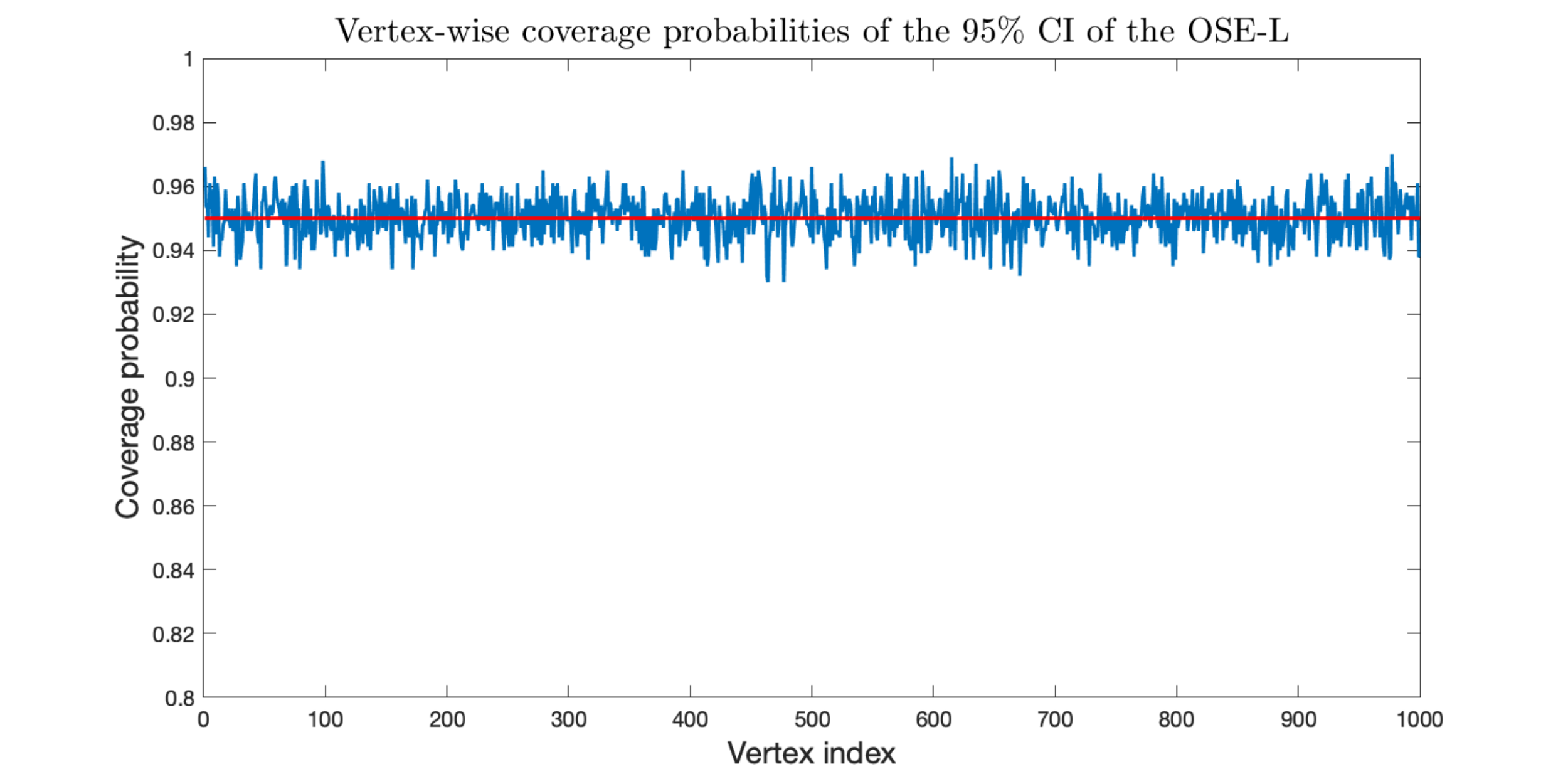}}
  \caption{Coverage probabilities of the vertex-wise confidence intervals for the population LSE $\bY_0 = [y_{01},\ldots,y_{0n}]\transpose$. The red horizontal line correspond to the nominal coverage probability $95\%$. }
  \label{fig:Coverage_probability_OSEL}
\end{figure}

We also randomly select one realization of the adjacency matrix from the aforementioned random dot product graph model   and compute the vertex-wise confidence intervals for $[x_{01},\ldots,x_{0n}]\transpose$ and $[y_{01},\ldots,y_{0n}]$ in Figures \ref{fig:CI_OSEA} and \ref{fig:CI_OSEL}, respectively. The numerical results presented further consolidate the theory developed in Sections \ref{sec:an_efficient_one_step_estimator} and \ref{sub:a_plug_in_estimator_for_the_normalized_laplacian_matrix} of the manuscript.
\begin{figure}[htbp]
  \centerline{\includegraphics[width=1\textwidth]{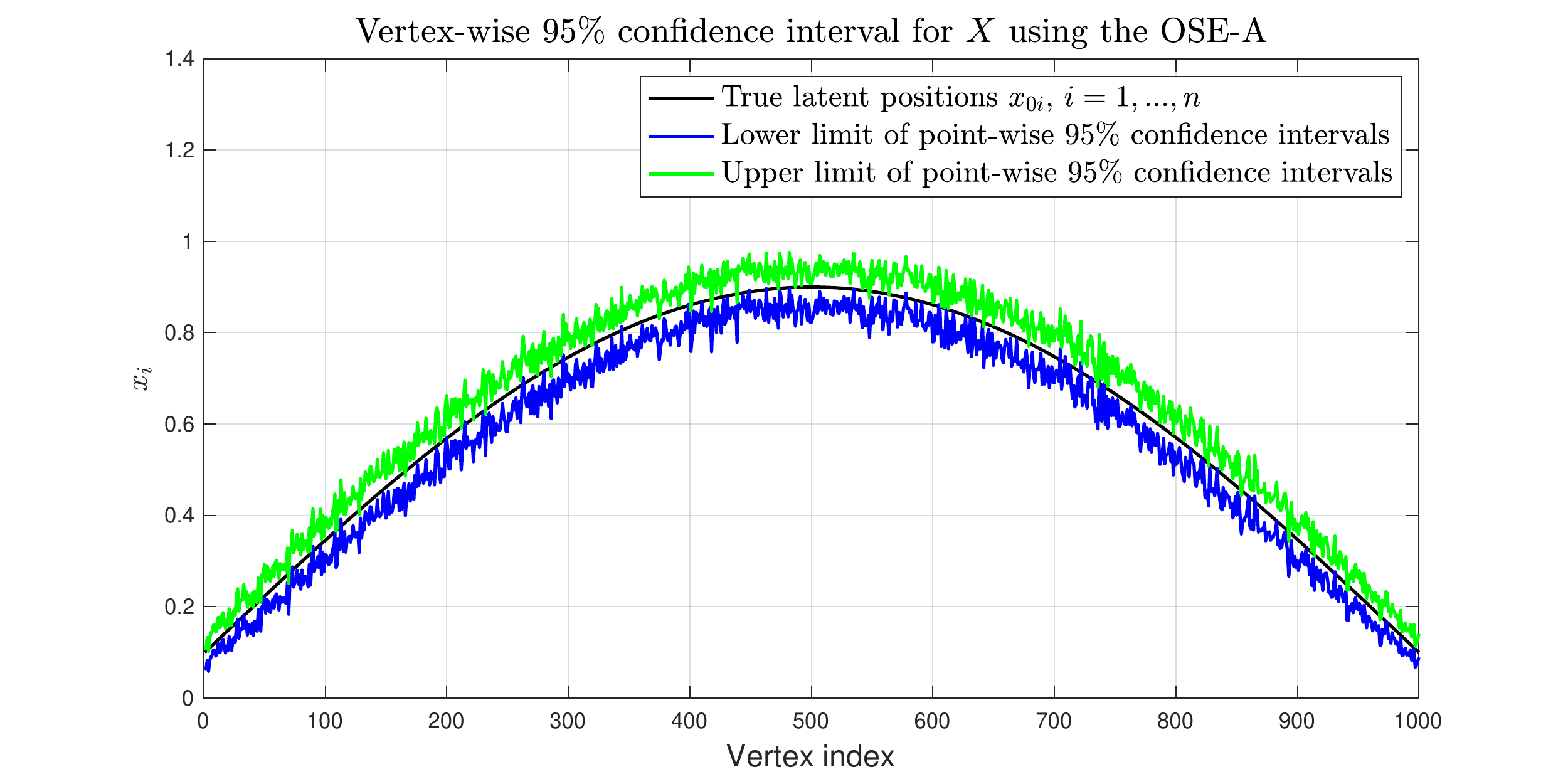}}
  \caption{A realization of the vertex-wise $95\%$ confidence intervals for $\bX_0 = [x_{01},\ldots,x_{0n}]\transpose$. }
  \label{fig:CI_OSEA}
\end{figure}
\begin{figure}[htbp]
  \centerline{\includegraphics[width=1\textwidth]{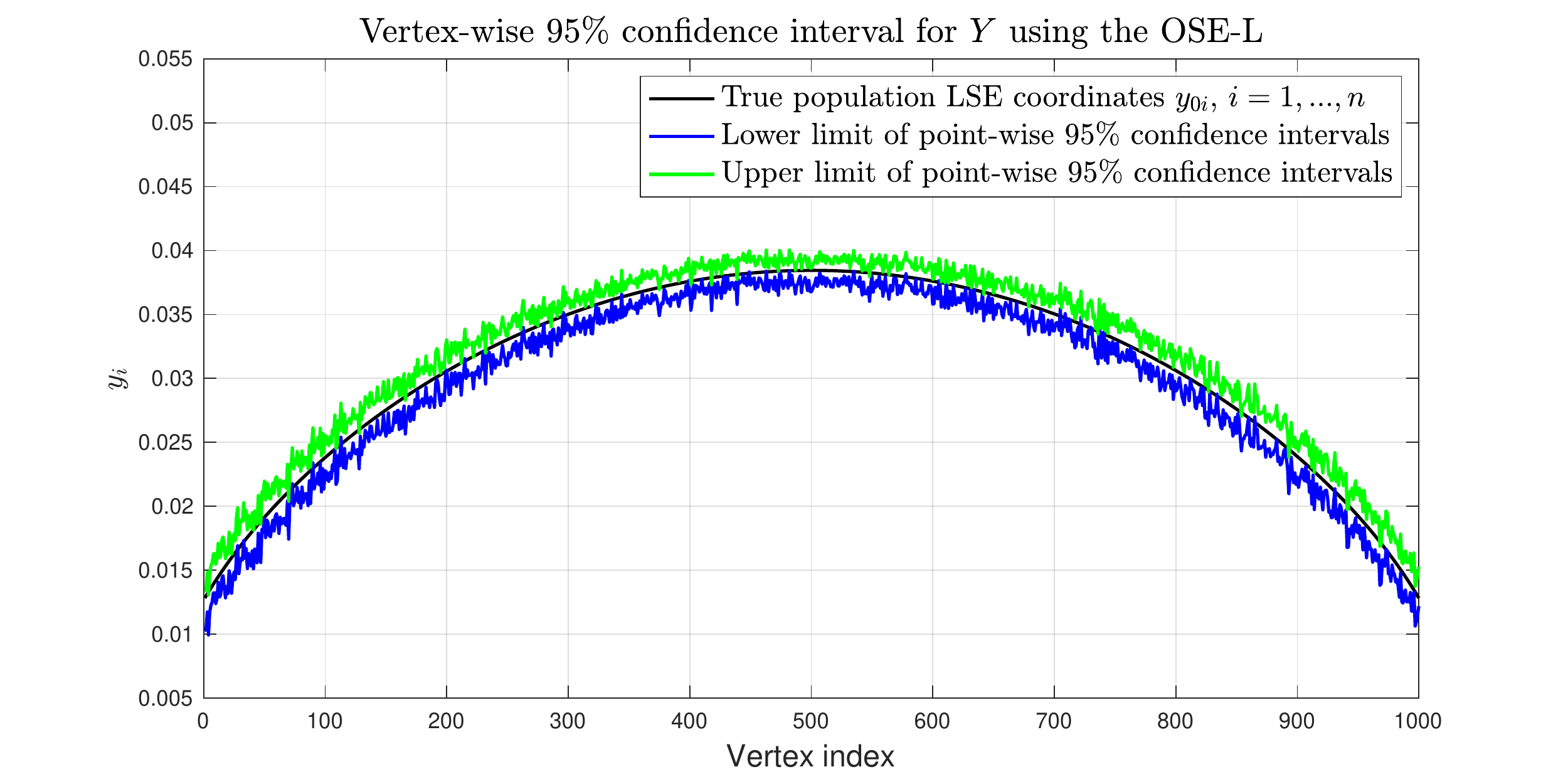}}
  \caption{A realization of the vertex-wise $95\%$ confidence intervals for $\bY_0 = [y_{01},\ldots,y_{0n}]\transpose$. }
  \label{fig:CI_OSEL}
\end{figure}

\end{appendices}

\bibliographystyle{apalike}
\bibliography{reference1,reference2}
\end{document}